\newcommand{\thmref}[1]{Theorem~\ref{#1}}
\newcommand{\propref}[1]{Proposition~\ref{#1}}
\newcommand{\lemref}[1]{Lemma~\ref{#1}}
\newcommand{\eqnref}[1]{Equation~(\ref{#1})}
\newcommand{\remref}[1]{Remark~\ref{#1}}
\newcommand{\corref}[1]{Corollary~\ref{#1}}
\newcommand{\exref}[1]{Example~\ref{#1}}
\newcommand{\figref}[1]{Figure~\ref{#1}}
\newcommand{\conjref}[1]{Conjecture~\ref{#1}}
\def\vc{\vec{v}}
\def\Zh{{\mathop{\rm Zh}}}
\def\li{L_{i}}
\def\ri{R_{i}}
\def\ddx{{\frac{d}{dx}}}
\def\oga{{\overline{\ga}}}
\def\tga{{\widetilde{\ga}}}
\def\jpxq{{j^{\beta_i}_{p}(x,q)}}
\def\jqxp{{j^{\beta_i}_{q}(x,p)}}
\def\jxpq{{j^{\beta_i}_{x}(p,q)}}
\def\RR{{\mathbb R}}
\newtheorem{theorem}{Theorem}[section]
\newtheorem{corollary}[theorem]{Corollary}
\newtheorem{conjecture}[theorem]{Conjecture}
\newtheorem{lemma}[theorem]{Lemma}
\newtheorem{proposition}[theorem]{Proposition}
\theoremstyle{example}
\newtheorem{remark}[theorem]{Remark}
\theoremstyle{definition}
\theoremstyle{notation}
\newtheorem*{notation}{Notation}
\newtheorem{example}[theorem]{Example}
\newcommand{\dd}[1]{\delta_{#1}}
\newcommand{\DD}[1]{\Delta_{#1}}
\newcommand{\nn}[1]{\mu_{#1}}
\newcommand{\jj}[3]{j_{#1}(#2,#3)}
\newcommand{\hj}[3]{\hat{j}_{#1}(#2,#3)}
\newcommand{\ga}{\Gamma}
\newcommand{\gam}[1]{\Gamma_{#1}}
\newcommand{\tg}{\tau(\Gamma)}
\newcommand{\ta}[1]{\tau(#1)}
\newcommand{\ee}[1]{E(#1)}
\newcommand{\vv}[1]{V(#1)}
\newcommand{\va}{\upsilon}
\newcommand{\vb}{\text{v} \hspace{0.5 mm}}
\newcommand{\pp}{p_{i}}
\newcommand{\qq}{q_{i}}
\def\can{{\mathop{\rm can}}}
\def\Zh{{\mathop{\rm Zh}}}
\def\CC{{\mathbb C}}
\def\cC{{\mathcal C}}
\def\<{\langle }
\def\>{\rangle }
\newcommand{\secref}[1]{\S\ref{#1}}
\def\elg{\ell (\ga)}
\begin{document}

\title[The tau constant of a metrized graph and graph operations]
{The tau constant of a metrized graph and its behavior under graph operations}

\author{Zubeyir Cinkir}
\address{Zubeyir Cinkir\\
Department of Mathematics\\
University of Georgia\\
Athens, Georgia 30602\\
USA}
\email{cinkir@math.uga.edu}

\keywords{Metrized Graphs, the tau constant, canonical measure, Laplacian operator, resistance function, graph operations.}

\thanks{I would like to thank Dr. Robert Rumely for his guidance. His continued support and encouragement made this work possible. I also would like to thank Dr. Matthew Baker for always being available for useful discussions during and before the preparation of this paper. Their suggestions and work were inspiring to me.}

\begin{abstract}
This paper concerns the tau constant, which is an important invariant of
a metrized graph, and which has applications to arithmetic properties of curves. We give several formulas for the tau constant, and show how it changes under graph operations including deletion of an edge, contraction of an edge, and union of graphs along one or two points.
We show how the tau constant changes when
edges of a graph are replaced by arbitrary graphs.
We prove Baker and Rumely's lower bound conjecture on the tau constant for several classes of metrized graphs.
\end{abstract}

\maketitle

\section{Introduction}\label{section introduction}
\vskip .1 in

Metrized graphs, which are graphs equipped with a distance function
on their edges, appear in many places in arithmetic geometry.
R. Rumely \cite{RumelyBook} used metrized graphs
to develop arithmetic capacity theory,
contributing to local intersection theory for
curves over non-archimedean fields. T.
Chinburg and Rumely \cite{CR} used metrized graphs to define their ``capacity pairing".
Another pairing satisfying ``desirable" properties is Zhang's
``admissible pairing on curves", introduced by S. Zhang \cite{Zh1}.
Arakelov introduced an intersection pairing at infinity and used
analysis on Riemann surfaces to derive global results. In the
non-archimedean case, metrized graphs appear as the analogue of a
Riemann surface. Metrized graphs and their invariants are studied in the articles
\cite{Zh1}, \cite{Zh2}, \cite{Fa}, \cite{C1}, \cite{C2}.

Metrized graphs which arise as dual graphs of curves, and Arakelov
Green's functions $g_{\mu}(x,y)$ on the metrized graphs, play an
important role in both of the articles \cite{CR} and \cite{Zh1}. Chinburg and Rumely
worked with a canonical measure $\mu_{can}$ of total mass $1$ on a
metrized graph $\ga$ which is the dual graph of the special fiber of a curve $C$.
Similarly, Zhang \cite{Zh1} worked with an ``admissible measure"
$\mu_{ad}$, a generalization of $\mu_{can}$, of total mass $1$
on $\ga$. The diagonal values $g_{\mu_{can}}(x,x)$ are constant on
$\ga$. M. Baker and Rumely called this constant  the ``tau constant"
of a metrized graph $\ga$, and denoted it by $\tg$. They posed a conjecture
(see \conjref{TauBound}) concerning lower bound of $\tg$. We call it Baker and Rumely's
lower bound conjecture.

In summer 2003 at UGA,  an REU group lead by Baker and Rumely
studied properties of the tau constant and the lower bound conjecture. Baker
and Rumely \cite{BRh} introduced a measure valued Laplacian operator $\Delta$ which
extends Laplacian operators studied earlier in the articles \cite{CR} and
\cite{Zh1}. This Laplacian operator combines the ``discrete''
Laplacian on a finite graph and the ``continuous'' Laplacian
$-f''(x)dx$ on $\RR$. Later, Baker and Rumely \cite{BRh} studied
harmonic analysis on metrized graphs. In terms of spectral theory,
the tau constant is the trace of the inverse operator of $\Delta$, acting on functions $f$ for which
$\int_{\ga}f d\mu_{can}=0$, when $\ga$ has total length $1$.

In this paper, we express the canonical measure $\mu_{can}$ on a metrized graph $\ga$
in terms of the voltage function $j_x(y,z)$ on $\ga$.
Our main focus is to give a systematic study
of how the tau constant behaves under common graph operations. We give new formulas for the tau constant,
and show how it changes under graph operations such as the deletion of an edge, the contraction of an edge into its end points,
identifying any two vertices, and extending or shortening one of the edge lengths of $\ga$. We define a new
graph operation which we call ``full immersion of a collection of given graphs into another graph''
(see \secref{section general DA}), and we show how the tau constant changes under this operation. We prove the lower bound conjecture for several classes of metrized graphs. We show how our formulas can be applied to compute the tau constant for various classes of metrized graphs, including those with vertex connectivity $1$ or $2$. The results here extend those obtained in \cite[Sections 2.4, 3.1, 3.2, 3.3, 3.4 and 3.5]{C1}.
Further applications of these results can be found in the articles \cite{C2}, \cite{C3}, \cite{C4}, and \cite{C5}.
\section{The tau constant and the lower bound conjecture}
\label{section two}

In this section, we first recall a few facts about metrized graphs,
the canonical measure $\mu_{can}$ on a metrized graph $\ga$, the
Laplacian operator $\Delta$ on $\ga$, and the tau constant $\tg$ of
$\ga$. Then we give a new expression for $\mu_{can}$ in terms of the
voltage function and two arbitrary points $p$, $q$ in $\ga$. This enables
us to obtain a new formula for the tau constant. We also show how
the Laplacian operator $\Delta$ acts on the product of two
functions.

A \textit{metrized graph} $\ga$ is a finite connected graph
equipped with a distinguished parametrization of each of its edges.
One can find other definitions of metrized graphs in the articles \cite{BRh}, \cite{Zh1}, \cite{BF}, and the references contained in those articles.

A metrized graph can have multiple edges and self-loops. For any given $p \in \ga$,
the number of directions
emanating from $p$ will be called the \textit{valence} of $p$, and will be denoted by
$\va(p)$. By definition, there can be only finitely many $p \in \ga$ with $\va(p)\not=2$.

For a metrized graph $\ga$, we will denote its set of vertices by $\vv{\ga}$.
We require that $\vv{\ga}$ be finite and non-empty and that $p \in \vv{\ga}$ for each $p \in \ga$ if
$\va(p)\not=2$. For a given metrized graph $\ga$, it is possible to enlarge the
vertex set $\vv{\ga}$ by considering more additional points of valence $2$ as vertices.

For a given graph $\ga$ with vertex set $\vv{\ga}$, the set of edges of $\ga$ is the set of closed line segments with end points in $\vv{\ga}$. We will denote the set of edges of $\ga$ by $\ee{\ga}$.

Let $v:=\# (\vv{\ga})$ and $e:=\# (\ee{\ga})$. We define the \textit{genus} of $\ga$ to be the first Betti number $g:=e-v+1$ of the graph $\ga$. Note that the genus is a topological invariant of $\ga$. In particular, it is independent of the choice of the vertex set $\vv{\ga}$.
Since $\ga$ is connected, $g (\ga)$ coincides with the cyclotomic number of $\ga$ in combinatorial graph theory.

We denote the length of an edge $e_i \in \ee{\ga}$ by $\li$. The total length of $\ga$, which will be denoted by $\elg$, is given by $\displaystyle \elg=\sum_{i=1}^e\li$.

Let $\ga$ be a metrized graph. If we scale each edge of $\ga$ by multiplying its length by $\frac{1}{\ell(\ga)}$, we
obtain a new graph which is called normalization of $\ga$, and will be
denoted $\ga^{N}$. Thus, $\ell(\ga^{N})=1$.

We will denote the graph obtained from $\ga$ by deletion of the interior points of an edge $e_i \in \ee{\ga}$ by $\ga-e_i$. An edge $e_{i}$ of a connected graph $\ga$ is called a \textit{bridge} if
$\ga-e_{i}$ becomes disconnected. If there is no such edge in $\ga$,
it will be called a \textit{bridgeless graph}.

As in the article \cite{BRh},
$\Zh(\Gamma)$ will be used to denote the set of all
continuous functions $f : \Gamma \rightarrow \CC$ such that for some vertex set $\vv{\ga}$, $f$ is
$\cC^2$ on $\ga
\backslash \vv{\ga}$ and $f^{\prime \prime}(x) \in L^1(\Gamma)$.

Baker and Rumely \cite{BRh} defined the following measure valued \textit{Laplacian} on a given metrized graph.
For a function $f \in \Zh(\Gamma)$,
\begin{equation}
\DD{x}(f(x))=-f''(x)dx - \sum_{p \in \vv{\ga}}\bigg[ \sum_{\vc
\hspace{0.5 mm} \text{at} \hspace{0.5 mm} p}
d_{\vc}f(p)\bigg]\dd{p}(x),
\end{equation}
See the article \cite{BRh} for details and for a description of the largest class of functions for which a measure valued Laplacian can be defined.

We will now clarify how the Laplacian operator acts on a product of
functions. For any two functions $f(x)$ and $g(x)$ in $\Zh(\Gamma)$,
we have $f(x) g(x) \in \Zh(\Gamma)$ and
\begin{equation*}
\begin{split}
\DD{x}(f(x)g(x)) &= -\big[f''(x)g(x)+2f'(x)g'(x)+f(x)g''(x)\big]dx
\\ & \hspace{5 mm}
- \sum_{p \in \vv{\ga}}\bigg[ \sum_{\vc  \hspace{0.5 mm} \text{at}
\hspace{0.5 mm} p} (f(p)d_{\vc}g(p)+g(p)d_{\vc}f(p)\bigg]\dd{p}(x)
\\ &= -g(x)f''(x)dx - \sum_{p \in \vv{\ga}}g(p)\bigg[ \sum_{\vc  \hspace{0.5 mm}
\text{at} \hspace{0.5 mm} p} d_{\vc}f(p)\bigg]\dd{p}(x)
\\ & \hspace{5 mm}
-f(x)g''(x)dx-\sum_{p \in \vv{\ga}}f(p)\bigg[ \sum_{\vc  \hspace{0.5
mm} \text{at} \hspace{0.5 mm} p} d_{\vc}g(p)\bigg]\dd{p}(x) -2f'(x)g'(x)dx
\\ &=g(x)\DD{x}f(x)+f(x)\DD{x}g(x)-2f'(x)g'(x)dx.
\end{split}
\end{equation*}
Thus, we have shown the following result:
\begin{theorem}\label{thmdelta}
For any $f(x)$ and $g(x)$ $\in \Zh(\Gamma)$, we have
\begin{equation*}
\DD{x}(f(x)g(x)) = g(x)\DD{x}f(x)+f(x)\DD{x}g(x)-2f'(x)g'(x)dx .
\end{equation*}
\end{theorem}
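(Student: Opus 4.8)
The plan is to prove the identity by a direct computation, reducing it to the classical Leibniz rule applied separately to the ``continuous'' part $-f''(x)\,dx$ and the ``discrete'' part $\sum_{p}[\cdots]\dd{p}(x)$ of $\Delta$. First I would verify that $f(x)g(x)$ actually lies in $\Zh(\Gamma)$, so that $\DD{x}(f(x)g(x))$ is defined. Choosing a vertex set $\vv{\ga}$ large enough that both $f$ and $g$ are $\cC^2$ on $\ga \backslash \vv{\ga}$, the product $fg$ is continuous on $\ga$ and $\cC^2$ on $\ga \backslash \vv{\ga}$, with $(fg)'' = f''g + 2f'g' + fg''$. Since $\ga$ is compact, each of $f, g, f', g'$ extends continuously to every closed edge and is therefore bounded on $\ga$, while $f'', g'' \in L^1(\ga)$ by hypothesis and $\ga$ has finite total length; hence $(fg)'' \in L^1(\ga)$ and $fg \in \Zh(\Gamma)$.

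Next I would substitute $fg$ into the defining formula for $\Delta$. The continuous term becomes $-(fg)''(x)\,dx = -\big(f''(x)g(x) + 2f'(x)g'(x) + f(x)g''(x)\big)\,dx$. For the discrete term, at each $p \in \vv{\ga}$ and each direction $\vc$ at $p$ the directional derivative satisfies the product rule $d_{\vc}(fg)(p) = f(p)\,d_{\vc}g(p) + g(p)\,d_{\vc}f(p)$, since $d_{\vc}$ is simply the one-sided derivative of the restriction of the function to the edge in the direction $\vc$. Summing over the directions at $p$ and over $p \in \vv{\ga}$ produces the discrete contribution $-\sum_{p \in \vv{\ga}}\big[\sum_{\vc \text{ at } p}\big(f(p)\,d_{\vc}g(p) + g(p)\,d_{\vc}f(p)\big)\big]\dd{p}(x)$.

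Finally I would regroup the resulting six pieces. Using that $g(x)\,\dd{p}(x) = g(p)\,\dd{p}(x)$ as measures, the terms $-g(x)f''(x)\,dx$ and $-\sum_{p}g(p)\big[\sum_{\vc}d_{\vc}f(p)\big]\dd{p}(x)$ assemble into $g(x)\DD{x}f(x)$; symmetrically the $f$-weighted terms assemble into $f(x)\DD{x}g(x)$; and the only leftover term is $-2f'(x)g'(x)\,dx$. This gives the claimed formula. The argument is entirely routine bookkeeping; the only points that genuinely require a word of justification are the membership $fg \in \Zh(\Gamma)$ and the Leibniz rule for the directional derivatives $d_{\vc}$, and both are immediate from the definitions, so there is no real obstacle here.
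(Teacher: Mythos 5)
Your proposal is correct and follows essentially the same route as the paper: substitute $fg$ into the definition of $\Delta$, apply the Leibniz rule to both the $-(fg)''\,dx$ term and the directional derivatives $d_{\vc}(fg)(p)$, and regroup using $g(x)\dd{p}(x)=g(p)\dd{p}(x)$. The only difference is that you spell out the verification that $fg\in\Zh(\Gamma)$, which the paper asserts without proof; that extra care is harmless and correct.
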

The following proposition shows that the Laplacian on $\Zh(\Gamma)$
is ``self-adjoint'', and explains the choice of sign in the
definition of $\Delta$. It is proved by a simple integration by
parts argument.
\begin{proposition}\cite[Lemma 4.a]{Zh1}\cite[Proposition 1.1]{BRh}\label{prop Greens identity}
\label{SelfAdjointProp} For every $f,g \in \Zh(\Gamma)$,
\begin{equation*}
\begin{split}
\int_\Gamma \overline{g} \, \Delta f &= \int_\Gamma f \,
\overline{\Delta g}, \quad \text{ Self-Adjointness of $\Delta$}
\\ &= \int_\Gamma f'(x) \overline{g'(x)}
dx
\quad \text{ Green's Identity}.
\end{split}
\end{equation*}
\end{proposition}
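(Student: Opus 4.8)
The plan is to reduce everything to an integration-by-parts computation, edge by edge, and then bookkeep the vertex contributions. First I would fix a common vertex set $\vv{\ga}$ large enough that both $f$ and $g$ are $\cC^2$ on each open edge $e_i \setminus \vv{\ga}$ and that $f'', g'' \in L^1(\ga)$; enlarging the vertex set if necessary is harmless since the genus and the Laplacian are independent of that choice. On a single edge $e_i$, parametrized by arclength as $[0,\li]$, the ``continuous'' part of $\Delta f$ is $-f''(x)\,dx$, so the contribution of $e_i$ to $\int_\ga \overline{g}\,\Delta f$ coming from this part is $-\int_0^{\li} \overline{g(x)} f''(x)\,dx$. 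Standard integration by parts on the interval gives
\begin{equation*}
-\int_0^{\li} \overline{g(x)} f''(x)\,dx = \int_0^{\li} f'(x)\overline{g'(x)}\,dx - \Big[\,\overline{g(x)} f'(x)\,\Big]_0^{\li}.
\end{equation*}
(One writes $\overline{g'(x)} = \overline{g(x)}'$ since complex conjugation commutes with real differentiation.)

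Next I would sum over all edges $e_i \in \ee{\ga}$. The integrals $\int_0^{\li} f'(x)\overline{g'(x)}\,dx$ add up to $\int_\ga f'(x)\overline{g'(x)}\,dx$, which is the desired right-hand side of Green's Identity. It remains to check that the boundary terms $-\big[\overline{g(x)}f'(x)\big]_0^{\li}$, summed over all edges, exactly cancel the discrete part of $\int_\ga \overline{g}\,\Delta f$, namely $-\sum_{p\in\vv{\ga}} \overline{g(p)}\big[\sum_{\vc \text{ at } p} d_{\vc}f(p)\big]$. The point is to regroup the edge-endpoint contributions by vertex: each endpoint of an edge $e_i$ incident to a vertex $p$ contributes a term involving $\overline{g(p)}$ times the one-sided derivative of $f$ along that edge directed \emph{into} the edge away from $p$, which is precisely the directional derivative $d_{\vc}f(p)$ in Baker--Rumely's sign convention. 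Collecting all directions $\vc$ at $p$ and using the continuity of $g$ at $p$ (so the value $\overline{g(p)}$ is well defined and common to all incident edges), the sum of boundary terms becomes $-\sum_{p\in\vv{\ga}}\overline{g(p)}\sum_{\vc \text{ at }p} d_{\vc}f(p)$, which cancels against the discrete part of $\overline g\,\Delta f$. Hence $\int_\ga \overline{g}\,\Delta f = \int_\ga f'(x)\overline{g'(x)}\,dx$.

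Finally, self-adjointness follows formally: the expression $\int_\ga f'(x)\overline{g'(x)}\,dx$ is manifestly turned into $\overline{\int_\ga \overline{f}\,\Delta g} = \int_\ga f\,\overline{\Delta g}$ by applying the identity just proved with the roles of $f$ and $g$ swapped and conjugating, since $\overline{\,\overline{g'}f'\,} = \overline{g'}'\,\overline{f}'$... more cleanly, $\int_\ga f'\overline{g'}\,dx = \overline{\int_\ga g'\overline{f'}\,dx} = \overline{\int_\ga \overline{f}\,\Delta g}$, and the outer conjugate distributes to give $\int_\ga f\,\overline{\Delta g}$. The only genuinely delicate step is the sign/convention matching in the boundary-term bookkeeping: one must be careful that the orientation implicit in ``$[\,\cdot\,]_0^{\li}$'' on each edge, combined with the definition of $d_{\vc}f(p)$ as the derivative in the direction pointing away from $p$ into the edge, produces the correct global sign after summation. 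I would carry out that matching explicitly for a single vertex of valence $2$ and a self-loop as sanity checks, and the $L^1$ hypothesis on $f''$, $g''$ guarantees all the edgewise integrals converge so the rearrangement of the finite sum over edges is legitimate.
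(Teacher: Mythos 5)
Your proposal is exactly the paper's approach: the paper gives no detailed argument, citing Zhang and Baker--Rumely and remarking only that the proposition ``is proved by a simple integration by parts argument,'' and your edge-by-edge integration by parts with the boundary terms regrouped by vertex (plus the conjugation step for self-adjointness) is that argument, carried out correctly in substance. One sign slip to fix in the write-up: since each endpoint contribution is $+\overline{g(p)}\,d_{\vc}f(p)$ (as you correctly identify), the summed boundary terms equal $+\sum_{p\in\vv{\ga}}\overline{g(p)}\sum_{\vc \text{ at } p} d_{\vc}f(p)$, not $-\sum_{p\in\vv{\ga}}\overline{g(p)}\sum_{\vc \text{ at } p} d_{\vc}f(p)$; it is the positive sum that cancels the discrete part $-\sum_{p\in\vv{\ga}}\overline{g(p)}\big[\sum_{\vc \text{ at } p} d_{\vc}f(p)\big]$ of $\int_\ga \overline{g}\,\Delta f$, as your claimed cancellation requires.
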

In the article \cite{CR}, a kernel $j_{z}(x,y)$ giving a
fundamental solution of the Laplacian is defined and studied as a
function of $x, y, z \in \Gamma$. For fixed $z$ and $y$ it has the
following physical interpretation: when $\Gamma$ is viewed as a
resistive electric circuit with terminals at $z$ and $y$, with the
resistance in each edge given by its length, then $j_{z}(x,y)$ is
the voltage difference between $x$ and $z$, when unit current enters
at $y$ and exits at $z$ (with reference voltage 0 at $z$).

For any $x$, $y$, $z$ in $\ga$, the voltage function $j_x(y,z)$ on
$\ga$ is a symmetric function in $y$ and $z$, and it satisfies
$j_x(x,z)=0$ and $j_x(y,y)=r(x,y)$, where $r(x,y)$ is the resistance
function on $\ga$. For each vertex set $\vv{\ga}$, $j_{z}(x,y)$ is
continuous on $\ga$ as a function of $3$ variables.
As the physical interpretation suggests, $j_x(y,z) \geq 0$ for all $x$, $y$, $z$ in $\ga$.
For proofs of these facts, see the articles \cite{CR}, \cite[sec 1.5 and sec 6]{BRh}, and \cite[Appendix]{Zh1}.
The voltage function $j_{z}(x,y)$ and the resistance function $r(x,y)$ on a metrized graph
were also studied by Baker and Faber \cite{BF}.
\begin{proposition}\cite{CR} \label{prop cordjpq}
For any $p,q,x \in \ga$,
$ \quad \DD{x}\jj{p}{x}{q}=\dd{q}(x)-\dd{p}(x)$.
\end{proposition}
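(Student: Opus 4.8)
The plan is to verify the claimed Laplacian identity directly from the defining properties of the voltage function $j_z(x,y)$ recalled just above, namely that for fixed $p$ and $q$, the function $x \mapsto \jj{p}{x}{q}$ is the voltage at $x$ (with reference $0$ at $p$) when a unit current is injected at $q$ and extracted at $p$. In the language of the measure-valued Laplacian $\Delta$, the statement to prove is exactly that $f(x) := \jj{p}{x}{q}$ is the fundamental solution: $\DD{x} f = \dd{q} - \dd{p}$. I would first treat the interior (continuous) part and then the vertex (discrete) part, so that the two pieces of the definition of $\Delta$ are handled separately.

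First I would recall, from \cite{CR} or \cite{BRh}, that on each edge $e_i$ the function $x \mapsto \jj{p}{x}{q}$ is piecewise linear away from $p$, $q$, and the vertices: this is the statement that harmonic functions on a metrized graph (solutions of $-f'' = 0$ on edges) are exactly the affine functions on edge interiors, so $f''(x) = 0$ for $x \notin \vv{\ga} \cup \{p,q\}$, and hence $-f''(x)\,dx$ contributes nothing except possibly atoms at $p$ and $q$. Enlarging the vertex set to include $p$ and $q$ (permissible by the discussion of vertex sets in \secref{section two}), I then only need the vertex sum in the definition of $\DD{x}$. For a vertex $s \notin \{p,q\}$, Kirchhoff's current law — conservation of current at $s$, which is precisely the harmonicity of the electrical potential at an interior node — gives $\sum_{\vc \text{ at } s} d_{\vc} f(s) = 0$, so $\dd{s}$ gets coefficient $0$. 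At $s = q$, the net current flowing out along the edges at $q$ equals the unit current injected there, which translates into $\sum_{\vc \text{ at } q} d_{\vc} f(q) = -1$ (with the sign convention built into the definition of $d_{\vc}$ and of $\Delta$); similarly at $s = p$ the extracted unit current gives $\sum_{\vc \text{ at } p} d_{\vc} f(p) = 1$. Substituting into the formula for $\DD{x}$ yields $\DD{x} f = -(-1)\dd{q}(x) - (1)\dd{p}(x) = \dd{q}(x) - \dd{p}(x)$, as claimed.

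An alternative, perhaps cleaner, route is to characterize $\DD{x}\jj{p}{x}{q}$ by pairing against test functions and invoking \propref{prop Greens identity}: for any $g \in \Zh(\ga)$ one has $\int_\ga g \, \DD{x} f = \int_\ga f'(x) g'(x)\, dx$, and one shows the right-hand side equals $g(q) - g(p)$ using the explicit piecewise-linear shape of $f$ and integration by parts on each edge, the boundary terms collapsing to evaluations at $p$ and $q$ by Kirchhoff's law at the other vertices. Since $\int_\ga g\,(\dd{q} - \dd{p}) = g(q) - g(p)$ as well, and test functions separate measures, the identity follows. I expect the main subtlety to be bookkeeping with the sign and orientation conventions for the directional derivatives $d_{\vc}$ and the resulting sign in $\Delta$ — making sure that "unit current in at $q$, out at $p$" produces $+\dd{q} - \dd{p}$ rather than the negative — together with the (standard but worth stating) fact that the voltage function is genuinely harmonic, i.e. affine on edge interiors, away from the source, sink, and vertices; everything else is a routine application of the definition of $\Delta$ and Kirchhoff's law.
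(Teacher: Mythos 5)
Your proof is correct; note, though, that the paper itself offers no argument for this proposition at all --- it is quoted from \cite{CR} --- so there is no internal proof to compare against, and what you have written is the standard verification one would expect. Your first route is exactly the canonical one: with $p$ and $q$ adjoined to $\vv{\ga}$, the potential $x\mapsto \jj{p}{x}{q}$ is affine on each edge interior (constant current along each edge), so the continuous part $-f''(x)\,dx$ of $\DD{x}$ vanishes; Kirchhoff's current law kills the vertex terms at every $s\notin\{p,q\}$; and your sign bookkeeping at the source and sink is right, since unit current entering at $q$ gives $\sum_{\vc}d_{\vc}f(q)=-1$ and exiting at $p$ gives $\sum_{\vc}d_{\vc}f(p)=+1$, which the minus sign in the definition of $\Delta$ converts into $+\dd{q}-\dd{p}$ (this is consistent with the way the paper later uses the proposition, e.g.\ in \lemref{lemorthogonality} and \thmref{lemmadcanjpq}). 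The Green's-identity alternative is also sound and is essentially dual to the first argument. The only caveat worth stating explicitly is where the harmonicity of $\jj{p}{x}{q}$ away from $p$ and $q$ comes from: in \cite{CR} (and in \cite{BRh}) the voltage function is constructed precisely as the solution of this current-flow problem, so your appeal to the physical interpretation recalled in \secref{section two} is legitimate and not circular; if instead one took the identity $\DD{x}\jj{p}{x}{q}=\dd{q}(x)-\dd{p}(x)$ together with $\jj{p}{p}{q}=0$ as the definition of $j$, the proposition would be definitional and there would be nothing to prove.
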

In \cite[Section 2]{CR}, it was shown that the theory of harmonic functions on metrized graphs is equivalent to the theory of resistive electric circuits with terminals. We now recall the following well known facts from circuit theory. They will be used frequently and implicitly in this paper and in the papers \cite{C2}, \cite{C3}, \cite{C4}. The basic principle of circuit analysis is that if one subcircuit of a circuit is replaced by another circuit which has the same resistances between each pair of terminals as the original subcircuit,
then all the resistances between the terminals of the original circuit are unchanged. The following subcircuit replacements are particularly useful:

\textbf{Series Reduction:} Let $\ga$ be a graph with vertex set $\{p, q, s \}$. Suppose that $p$ and $s$ are connected
 by an edge of length $A$, and that $s$ and $q$ are connected by an edge of length $B$. Let $\beta$ be a graph with vertex set $\{p, q\}$, where $p$ and $q$ are connected by an edge of length $A+B$.
 Then the effective resistance in $\ga$ between $p$ and $q$ is equal to the effective resistance in $\beta$ between $p$ and $q$.
These are illustrated by the first two graphs in \figref{fig seriesandparallel}.
\begin{figure}
\centering
\includegraphics[scale=0.8]{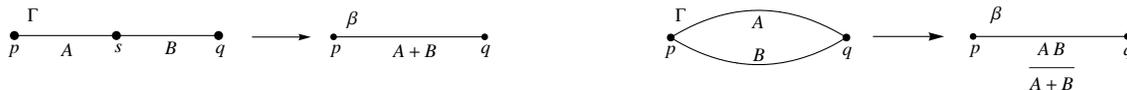} \caption{Series and Parallel Reductions} \label{fig seriesandparallel}
\end{figure}

\textbf{Parallel Reduction:} Suppose $\ga$ and $\beta$ be two
graphs with vertex set $\{p, q\}$. Suppose $p$ and $q$ in $\ga$ are connected by two edges of lengths $A$ and $B$, respectively, and
let $p$ and $q$ in $\beta$ be connected by an edge of length $\frac{A B}{A+B}$ (see the last two graphs in \figref{fig seriesandparallel}).
Then the effective resistance in $\ga$ between $p$ and $q$ is equal to the effective resistance in $\beta$ between $p$ and $q$.

\textbf{Delta-Wye transformation:} This is the one case where a mesh
can be replaced by a star. Let $\ga$ be a triangular graph
with vertices $p$, $q$, and $s$. Then, $\ga$ (with resistance function $r_{\ga}$)
can be transformed to a Y-shaped graph $\beta$ (with resistance
function $r_{\beta}$) so that $p, q, s$ become end points in $\beta$ and the
following equivalence of resistances hold:
$r_{\ga}(p,q)=r_{\beta}(p,q)$, $r_{\ga}(p,s)=r_{\beta}(p,s)$, $r_{\ga}(q,s)=r_{\beta}(q,s).$
Moreover, for the resistances $a$, $b$, $c$  in $\ga$, we have the resistances $\frac{bc}{a+b+c}$, $\frac{ac}{a+b+c}$, $\frac{ab}{a+b+c}$ in $\beta$, as illustrated by the first two graphs in \figref{fig deltawyewyedelta2}.

\textbf{Wye-Delta transformation:} This is the inverse
Delta-Wye transformation, and is illustrated by the last two graphs in  \figref{fig deltawyewyedelta2}.

\begin{figure}
\centering
\includegraphics[scale=0.8]{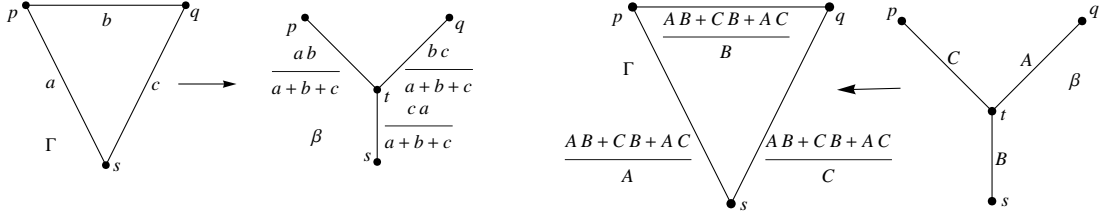} \caption{Delta-Wye and Wye-Delta transformations} \label{fig deltawyewyedelta2}
\end{figure}

\textbf{Star-Mesh transformation:} An $n$-star shaped graph (
i.e. $n$ edges with one common point whose other end points are of
valence $1$)
can be transformed into a complete graph of $n$ vertices (which does
not contain the common end point) so that all resistances between the remaining
vertices remain unchanged. A more precise description is as follows:

Let $L_1, L_2, \cdots, L_n$ be the edges in an $n$-star shaped graph $\ga$ with
common vertex $p$, where $L_i$ is the length of
the edge connecting the vertices $q_i$ and $p$ (i.e., the resistance between the
vertices $q_i$ and $p$. The star-mesh transformation applied to $\ga$ gives
a complete graph $\ga^{c}$ on the set of vertices $q_1,q_2, \cdots, q_n$ with $\frac{n(n-1)}{2}$ edges.
Let $L_{ij}$ be the length of the edge connecting the vertices $q_i$ and $q_j$ in $\ga^{c}$ for any
$1\leq i < j \leq n$. Then
$\displaystyle L_{ij}=L_i L_j \cdot \sum_{k=1}^n \frac{1}{L_k}.$
When $n=2$, the star-mesh transformation is identical to series reduction.
When $n=3$, the star-mesh transformation is identical to the Wye-Delta transformation,
and can be inverted by the Delta-Wye transformation.
When $n\geq 4$, there is no inverse transformation for the star-mesh
transformation. \figref{fig starmesh} illustrates the case $n=6$.
(For more details see \cite{S} or \cite{F-C}).
\begin{figure}
\centering
\includegraphics[scale=0.8]{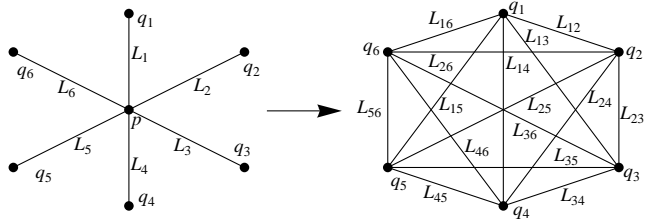} \caption{Star-Mesh transformations when $n=6$.} \label{fig starmesh}
\end{figure}

For any given $p$ and $q$ in $\ga$, we say that an edge $e_i$ is not part of a simple path from $p$ to $q$ if all walks
starting at $p$, passing through $e_i$, and ending at $q$ must visit some vertex more than once.
Another basic principle of circuit reduction is the following transformation:

The effective resistances between $p$ and $q$ in both $\ga$ and $\ga-e_i$ are the same if $e_i$ is not part of a simple
path from $p$ to $q$. Therefore, such an edge $e_i$ can be deleted as far as the resistance between $p$ and $q$ is concerned.

For any real-valued, signed Borel measure $\mu$ on $\Gamma$ with
$\mu(\Gamma)=1$ and $|\mu|(\Gamma) < \infty$, define the function
$\displaystyle j_{\mu}(x,y) \ = \ \int_{\Gamma} j_{\zeta}(x,y) \, d\mu({\zeta}).$
Clearly $j_{\mu}(x,y)$ is symmetric, and is jointly continuous in
$x$ and $y$. Chinburg and Rumely discovered in \cite{CR} that there is a unique real-valued, signed Borel measure $\mu=\mu_{can}$ such that $j_{\mu}(x,x)$ is constant on $\ga$. The measure $\mu_\can$ is called the
\textit{canonical measure}.
Baker and Rumely \cite{BRh} called the constant $\frac{1}{2}j_{\mu}(x,x)$ the \textit{tau constant} of $\ga$ and denoted by $\tg$. In terms of spectral theory, as shown in the article \cite{BRh}, the tau constant $\tg$ is the trace of the inverse of the Laplacian operator on $\ga$ with respect to $\mu_{can}$.

The following lemma gives another description of the tau constant. In particular, it implies that the tau constant is positive.
\begin{lemma}\cite[Lemma 14.4]{BRh}\label{lemtauformula}
For any fixed $y$ in $\ga$,
$\displaystyle \tg =\frac{1}{4}\int_{\ga}\big(\frac{\partial}{\partial x} r(x,y) \big)^2dx$.
\end{lemma}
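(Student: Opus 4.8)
The plan is to reduce the claimed identity to Green's identity (\propref{prop Greens identity}) together with an explicit description of the measure $\DD{x}r(x,y)$. First I would record that, for each fixed $y\in\ga$, the function $x\mapsto r(x,y)$ lies in $\Zh(\ga)$: it is continuous on $\ga$, and after enlarging $\vv{\ga}$ to include $y$ it is smooth (in fact a rational function of the edge parameter with no poles) on the interior of every edge, by the description of $r$ as an effective resistance in \cite{CR}; hence $\big(\tfrac{\partial}{\partial x}r(x,y)\big)^2$ and the second derivative are bounded, so in particular in $L^1(\ga)$.

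The key step is to prove that, as measures in $x$ with $y$ fixed,
\begin{equation*}
\DD{x}r(x,y)=2\,\mucan(x)-2\,\dd{y}(x).
\end{equation*}
To this end, recall the classical resistance identity $\jj{\zeta}{x}{y}=\tfrac12\big(r(x,\zeta)+r(y,\zeta)-r(x,y)\big)$, valid for all $x,y,\zeta\in\ga$ (it follows from the characterizing properties of $j$ listed above: for fixed $\zeta,y$ both sides have Laplacian $\dd{y}-\dd{\zeta}$ in $x$ and vanish at $x=\zeta$). Integrating this against $d\mucan(\zeta)$, using $\mucan(\ga)=1$ and the defining property $\int_\ga r(x,\zeta)\,d\mucan(\zeta)=j_{\mucan}(x,x)=2\tg$ (constant in $x$), gives
\begin{equation*}
j_{\mucan}(x,y)=2\tg-\tfrac12\,r(x,y).
\end{equation*}
On the other hand, integrating the identity $\DD{x}\jj{\zeta}{x}{y}=\dd{y}(x)-\dd{\zeta}(x)$ of \propref{prop cordjpq} against $d\mucan(\zeta)$ yields $\DD{x}j_{\mucan}(x,y)=\dd{y}(x)-\mucan(x)$. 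Comparing the two expressions for $\DD{x}j_{\mucan}(x,y)$, and using that $\DD{x}$ annihilates constants, gives the displayed formula for $\DD{x}r(x,y)$.

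With this in hand the lemma is immediate. Applying Green's identity (\propref{prop Greens identity}) with $f=g=r(\cdot,y)\in\Zh(\ga)$ gives
\begin{equation*}
\int_\ga\Big(\frac{\partial}{\partial x}r(x,y)\Big)^2dx=\int_\ga r(x,y)\,\DD{x}r(x,y)=\int_\ga r(x,y)\big(2\mucan(x)-2\,\dd{y}(x)\big)=2\!\int_\ga r(x,y)\,d\mucan(x)-2\,r(y,y).
\end{equation*}
Since $r(y,y)=0$ and $\int_\ga r(x,y)\,d\mucan(x)=2\tg$ by the computation above, the right-hand side equals $4\tg$, which is the assertion; as a byproduct, the integral is seen to be independent of $y$.

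I expect the main obstacle to be the careful justification of the formula for $\DD{x}r(x,y)$: verifying the regularity of $r(\cdot,y)$ needed to put it in the domain of $\DD{}$ and to legitimately differentiate under the integral sign in $\DD{x}\int_\ga\jj{\zeta}{x}{y}\,d\mucan(\zeta)$, and keeping track of the atom of $\DD{x}r(x,y)$ at $x=y$ — which is exactly cancelled in the final step because $r(y,y)=0$. Once the Laplacian formula and the membership $r(\cdot,y)\in\Zh(\ga)$ are secured, the remainder is a one-line application of \propref{prop Greens identity}.
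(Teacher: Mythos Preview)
Your argument is correct. Note, however, that the paper does not supply its own proof of this lemma: it is simply quoted from \cite[Lemma 14.4]{BRh}, so there is nothing to compare against directly.

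That said, your proof fits naturally into the paper's framework. The key identity you derive, $\DD{x}r(x,y)=2\mucan(x)-2\dd{y}(x)$, is precisely \thmref{ncan} (also cited from \cite{BRh}); you have essentially rederived it from the defining property of $\mucan$ and the definition $\tg=\tfrac12 j_{\mucan}(x,x)$. Once that is in hand, your application of Green's identity together with $\int_\ga r(x,y)\,d\mucan(x)=2\tg$ is exactly the intended mechanism. One small remark: since the paper already records \thmref{ncan}, you could shorten your write-up by invoking it directly rather than rederiving it, and the regularity $r(\cdot,y)\in\Zh(\ga)$ is implicit in the paper's standing assumptions on the voltage function (see the discussion preceding \thmref{lemmadcanjpq}). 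The concern you flag about differentiating under the integral in $\DD{x}\int_\ga \jj{\zeta}{x}{y}\,d\mucan(\zeta)$ is handled in \cite{BRh} and need not be reargued here.
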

The canonical measure is given by the following explicit formula:
\begin{theorem}\cite[Theorem 2.11]{CR} \label{thmCanonicalMeasureFormula}
Let $\ga$ be a metrized graph. Suppose that $\li$ is the length of edge $e_i$ and $R_i$ is the effective
resistance between the endpoints of $e_i$ in the graph $\Gamma-e_i$,
when the graph is regarded as an electric circuit with resistances
equal to the edge lengths. Then we have
\begin{equation*}
\mu_\can(x) \ = \ \sum_{p \in \vv{\ga}} (1 - \frac{1}{2}\vb(p))
\, \delta_p(x) + \sum_{e_i \in \ee{\ga}} \frac{dx}{L_i+R_i},
\end{equation*}
where
$\delta_p(x)$ is the Dirac measure.
\end{theorem}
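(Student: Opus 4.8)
The plan is to verify the formula for $\mucan$ directly, using the characterization that $\mucan$ is the unique signed Borel measure of total mass $1$ for which $j_{\mucan}(x,x)$ is constant on $\ga$. So the proof has two parts: first, check that the proposed measure $\nu(x) := \sum_{p \in \vv{\ga}} (1 - \frac12 \vb(p))\, \dd{p}(x) + \sum_{e_i \in \ee{\ga}} \frac{dx}{L_i + R_i}$ has total mass $1$; second, check that $j_\nu(x,x)$ is constant on $\ga$. For the mass computation, $\int_\ga dx/(L_i+R_i) = L_i/(L_i+R_i)$ on edge $e_i$, and $\sum_{p} (1 - \frac12\vb(p)) = v - \frac12 \sum_p \vb(p) = v - e$ since each edge contributes $2$ to the total valence. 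So the total mass is $v - e + \sum_i \frac{L_i}{L_i+R_i}$, and one needs $\sum_i \frac{L_i}{L_i+R_i} = e - v + 1 = g$. This is a standard identity: $\frac{L_i}{L_i+R_i}$ is exactly the probability that edge $e_i$ lies in a random spanning tree (equivalently $1$ minus the current through $e_i$ when unit current is driven across $e_i$), and the expected number of edges in a spanning tree is $v-1$, so $\sum_i (1 - \frac{L_i}{L_i+R_i}) = v-1$, giving the claim. I would cite this via Kirchhoff's theorem / the matrix-tree theorem, or via the resistance-as-probability interpretation in circuit theory already invoked in the paper.

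For the second and harder part, I would show $\Delta_x j_\nu(x,x)$ controls constancy. Recall that for a mass-one measure $\mu$, the function $x \mapsto j_\mu(x,x)$ satisfies, by applying $\Delta$ and using \propref{prop cordjpq} together with the defining properties of the Laplacian and the measure $\mu$, a relation of the form $\Delta_x\big(\tfrac12 j_\mu(x,x)\big) = \mu(x) - \mucan(x)$, or more directly one uses that $j_\mu(x,x) = \int_\ga j_\zeta(x,x)\,d\mu(\zeta) = \int_\ga r(x,\zeta)\,d\mu(\zeta)$ and that $\Delta_x r(x,\zeta) = \dd{\zeta}(x) - \mucan$-type identities. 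The cleanest route: it suffices to show that the proposed $\nu$ satisfies $\Delta_x\big( j_\nu(x,x) \big) = 0$ as a measure away from being the zero measure—i.e., that $j_\nu(x,x)$ is harmonic—hence constant on the connected graph $\ga$. Since $r(x,y) = j_y(x,x)$ and $\Delta_x r(x,y) = \dd{y}(x) - \dd{}(\text{something})$, integrating against $d\nu(y)$ and using $\nu(\ga) = 1$ makes the singular parts telescope. The key computation is evaluating $\Delta_x r(x,y)$ edgewise: on the interior of an edge $e_i$ of length $L_i$ with endpoints contributing resistance $R_i$ in $\ga - e_i$, parallel reduction gives $r(x,y)$ as an explicit piecewise-quadratic-free (piecewise linear) function whose second derivative is constant, and the coefficient works out to exactly $2/(L_i+R_i)$, matching the continuous part of $\nu$; the vertex terms then match the Dirac part via the valence bookkeeping.

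I expect the main obstacle to be the edgewise Laplacian computation of $r(x,y)$, in particular handling the dependence on where $y$ sits relative to the edge $e_i$ containing $x$ and correctly assembling the delta-masses at vertices from the directional-derivative jumps of $x \mapsto r(x,y)$ across vertices, integrated over $y \in \ga$ against $\nu$. One must be careful that the formula is independent of the chosen vertex set (adding a valence-$2$ vertex $p$ contributes $1 - \frac12\cdot 2 = 0$ to the Dirac part and splits one edge into two, and one should check $\frac{L_i'}{L_i' + R_i'} + \frac{L_i''}{L_i''+R_i''}$ reassembles correctly—this is really the series-reduction identity). Rather than reprove \thmref{thmCanonicalMeasureFormula} from scratch, since it is quoted from \cite[Theorem 2.11]{CR}, the honest ``proof'' in this paper is simply the citation; if a self-contained argument is wanted, the route above via ``$j_\nu(x,x)$ is harmonic on a connected graph, hence constant, and $\nu(\ga)=1$'' is the one I would write out, with the spanning-tree identity $\sum_i L_i/(L_i+R_i) = g$ as the one genuinely combinatorial input.
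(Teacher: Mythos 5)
The paper offers no proof of this statement at all: it is imported verbatim from \cite[Theorem 2.11]{CR}, and you correctly identify that the honest ``proof'' in this paper is the citation. So there is nothing in the paper to compare your sketch against line by line; what you propose is the natural independent verification (check total mass $1$, then check that $j_\nu(x,x)$ is constant by computing its Laplacian edgewise and at the vertices), and its one genuinely combinatorial input, $\sum_{e_i}\frac{\li}{\li+\ri}=g$, is itself quoted later in the paper as \eqnref{eqn genus}, again from \cite{CR}, so you could simply invoke it rather than rederive it.

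Two slips in the sketch are worth fixing if you ever write it out. First, the probabilistic gloss is inverted: the probability that $e_i$ lies in the (length-weighted) random spanning tree equals the current through $e_i$ when unit current is driven between its endpoints, namely $\frac{\ri}{\li+\ri}$, not $\frac{\li}{\li+\ri}$; the identity you then display, $\sum_i\bigl(1-\frac{\li}{\li+\ri}\bigr)=v-1$, is the correct one, so your conclusion survives, but it does not follow from the sentence preceding it. Second, $x\mapsto r(x,y)$ restricted to an edge is quadratic (compare the parallel-reduction formula preceding \eqnref{eqnproptau1}), not ``piecewise linear''; its constant second derivative $-\frac{2}{\li+\ri}$ is precisely what produces the continuous part $\frac{dx}{\li+\ri}$, so the phrase should be corrected. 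Finally, note that the part you flag as the ``main obstacle'' --- assembling the Dirac masses at the vertices from the jumps of the outgoing derivatives of $\int_{\ga} r(x,\zeta)\,d\nu(\zeta)$, handling the extra $\delta_\zeta$-term when $\zeta$ lies on the edge containing $x$, and checking that everything cancels against the coefficients $1-\frac{1}{2}\vb(p)$ --- is the entire analytic content of the theorem; as written, your proposal records a plan for this but does not carry it out, so on its own it is an outline rather than a proof. Since the paper treats the result as known, presenting the citation (with, if desired, this verification plan as a remark) is the appropriate course.
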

\begin{corollary}\cite[Corollary 14.2]{BRh}
The measure $\mu_\can$ is the unique measure $\nu$ of total mass 1
on $\Gamma$ maximizing the integral
$\iint_{\Gamma \times \Gamma} r(x,y) \,
                   d\nu(x) \overline{d\nu(y)}.$
\end{corollary}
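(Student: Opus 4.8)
The plan is to show directly that for every (complex) Borel measure $\nu$ on $\ga$ with $\nu(\ga)=1$ and $|\nu|(\ga)<\infty$ one has
\[
\iint_{\ga \times \ga} r(x,y)\,d\nu(x)\overline{d\nu(y)} \ \le \ \iint_{\ga \times \ga} r(x,y)\,d\mucan(x)\overline{d\mucan(y)},
\]
with equality only for $\nu=\mucan$. Writing $\nu=\mucan+\sigma$, so that $\sigma(\ga)=0$, I would first record two elementary consequences of the facts recalled above: \emph{(i)} since $j_\zeta(x,x)=r(\zeta,x)$, integrating in $\zeta$ gives $\int_\ga r(x,\zeta)\,d\mu(\zeta)=j_\mu(x,x)$ for any mass-one measure $\mu$, so in particular $\int_\ga r(x,\zeta)\,d\mucan(\zeta)=2\tg$ is constant in $x$; and \emph{(ii)} the standard identity $\jj{x_0}{x}{y}=\tfrac12\big(r(x,x_0)+r(y,x_0)-r(x,y)\big)$, valid for any basepoint $x_0\in\ga$ (see \cite{CR}, \cite{BRh}).

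Next I would expand the energy of $\nu$ bilinearly; using \emph{(i)} together with $\sigma(\ga)=0$ to kill the two cross terms gives
\[
\iint_{\ga\times\ga} r(x,y)\,d\nu(x)\overline{d\nu(y)} \ = \ \iint_{\ga\times\ga} r(x,y)\,d\mucan(x)\overline{d\mucan(y)} \ + \ \iint_{\ga\times\ga} r(x,y)\,d\sigma(x)\overline{d\sigma(y)},
\]
since, for instance, $\iint r(x,y)\,d\mucan(x)\overline{d\sigma(y)}=\int\big(\int r(x,y)\,d\mucan(x)\big)\overline{d\sigma(y)}=2\tg\,\overline{\sigma(\ga)}=0$. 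Thus it suffices to prove that the ``energy of $\sigma$'' is $\le 0$ and vanishes exactly when $\sigma=0$.

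To estimate that energy I would fix a basepoint $x_0$ and set $h(x):=\int_\ga \jj{x_0}{x}{y}\,d\sigma(y)$. Differentiating under the integral sign and applying \propref{prop cordjpq} (so $\DD{x}\jj{x_0}{x}{y}=\dd{y}(x)-\dd{x_0}(x)$), together with $\sigma(\ga)=0$, yields $\DD{x}h=\sigma$; one also checks $h\in\Zh(\ga)$. By \emph{(ii)}, $2h(x)=C-\int_\ga r(x,y)\,d\sigma(y)$ with $C=\int_\ga r(y,x_0)\,d\sigma(y)$ constant, hence $\int_\ga r(x,y)\,d\sigma(y)=C-2h(x)$. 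Substituting and using $\sigma(\ga)=0$, then Green's identity (\propref{SelfAdjointProp}) with $\sigma=\DD{x}h$,
\[
\iint_{\ga\times\ga} r(x,y)\,d\sigma(x)\overline{d\sigma(y)} \ = \ -2\int_\ga h\,\overline{d\sigma} \ = \ -2\int_\ga |h'(x)|^2\,dx \ \le\ 0,
\]
with equality iff $h'\equiv 0$, i.e. $h$ constant, i.e. $\sigma=\DD{x}h=0$, i.e. $\nu=\mucan$. Combined with the previous display this proves the corollary; it also shows $\iint r\,d\mucan\overline{d\mucan}=2\tg$.

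The step requiring the most care will be the regularity claim above: that $h(x)=\int_\ga \jj{x_0}{x}{y}\,d\sigma(y)$ lies in $\Zh(\ga)$ and that $\DD{x}$ may be moved inside the integral so that $\DD{x}h=\sigma$. This is a Fubini-type argument resting on the joint continuity of $j_z(x,y)$ in its three variables and on the explicit form of $\DD{x}$: the continuous part of $\DD{x}h$ has $L^1$ density precisely because $|\sigma|(\ga)<\infty$, and the atomic part of $\sigma$ can be absorbed by enlarging the vertex set, so $h$ is continuous, piecewise $\cC^2$, with $h''\in L^1(\ga)$. Once this is granted the remaining manipulations are routine; and if one wishes to maximize only over positive probability measures nothing changes, since $\sigma=\nu-\mucan$ is still a finite signed measure of total mass $0$.
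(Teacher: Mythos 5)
The paper does not actually prove this corollary; it is quoted verbatim from \cite[Corollary 14.2]{BRh}, so there is no in-paper argument to compare against. Your outline is the natural potential-theoretic proof, and its skeleton is sound: writing $\nu=\mucan+\sigma$ with $\sigma(\ga)=0$, killing the cross terms because $\int_\ga r(x,\zeta)\,d\mucan(\zeta)=j_{\mucan}(x,x)=2\tg$ is constant in $x$, reducing to the sign of the energy of $\sigma$, and evaluating that energy as $-2\int_\ga|h'(x)|^2dx$ via the identity $j_{x_0}(x,y)=\tfrac12\big(r(x,x_0)+r(y,x_0)-r(x,y)\big)$ (which follows from \eqnref{eqn1.1}), \propref{prop cordjpq}, and \propref{prop Greens identity}. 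All of these algebraic steps are correct, and they also give the value $2\tg$ for the maximal energy.

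The genuine gap is exactly at the step you flagged, and your proposed repair does not work at the stated level of generality. The competitors are arbitrary measures of total mass $1$ with $|\nu|(\ga)<\infty$, so $\sigma=\nu-\mucan$ may have infinitely many atoms or a nonzero singular continuous part. A vertex set $\vv{\ga}$ is by definition finite, so you cannot absorb infinitely many atoms into it; and the claim that the continuous part of $\DD{x}h$ has an $L^1$ density ``because $|\sigma|(\ga)<\infty$'' is false, since finite total variation does not imply absolute continuity (take $\sigma$ to be a Cantor-type probability measure on one edge minus the normalized length measure $dx/\ell(\ga)$). For such $\sigma$ the potential $h(x)=\int_\ga j_{x_0}(x,y)\,d\sigma(y)$ is continuous with $h'$ of bounded variation, but it is not piecewise $\cC^2$ with $h''\in L^1(\ga)$; hence $h\notin\Zh(\ga)$, the equation $\DD{x}h=\sigma$ is not meaningful with the Laplacian as defined in this paper, and \propref{prop Greens identity}, stated only for $\Zh(\ga)$, cannot be invoked. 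To close the argument you must either (a) restrict to $\sigma$ with $L^1$ density plus finitely many atoms and then treat general $\nu$ separately --- a weak-approximation argument gives the inequality but not, by itself, the uniqueness of the maximizer, which requires strict negativity of the mass-zero energy for $\sigma\neq 0$ --- or (b) invoke the extension of the measure-valued Laplacian and of Green's identity to the class $\BDV(\ga)$ of functions of bounded differential variation developed in \cite{BRh}; option (b) is precisely the machinery on which the original proof of \cite[Corollary 14.2]{BRh} rests, so the corollary cannot be reproved in full generality using only the tools quoted in this paper.
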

The following theorem expresses $\mu_{can}$ in terms of the resistance function:
\begin{theorem}\cite[Theorem 14.1]{BRh}\label{ncan}
The measure $\nn{can}(x)=\frac{1}{2}\DD{x}r(x,p)+\dd{p}(x)$ is of
total mass $1$ on $\ga$, which is independent of $p \in \ga$.
\end{theorem}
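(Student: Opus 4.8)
The statement has two parts: that $\nn{can}$ has total mass $1$, and that $\tfrac{1}{2}\DD{x}r(x,p)+\dd{p}(x)$ does not depend on the choice of $p$. The plan is to handle these separately, the first by Green's identity and the second by the standard three-point relation between the resistance function and the voltage function.

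For the total mass: fix $p\in\ga$. The function $x\mapsto r(x,p)$ belongs to $\Zh(\ga)$ — it is continuous on $\ga$, is $\cC^{2}$ off the vertex set once $p$ is included among the vertices, and has second derivative in $L^{1}(\ga)$. Hence one may apply \propref{prop Greens identity} with $f(x)=r(x,p)$ and $g\equiv 1$; since $g'\equiv 0$, the right-hand side of Green's identity vanishes, so $\int_{\ga}\DD{x}r(x,p)=0$. Consequently $\int_{\ga}\nn{can}(x)=\tfrac{1}{2}\int_{\ga}\DD{x}r(x,p)+\int_{\ga}\dd{p}(x)=0+1=1$.

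For independence of $p$: recall the standard identity $\jj{q}{x}{p}=\tfrac{1}{2}\bigl(r(x,q)+r(p,q)-r(x,p)\bigr)$ (see \cite{CR}, \cite{BRh}; it follows from \propref{prop cordjpq} together with the normalizations $j_{x}(x,z)=0$ and $j_{x}(y,y)=r(x,y)$, by the uniqueness of a function with prescribed Laplacian and a prescribed value at one point). Rewriting this as $r(x,p)=r(x,q)+r(p,q)-2\jj{q}{x}{p}$, applying $\DD{x}$, and using that $r(p,q)$ is constant in $x$ while $\DD{x}\jj{q}{x}{p}=\dd{p}(x)-\dd{q}(x)$ by \propref{prop cordjpq}, we obtain
\[
\DD{x}r(x,p)=\DD{x}r(x,q)-2\dd{p}(x)+2\dd{q}(x).
\]
Dividing by $2$ and adding $\dd{p}(x)$ to both sides yields $\tfrac{1}{2}\DD{x}r(x,p)+\dd{p}(x)=\tfrac{1}{2}\DD{x}r(x,q)+\dd{q}(x)$, which is exactly the asserted independence of the base point.

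The only genuinely delicate points are bookkeeping ones: verifying that $r(\cdot,p)\in\Zh(\ga)$ so that \propref{prop Greens identity} and \propref{prop cordjpq} legitimately apply, and keeping the signs straight in the three-point identity and in \propref{prop cordjpq}. I do not anticipate a substantive obstacle; as an alternative one could instead check the formula directly against the explicit expression for $\mu_{can}$ in \thmref{thmCanonicalMeasureFormula}, but the argument above is shorter and makes the independence of $p$ transparent.
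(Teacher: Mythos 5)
The paper itself gives no proof of this statement: it is quoted from \cite[Theorem 14.1]{BRh}, so there is no internal argument to compare yours against, and a self-contained proof is welcome. What you do prove is correct: Green's identity (\propref{prop Greens identity}) with $g\equiv 1$ gives $\int_{\ga}\DD{x}r(x,p)=0$ (and $r(x,p)=\jj{x}{p}{p}\in\Zh(\ga)$, as the paper notes), so the measure has total mass $1$; and the three-point identity $\jj{q}{x}{p}=\tfrac{1}{2}\bigl(r(x,q)+r(p,q)-r(x,p)\bigr)$ together with \propref{prop cordjpq} does show that $\tfrac{1}{2}\DD{x}r(x,p)+\dd{p}(x)$ does not depend on $p$. (For that identity, cite \eqnref{eqn1.1}, from which it follows by adding and subtracting the three displayed relations; your parenthetical justification via ``uniqueness of a function with prescribed Laplacian'' is shaky, since computing the Laplacian of the right-hand side would already require knowing $\DD{x}r(x,p)$ and $\DD{x}r(x,q)$.)

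The genuine gap is that you have only shown that the formula defines \emph{some} measure of total mass $1$ independent of $p$; you have not shown that it equals $\mu_{can}$, the canonical measure already defined in the paper as the unique measure $\mu$ of total mass $1$ for which $j_{\mu}(x,x)$ is constant, with explicit formula in \thmref{thmCanonicalMeasureFormula}. That identification is the actual content of \cite[Theorem 14.1]{BRh} and is precisely how the paper uses the statement (for instance in the proof of \thmref{lemmadcanjpq}); mass $1$ and $p$-independence alone do not single out $\mu_{can}$ among all measures of mass $1$. So the verification you set aside as an optional ``alternative'' is in fact the missing step. It can be carried out along the lines of the proof of \propref{proptau}: for $x$ in an edge $e_i$ one has $r(x,p)=\frac{(x+R_{a_i,p})(L_i-x+R_{b_i,p})}{L_i+R_i}+R_{c_i,p}$, so the continuous part of $\tfrac{1}{2}\DD{x}r(x,p)$ is $\frac{dx}{L_i+R_i}$ on each edge, matching \thmref{thmCanonicalMeasureFormula}; one must then also compute $\sum_{\vc \text{ at } q} d_{\vc}r(q,p)$ at each vertex $q$ and check that the resulting point masses, together with your $\dd{p}(x)$, reproduce $\bigl(1-\tfrac{1}{2}\vb(q)\bigr)\dd{q}(x)$. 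That vertex computation is the part your argument never touches, and without it (or an equivalent argument showing $j_{\nu}(x,x)$ is constant for your measure $\nu$) the theorem as stated is not proved.
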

It is shown in \cite{CR} that as a function of three variables, on each edge $j_{x}(p,q)$ is a quadratic function of
$p$, $q$, $x$ and possibly with linear terms in $|x-p|$, $|x-q|$, $|p-q|$ if some of $p$, $q$, $x$ belong to the same edge.
These can be used to show that $\jj{x}{p}{q}$ is differentiable for $x \in \ga \backslash \big( \{p, q \} \cup \vv{\ga}\big)$.
Moreover, we have $\jj{x}{p}{q} \in Zh(\ga)$ for each $p$, $q$ and $x$ in $\ga$.

For any $x$, $p$ and $q$ in $\ga$, we can transform $\ga$ to an
$Y$-shaped graph with the same resistances between $x$, $p$, and
$q$ as in $\ga$ by applying a sequence of circuit reductions. The resulting graph is shown in Figure \ref{fig xpq1new2}, with the corresponding voltage values on each segment.
\begin{figure}
\centering
\includegraphics[scale=0.7]{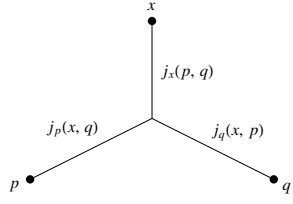} \caption{Circuit reduction with reference to $3$ points $x$, $p$ and $q$.} \label{fig xpq1new2}
\end{figure}
Then by \figref{fig xpq1new2}, we have
\begin{equation}\label{eqn1.1}
\begin{split}
\\r(p,x) = \jj{p}{x}{q}+\jj{x}{p}{q}, \; \, r(q,x) = \jj{q}{x}{p}+\jj{x}{p}{q},
\; \, r(p,q) = \jj{q}{x}{p}+\jj{p}{x}{q},
\end{split}
\end{equation}
so
\begin{equation}\label{eqn1.2}
\begin{split}
\DD{x}r(p,x) = \DD{x}\jj{p}{x}{q} & +\DD{x}\jj{x}{p}{q},  \qquad \quad
\DD{x}r(q,x) =\DD{x}\jj{q}{x}{p}+\DD{x}\jj{x}{p}{q},
\\ &\DD{x}r(p,q) =\DD{x}\jj{q}{x}{p}+\DD{x}\jj{p}{x}{q}=0.
\end{split}
\end{equation}

Using these formulas, we can express $\mu_{can}$ in terms of the voltage function in the following way:
\begin{theorem}\label{lemmadcanjpq}
For any $p,q \in \ga$,
$ \quad 2\nn{can}(x)=\DD{x}\jj{x}{p}{q}+\dd{q}(x)+\dd{p}(x)$.
\end{theorem}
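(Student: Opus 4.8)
The plan is to combine \thmref{ncan}, which gives $\nn{can}(x)=\frac{1}{2}\DD{x}r(x,p)+\dd{p}(x)$, with the decomposition of $r(p,x)$ and $r(q,x)$ from \eqnref{eqn1.1} and the Laplacian identities in \eqnref{eqn1.2}. The idea is to write $\nn{can}$ two ways, once using the base point $p$ and once using the base point $q$, and then average the two expressions so that the mixed term $\DD{x}\jj{x}{p}{q}$ survives symmetrically while the remaining pieces are handled by \propref{prop cordjpq}.

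First I would apply \thmref{ncan} with base point $p$ to get $2\nn{can}(x)=\DD{x}r(x,p)+2\dd{p}(x)$, and with base point $q$ to get $2\nn{can}(x)=\DD{x}r(x,q)+2\dd{q}(x)$; adding these gives $4\nn{can}(x)=\DD{x}r(x,p)+\DD{x}r(x,q)+2\dd{p}(x)+2\dd{q}(x)$. Next I would substitute the first two identities of \eqnref{eqn1.2}, namely $\DD{x}r(p,x)=\DD{x}\jj{p}{x}{q}+\DD{x}\jj{x}{p}{q}$ and $\DD{x}r(q,x)=\DD{x}\jj{q}{x}{p}+\DD{x}\jj{x}{p}{q}$, so that the right side becomes $\DD{x}\jj{p}{x}{q}+\DD{x}\jj{q}{x}{p}+2\DD{x}\jj{x}{p}{q}+2\dd{p}(x)+2\dd{q}(x)$.

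The key cancellation comes from \propref{prop cordjpq}: $\DD{x}\jj{p}{x}{q}=\dd{q}(x)-\dd{p}(x)$, and by the symmetry of the voltage function in its lower two arguments together with the same proposition, $\DD{x}\jj{q}{x}{p}=\DD{x}j_q(x,p)=\dd{p}(x)-\dd{q}(x)$. Hence $\DD{x}\jj{p}{x}{q}+\DD{x}\jj{q}{x}{p}=0$, and the right side collapses to $2\DD{x}\jj{x}{p}{q}+2\dd{p}(x)+2\dd{q}(x)$. Dividing by $2$ yields $2\nn{can}(x)=\DD{x}\jj{x}{p}{q}+\dd{q}(x)+\dd{p}(x)$, as claimed; independence from $p$ and $q$ is automatic since $\nn{can}$ itself is already known to be independent of the chosen point by \thmref{ncan}.

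I do not anticipate a serious obstacle here: the only point requiring a little care is correctly reading off which of $\jj{p}{x}{q}$ and $\jj{q}{x}{p}$ has which sign under \propref{prop cordjpq}, i.e. keeping straight that the subscript is the point where the voltage is measured against while the Dirac masses sit at the two terminal points. Everything else is a bookkeeping exercise with the identities already displayed in the excerpt, so the proof is short.
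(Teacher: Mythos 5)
Your proof is correct and relies on exactly the same ingredients as the paper's one-line argument (\thmref{ncan}, \eqnref{eqn1.2}, and \propref{prop cordjpq}); the only difference is that you symmetrize over the two base points and average, whereas the paper simply substitutes \propref{prop cordjpq} into $\DD{x}r(x,p)=\DD{x}\jj{p}{x}{q}+\DD{x}\jj{x}{p}{q}$ and concludes from \thmref{ncan} with the single base point $p$. The averaging step is harmless but unnecessary.
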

\begin{proof}By \propref{prop cordjpq} and \eqnref{eqn1.2},
\begin{equation}\label{eqn lem can jpq}
\begin{split}
\DD{x}r(x,p) = \DD{x}\jj{x}{p}{q}+\dd{q}(x)-\dd{p}(x).
\end{split}
\end{equation}
Hence, the result follows from \thmref{ncan}.
\end{proof}
Let $e_i \in \ee{\ga}$ be an edge for which $\ga-e_i$ is connected, and let $\li$ be the length of $e_i$.
Suppose $\pp$ and $\qq$ are the end points of $e_i$, and $p \in \ga-e_i$. By applying circuit reductions,
$\ga-e_i$ can be transformed into a $Y$-shaped graph with the same resistances between $\pp$, $\qq$, and $p$ as in $\ga-e_i$.
The resulting graph is shown by the first graph in \figref{fig edgedelete5}, with the corresponding voltage values on each segment, where $\hj{x}{y}{z}$ is the voltage function in $\ga-e_i$. Since $\ga-e_i$ has such a circuit reduction, $\ga$ has the circuit reduction shown in the second graph in \figref{fig edgedelete5}. Throughout this paper, we will use the following notation:
$R_{a_i,p} := \hj{\pp}{p}{\qq}$, $R_{b_i,p} := \hj{\qq}{\pp}{p}$, $R_{c_i,p} := \hj{p}{\pp}{\qq}$, and $\ri$ is the resistance
between $\pp$ and $\qq$ in $\ga-e_i$. Note that $R_{a_i,p}+R_{b_i,p}=\ri$ for each $p \in \ga$. When $\ga-e_i$ is not connected, we set $R_{b_i,p}=\ri=\infty$ and $R_{a_i,p}=0$ if $p$ belongs to the component of $\ga-e_i$
containing $\pp$, and we set $R_{a_i,p}=\ri=\infty$ and $R_{b_i,p}=0$ if $p$ belongs to the component of $\ga-e_i$
containing $\qq$.
\begin{figure}
\centering
\includegraphics[scale=0.7]{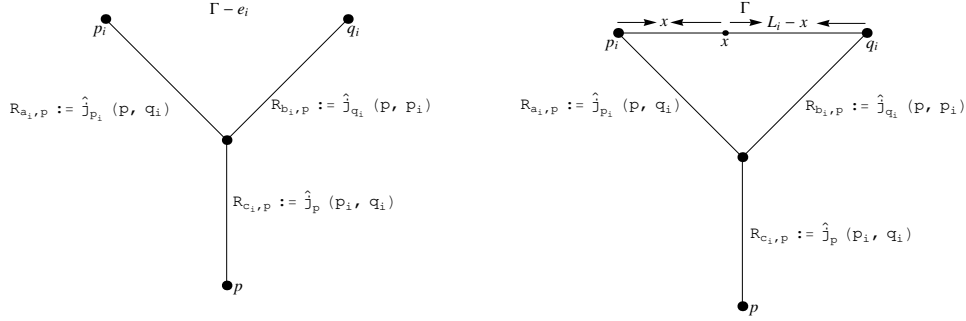} \caption{Circuit reduction of $\ga-e_i$ with reference to $\pp$, $\qq$ and $p$.} \label{fig edgedelete5}
\end{figure}

Another description of the tau constant is given below.
\begin{proposition}\cite{REU} \label{proptau}
Let $\Gamma$ be a metrized graph, and let $L_i$ be the length of the
edge $e_{i}$, for $i \in \{1,2, \dots, e\}$.
Using the notation above,
if we fix a vertex $p$ we have
\[
\ta{\ga} = \frac{1}{12} \sum_{e_i \in \ga} \left(
\frac{\li^3+3\li(R_{a_{i},p}-R_{b_{i},p})^2}{(\li+\ri)^2}\right ).
\]
Here, if $\ga-e_i$ is not connected, i.e. $\ri$ is infinite, the
summand corresponding to $e_i$ should be replaced by $3\li$, its limit as $\ri \longrightarrow
\infty$.
\end{proposition}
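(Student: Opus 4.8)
The plan is to start from \lemref{lemtauformula}, which gives
\[
\ta{\ga} = \frac{1}{4}\int_{\ga}\Big(\frac{\partial}{\partial x} r(x,p)\Big)^2\, dx
\]
for any fixed vertex $p$, and to compute the integral edge by edge. So I would fix a vertex $p$ and an edge $e_i$ with endpoints $\pp$ and $\qq$, parametrize $e_i$ by arclength $x \in [0,\li]$ (say with $x=0$ at $\pp$ and $x=\li$ at $\qq$), and find an explicit formula for $r(x,p)$ restricted to the interior of $e_i$. The natural tool is the circuit reduction shown in the second graph of \figref{fig edgedelete5}: after reducing $\ga - e_i$ to a $Y$-shape with reference points $\pp$, $\qq$, $p$, the point $x$ on $e_i$ sees a simple finite network, and $r(x,p)$ can be written down by elementary series/parallel reductions in terms of $x$, $\li$, $R_{a_i,p}$, $R_{b_i,p}$, and $\ri = R_{a_i,p}+R_{b_i,p}$. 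Concretely, the portion of $e_i$ on each side of $x$ is a resistor in series with the corresponding $R_{a_i,p}$ or $R_{b_i,p}$ arm; these two paths from $x$ to $p$ run in parallel, so one gets a closed-form rational expression for $r(x,p)$ as a function of $x$.

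Next I would differentiate that expression with respect to $x$ on the interior of $e_i$. After simplification using $R_{a_i,p}+R_{b_i,p}=\ri$, the derivative $\frac{\partial}{\partial x}r(x,p)$ should turn out to be an affine (degree $\le 1$) function of $x$ on $[0,\li]$ whose coefficients involve $\li$, $\ri$, and $R_{a_i,p}-R_{b_i,p}$; squaring it gives a quadratic in $x$, and $\int_0^{\li}$ of that quadratic is elementary. Summing the contributions $\frac{1}{4}\int_0^{\li}\big(\partial_x r(x,p)\big)^2\,dx$ over all edges $e_i$ and collecting terms should produce exactly
\[
\frac{1}{12}\sum_{e_i}\frac{\li^3 + 3\li(R_{a_i,p}-R_{b_i,p})^2}{(\li+\ri)^2}.
\]
For the degenerate case where $\ga-e_i$ is disconnected, one uses the convention introduced just before the statement ($R_{a_i,p}=0$, $R_{b_i,p}=\ri=\infty$, or vice versa) and checks that the summand tends to $3\li$; alternatively, such a $e_i$ is a bridge, $r(x,p)$ is affine with slope $\pm 1$ along it, and $\frac{1}{4}\int_0^{\li} 1\, dx \cdot 4 \cdot \tfrac{1}{?}$—more simply, $\frac14\int_0^{\li}1\,dx$ scaled appropriately gives $\tfrac{3\li}{12}$, consistent with the limit.

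The main obstacle is purely the bookkeeping in the first two steps: getting the correct closed form for $r(x,p)$ on the interior of $e_i$ via the circuit reduction of \figref{fig edgedelete5}, being careful about which arm ($R_{a_i,p}$ or $R_{b_i,p}$) attaches to which endpoint of $e_i$ and how $x$ splits the edge, and then not dropping terms when differentiating and squaring the resulting rational function. One subtlety worth flagging: a priori $r(x,p)$ might also involve $R_{c_i,p}$, but since $R_{c_i,p}$ is the common "stub" from the $Y$-center to $p$ and adds the same constant to every path from $x$ to $p$, it cancels upon differentiating in $x$, which is why only $R_{a_i,p}$, $R_{b_i,p}$ appear in the final formula—this should be checked explicitly. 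Everything after the differentiation is a routine polynomial integration and resummation, and the independence of the answer from the choice of $p$ is automatic since the left-hand side $\ta{\ga}$ does not depend on $p$.
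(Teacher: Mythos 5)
Your proposal is correct and follows essentially the same route as the paper's proof: fix $p$, use \lemref{lemtauformula}, reduce $\ga-e_i$ as in \figref{fig edgedelete5} so that $r(x,p)=\frac{(x+R_{a_i,p})(\li-x+R_{b_i,p})}{\li+\ri}+R_{c_i,p}$ on $e_i$ (with $R_{c_i,p}$ indeed disappearing upon differentiation), then integrate the square of the affine derivative edge by edge, treating bridges via slope $\pm1$ exactly as the paper does with its $\epsilon=\pm1$ case. No gaps beyond the routine computation you already describe.
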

\begin{proof}
We start by fixing a vertex point $p \in \vv{\ga}$. By applying
circuit reductions, we can transform $\ga$ to the graph as in the second graph
in \figref{fig edgedelete5} when $x \in e_i$. Then, applying parallel reduction gives
$$r(x,p)=\frac{(x+R_{a_{i},p})(\li-x+R_{b_{i},p})}{\li+\ri}+R_{c_{i},p}.$$
Thus,
\begin{equation}\label{eqnproptau1}
\ddx r(x,p)=\begin{cases}\frac{\li-2x+R_{b_{i},p}-R_{a_{i},p}}{\li+\ri}, &  \, \text{if } \, \ga-e_i \, \text{is connected}, \\
\epsilon, &  \,\text{if } \, \ga-e_i \, \text{is disconnected},
\end{cases}
\end{equation}
where $\epsilon$ is $+1$ or $-1$, depending on which component of $\ga-e_i$ the point $p$ belongs to.

By \lemref{lemtauformula},
\begin{equation}\label{eqnproptau2}
\begin{split}
\tg =\frac{1}{4}\int_{\ga}\big(\ddx r(x,p)\big)^2dx
=\frac{1}{4}\sum_{e_i \in \, \ee{\ga}}\int_{e_i}\big(\ddx
r(x,p)\big)^2dx.
\end{split}
\end{equation}
Computing the integral after substituting \eqnref{eqnproptau1} into
\eqnref{eqnproptau2} gives the result.
\end{proof}
%

Chinburg and Rumely showed in \cite[page 26]{CR} that
\begin{equation}\label{eqn genus}
\sum_{e_i \in \ee{\ga}}\frac{\li}{\li +\ri}=g, \quad \text{equivalently } \sum_{e_i \in \ee{\ga}}\frac{\ri}{\li +\ri}=v-1.
\end{equation}
\begin{remark}\noindent{\bf Valence Property of $\tg$}\label{remvalence}
Let $\ga$ be any metrized graph with resistance function $r(x,y)$.
The formula for $\tg$ given in \propref{proptau} is independent of the chosen
point $p \in \vv{\ga}$, where $\vv{\ga}$ is the specified vertex set. In particular, enlarging $\vv{\ga}$ by including points $p \in \ga$ with $\va(p)=2$ does not change $\tg$. Thus, $\tg$ depends only
on the topology and the edge length distribution of the metrized
graph $\ga$.
\end{remark}
Let $\ga$ be a metrized graph with $e$ edges. Then
$\displaystyle \sum_{p \in \vv{\ga}} \va(p) = 2e$.
This is the ``Handshaking Lemma" of graph theory.
\begin{remark}\label{rem2term}
By \propref{proptau}, for any $p$ and $q$ in $\vv{\ga}$,
 $$\sum_{e_i \in \, \ee{\ga}}\frac{\li(R_{a_{i},p}-R_{b_{i},p})^2}{(\li+\ri)^2}
=\sum_{e_i \in \, \ee{\ga}}\frac{\li(R_{a_{i},q}-R_{b_{i},q})^2}{(\li+\ri)^2}.$$
\end{remark}
Let $\ga$ be a graph and let $p \in \vv{\ga}$. If a vertex $p$ is an end point of an edge $e_i$, then we write $e_i \sim p$. Since one of $R_{a_{i},p}$ and $R_{b_{i},p}$ is $0$ and the other is $\ri$
for every edge $e_i \sim p$,
\begin{equation}\label{eqn2term1}
\sum_{e_i \in \, \ee{\ga}}\frac{\li(R_{a_{i},p}-R_{b_{i},p})^2}{(\li+\ri)^2}
=\sum_{\substack{e_i \sim p\\ e_i \in \, \ee{\ga}}}\frac{\li \ri^2}{(\li+\ri)^2}
+\sum_{\substack{e_i \not\sim p\\ e_i \in \, \ee{\ga}}}\frac{\li(R_{a_{i},p}-R_{b_{i},p})^2}{(\li+\ri)^2}.
\end{equation}
\begin{lemma}\label{lem2term}
Let $\ga$ be a graph and $p \in \vv{\ga}$.
Then
\begin{equation*}
\begin{split}
\sum_{e_i \in \, \ee{\ga}}\frac{\li(R_{a_{i},p}-R_{b_{i},p})^2}{(\li+\ri)^2}
= \frac{2}{v}\sum_{e_i \in \, \ee{\ga}}\frac{\li\ri^2}{(\li+\ri)^2}
+ \frac{1}{v}\sum_{p \in \, \vv{\ga}}\Bigg(\sum_{\substack{e_i \not\sim p\\ e_i \in \,
\ee{\ga}}}\frac{\li(R_{a_{i},p}-R_{b_{i},p})^2}{(\li+\ri)^2} \Bigg).
\end{split}
\end{equation*}
\end{lemma}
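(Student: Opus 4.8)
The plan is to deduce the identity purely from \remref{rem2term} and \eqnref{eqn2term1}, together with a vertex--edge incidence count. Write
\[
S \ := \ \sum_{e_i \in \, \ee{\ga}}\frac{\li(R_{a_{i},p}-R_{b_{i},p})^2}{(\li+\ri)^2},
\]
which by \remref{rem2term} is independent of the choice of $p \in \vv{\ga}$. The left-hand side of \eqnref{eqn2term1} equals $S$ for every $p$, so averaging that identity over all $v$ vertices $p \in \vv{\ga}$ yields
\[
S \ = \ \frac{1}{v}\sum_{p \in \, \vv{\ga}}\left( \sum_{\substack{e_i \sim p\\ e_i \in \, \ee{\ga}}}\frac{\li \ri^2}{(\li+\ri)^2} \ + \ \sum_{\substack{e_i \not\sim p\\ e_i \in \, \ee{\ga}}}\frac{\li(R_{a_{i},p}-R_{b_{i},p})^2}{(\li+\ri)^2}\right).
\]

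The next step is to interchange the order of summation in the first double sum on the right. For a fixed edge $e_i \in \ee{\ga}$ that is not a self-loop, the relation $e_i \sim p$ holds for exactly the two endpoints of $e_i$, so the term $\frac{\li \ri^2}{(\li+\ri)^2}$ is counted twice; hence
\[
\sum_{p \in \, \vv{\ga}} \ \sum_{\substack{e_i \sim p\\ e_i \in \, \ee{\ga}}}\frac{\li \ri^2}{(\li+\ri)^2} \ = \ 2\sum_{e_i \in \, \ee{\ga}}\frac{\li \ri^2}{(\li+\ri)^2}.
\]
Substituting this back into the displayed formula gives the claimed identity, the second double sum being carried over unchanged.

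It remains only to check the degenerate cases, which I expect to be the sole (and minor) obstacle. If $e_i$ is a self-loop at a vertex $p$, its endpoints coincide, so $\ri = 0$ and the summand $\frac{\li \ri^2}{(\li+\ri)^2}$ vanishes; counting it once rather than twice is then harmless, and the incidence count above remains correct. If $\ga - e_i$ is disconnected, then $\ri = \infty$ and, as in \propref{proptau}, all the relevant summands are read as their limits as $\ri \to \infty$: one has $\frac{\li \ri^2}{(\li+\ri)^2} \to \li$, and for every $p$ that is not an endpoint of $e_i$ one also has $\frac{\li(R_{a_{i},p}-R_{b_{i},p})^2}{(\li+\ri)^2} \to \li$, so the interchange of summation and the factor of $2$ go through verbatim in the limit. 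With these remarks in place the algebraic identity is immediate.
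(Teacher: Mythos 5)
Your proof is correct and follows essentially the same route as the paper: average \eqnref{eqn2term1} over all $p \in \vv{\ga}$ using \remref{rem2term}, swap the order of summation so each non-self-loop edge contributes its term $\frac{\li\ri^2}{(\li+\ri)^2}$ twice, and note that self-loop terms vanish since $\ri=0$ there. Your additional remark on the bridge case ($\ri=\infty$, handled by limits) is a harmless elaboration of what the paper leaves implicit.
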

\begin{proof}
By \remref{rem2term}, summing up \eqnref{eqn2term1} over all $p \in \vv{\ga}$ and dividing by $v=\#(\vv{\ga})$ gives
\begin{equation}\label{eqn2term2}
\begin{split}
\sum_{e_i \in \, \ee{\ga}}\frac{\li(R_{a_{i},p}-R_{b_{i},p})^2}{(\li+\ri)^2}
& =\frac{1}{v}\sum_{p \in \, \vv{\ga}}\Bigg(\sum_{\substack{e_i \sim p\\ e_i \in \, \ee{\ga}}}\frac{\li \ri^2}{(\li+\ri)^2}\Bigg)
\\ & \qquad +\frac{1}{v}\sum_{p \in \, \vv{\ga}}\Bigg(\sum_{\substack{e_i \not\sim p\\ e_i \in \,
\ee{\ga}}}\frac{\li(R_{a_{i},p}-R_{b_{i},p})^2}{(\li+\ri)^2} \Bigg).
\end{split}
\end{equation}
Each edge that is not a self loop is incident on exactly two vertices. On the other hand, $\ri=R_{a_{i},p}=R_{b_{i},p}=0$ for an edge $e_i$ that is a self loop.
Thus, the result follows from \eqnref{eqn2term2}.
\end{proof}
It was shown in \cite[Equation 14.3]{BRh} that for a metrized graph
$\Gamma$ with $e$ edges, we have
\begin{equation} \label{FMM1}
\frac{1}{16e} \ell(\Gamma) \ \leq \ \tau(\Gamma) \ \leq \
\frac{1}{4} \ell(\Gamma) \ ,
\end{equation}
with equality in the upper bound if and only if $\Gamma$ is a tree.
However, the lower bound is not sharp, and
Baker and Rumely posed the following lower bound
conjecture:
\begin{conjecture}\cite{BRh}\label{TauBound}
There is a universal constant $C>0$
such that for all metrized graphs $\Gamma$,
$$\tau(\Gamma) \ \geq \ C \cdot \ell(\Gamma)\ .$$
\end{conjecture}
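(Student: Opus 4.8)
This final statement is a conjecture, so rather than a complete proof I will outline the strategy by which I would establish it for the classes of graphs treated in this paper, and indicate precisely where an attack on the full conjecture stalls. After replacing $\ga$ by its normalization $\ga^N$, which scales both $\tg$ and $\elg$ by $\ell(\ga)^{-1}$, it suffices to produce a universal constant $C>0$ with $\tg\geq C$ whenever $\elg=1$.

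\textbf{Reduction to $2$-connected graphs.} If $\ga$ is a tree, then $\tg=\frac14\elg$ by \eqnref{FMM1}, so $C=\frac14$ works. In general I would fix a cut point $p$ and write $\ga=\ga_1\cup\ga_2$ with $\ga_1\cap\ga_2=\{p\}$; applying \lemref{lemtauformula} with $y=p$ and observing that for $x\in\ga_j$ the resistance $r(x,p)$ is computed entirely inside $\ga_j$ (since $p$ separates), one gets $\tg=\tau(\ga_1)+\tau(\ga_2)$, while $\elg=\ell(\ga_1)+\ell(\ga_2)$. Thus a bound for the blocks transfers to $\ga$, and iterating the decomposition into blocks, with bridges (being single edges) contributing $\frac14$ of their length, reduces the conjecture to $2$-connected graphs.

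\textbf{Reduction via $2$-vertex-cuts.} If a $2$-connected $\ga$ has a vertex cut $\{p,q\}$, I would decompose $\ga=\ga_1\cup\ga_2$ along $\{p,q\}$ and invoke the formula for the behavior of $\tg$ under union along two points (developed elsewhere in this paper), which expresses $\tg$ through $\tau(\ga_1)$, $\tau(\ga_2)$ and the resistances $r_{\ga_j}(p,q)$. A direct estimate of that expression yields $\tg\geq C\elg$ once the pieces are controlled, so by induction on the number of edges the conjecture follows for all graphs of vertex connectivity at most $2$, and the remaining case is that of $3$-connected graphs.

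\textbf{The $3$-connected case and the main obstacle.} Here I would start from \propref{proptau}: dropping the nonnegative terms $3\li(R_{a_i,p}-R_{b_i,p})^2$ and using that, when $\ga-e_i$ is connected, $\ri$ is bounded by the length of a simple path joining the endpoints of $e_i$ in $\ga-e_i$, hence $\ri\leq\elg-\li$, one obtains for $\elg=1$
\begin{equation*}
\tg\ \geq\ \frac{1}{12}\sum_{e_i\in\ee{\ga}}\frac{\li^3}{(\li+\ri)^2}\ \geq\ \frac{1}{12}\sum_{e_i\in\ee{\ga}}\li^3\ \geq\ \frac{1}{12\,e^2}
\end{equation*}
by convexity of $t\mapsto t^3$. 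This merely reproduces the non-sharp bound of \eqnref{FMM1}, and improving it is exactly the crux: one needs an upper bound on each $\ri$ that does not deteriorate as the number of edges grows. The plan would be to combine the genus identity $\sum_{e_i}\li/(\li+\ri)=\gga$ from \eqnref{eqn genus} with Menger's theorem — in a $3$-connected graph there are several internally disjoint $\pp$--$\qq$ paths, so $\ri$ is dominated by a parallel combination of path lengths — and feed these into the displayed sum via Cauchy--Schwarz. For highly symmetric families (complete graphs, complete bipartite graphs, multigraphs on two vertices, cycles with chords) every $\ri$ is explicit and the bound can be checked directly. The genuine difficulty, which I do not expect to overcome with the present tools, is the general $3$-connected graph with many edges of tiny length: there is no decomposition to exploit, and controlling all the $\ri$ from above uniformly in the combinatorics would most naturally require a lower bound for $\tg$ in terms of a combinatorial invariant that is itself bounded below — edge connectivity being the natural candidate — which lies beyond the scope of what is proved here.
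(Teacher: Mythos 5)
First, a point of calibration: the statement you address is stated in the paper as a \emph{conjecture} of Baker and Rumely, and the paper does not prove it. It only establishes special cases: trees ($C=\tfrac14$), circle graphs ($C=\tfrac1{12}$), complete graphs (\propref{prop tau formula for complete graph}), graphs with all edge lengths equal and all valences at least $3$ (\thmref{thmeqlength}, giving $C=\tfrac1{108}$ via Cauchy--Schwarz and the genus identity \eqnref{eqn genus}), and graphs in which adjacent vertices are joined by at least two edges (\thmref{thmcorineqsumR4}, $C=\tfrac1{48}$, via Jensen's inequality), after the reduction to bridgeless graphs of \corref{coradd}. So there is no proof in the paper to compare yours with, and you rightly refrain from claiming one; your normalization step and your cut-point reduction are exactly the scale-independence remark, the additive property, and \corref{coradd}.

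The genuine gap is in your two-vertex-cut step. \thmref{thmtwopunion} gives $\tau(\gam1\cup\gam2)=\tau(\gam1)+\tau(\gam2)-\tfrac{r_1(p,q)+r_2(p,q)}{6}+\tfrac{A_{p,q,\gam1}+A_{p,q,\gam2}}{r_1(p,q)+r_2(p,q)}$, and the negative term $-\tfrac{r_1+r_2}{6}$ cannot be absorbed without a lower bound on the $A$-terms, which the paper does not supply; the assertion that ``a direct estimate \dots so by induction the conjecture follows for all graphs of vertex connectivity at most $2$'' is precisely the missing step, deferred to \cite{C2} and \cite{C5}. Indeed \thmref{thm smaller tau} shows the opposite flavor: gluings along two points can push $\tau$ strictly downward, which is why no normalized graph can attain the optimal constant, and why this induction is delicate. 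Your $3$-connected plan also fails as stated: dropping the terms $3\li(R_{a_i,p}-R_{b_i,p})^2$ in \propref{proptau} and estimating $\sum_i \li^3/(\li+\ri)^2$ cannot ever yield a universal constant, because the Diamond Necklace computation (\exref{exDiamondnecklace}) exhibits normalized graphs with $\tau$ near $\tfrac1{12}$ for which $\tfrac1{12}\sum_i \li^3/(\li+\ri)^2$ is arbitrarily small --- the paper states explicitly that this method cannot prove \conjref{TauBound} in general. A smaller quantitative point: your bound $\tau\geq\tfrac1{12}\sum_i\li^3\geq\tfrac1{12e^2}$ is weaker than the known bound $\tfrac{\ell(\Gamma)}{16e}$ of \eqnref{FMM1}, so it does not even recover the state of the art.
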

\begin{remark}\label{rem C=1/108}
As can be seen from the examples and the cases we consider later in this paper, there is good evidence that $C=\frac{1}{108}$.
\end{remark}
\begin{remark}\cite{BRh}\label{rem tau scale-idependence}
If we multiply all lengths on $\Gamma$ by a
positive constant $c$, we obtain a graph $\Gamma'$ of
total length $c \cdot \ell(\Gamma)$. Then $\tau(\Gamma')
= c \cdot \tau(\Gamma)$. This will be called the \textit{scale-independence} of the tau constant.
By this property, to prove \conjref{TauBound}, it is enough to consider metrized graphs with total length $1$.
\end{remark}
The following proposition gives an explicit formula for the tau constant for complete graphs, for which \conjref{TauBound} holds with $C=\frac{23}{500}$.
\begin{proposition}\label{prop tau formula for complete graph}
Let $\ga$ be a complete graph on $v$ vertices with equal edge lengths. Suppose $v \geq 2$. Then we have
\begin{equation}\label{eqn prop tau}
\begin{split}
\tg =\Big( \frac{1}{12}\big(1-\frac{2}{v}\big)^2+\frac{2}{v^3}\Big)\ell (\ga).
\end{split}
\end{equation}
In particular, $\tg \geq \frac{23}{500} \elg$, with equality when $v=5$.
\end{proposition}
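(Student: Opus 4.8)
The plan is to evaluate \propref{proptau} directly for $\Gamma = K_v$, using vertex-transitivity to pick a convenient base point $p$ and the edge-symmetry of $K_v$ to make almost all of the ``cross terms'' $(R_{a_{i},p}-R_{b_{i},p})^2$ vanish, and then to minimize the resulting function of $v$ over integers $v \geq 2$.

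\emph{Setup.} Write $a$ for the common edge length, so $\Gamma$ has $\binom{v}{2}$ edges and $\elg = \tfrac{v(v-1)}{2}a$; in particular $\li = a$ for every edge. When $v = 2$, $\Gamma$ is a single segment (a tree), and both sides of \eqnref{eqn prop tau} equal $\tfrac14 \elg$ by \eqnref{FMM1}, so I may assume $v \geq 3$. Since $K_v$ is vertex-transitive, I fix an arbitrary vertex $p$ and apply \propref{proptau}.

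\emph{The two resistance quantities.} First I need $\ri$, the effective resistance between the endpoints of $e_i$ in $\Gamma - e_i$. The classical fact (proved by a short superposition argument: inject one unit of current at a vertex, withdraw $\tfrac{1}{v-1}$ at each of the others, and symmetrize) is that the effective resistance between any two vertices of $\Gamma$ equals $\tfrac{2a}{v}$; since $\Gamma$ is $\Gamma - e_i$ with $e_i$ (of length $a$) adjoined in parallel between the endpoints of $e_i$, parallel reduction gives $\tfrac{v}{2a} = \tfrac1a + \tfrac1{\ri}$, hence $\ri = \tfrac{2a}{v-2}$ and $\li + \ri = \tfrac{av}{v-2}$ for every edge. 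Next I evaluate $(R_{a_{i},p}-R_{b_{i},p})^2$. If $e_i \sim p$, then one of $R_{a_{i},p}, R_{b_{i},p}$ is $0$ and the other is $\ri$ (the degenerate case of the $Y$-reduction in \figref{fig edgedelete5} when $p$ is an endpoint of $e_i$), so the square equals $\ri^2$. If $e_i \not\sim p$, then $\Gamma - e_i$ admits an automorphism transposing the endpoints of $e_i$ and fixing $p$; this automorphism interchanges $R_{a_{i},p}$ and $R_{b_{i},p}$, so $R_{a_{i},p} = R_{b_{i},p}$ and the square is $0$. Finally, $p$ lies on exactly $v-1$ edges, and $\binom{v-1}{2}$ edges are not incident to $p$.

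\emph{Assembly and minimization.} Feeding this into \propref{proptau},
\[
\tg = \frac{1}{12}\left[ (v-1)\,\frac{a^3 + 3a\,\ri^2}{(\li+\ri)^2} + \binom{v-1}{2}\,\frac{a^3}{(\li+\ri)^2} \right],
\]
and substituting $\ri = \tfrac{2a}{v-2}$, $\li + \ri = \tfrac{av}{v-2}$ and $a = \tfrac{2\elg}{v(v-1)}$, I expect a routine simplification over the common denominator $12v^3$ to give $\tg = \frac{(v-2)^2 v + 24}{12 v^3}\,\elg$, which is precisely $\big(\tfrac{1}{12}(1-\tfrac2v)^2 + \tfrac{2}{v^3}\big)\elg$ (and this also recovers the $v=2$ value). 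For the last claim, set $g(v) = \tfrac{1}{12}(1-\tfrac2v)^2 + \tfrac{2}{v^3}$, so $\tg = g(v)\,\elg$; a short computation gives $g'(v) = \frac{v^2 - 2v - 18}{3v^4}$, which is negative for $2 \leq v < 1 + \sqrt{19}$ and positive for $v > 1 + \sqrt{19} \approx 5.36$. Thus, over the integers, $g$ strictly decreases on $\{2,3,4,5\}$ and strictly increases on $\{6,7,\dots\}$, so its minimum on $\{v \in \ZZ : v \geq 2\}$ is $\min\{g(5),\, g(6)\} = \min\{\tfrac{23}{500},\, \tfrac{5}{108}\} = \tfrac{23}{500}$, attained only at $v = 5$. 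Hence $\tg \geq \tfrac{23}{500}\elg$ with equality exactly when $v = 5$.

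The main obstacle (such as it is) is the evaluation of the cross term $(R_{a_{i},p}-R_{b_{i},p})^2$, specifically its vanishing for edges not incident to $p$; this is where the symmetry of $K_v - e_i$ under swapping the endpoints of the deleted edge does the essential work. Once that is in hand, what remains is the classical resistance computation for $K_v$ and elementary one-variable calculus.
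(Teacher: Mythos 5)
Your proposal is correct and follows essentially the same route as the paper: both evaluate \propref{proptau} at a fixed vertex of the complete graph, use the symmetry swapping the endpoints of a deleted edge to get $R_{a_{i},p}=R_{b_{i},p}$ for edges not incident to $p$ (and the values $0$, $\ri$ for incident ones), and finish with elementary one-variable calculus. The only cosmetic differences are that the paper obtains $\ri=\frac{2\li}{v-2}$ from \eqnref{eqn genus} and organizes the sum via \lemref{lem2term}, whereas you use the classical $K_v$ resistance $\frac{2a}{v}$ plus a parallel reduction and count incident versus non-incident edges directly.
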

\begin{proof}
Let $\ga$ be a complete graph on $v$ vertices. If $v=2$, then $\ga$ contains only one edge $e_1$ of length $L_1$, i.e. $\ga$ is a line segment. In this case, $R_1$ is infinite. Therefore, $\tg=\frac{L_1}{4}$ by \propref{proptau}, which coincides with \eqnref{eqn prop tau}. Suppose $v \geq 3$. Then the valence of any vertex is $v-1$, so by basic graph theory $e=\frac{v(v-1)}{2}$, and $g=\frac{(v-1)(v-2)}{2}$.
Since all edge lengths are equal, $\li=\frac{\ell (\ga)}{e}$ for each edge $e_i \in \ee{\ga}$. By the symmetry of the graph, we have $\ri=R_j$ for any two edges $e_i$ and $e_j$ of $\ga$.
Thus \eqnref{eqn genus} implies that $\ri=\frac{2 \li}{v-2}$ for each edge $e_i$. Moreover, by the symmetry of the graph again, $r(p,q)=\frac{\li \ri}{\li+\ri}$ for all distinct $p$, $q \in \vv{\ga}$. Again by the symmetry and the fact that $R_{a_{i},p}+R_{b_{i},p}=\ri$, we have $R_{a_{i},p}=R_{b_{i},p}=\frac{\ri}{2}$ for each edge $e_i$ with end points different from $p$. Substituting these values into the formula for $\tg$ given in \propref{proptau} and using \lemref{lem2term} gives the equality. The inequality $\tg \geq \frac{23}{500} \elg$ now follows by elementary calculus.
\end{proof}
For a circle graph, \conjref{TauBound} holds with $C=\frac{1}{12}$.
\begin{corollary}\label{cor tau formula for circle}
Let $\ga$ be a circle graph. Then we have
$\tg=\frac{\ell (\ga)}{12}.$
\end{corollary}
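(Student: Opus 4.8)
The plan is to reduce an arbitrary circle graph to the simplest model of $S^1$ and then apply \propref{proptau} directly. By the valence property of the tau constant (\remref{remvalence}), $\tg$ depends only on the topology and the edge-length distribution of $\ga$, and inserting or deleting valence-$2$ vertices changes neither; since any circle graph is homeomorphic to $S^1$, we may therefore take the vertex set to consist of just two points $p$ and $q$, so that $\ga$ has exactly two edges $e_1$ and $e_2$, of lengths $L_1$ and $L_2$ with $L_1+L_2=\elg$, each joining $p$ to $q$.

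Next I would record the resistances needed in \propref{proptau}. Deleting the interior of $e_1$ leaves the single edge $e_2$ joining its endpoints $p$ and $q$, so $R_1=L_2$; symmetrically $R_2=L_1$. Both edges are incident to $p$, so for $i=1,2$ one of $R_{a_{i},p}$, $R_{b_{i},p}$ equals $0$ and the other equals $\ri$, whence $(R_{a_{i},p}-R_{b_{i},p})^2=\ri^2$. Substituting into \propref{proptau} and using the binomial identity $L_1^3+3L_1^2L_2+3L_1L_2^2+L_2^3=(L_1+L_2)^3$ in the numerator gives
\[
\tg=\frac{1}{12}\left(\frac{L_1^3+3L_1L_2^2}{(L_1+L_2)^2}+\frac{L_2^3+3L_2L_1^2}{(L_1+L_2)^2}\right)=\frac{1}{12}\cdot\frac{(L_1+L_2)^3}{(L_1+L_2)^2}=\frac{\elg}{12}.
\]

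There is essentially no obstacle here; the only points deserving a word of care are the appeal to \remref{remvalence} to pass from an arbitrary circle graph (with arbitrarily many valence-$2$ vertices) to the two-edge model, and the observation that the computation above is valid for all $L_1,L_2>0$, not merely when $L_1=L_2$. One could equally well bypass \propref{proptau}: parametrizing $\ga$ by arclength $s\in[0,\elg]$ and using parallel reduction of the two arcs gives $r(x,y)=s(\elg-s)/\elg$ for points at arc-distance $s$, so $\frac{\partial}{\partial x}r(x,y)=(\elg-2s)/\elg$ and \lemref{lemtauformula} yields $\tg=\frac14\int_0^{\elg}\big((\elg-2s)/\elg\big)^2\,ds=\frac{\elg}{12}$. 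Alternatively, the equilateral triangle is the complete graph on $3$ vertices, so \propref{prop tau formula for complete graph} with $v=3$ gives $\big(\tfrac{1}{12}\cdot\tfrac19+\tfrac{2}{27}\big)\elg=\tfrac{\elg}{12}$, and \remref{remvalence} again extends this to all circle graphs.
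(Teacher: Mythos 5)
Your primary argument is correct, and it takes a different route from the paper. The paper treats the circle as the complete graph on $3$ vertices: it invokes \remref{remvalence} to say the edge-length distribution is irrelevant, positions three valence-$2$ vertices equally spaced, and then quotes \propref{prop tau formula for complete graph} with $v=3$ -- essentially your third alternative. You instead reduce (again via \remref{remvalence}) to the two-edge model with vertex set $\{p,q\}$ and feed the explicit data $R_1=L_2$, $R_2=L_1$, $(R_{a_i,p}-R_{b_i,p})^2=\ri^2$ directly into \propref{proptau}, where the binomial identity collapses the sum to $\frac{(L_1+L_2)^3}{12(L_1+L_2)^2}$. Your computation is more self-contained (it does not rely on the complete-graph formula, whose proof itself uses symmetry, \eqnref{eqn genus} and \lemref{lem2term}) and it handles arbitrary $L_1,L_2$ without any equal-spacing normalization; the paper's route buys brevity by recycling a result already established. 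The only point worth stating explicitly in your reduction is that a circle graph may a priori have a single vertex with one self-loop, so "taking the vertex set to be two points" means enlarging the vertex set by valence-$2$ points, which \remref{remvalence} permits (and which also covers discarding any extra valence-$2$ vertices, since both graphs are refinements of the two-point model). Your arclength computation via \lemref{lemtauformula} is also correct and gives an even more elementary second proof.
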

\begin{proof}
A circle graph can be considered as a complete graph on $3$ vertices. The vertices are of valence two, so by the valence property of $\ga$, edge length distribution does not effect the tau constant of $\ga$. If we position the vertices equally spaced on $\ga$, we can apply \propref{prop tau formula for complete graph} with $v=3$.
\end{proof}
The following theorem is frequently needed in computations related to
the tau constant. It is also interesting in its own right.
\begin{theorem}\label{thmjpq2njpq}
For any p, q $\in \ga$ and $-1 < n \in \RR$,
\begin{equation*}
\int_{\ga}(\frac{d}{dx}\jj{p}{x}{q})^2\jj{p}{x}{q}^ndx =
\frac{1}{n+1}r(p,q)^{n+1} .
\end{equation*}
\end{theorem}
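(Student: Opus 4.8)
The plan is to recognize the integrand as a product of two derivatives and to apply Green's identity. Fix $p,q\in\ga$. Neither side of the asserted identity changes if we enlarge the vertex set, so we may assume $p,q\in\vv{\ga}$. Put $g(x):=\jj{p}{x}{q}$. By \propref{prop cordjpq} we have $\DD{x}g(x)=\dd{q}(x)-\dd{p}(x)$; since the continuous part $-g''(x)\,dx$ of $\DD{x}g$ then vanishes on the interior of every edge, $g$ is affine along each edge of $\ga$. We also record $g(p)=\jj{p}{p}{q}=0$ and $g(q)=\jj{p}{q}{q}=r(p,q)$.

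For $n>-1$ set $f(x):=\frac{1}{n+1}g(x)^{n+1}$, which is legitimate since $g\ge 0$. Then $f'(x)=g(x)^{n}g'(x)$, so the integrand is exactly $f'(x)g'(x)$. Granting that $f\in\Zh(\ga)$, Green's identity (\propref{prop Greens identity}) gives
\[
\int_{\ga}\Big(\frac{d}{dx}\jj{p}{x}{q}\Big)^{2}\jj{p}{x}{q}^{n}\,dx=\int_{\ga}f'(x)g'(x)\,dx=\int_{\ga}f(x)\,\DD{x}g(x)=f(q)-f(p)=\frac{1}{n+1}r(p,q)^{n+1},
\]
where the last two equalities use $\DD{x}g=\dd{q}-\dd{p}$ together with $g(q)=r(p,q)$ and $g(p)=0$. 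This is the claim.

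The step I expect to be the real obstacle is the verification that $f\in\Zh(\ga)$, i.e. $f''\in L^{1}(\ga)$: near a point where $g$ vanishes (for instance $p$, or a junction onto a branch carrying no current) $g$ vanishes linearly, so $f''$ blows up like $|x-x_{0}|^{n-1}$, which is integrable only for $n>0$. I would avoid this issue altogether — and simultaneously handle $-1<n\le 0$ — by a direct edge computation. On an edge $e_{i}$ of length $\li$, parametrized so that $g(t)=a+bt$ for $t\in[0,\li]$, one has $\int_{e_{i}}(g')^{2}g^{n}\,dt=b^{2}\int_{0}^{\li}(a+bt)^{n}\,dt=\frac{b}{n+1}\big(g(\li)^{n+1}-g(0)^{n+1}\big)$; this is $0$ when $b=0$, and the integral converges for all $n>-1$ since $g$, being nonnegative and affine, can vanish only at an endpoint. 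Summing over all edges and regrouping the resulting boundary terms by vertex, the total becomes $-\frac{1}{n+1}\sum_{v\in\vv{\ga}}g(v)^{n+1}\sum_{\vc\text{ at }v}d_{\vc}g(v)$; but $g$ is affine on edges and $\DD{x}g=\dd{q}-\dd{p}$, so $\sum_{\vc\text{ at }v}d_{\vc}g(v)$ equals $-1$ at $v=q$, $+1$ at $v=p$, and $0$ otherwise. Hence the sum telescopes to $\frac{1}{n+1}\big(g(q)^{n+1}-g(p)^{n+1}\big)=\frac{1}{n+1}r(p,q)^{n+1}$, as desired.
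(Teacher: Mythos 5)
Your proposal is correct, and its second half takes a genuinely different (and in fact more careful) route than the paper. The paper's own proof is exactly your first paragraph: it rewrites the integrand as $\tfrac{1}{n+1}\,\tfrac{d}{dx}\jj{p}{x}{q}\cdot\tfrac{d}{dx}\big(\jj{p}{x}{q}^{n+1}\big)$, asserts that $\jj{p}{x}{q}^{n+1}\in\Zh(\ga)$ for all $n>-1$, and then applies Green's identity (\propref{prop Greens identity}) together with \propref{prop cordjpq}. The obstacle you flag is real: writing $g(x):=\jj{p}{x}{q}$, the directional derivatives of $g$ at $p$ sum to $+1$ (this is the coefficient of $\dd{p}$ in \propref{prop cordjpq}), so $g$ vanishes linearly along at least one branch at $p$, whence $(g^{n+1})''$ behaves like a constant times $|x-p|^{n-1}$ there and lies in $L^{1}$ only for $n\geq 0$; thus the membership claim $g^{n+1}\in\Zh(\ga)$ is justified only for $n\geq 0$, a range that does cover the cases $n=0,1,2$ actually used later in \corref{corjrpq}. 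Your edge-by-edge computation repairs this: it uses only that $g$ is affine on each edge (after putting $p,q$ into the vertex set) and that the vertex sums of directional derivatives of $g$ are $+1$ at $p$, $-1$ at $q$, and $0$ elsewhere, and the telescoping you carry out amounts to proving Green's identity by hand for this particular pair of functions, valid on the whole range $n>-1$. So the paper's argument buys brevity and suffices for its later applications, while yours covers the full stated range, is elementary, and in effect corrects the paper's $\Zh(\ga)$ assertion; the only costs are the explicit bookkeeping over edges and the (harmless, already implicit in the statement of the theorem for $n<0$) convention that the integrand is read as $0$ on any edge where $g'\equiv 0$.
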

\begin{proof}
Note that $\jj{p}{x}{q}^{n+1} \in \Zh(\Gamma)$ when $-1 < n \in \RR$.
\begin{equation*}
\begin{split}
(n+1)\int_{\ga}(\frac{d}{dx}\jj{p}{x}{q})^2\jj{p}{x}{q}^ndx &=
\int_{\ga}\frac{d}{dx}\jj{p}{x}{q}\frac{d}{dx}(\jj{p}{x}{q}^{n+1})dx
\\ &= \int_{\ga}\jj{p}{x}{q}^{n+1}\DD{x}\jj{p}{x}{q}, \quad \text{by
\propref{prop Greens identity}}
\\ & = \int_{\ga}\jj{p}{x}{q}^{n+1}(\dd{q}(x)-\dd{p}(x)).
\end{split}
\end{equation*}
Then the result follows from the properties of the voltage function.
\end{proof}
We isolate the cases $n=0, \, 1$, and $2$, since we will use them later
on.
\begin{corollary}\label{corjrpq}
For any $p$ and $q$ in $\ga$,
$$
\int_{\ga}(\frac{d}{dx}\jj{p}{x}{q})^2dx = r(p,q),
\quad  \quad \int_{\ga}(\frac{d}{dx}\jj{p}{x}{q})^2\jj{p}{x}{q}dx =
\frac{1}{2}r(p,q)^2 \quad \text{and} $$
$$\int_{\ga}(\frac{d}{dx}\jj{p}{x}{q})^2\jj{p}{x}{q}^2dx =
\frac{1}{3}r(p,q)^3.
$$
\end{corollary}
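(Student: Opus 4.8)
The plan is to obtain \corref{corjrpq} as an immediate specialization of \thmref{thmjpq2njpq}, simply by substituting $n = 0$, $n = 1$, and $n = 2$ into the formula
\[
\int_{\ga}(\tfrac{d}{dx}\jj{p}{x}{q})^2\jj{p}{x}{q}^n\,dx = \tfrac{1}{n+1}r(p,q)^{n+1},
\]
noting that each of these values satisfies the hypothesis $-1 < n \in \RR$. So the corollary requires essentially no new argument.

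First, for $n = 0$: the integrand becomes $(\tfrac{d}{dx}\jj{p}{x}{q})^2$ since $\jj{p}{x}{q}^0 = 1$, and the right-hand side is $\tfrac{1}{1}r(p,q)^1 = r(p,q)$, giving the first identity. Next, for $n = 1$: the integrand is $(\tfrac{d}{dx}\jj{p}{x}{q})^2\jj{p}{x}{q}$ and the right-hand side is $\tfrac{1}{2}r(p,q)^2$, giving the second identity. Finally, for $n = 2$: the integrand is $(\tfrac{d}{dx}\jj{p}{x}{q})^2\jj{p}{x}{q}^2$ and the right-hand side is $\tfrac{1}{3}r(p,q)^3$, giving the third identity.

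There is no real obstacle here; the only thing worth remarking is that one should double-check that $n = 0, 1, 2$ all lie in the admissible range $(-1, \infty)$ of \thmref{thmjpq2njpq}, which they plainly do, and that the relevant powers $\jj{p}{x}{q}^{n+1}$ (namely $\jj{p}{x}{q}$, $\jj{p}{x}{q}^2$, $\jj{p}{x}{q}^3$) lie in $\Zh(\ga)$, which also holds by the same reasoning used in the proof of \thmref{thmjpq2njpq}. Thus the proof is just the sentence "Take $n = 0$, $1$, and $2$ in \thmref{thmjpq2njpq}."
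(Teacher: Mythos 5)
Your proposal is correct and matches the paper's approach exactly: the corollary is stated immediately after \thmref{thmjpq2njpq} precisely as the specializations $n=0,1,2$, which the paper treats as immediate ("We isolate the cases $n=0,\,1$, and $2$"). Nothing further is needed.
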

\begin{lemma}\label{lemorthogonality}
For any $p,q,x \in \ga$,
\begin{equation*}
\begin{split}
 \int_{\ga}\ddx\jj{x}{p}{q}\ddx\jj{p}{x}{q}dx =\int_{\ga}\jj{p}{x}{q}\DD{x}\jj{x}{p}{q}
=\int_{\ga}\jj{x}{p}{q}\DD{x}\jj{p}{x}{q}=0.
\end{split}
\end{equation*}
\end{lemma}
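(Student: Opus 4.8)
The plan is to exploit the Green's identity in \propref{prop Greens identity} together with \propref{prop cordjpq} exactly as in the proof of \thmref{thmjpq2njpq}, but now applied to the \emph{pair} of functions $\jj{x}{p}{q}$ and $\jj{p}{x}{q}$. Both functions lie in $\Zh(\ga)$, so Green's identity applies. First I would record that by \propref{prop Greens identity},
\begin{equation*}
\int_{\ga}\ddx\jj{x}{p}{q}\,\ddx\jj{p}{x}{q}\,dx = \int_{\ga}\jj{p}{x}{q}\,\DD{x}\jj{x}{p}{q} = \int_{\ga}\jj{x}{p}{q}\,\DD{x}\jj{p}{x}{q},
\end{equation*}
which is just the self-adjointness/Green's identity applied with $f=\jj{x}{p}{q}$, $g=\jj{p}{x}{q}$ (note the bar is irrelevant since these are real-valued). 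So it remains to evaluate one of the two right-hand integrals.

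The second step is to compute $\int_{\ga}\jj{x}{p}{q}\,\DD{x}\jj{p}{x}{q}$. By \propref{prop cordjpq}, $\DD{x}\jj{p}{x}{q} = \dd{q}(x) - \dd{p}(x)$, so this integral equals $\jj{x}{p}{q}\big|_{x \to q}$ evaluated appropriately — more precisely, integrating a function against $\dd{q}(x) - \dd{p}(x)$ returns its value at $q$ minus its value at $p$. Here the integrand $\jj{x}{p}{q}$, viewed as a function of $x$ (with $p,q$ fixed), has the property $\jj{q}{p}{q} = 0$ and $\jj{p}{p}{q} = 0$, since $j_x(y,z)$ vanishes whenever $x=y$ or $x=z$ (recall $j_x(x,z)=0$ and symmetry in the last two slots gives $j_x(y,x)=0$). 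Therefore both boundary contributions vanish and the integral is $0$. This immediately gives that all three quantities equal $0$.

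The only subtlety — and the one place I would be careful — is the justification that integrating against the measure $\dd{q}(x)-\dd{p}(x)$ indeed just evaluates the continuous function $\jj{x}{p}{q}$ at $q$ and at $p$; this is fine because $\jj{x}{p}{q} \in \Zh(\ga)$ is in particular continuous in $x$, and $\DD{x}\jj{p}{x}{q}$ has no continuous (i.e. $-f''dx$) part by \propref{prop cordjpq}. A secondary point is making sure the roles of the variables in $j_x(p,q)$ versus $j_p(x,q)$ are not conflated: in $\jj{x}{p}{q}$ the subscript $x$ is the ``reference/base point'' variable over which we integrate, while in $\jj{p}{x}{q}$ the base point $p$ is fixed and $x$ ranges; the Laplacian $\DD{x}$ in each integral always differentiates with respect to the integration variable $x$. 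Once these bookkeeping matters are settled, the computation is a two-line application of the results already established, so I expect no real obstacle.
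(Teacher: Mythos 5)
Your proposal is correct and follows essentially the same route as the paper: self-adjointness/Green's identity (\propref{prop Greens identity}) to equate the three integrals, then \propref{prop cordjpq} together with the vanishing $\jj{p}{p}{q}=\jj{q}{p}{q}=0$ to evaluate them as zero. No gaps.
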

\begin{proof}
Since  $\DD{x}$ is a self-adjoint operator (see \propref{prop Greens identity}),
$$\int_{\ga}\jj{p}{x}{q}\DD{x}\jj{x}{p}{q}=\int_{\ga}\jj{x}{p}{q}\DD{x}\jj{p}{x}{q}=\jj{p}{p}{q}-\jj{q}{p}{q}=0,$$
where the second equality is by \propref{prop cordjpq}. Also, by the Green's identity (see \propref{prop Greens identity}),
$\int_{\ga}\jj{x}{p}{q}\DD{x}\jj{p}{x}{q}=\int_{\ga}\ddx\jj{p}{x}{q}\ddx\jj{x}{p}{q}dx.$
This completes the proof.
\end{proof}
Now we are ready to express the tau constant in terms of the voltage
function.
\begin{theorem}\label{thmbasic}
For any $p,q \in \ga$,
$
\tg = \frac{1}{4}\int_{\ga}(\frac{d}{dx}\jj{x}{p}{q})^2dx +
\frac{1}{4}r(p,q).
$
\end{theorem}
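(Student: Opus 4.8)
The plan is to start from Lemma \ref{lemtauformula} with the fixed point taken to be $p$, so that $\tg = \frac14\int_\ga\big(\frac{\partial}{\partial x}r(x,p)\big)^2\,dx$. The key is to substitute the decomposition of $r(x,p)$ coming from \eqnref{eqn1.1}, namely $r(x,p) = \jj{p}{x}{q} + \jj{x}{p}{q}$, which expresses the resistance from $x$ to $p$ as a sum of two voltage functions using the auxiliary point $q$. Differentiating in $x$ and squaring gives
\[
\Big(\ddx r(x,p)\Big)^2 = \Big(\ddx\jj{p}{x}{q}\Big)^2 + 2\,\ddx\jj{p}{x}{q}\,\ddx\jj{x}{p}{q} + \Big(\ddx\jj{x}{p}{q}\Big)^2 .
\]
Integrating over $\ga$ term by term, the first term integrates to $r(p,q)$ by \corref{corjrpq} (the case $n=0$ of \thmref{thmjpq2njpq}), the middle cross term vanishes by \lemref{lemorthogonality}, and the last term is exactly $\int_\ga\big(\ddx\jj{x}{p}{q}\big)^2\,dx$. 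Multiplying through by $\frac14$ yields
\[
\tg = \frac14\int_\ga\Big(\ddx\jj{x}{p}{q}\Big)^2\,dx + \frac14 r(p,q),
\]
which is the claimed identity.

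The main point requiring care is the justification of the term-by-term integration and, in particular, the legitimacy of applying Green's identity / self-adjointness to the functions $\jj{p}{x}{q}$ and $\jj{x}{p}{q}$ viewed as functions of $x$. This is handled by the remarks preceding the theorem: it is noted there that $\jj{x}{p}{q}\in\Zh(\ga)$ for all $p,q,x$, that $\jj{x}{p}{q}$ is differentiable on $\ga\setminus(\{p,q\}\cup\vv{\ga})$, and similarly for $\jj{p}{x}{q}$ (here $x$ is the variable and $p$ plays the role of the base point, with $q$ the other reference point). Since both summands lie in $\Zh(\ga)$, so does their sum, and the integrals of the squares of their $x$-derivatives are finite, so the expansion of the square is integrable and may be split. \lemref{lemorthogonality} is stated precisely for these two functions, so the vanishing of the cross term is immediate once the expansion is justified.

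One small subtlety worth a sentence in the write-up: the identity holds for arbitrary $p,q\in\ga$, not just vertices, even though \propref{proptau} fixed $p\in\vv{\ga}$; this is fine because \lemref{lemtauformula} allows any $y\in\ga$, and \eqnref{eqn1.1} was derived for arbitrary $x,p,q\in\ga$ via circuit reduction to the $Y$-shaped graph of \figref{fig xpq1new2}. No genuine obstacle is expected — the content is entirely the orthogonality relation of \lemref{lemorthogonality} together with the evaluation $\int_\ga\big(\ddx\jj{p}{x}{q}\big)^2\,dx = r(p,q)$ from \corref{corjrpq}.
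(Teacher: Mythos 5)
Your proposal is correct, and it reaches the identity by a somewhat more direct route than the paper. You stay entirely at the level of the Dirichlet integral: write $r(x,p)=\jj{p}{x}{q}+\jj{x}{p}{q}$ from \eqnref{eqn1.1}, expand the square of the $x$-derivative, and kill the cross term with \lemref{lemorthogonality}, evaluating $\int_{\ga}(\ddx\jj{p}{x}{q})^2dx=r(p,q)$ by \corref{corjrpq}. The paper instead first converts $\int_{\ga}(\partial_x r)^2dx$ into $\int_{\ga}r\,\DD{x}r$ by Green's identity, substitutes $\DD{x}r(x,p)=\DD{x}\jj{x}{p}{q}+\dd{q}(x)-\dd{p}(x)$ (\eqnref{eqn lem can jpq}), then splits $r$ via \eqnref{eqn1.1} and applies self-adjointness and \lemref{lemorthogonality}. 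The two arguments use the same essential ingredients (the $Y$-decomposition of $r$, the orthogonality lemma, and the $n=0$ case of \thmref{thmjpq2njpq}), but yours avoids any explicit manipulation of the measure-valued Laplacian and the delta terms, making it shorter and slightly more elementary; the paper's formulation has the advantage of rehearsing the Laplacian/Green's-identity calculus that it reuses in later computations such as \thmref{thmremain} and \thmref{thmmagnificent}. Your remarks on integrability ($\jj{x}{p}{q}\in\Zh(\ga)$, piecewise differentiability away from $\{p,q\}\cup\vv{\ga}$) and on $p,q$ being arbitrary points rather than vertices adequately cover the technical points needed to justify the term-by-term integration.
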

\begin{proof}
For any $p, \, q \in \ga$, we have
\begin{equation*}
\begin{split}
4 \tg &=\int_{\ga}\big(\frac{\partial}{\partial x} r(p,x) \big)^2dx, \quad \text{by \lemref{lemtauformula};}
\\ &=\int_{\ga}r(p,x)\DD{x}r(p,x), \quad \text{by the Green's identity;}
\\ &=\int_{\ga}r(p,x) \big(\DD{x}\jj{x}{p}{q}+\dd{q}(x)-\dd{p}(x) \big), \quad \text{by \eqnref{eqn lem can jpq};}
\\ &=\int_{\ga}r(p,x)\DD{x}\jj{x}{p}{q} +r(p,q), \quad \text{since $r(p,p)=0$;}
\\ &=\int_{\ga}(\jj{x}{p}{q}+\jj{p}{x}{q})\DD{x}\jj{x}{p}{q}
+r(p,q), \quad \text{by \eqnref{eqn1.1};}
\\ &=\int_{\ga}(\frac{d}{dx}\jj{x}{p}{q})^2dx
+\int_{\ga}\jj{x}{p}{q}\DD{x}\jj{p}{x}{q}+r(p,q),
\quad \text{by \propref{prop Greens identity};}
\\ &=\int_{\ga}(\frac{d}{dx}\jj{x}{p}{q})^2dx+r(p,q), \quad \text{by \lemref{lemorthogonality}}.
%
\end{split}
\end{equation*}
This is what we wanted to show.
\end{proof}
Since $\jj{x}{p}{p}=r(p,x)$ and $r(p,p)=0$, \lemref{lemtauformula} is the special case of
\thmref{thmbasic} with $q=p$.

Suppose $\ga$ is a graph which is the union of two subgraphs $\ga_1$ and $\ga_2$,
i.e., $\ga=\ga_1 \cup \ga_2$. If $\ga_1$ and $\ga_2$ intersect in a single point $p$, i.e.,
$\ga_1 \cap \ga_2= \{ p \}$, then by circuit theory (see
also \cite[Theorem 9 (ii)]{BF}) we have $r(x,y)=r(x,p)+r(p,y)$ for
each $x \in \ga_1$ and $y \in \ga_2$. By using this fact and
\corref{lemtauformula}, we obtain $\ta{\ga_1 \cup
\ga_2}=\ta{\ga_1}+\ta{\ga_2}$, which we call the ``\textit{additive
property}" of the tau constant. It was initially noted in \cite{REU}.

The following corollary of \thmref{thmbasic} was given in
\cite[Equation 14.3]{BRh}.
\begin{corollary}\label{lemtauformula2}
Let $\ga$ be a tree, i.e. a graph without cycles. Then,
$\tg=\frac{\ell(\ga)}{4}$.
\end{corollary}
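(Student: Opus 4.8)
The plan is to deduce this from \lemref{lemtauformula} (equivalently, the $q=p$ case of \thmref{thmbasic}), together with the elementary fact that on a tree the resistance function is just path-length. First I would fix a vertex $p \in \vv{\ga}$. Since $\ga$ has no cycles, for every $x \in \ga$ there is a unique path from $x$ to $p$, and viewing $\ga$ as a resistive circuit the unit current from $x$ to $p$ flows entirely along that path with no current elsewhere; hence $r(x,p)$ equals the length of that path. Restricted to any edge, $r(\cdot\,,p)$ is therefore affine with slope $\pm 1$ away from the finitely many branch points, so $\big(\frac{\partial}{\partial x} r(x,p)\big)^2 = 1$ for all but finitely many $x \in \ga$.

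Then \lemref{lemtauformula} gives
\[
\tg \;=\; \frac{1}{4}\int_{\ga}\Big(\frac{\partial}{\partial x} r(x,p)\Big)^{2}dx \;=\; \frac{1}{4}\int_{\ga} 1 \, dx \;=\; \frac{\ell(\ga)}{4},
\]
which is the assertion.

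An alternative route, matching the sentence immediately preceding the statement, uses the additive property directly: every tree is built from a single edge by repeatedly attaching a pendant edge at a vertex, and attaching a pendant edge $e_i$ of length $\li$ means writing $\ga = \ga' \cup e_i$ with $\ga' \cap e_i$ a single point, so $\ta{\ga} = \ta{\ga'} + \ta{e_i}$. A single edge of length $\li$ has $\ri = \infty$, so by \propref{proptau} its tau constant is $\tfrac{1}{12}(3\li) = \tfrac{\li}{4}$ (this is also the $v=2$ case of \propref{prop tau formula for complete graph}). Induction on the number of edges then yields $\tg = \sum_{e_i \in \ee{\ga}} \tfrac{\li}{4} = \tfrac{\ell(\ga)}{4}$.

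There is no real obstacle here; the only points requiring (standard) care are the identification of $r(x,p)$ with path-length on a tree — which rests on the uniqueness of the current path in an acyclic circuit — and the harmless observation that the finitely many non-smooth points of $r(\cdot\,,p)$ contribute nothing to the integral. I would present the first argument as the proof, as it is the shortest and directly exhibits the corollary as a specialization of \thmref{thmbasic}.
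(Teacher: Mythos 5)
Both of your arguments are correct, and the one you choose to present is genuinely different from the paper's. The paper first handles a line segment $\beta$ via \thmref{thmbasic}: on a segment the voltage function $\jj{x}{p}{q}$ vanishes identically, so $\ta{\beta}=\frac{1}{4}r(p,q)=\frac{\ell(\beta)}{4}$, and then the general tree case follows by repeated use of the additive property $\ta{\ga_1\cup\ga_2}=\ta{\ga_1}+\ta{\ga_2}$ for graphs meeting in one point. Your primary argument instead works directly from \lemref{lemtauformula}: on a tree $r(x,p)$ is the path distance to $p$, hence affine of slope $\pm 1$ on each edge away from the finitely many branch points, so $\big(\frac{\partial}{\partial x}r(x,p)\big)^2=1$ almost everywhere and $\tg=\frac{1}{4}\int_\ga 1\,dx=\frac{\ell(\ga)}{4}$. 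This is more elementary and self-contained (no additive property, no voltage function, no induction), at the cost of having to justify the identification of resistance with path length on an acyclic network, which you do correctly by uniqueness of the current path; the paper's route instead leverages machinery it has already set up (\thmref{thmbasic} and additivity) and so fits more naturally into its running framework. Your alternative second route (pendant-edge induction with the segment case from \propref{proptau}, where $\ri=\infty$ gives the summand $3\li$ and hence $\frac{\li}{4}$) is essentially the paper's proof with a different computation for the base case. No gaps in either argument.
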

\begin{proof}
First we note that for a line segment $\beta$ with end points $p$
and $q$, we have that $r(p,q)=\ell(\beta)$. 
It is clear by circuit theory that $\jj{x}{p}{q}=0$ for any $x \in \beta$, where
$\jj{x}{y}{z}$ is the voltage function on $\beta$. Therefore,
$\ta{\beta}=\frac{\ell(\beta)}{4}$ by \thmref{thmbasic}. Hence the result follows for any
tree graph by applying the additive property whenever it is needed.
\end{proof}
Thus, \conjref{TauBound} holds with $C=\frac{1}{4}$ for a tree graph.

\begin{corollary}\label{coradd}
Let $\ga$ be a metrized graph, and let $E_1(\ga)=\{e_i \in \ee{\ga}| \text{$e_i$ is a bridge}\}.$
Suppose $\overline{\ga}$ is the metrized graph obtained from $\ga$ by contracting
edges in $E_1(\ga)$ to their end points. Then
$\tg=\ta{\overline{\ga}}+\frac{\ell(\ga)-\ell(\overline{\ga})}{4}$.
\end{corollary}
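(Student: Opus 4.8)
The plan is to reduce to the case of a single bridge and then induct on $\#E_1(\ga)$, using the additive property of the tau constant together with \corref{lemtauformula2}.

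\emph{Single-bridge step.} Suppose $e_i$ is a bridge with endpoints $p$ and $q$. Removing the interior of $e_i$ disconnects $\ga$ into two pieces; let $A$ be the closure of the component containing $p$ and $B$ the closure of the one containing $q$. Then $\ga_1 := A \cup e_i$ is a metrized subgraph meeting $B$ only at $q$, so the additive property gives $\tg = \ta{\ga_1} + \ta{B}$; and $A$ meets $e_i$ only at $p$, so $\ta{\ga_1} = \ta{A} + \ta{e_i}$. Since $e_i$ is a line segment (a tree), \corref{lemtauformula2} gives $\ta{e_i} = \frac{\ell(e_i)}{4}$, hence $\tg = \ta{A} + \ta{B} + \frac{\ell(e_i)}{4}$. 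Now let $\ga'$ be $\ga$ with $e_i$ contracted to a point; contraction identifies $p$ with $q$, so $\ga' = A \cup B$ meeting in that single point, and the additive property again yields $\ta{\ga'} = \ta{A} + \ta{B}$. Combining, $\tg = \ta{\ga'} + \frac{\ell(e_i)}{4}$.

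\emph{Induction.} I would then observe that a bridge is precisely an edge lying on no cycle, and that contracting an edge lying on no cycle neither creates nor destroys cycles through the remaining edges; consequently the bridge set of $\ga'$ is exactly $E_1(\ga) \setminus \{e_i\}$, and $\ell(\ga') = \ell(\ga) - \ell(e_i)$, while $\overline{\ga'} = \overline{\ga}$. Inducting on $\#E_1(\ga)$: the base case $E_1(\ga) = \varnothing$ is immediate since then $\overline{\ga} = \ga$; for the inductive step, pick any $e_i \in E_1(\ga)$, apply the single-bridge step, and use the inductive hypothesis for $\ga'$. This gives $\tg = \ta{\overline{\ga}} + \sum_{e_i \in E_1(\ga)} \frac{\ell(e_i)}{4}$, and since $\overline{\ga}$ consists exactly of the non-bridge edges of $\ga$ with unchanged lengths, $\sum_{e_i \in E_1(\ga)} \ell(e_i) = \ell(\ga) - \ell(\overline{\ga})$, which is the desired formula.

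There is no serious obstacle here; the only points needing care are verifying that the pieces $A$, $B$, $\ga_1$ are genuine metrized subgraphs to which the additive property applies, and that the bridge set behaves as claimed under a single contraction. Both become routine once bridges are characterized as edges on no cycle.
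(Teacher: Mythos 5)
Your proof is correct and is essentially the paper's own argument: the paper's one-line proof ("successively apply the additive property of the tau constant and \corref{lemtauformula2}") is exactly your single-bridge decomposition $\ga=A\cup e_i\cup B$ plus induction, just left implicit. Your write-up merely makes explicit the routine verifications (the two one-point intersections, $\ta{e_i}=\ell(e_i)/4$, and the fact that contracting one bridge removes it from the bridge set without affecting the others), so there is nothing genuinely different to compare.
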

\begin{proof}
If $E_1(\ga)\not=\emptyset$, we successively apply the additive property of the tau constant and \corref{lemtauformula2} to obtain the result.
\end{proof}
By \corref{coradd}, to prove \conjref{TauBound}, it is enough to prove it for bridgeless graphs.
\begin{theorem}[Baker]\label{thmeqlength}
Suppose all edge lengths in a metrized graph $\ga$ with $\ell(\ga)=1$  are
equal, i.e., of length $\frac{1}{e}$. Then
$ \tg \geq \frac{1}{12}(\frac{g}{e})^2$.
In particular, \conjref{TauBound} holds with
$ C= \frac{1}{108}$ if we also have $\va(p) \geq 3$ for each vertex $p \in \vv{\ga}$.
\end{theorem}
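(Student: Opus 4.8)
The plan is to start from Proposition~\ref{proptau}, which gives
\[
\tg = \frac{1}{12} \sum_{e_i \in \ee{\ga}} \frac{\li^3 + 3\li(R_{a_i,p}-R_{b_i,p})^2}{(\li+\ri)^2}
\]
for any fixed vertex $p$. Since all edge lengths equal $\frac{1}{e}$, the first term in each summand contributes $\frac{1}{12e}\sum_{e_i}\frac{\li}{(\li+\ri)^2} \cdot \frac{1}{e} \cdot e$; more precisely, pulling out $\li = \frac{1}{e}$ we get $\frac{1}{12}\sum_{e_i}\frac{\li^3}{(\li+\ri)^2} = \frac{1}{12e^2}\sum_{e_i}\frac{\li}{(\li+\ri)^2}$. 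The second, ``cross-term'' part $\frac{1}{4}\sum_{e_i}\frac{\li(R_{a_i,p}-R_{b_i,p})^2}{(\li+\ri)^2}$ is manifestly nonnegative, so it may simply be dropped for the lower bound. Thus the whole problem reduces to bounding $\sum_{e_i}\frac{\li}{(\li+\ri)^2}$ from below.

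The key step is to apply the Cauchy--Schwarz inequality (in the form $\sum a_i^2 \cdot \sum b_i^2 \geq (\sum a_i b_i)^2$, or equivalently the power-mean / Jensen inequality) together with the genus identity \eqnref{eqn genus}, namely $\sum_{e_i}\frac{\li}{\li+\ri} = g$. Writing $t_i := \frac{\li}{\li+\ri} \in [0,1]$, we have $\sum_i t_i = g$, and since all $\li$ are equal we can write $\sum_i \frac{\li}{(\li+\ri)^2} = \frac{1}{\li}\sum_i t_i^2$ — wait, more carefully $\frac{\li}{(\li+\ri)^2} = \frac{t_i^2}{\li}$, so $\sum_i \frac{\li}{(\li+\ri)^2} = \frac{1}{\li}\sum_i t_i^2 = e\sum_i t_i^2$ (using $\li = 1/e$). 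By Cauchy--Schwarz, $\sum_i t_i^2 \geq \frac{1}{e}\big(\sum_i t_i\big)^2 = \frac{g^2}{e}$. Combining: $\tg \geq \frac{1}{12e^2}\cdot e \cdot \frac{g^2}{e} = \frac{1}{12}\cdot\frac{g^2}{e^2} = \frac{1}{12}\big(\frac{g}{e}\big)^2$, which is the first claim.

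For the ``in particular'' statement, assume $\va(p)\geq 3$ for every vertex. The Handshaking Lemma gives $2e = \sum_{p}\va(p) \geq 3v$, hence $v \leq \frac{2e}{3}$, so $g = e - v + 1 \geq e - \frac{2e}{3} + 1 = \frac{e}{3} + 1 > \frac{e}{3}$, i.e. $\frac{g}{e} > \frac{1}{3}$. Plugging into $\tg \geq \frac{1}{12}(\frac{g}{e})^2$ yields $\tg > \frac{1}{12}\cdot\frac{1}{9} = \frac{1}{108}$, and since $\ell(\ga) = 1$ this is exactly \conjref{TauBound} with $C = \frac{1}{108}$.

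I do not anticipate a serious obstacle here: the argument is a clean combination of Proposition~\ref{proptau}, the elementary Cauchy--Schwarz inequality, the genus identity \eqnref{eqn genus}, and the Handshaking Lemma. The only point requiring a little care is the bookkeeping of the factor $\li = 1/e$ through the summand of Proposition~\ref{proptau}, and noting explicitly that the discarded cross term is nonnegative (it is a sum of squares divided by squares), so that the inequality — rather than equality — is what survives.
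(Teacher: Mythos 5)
Your proposal is correct and follows essentially the same route as the paper: lower-bound $\tg$ by $\frac{1}{12}\sum_{e_i}\li^3/(\li+\ri)^2$ via \propref{proptau} (dropping the nonnegative cross term), apply Cauchy--Schwarz together with the genus identity \eqnref{eqn genus} — your $t_i$-substitution is exactly the paper's inequality \eqnref{eqnthmcauchy1} specialized to equal edge lengths — and use the Handshaking Lemma to get $g\geq e/3$ for the $C=\tfrac{1}{108}$ statement. The only cosmetic difference is that the paper first reduces to bridgeless graphs via \corref{coradd}, whereas you handle bridge edges implicitly through the $\ri=\infty$ convention (their summand $3\li$ is nonnegative and they contribute $0$ to both the Cauchy--Schwarz sum and the genus sum), which is equally valid.
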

\begin{proof}
By \corref{coradd}, the scale-independence and the additive
properties of $\tg$, it will be enough to prove the result for a
graph $\ga$ that does not have any edge whose removal disconnects
it. Applying the Cauchy-Schwarz inequality to the second part of the equality
$\displaystyle \sum_{e_i \in \ee{\ga}}
\frac{\li^3}{(L_{i}+R_{i})^2} = \sum_{e_i \in \ee{\ga}} \frac{\li^3}{(L_{i}+R_{i})^2} \sum_{e_i
\in\ee{\ga}}\li$ gives
\begin{equation}\label{eqnthmcauchy1}
\begin{split}
\sum_{e_i \in \ee{\ga}}
\frac{\li^3}{(L_{i}+R_{i})^2} \geq \Big(\sum_{e_i \in \ee{\ga}}
\frac{\li^2}{L_{i}+R_{i}}\Big)^2.
\end{split}
\end{equation}
We have
\begin{equation*}\label{eqneqlength}
\begin{split}
\tg &\geq \frac{1}{12}\sum_{e_i \in \ee{\ga}}
\frac{\li^3}{(L_{i}+R_{i})^2}, \quad \text{by \propref{proptau}};
\\ &\geq \frac{1}{12}\Big(\sum_{e_i \in \ee{\ga}}
\frac{\li^2}{L_{i}+R_{i}}\Big)^2, \quad \text{by
\eqnref{eqnthmcauchy1}};
\\ &=\frac{1}{12}\Big(\frac{1}{e}\sum_{e_i \in \ee{\ga}}
\frac{\li}{L_{i}+R_{i}}\Big)^2, \quad \text{since all edge lengths
are equal;}
\\ &=\frac{1}{12}(\frac{g}{e})^2, \quad \text{by \eqnref{eqn genus}}.
\end{split}
\end{equation*}
This proves the first part.
If $\va(p) \geq 3$ for each $p \in \vv{\ga}$, then we have $e
\geq \frac{3}{2} v $ by basic properties of connected graphs.
Thus $g=e-v+1 \geq e-\frac{2}{3}e+1 \geq \frac{e}{3}$. Using this inequality along with the first part gives the last part.
\end{proof}
In the next theorem, we show that \conjref{TauBound} holds for another large class of graphs with $C=\frac{1}{48}$. First, we recall Jensen's Inequality:

For any integer $n\geq 2$, let $a_i \in (c,d)$, an interval in $\RR$, and $b_i \geq 0$ for all $i=1, \dots, n$.
If $f$ is a convex function on the interval $(c,d)$, then
$$f\Big( \frac{\sum_{i=1}^n b_ia_i}{\sum_{i=1}^n b_i}\Big)\leq \frac{\sum_{i=1}^n b_if(a_i)}{\sum_{i=1}^n b_i}.$$
The inequality is reversed, if $f$ is a concave function on $(c,d)$.
\begin{theorem}\label{thmcorineqsumR4}
Let $\ga$ be a graph with $\ell(\ga)=1$ and let $\li$, $\ri$ be as
before. Then we have
$\tg \geq \frac{1}{12}\frac{1}{\big(1+\sum_{e_i \in \ee{\ga}}\ri\big)^2}.$
In particular, if any pair of vertices $\pp$ and $\qq$ that are end
points of an edge are joined by at least two edges,
we have
$\tg \geq \frac{1}{48}.$
\end{theorem}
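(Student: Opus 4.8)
The plan is to combine \propref{proptau} with two applications of the Cauchy--Schwarz inequality, the first of which is exactly \eqnref{eqnthmcauchy1}. Since every term $3\li(R_{a_i,p}-R_{b_i,p})^2$ appearing in the formula of \propref{proptau} is nonnegative, we first get $\tg \ge \frac{1}{12}\sum_{e_i\in\ee{\ga}}\frac{\li^3}{(\li+\ri)^2}$. If $\ga$ has a bridge, then $\ri=\infty$ for that edge, so $1+\sum_i\ri=\infty$ and the claimed lower bound is $0$; as $\tg>0$ there is nothing to prove, and we may assume $\ga$ is bridgeless, so that every $\ri$ is finite. Because $\ell(\ga)=\sum_i\li=1$, \eqnref{eqnthmcauchy1} gives $\sum_i\frac{\li^3}{(\li+\ri)^2}\ge\bigl(\sum_i\frac{\li^2}{\li+\ri}\bigr)^2$.

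Next I would apply Cauchy--Schwarz once more, now to the factorization $\li=\frac{\li}{\sqrt{\li+\ri}}\cdot\sqrt{\li+\ri}$, which yields $1=\bigl(\sum_i\li\bigr)^2\le\bigl(\sum_i\frac{\li^2}{\li+\ri}\bigr)\bigl(\sum_i(\li+\ri)\bigr)=\bigl(\sum_i\frac{\li^2}{\li+\ri}\bigr)\bigl(1+\sum_i\ri\bigr)$. Hence $\sum_i\frac{\li^2}{\li+\ri}\ge\frac{1}{1+\sum_i\ri}$, and stringing the three inequalities together gives $\tg\ge\frac{1}{12}\cdot\frac{1}{(1+\sum_i\ri)^2}$, the first assertion.

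For the ``in particular'' statement it suffices to show that the hypothesis forces $\sum_{e_i\in\ee{\ga}}\ri\le\ell(\ga)=1$; then $1+\sum_i\ri\le 2$ and the bound becomes $\tg\ge\frac{1}{48}$. A self-loop $e_i$ has $\ri=0\le\li$, so I would set these aside and partition the remaining edges into groups according to their (distinct, unordered) pair of endpoints. Fix one such group, joining a pair $\{p,q\}$ by edges of lengths $a_1\le a_2\le\cdots\le a_k$; by hypothesis $k\ge 2$. Deleting the edge $e_i$ of length $a_m$ leaves every other edge of the group in place, so $\ri$, the resistance between $p$ and $q$ in $\ga-e_i$, is at most $\min_{l\ne m}a_l$, i.e. at most $a_1$ when $m\ne 1$ and at most $a_2$ when $m=1$. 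Summing the resistances $\ri$ over the edges of the group, the total is therefore at most $a_2+(k-1)a_1\le a_1+a_2+\cdots+a_k$, the total length of the edges in the group, where the last inequality uses $a_1\le a_l$ for all $l$. Adding this over all groups (together with the self-loops) gives $\sum_i\ri\le\sum_i\li=1$, as needed.

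The computation is largely routine; the one point that needs care is the last paragraph --- organizing the parallel edges into groups and verifying the per-group estimate $\sum\ri\le\sum\li$ (in particular justifying $\ri\le a_1$, resp. $a_2$, for each edge of a group), along with the bookkeeping for self-loops and the initial reduction away from graphs with bridges.
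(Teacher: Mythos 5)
Your proof is correct and follows essentially the same route as the paper: \propref{proptau} combined with \eqnref{eqnthmcauchy1} and the bound $\sum_{e_i}\frac{\li^2}{\li+\ri}\ge\frac{1}{1+\sum_{e_i}\ri}$, and then $\sum_{e_i}\ri\le 1$ via the parallel-edge hypothesis for the second claim. The only differences are cosmetic: you obtain the middle inequality by a second application of Cauchy--Schwarz instead of the paper's Jensen argument (the paper itself notes alternative proofs of that inequality exist), and you make explicit the bridge case and the per-group bookkeeping for parallel edges that the paper dispatches with the phrase ``by applying parallel circuit reduction.''
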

\begin{proof}
Let $b_i=L_{i}$, $a_{i}=\frac{\li+\ri}{\li}$, and
$f(x)=\frac{1}{x}$ on $(0,\infty)$. Then applying Jensen's
inequality and using the assumption that $\sum b_i= \ell(\ga)=1$, we obtain
the following inequality:
\begin{equation}\label{eqn sumR}
\begin{split}
 \sum_{e_i \in \ee{\ga}}\frac{\li^2}{L_{i}+R_{i}} \geq \frac{1}{1+\sum_{e_i \in \ee{\ga}}\ri}.
\end{split}
\end{equation}
Then the first part follows from \propref{proptau}, \eqnref{eqnthmcauchy1}, and \eqnref{eqn sumR}.
Under the assumptions of the second part,
we obtain $\sum_{e_i \in \ee{\ga}}\ri \leq \sum_{e_i \in \ee{\ga}}\li=1$
by applying parallel circuit reduction.
This yields the second part.
\end{proof}
Additional proofs of \eqnref{eqn sumR} can be found in \cite[page 50]{C1}.
\begin{theorem}\label{thm2term}
Let $\ga$ be a metrized graph with $\elg=1$. Then we have
$$\sum_{e_i \in \, \ee{\ga}}\frac{\li\ri^2}{(\li+\ri)^2} \geq \Big(\sum_{e_i \in \, \ee{\ga}}\frac{\li\ri}{\li+\ri}\Big)^2.$$
\end{theorem}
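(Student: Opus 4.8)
The plan is to deduce this inequality directly from the Cauchy--Schwarz inequality, exactly as in the proof of \thmref{thmeqlength}. First I would introduce, for each edge $e_i \in \ee{\ga}$, the two quantities
\begin{equation*}
u_i = \frac{\sqrt{\li}\,\ri}{\li+\ri}, \qquad w_i = \sqrt{\li},
\end{equation*}
so that $u_i w_i = \frac{\li\ri}{\li+\ri}$, that $u_i^2 = \frac{\li\ri^2}{(\li+\ri)^2}$, and, crucially, that $\sum_{e_i \in \ee{\ga}} w_i^2 = \sum_{e_i \in \ee{\ga}} \li = \elg = 1$ by hypothesis. Applying Cauchy--Schwarz in the form $\big(\sum_i u_i w_i\big)^2 \leq \big(\sum_i u_i^2\big)\big(\sum_i w_i^2\big)$ then yields
\begin{equation*}
\Big(\sum_{e_i \in \ee{\ga}}\frac{\li\ri}{\li+\ri}\Big)^2 \ \leq \ \Big(\sum_{e_i \in \ee{\ga}}\frac{\li\ri^2}{(\li+\ri)^2}\Big)\cdot\Big(\sum_{e_i \in \ee{\ga}}\li\Big) \ = \ \sum_{e_i \in \ee{\ga}}\frac{\li\ri^2}{(\li+\ri)^2},
\end{equation*}
which is precisely the assertion.

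The only point requiring a word of care is the presence of bridges, for which $\ri = \infty$. For such an edge both $\frac{\li\ri^2}{(\li+\ri)^2}$ and $\frac{\li\ri}{\li+\ri}$ are to be read as their limits as $\ri \to \infty$, each of which equals $\li$; with this convention one simply takes $u_i = w_i = \sqrt{\li}$ for a bridge, and the Cauchy--Schwarz step goes through verbatim. Alternatively, I could first invoke \corref{coradd} together with the scale-independence and additive properties of the tau constant to reduce to a bridgeless graph, in which all the $\ri$ are finite; I would at least mention this reduction, though the limiting interpretation is cleaner and makes it unnecessary.

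I do not anticipate any genuine obstacle: the entire content of the proof is the choice of the vectors $(u_i)$ and $(w_i)$, after which the normalization $\sum_{e_i \in \ee{\ga}}\li = 1$ forces the conclusion. If a sharper statement is wanted one could also record the equality case (all ratios $\ri/(\li+\ri)$ equal), but that is not needed for the applications to the lower bound conjecture.
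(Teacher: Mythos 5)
Your proof is correct and is essentially the paper's own argument: the paper likewise multiplies the left-hand side by $\sum_{e_i \in \ee{\ga}}\li=1$ and applies Cauchy--Schwarz, which amounts exactly to your choice of $u_i$ and $w_i$. Your extra remark on bridges (reading the summands as limits as $\ri\to\infty$, or reducing to the bridgeless case) is a harmless refinement the paper leaves implicit.
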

\begin{proof}
We have $\ell(\ga)=1$. Hence, by Cauchy-Schwarz inequality
$$\sum_{e_i \in \ee{\ga}} \frac{\li \ri^2}{(L_{i}+R_{i})^2}= \sum_{e_i \in
\ee{\ga}} \frac{\li \ri^2}{(L_{i}+R_{i})^2} \sum_{e_i \in\ee{\ga}}\li \geq \Big(\sum_{e_i \in \ee{\ga}} \frac{\li \ri}{L_{i}+R_{i}}\Big)^2.$$
\end{proof}
The following theorem improves \thmref{thmeqlength} slightly:
\begin{theorem}\label{thmeqlength2}
Suppose all edge lengths in a graph $\ga$ with $\ell(\ga)=1$  are
equal, i.e., of length $\frac{1}{e}$. Then
$ \tg \geq \frac{1}{12}(\frac{g}{e})^2+\frac{1}{2v}(\frac{v-1}{e})^2$.
\end{theorem}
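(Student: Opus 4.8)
The plan is to sharpen the proof of \thmref{thmeqlength} by retaining, rather than discarding, the contribution of the cross terms $(R_{a_i,p}-R_{b_i,p})^2$. Fix a vertex $p \in \vv{\ga}$. By \propref{proptau} (and \remref{rem2term}, which guarantees that the second sum below does not depend on the choice of $p$),
\[
\tg = \frac{1}{12}\sum_{e_i \in \ee{\ga}}\frac{\li^3}{(\li+\ri)^2} + \frac{1}{4}\sum_{e_i \in \ee{\ga}}\frac{\li(R_{a_i,p}-R_{b_i,p})^2}{(\li+\ri)^2}.
\]
I would apply \lemref{lem2term} to the second sum and then drop the manifestly nonnegative term $\frac{1}{v}\sum_{p}\big(\sum_{e_i \not\sim p}\li(R_{a_i,p}-R_{b_i,p})^2/(\li+\ri)^2\big)$, which yields
\[
\tg \ \geq \ \frac{1}{12}\sum_{e_i \in \ee{\ga}}\frac{\li^3}{(\li+\ri)^2} + \frac{1}{2v}\sum_{e_i \in \ee{\ga}}\frac{\li\ri^2}{(\li+\ri)^2}.
\]

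Next I would estimate the two sums separately using the hypothesis $\li = 1/e$ for all $i$. For the first, \eqnref{eqnthmcauchy1} (Cauchy--Schwarz) gives $\sum_{e_i}\li^3/(\li+\ri)^2 \geq \big(\sum_{e_i}\li^2/(\li+\ri)\big)^2 = \frac{1}{e^2}\big(\sum_{e_i}\li/(\li+\ri)\big)^2 = (g/e)^2$ by \eqnref{eqn genus}. For the second, \thmref{thm2term} (applicable since $\ell(\ga)=1$) gives $\sum_{e_i}\li\ri^2/(\li+\ri)^2 \geq \big(\sum_{e_i}\li\ri/(\li+\ri)\big)^2 = \frac{1}{e^2}\big(\sum_{e_i}\ri/(\li+\ri)\big)^2 = ((v-1)/e)^2$, again by the second form of \eqnref{eqn genus}. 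Substituting these two bounds into the displayed inequality produces exactly $\tg \geq \frac{1}{12}(g/e)^2 + \frac{1}{2v}((v-1)/e)^2$, as desired.

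I do not expect a real obstacle: the statement is a bookkeeping combination of \propref{proptau}, \lemref{lem2term}, \thmref{thm2term}, and \eqnref{eqn genus}, exactly mirroring the structure of the proof of \thmref{thmeqlength} but carrying along one extra nonnegative term. The only points needing mild care are (i) invoking \remref{rem2term} so that \lemref{lem2term} may legitimately be applied to the $p$-dependent sum coming from \propref{proptau}, and (ii) the convention that when $\ga - e_i$ is disconnected the relevant summands are replaced by their limits as $\ri \to \infty$; as in \thmref{thmeqlength}, one may alternatively first reduce to the bridgeless case via \corref{coradd} together with the scale-independence and additive properties of $\tg$, so that all $\ri$ are finite.
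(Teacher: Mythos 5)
Your proposal is correct and follows essentially the same route as the paper's proof: combine \propref{proptau} with \lemref{lem2term} and \thmref{thm2term} to bound the cross-term sum by $\frac{2}{v}\big(\sum_{e_i}\frac{\li\ri}{\li+\ri}\big)^2=\frac{2}{v}\big(\frac{v-1}{e}\big)^2$ via \eqnref{eqn genus}, and handle the $\frac{1}{12}\sum_{e_i}\frac{\li^3}{(\li+\ri)^2}$ part exactly as in the proof of \thmref{thmeqlength}. The only difference is that you spell out the bookkeeping (and the $\ri=\infty$ convention) that the paper leaves implicit.
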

\begin{proof}
It follows from \lemref{lem2term} and \thmref{thm2term}  that
$$\sum_{e_i \in \, \ee{\ga}}\frac{\li(R_{a_{i},p}-R_{b_{i},p})^2}{(\li+\ri)^2}
\geq \frac{2}{v}\big( \sum_{e_i \in \, \ee{\ga}}\frac{\li\ri}{\li+\ri} \big)^2.$$
Since $\li=\frac{1}{e}$ for each edge $e_i$, $\sum_{e_i \in \, \ee{\ga}}\frac{\li\ri}{\li+\ri}=
\frac{1}{e}\sum_{e_i \in \, \ee{\ga}}\frac{\ri}{\li+\ri}=\frac{v-1}{e}$ by using \eqnref{eqn genus}.
Therefore, the result follows from \propref{proptau} and the proof of \thmref{thmeqlength}.
\end{proof}
In the next section, we will derive explicit formulas for the tau constants of certain graphs with multiple edges.
\section{The tau constants of metrized graphs with multiple edges}\label{section DA}
Let $\ga$ be an arbitrary graph; write $\ee{\ga}=\{e_1, e_2, \dots,
e_e\}$. As before, let $\li$ be the length of edge $e_i$. Let
$\ga^{DA,n}$, for a positive integer $n \geq 2$, be the graph
obtained from $\ga$ by replacing each edge $e_i \in \ee{\ga}$ by $n$
edges $e_{i,1}, e_{i,2}, \dots, e_{i,n} $ of equal lengths
$\frac{\li}{n}$. (Here DA stands for ``Double Adjusted".) Then,
$\vv{\ga}=\vv{\ga^{DA,n}}$ and $\ell(\ga)=\ell(\ga^{DA,n})$. We set
$\ga^{DA}:=\ga^{DA,2}$. The following observations will enable us to
compute $\ta{\ga^{DA,n}}$ in terms of $\tg$.

We will denote by $R_{j}(\ga)$ the resistance between end points of
an edge $e_j$ of a graph $\ga$  when the edge $e_j$ is deleted from
$\ga$.

Figure \ref{fig doubling} shows the edge replacement for an edge
when $n=4$.
\begin{figure}
\centering
\includegraphics[scale=0.6]{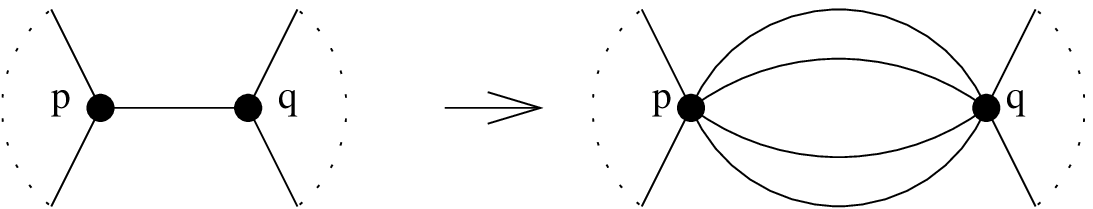} \caption{$\ga$ and
$\ga^{DA,4}$} \label{fig doubling}
\end{figure}
%
A graph with two vertices and $m$ edges connecting the vertices will be called a $m$-banana graph.
\begin{lemma}\label{lemparallel1} Let $\beta$ be a $m$-banana graph, as shown in Figure \ref{fig doubparred},
such that $\li=L$ for each $e_i \in \beta$. Let $r(x,y)$ be the
resistance function in $\beta$, and let $p$ and $q$ be the end points
of all edges. Then,
$r(p,q)=\frac{L}{m}.$
\end{lemma}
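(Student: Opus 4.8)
The plan is to compute the effective resistance $r(p,q)$ in the $m$-banana graph $\beta$ directly by iterated parallel reduction. The graph $\beta$ consists of exactly $m$ edges, each of length $L$, all joining the two vertices $p$ and $q$; since every edge runs from $p$ to $q$, they are all in parallel, so no other circuit reductions are needed.

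First I would invoke the Parallel Reduction principle recalled earlier in \secref{section two}: two edges between $p$ and $q$ of lengths $A$ and $B$ may be replaced by a single edge of length $\frac{AB}{A+B}$ without changing the effective resistance between $p$ and $q$. Applying this to combine two of the $L$-edges gives a single edge of length $\frac{L\cdot L}{L+L}=\frac{L}{2}$, leaving an $(m-1)$-banana configuration with edge lengths $\frac{L}{2}, L, L, \dots, L$. Iterating — or, more cleanly, arguing by induction on $m$ — one shows that after reducing the first $k$ of the original edges one is left with an edge of length $\frac{L}{k}$ in parallel with $m-k$ edges of length $L$; indeed combining an edge of length $\frac{L}{k}$ with one of length $L$ yields $\frac{(L/k)\cdot L}{(L/k)+L}=\frac{L}{k+1}$. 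Taking $k=m$ collapses $\beta$ to a single edge between $p$ and $q$ of length $\frac{L}{m}$, and since $r(p,q)$ for a single segment equals its length (as noted in the proof of \corref{lemtauformula2}), we conclude $r(p,q)=\frac{L}{m}$.

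Alternatively, and perhaps more transparently, I would note that parallel reduction is exactly the statement that conductances (reciprocals of resistances) add: each of the $m$ edges has resistance $L$ hence conductance $\frac{1}{L}$, the total conductance between $p$ and $q$ is $\frac{m}{L}$, and therefore $r(p,q)=\frac{L}{m}$. This is really just the base observation underlying the iterated computation.

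There is essentially no obstacle here: the only thing to be careful about is making the induction (or the "conductances add" claim) precise from the two-edge parallel reduction stated in the text, and noting that a single edge of length $\ell$ between its endpoints has resistance $\ell$. I would present the induction argument in one or two sentences and be done.
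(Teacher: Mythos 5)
Your proof is correct and follows essentially the same route as the paper: the paper's proof is exactly the "conductances add" computation $\frac{1}{r(p,q)}=\sum_{k=1}^m\frac{1}{L}=\frac{m}{L}$ via parallel circuit reduction, which is your second formulation (and your induction from the two-edge rule is just a more detailed justification of the same step).
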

\begin{proof}
By parallel circuit reduction,
$\frac{1}{r(p,q)}=\sum_{k=1}^m\frac{1}{L}=\frac{m}{L}$. Hence, the
result follows.
\end{proof}
\begin{figure}
\centering
\includegraphics[scale=0.6]{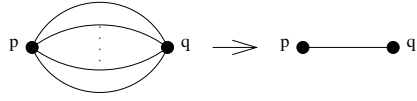} \caption{Circuit reduction
for a banana graph.} \label{fig doubparred}
\end{figure}
\begin{remark}\label{remresproportional}
If we divide each edge length of a graph $\ga$, with resistance
function $r(x,y)$, by a positive number $k$, we obtain a graph with
resistance function $\frac{r(x,y)}{k}$.
\end{remark}
\begin{corollary}\label{corparallel2}
Let $r(x,y)$ and $r^n(x,y)$ be the resistance functions in $\ga$ and
$\ga^{DA,n}$, respectively. Then, for any $p$ and $q$ $\in \vv{\ga}
$, $r^n(p,q)=\frac{r(p,q)}{n^2}.$
\end{corollary}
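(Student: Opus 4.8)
The plan is to reduce the claim for $\ga^{DA,n}$ to two effects that each scale the resistance function, and then compose them. Recall from Corollary~\ref{corparallel2}'s statement that $r^n$ denotes the resistance function in $\ga^{DA,n}$ and $r$ that in $\ga$. The key observation is that $\ga^{DA,n}$ is obtained from $\ga$ in two stages: first subdivide each edge $e_i$ into $n$ equal segments of length $\li/n$ (this does not change the graph at all, only its vertex set, so the resistance function between points of $\ga$ is unchanged — or, equivalently, think of it as dividing all edge lengths by $n$ and then scaling back), and second, replace each length-$\li/n$ edge by $n$ parallel copies of itself. I would instead phrase it as a single clean computation using the two lemmas already in hand.

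First I would apply Remark~\ref{remresproportional}: dividing every edge length of $\ga$ by $n$ produces a graph $\ga'$ with resistance function $\frac{1}{n}r(x,y)$ between any two vertices of $\ga$. Next, I observe that $\ga^{DA,n}$ is obtained from $\ga'$ by replacing each edge (of length $\li/n$) with $n$ parallel edges of the same length. By parallel circuit reduction — exactly the computation in Lemma~\ref{lemparallel1}, applied locally to each bundle of $n$ parallel edges of common length $\li/n$ — each such bundle has effective resistance $\frac{\li/n}{n} = \frac{\li}{n^2}$ between its endpoints, which is $\frac{1}{n}$ times the resistance $\li/n$ of the single edge it replaces. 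Since this replacement rescales the resistance of every edge-bundle by the same factor $\frac{1}{n}$, Remark~\ref{remresproportional} (or simply the fact that a circuit all of whose local resistances are scaled by a common factor has all its effective resistances scaled by that factor) gives that the resistance function of $\ga^{DA,n}$ is $\frac{1}{n}$ times that of $\ga'$. Combining, $r^n(p,q) = \frac{1}{n}\cdot\frac{1}{n}r(p,q) = \frac{r(p,q)}{n^2}$ for all $p,q\in\vv{\ga}$.

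The one point requiring a little care — and the step I expect to be the main obstacle to a fully rigorous write-up — is justifying that the two rescalings genuinely compose, i.e. that replacing the bundles by single edges of resistance $\frac{\li}{n^2}$ is legitimate as a subcircuit replacement and that the resulting graph is (resistance-equivalently) a copy of $\ga$ with all edge lengths divided by $n^2$. This follows from the basic principle of circuit reduction quoted in Section~\ref{section two}: replacing a subcircuit (here, the $n$ parallel edges between two fixed vertices) by another subcircuit with the same pairwise terminal resistances leaves all resistances between the original terminals unchanged. Applying this to each edge-bundle in turn turns $\ga^{DA,n}$ into a graph with the same vertex set as $\ga$, one edge per original edge $e_i$, of length $\li/n^2$, hence with resistance function $\frac{r(x,y)}{n^2}$ by Remark~\ref{remresproportional}. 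This finishes the proof.
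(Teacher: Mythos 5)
Your proposal is correct and follows essentially the same route as the paper: reduce each bundle of $n$ parallel edges of length $\li/n$ to a single edge of length $\li/n^2$ via parallel reduction (Lemma~\ref{lemparallel1} together with the subcircuit-replacement principle), observe that the resulting graph is $\ga$ with every edge length divided by $n^2$, and conclude with Remark~\ref{remresproportional}. Your intermediate two-stage scaling by $\frac{1}{n}\cdot\frac{1}{n}$ is just a repackaging of this same argument, and the point you flag as delicate is exactly what the paper handles by the quoted circuit-reduction principle.
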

\begin{proof}
By using \lemref{lemparallel1}, every group of $n$ edges $e_{i,1},
e_{i,2}, \dots, e_{i,n} $, in $\ee{\ga^{DA,n}}$, corresponding to
edge $e_i \in \ee{\ga}$ can be transformed into an edge $e_i'$. When
completed,
 this process results in a graph which can also be obtained
from $\ga$ by dividing each edge length $\li$ by $n^2$. Therefore,
the result follows from \remref{remresproportional}.
\end{proof}
\begin{theorem}\label{thmdouble}
Let $\ga$ be any graph, and let $\ga^{DA,n}$ be the related graph
described before. Then
\[
\ta{\ga^{DA,n}}=\frac{\tg}{n^2} +
\frac{\ell(\ga)}{12}\big(\frac{n-1}{n}\big)^2 +
\frac{n-1}{6n^2}\sum_{e_i \in \, \ee{\ga}}\frac{\li^2}{\li+\ri}.
\]
\end{theorem}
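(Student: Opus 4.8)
The plan is to apply \propref{proptau} directly to $\ga^{DA,n}$, after determining how the two ingredients of that formula --- namely $\li+\ri$ and $R_{a_i,p}-R_{b_i,p}$ --- transform when each edge $e_i$ of $\ga$ is replaced by its bundle $e_{i,1},\dots,e_{i,n}$ of $n$ parallel edges of length $\li/n$. Fix once and for all a vertex $p\in\vv{\ga}=\vv{\ga^{DA,n}}$, let $\pp,\qq$ denote the endpoints of $e_i$, and abbreviate $D_i:=R_{a_i,p}-R_{b_i,p}$, so that \propref{proptau} for $\ga$ reads $\ta{\ga}=\tfrac{1}{12}\sum_{e_i\in\ee{\ga}}\frac{\li^3+3\li D_i^2}{(\li+\ri)^2}$. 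Note that \propref{proptau} is phrased purely in terms of these edge quantities, so the (increased) valences in $\ga^{DA,n}$ play no role.

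The heart of the argument is a local circuit computation. For fixed $i$ all $n$ edges $e_{i,j}$ are interchangeable, hence each contributes the same amount to \propref{proptau} for $\ga^{DA,n}$, so it suffices to analyze one. Deleting $e_{i,j}$ from $\ga^{DA,n}$ leaves $(\ga-e_i)^{DA,n}$ together with the remaining $n-1$ parallel edges of the bundle, which by parallel reduction form a single edge of length $\rho:=\frac{\li}{n(n-1)}$ between $\pp$ and $\qq$. By \corref{corparallel2} and its proof (which realizes $(\ga-e_i)^{DA,n}$ as $\ga-e_i$ with every edge length divided by $n^2$), all pairwise resistances among $\pp,\qq,p$ in $(\ga-e_i)^{DA,n}$ are $\tfrac{1}{n^2}$ times the corresponding ones in $\ga-e_i$; equivalently, the $Y$-reduction of $(\ga-e_i)^{DA,n}$ with respect to $\pp,\qq,p$ has legs $\frac{R_{a_i,p}}{n^2},\frac{R_{b_i,p}}{n^2},\frac{R_{c_i,p}}{n^2}$. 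Adding the edge $\rho$ across this $Y$ and re-reducing is then a one-line computation, which should yield
\[
\frac{\li}{n}+R_{i,j}=\frac{\li(\li+\ri)}{n\li+(n-1)\ri},\qquad
R_{a_{i,j},p}-R_{b_{i,j},p}=\frac{\li D_i}{n\bigl(n\li+(n-1)\ri\bigr)} .
\]

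Substituting these into \propref{proptau} for $\ga^{DA,n}$ and using that each bundle contributes $n$ equal summands, I expect everything to collapse to
\[
\ta{\ga^{DA,n}}=\frac{1}{12n^2}\sum_{e_i\in\ee{\ga}}\frac{\li\,(S_i^2+3D_i^2)}{(\li+\ri)^2},\qquad S_i:=n\li+(n-1)\ri=n(\li+\ri)-\ri .
\]
Then one splits $S_i^2=\li^2+(S_i^2-\li^2)$ with $S_i^2-\li^2=(n-1)(\li+\ri)\bigl((n+1)\li+(n-1)\ri\bigr)$: the $\li^2+3D_i^2$ part reproduces $\frac{1}{n^2}\ta{\ga}$ edge by edge via \propref{proptau}, and in the leftover part the identity $\frac{(n+1)\li+(n-1)\ri}{\li+\ri}=(n-1)+\frac{2\li}{\li+\ri}$ together with $\sum_i\li=\ell(\ga)$ produces exactly $\frac{\ell(\ga)}{12}\bigl(\frac{n-1}{n}\bigr)^2+\frac{n-1}{6n^2}\sum_{e_i\in\ee{\ga}}\frac{\li^2}{\li+\ri}$, as claimed. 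The bridge case $\ri=\infty$ is not a genuine exception: then $\ga^{DA,n}-e_{i,j}$ is still connected, $R_{i,j}=\rho$ and $R_{a_{i,j},p}-R_{b_{i,j},p}=\mp\rho$ stay finite, and a direct evaluation of the $e_i$-term on both sides (reading $\frac{\li^2}{\li+\ri}$ as $0$ and the $\tg$-summand of $e_i$ as its limit $3\li$, per \propref{proptau}) gives the common value $\frac{\li((n-1)^2+3)}{12n^2}$.

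The step I expect to be the main obstacle is the local identification of $R_{a_{i,j},p}-R_{b_{i,j},p}$: one must track the voltage-function data $R_{a_i,p},R_{b_i,p}$, not merely the resistance $\ri$, through the edge replacement, and the factor $\frac{1}{n}$ versus $\frac{1}{n^2}$ appearing there is easy to get wrong. Everything after the displayed collapse is routine algebra, and the reduction from the general formula to a statement about each bundle only uses the symmetry among the $n$ parallel edges, which is immediate.
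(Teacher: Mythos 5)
Your proposal is correct, and it reaches the formula by a genuinely different route than the paper. The paper fixes a vertex $p$, reduces $\ga^{DA,n}$ (for $x$ on a new edge $e_{i,j}$) to the scaled $Y$ with legs $R_{a_i,p}/n^2$, $R_{b_i,p}/n^2$, $R_{c_i,p}/n^2$ plus the parallel remainder $d=\tfrac{\li}{n(n-1)}$, writes out $r^n(x,p)$ explicitly via a Delta--Wye and parallel reduction, and then integrates $\big(\ddx r^n(x,p)\big)^2$ over each new edge (a computation done in Maple) before matching the result against the summand of \propref{proptau}. You instead feed \propref{proptau} directly with the transformed deleted-edge data of $\ga^{DA,n}$: your two local identities are correct, since deleting $e_{i,j}$ leaves $(\ga-e_i)^{DA,n}$ (whose $Y$-legs scale by $1/n^2$ by \corref{corparallel2}) bridged by $\rho=\tfrac{\li}{n(n-1)}$, which gives $\tfrac{\li}{n}+R_{i,j}=\tfrac{\li(\li+\ri)}{n\li+(n-1)\ri}$ (this is exactly \lemref{lemdivisione}(i), which you get as a byproduct) and $R_{a_{i,j},p}-R_{b_{i,j},p}=\tfrac{\rho(a-b)}{a+b+\rho}=\tfrac{\li(R_{a_i,p}-R_{b_i,p})}{n(n\li+(n-1)\ri)}$; I checked the ensuing algebra with $S_i=n\li+(n-1)\ri$ and it collapses to the stated formula, and your direct evaluation $\tfrac{\li((n-1)^2+3)}{12n^2}$ settles the bridge case, which the paper's proof passes over in silence. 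What each approach buys: yours is integration-free and Maple-free, making the proof self-contained modulo \propref{proptau} and elementary circuit reductions, and it isolates exactly which circuit data must be tracked; the paper's integral method, though heavier here, is the template that generalizes to \thmref{thmmagnificent}, where the inserted graph $\beta$ is arbitrary and one genuinely needs the voltage-function integrals (\corref{corjrpq}, \lemref{lemorthogonality}, \thmref{thmremain}) rather than a closed-form reduction of the new edge data.
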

\begin{proof}
Let $p$ be a fixed vertex in $\vv{\ga}=\vv{\ga^{DA,n}}$. Whenever $x
\in e_{i,j}$ for some $j \in \{1, 2, \dots, n\}$,  we can transform
the graph $\ga^{DA,n}$ to the graph as shown in Figure \ref{fig
doublethmp} by using \corref{corparallel2}, \corref{corparallel2}
and circuit reduction for $\ga-e_i$. (Here $R_{a_{i,p}}$,
$R_{b_{i,p}}$ and $R_{c_{i,p}}$ are as in
 \propref{proptau}
 and so
$R_{a_{i,p}}+R_{b_{i,p}}=\ri$.)
\begin{figure}
\centering
\includegraphics[scale=0.6]{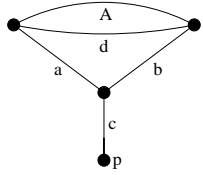} \caption{Circuit reduction
for $\ga^{DA,n}$ with reference to an edge and a point.} \label{fig
doublethmp}
\end{figure}

In Figure \ref{fig doublethmp}, we have
$a=\frac{R_{a_{i,p}}(\ga)}{n^2}, \,$
$b=\frac{R_{b_{i,p}}(\ga)}{n^2}, \,$
$c=\frac{R_{c_{i,p}}(\ga)}{n^2}$, $A$ is the edge $e_{i,j}$ of
length $\frac{\li}{n}$, and $d=\frac{\li}{n(n-1)}.$ Then, by using
a Delta-Wye transformation followed by parallel circuit reduction, we
derive the formula below for the effective resistance between a
point $x \in e_{i,j}$ and $p$, which will be denoted by $r^n(x,p)$.
\begin{equation}\label{eqndouble1}
r^n(x,p)= \frac{\big(x+\frac{ad}{a+b+d}\big)\big(\frac{\li}{n}-x+\frac{db}{a+b+d} \big)}
{\frac{\li}{n}+\frac{ad+db}{a+b+d}} +\frac{ab}{a+b+d}+c.
\end{equation}
By using \corref{lemtauformula},
\begin{equation}\label{eqndouble2}
\begin{split}
\ta{\ga^{DA,n}} &= \frac{1}{4} \int_{\ga^{DA,n}} \left( \ddx r(x,y)
\right)^2 dx .
\\ &= \frac{1}{4} \sum_{e_{i,j} \in \ee{\ga^{DA,n}}} \int_{e_{i,j}} \left( \ddx r(x,y)
\right)^2 dx .
\\ &= \frac{n}{4} \sum_{e_{i} \in \ee{\ga}} \int_{0}^{\frac{\li}{n}} \left( \ddx r(x,y)
\right)^2 dx, \quad \text{by symmetry within multiple edges.}
\end{split}
\end{equation}
This integral was computed using Maple, after substituting the derivative of
\eqnref{eqndouble1} and the values of $a$, $b$ and $d$ as above into
\eqnref{eqndouble2}. Let
\begin{equation}\label{eqndouble200}
\begin{split}
I_i:=&\int_{0}^{\frac{\li}{n}} \left( \ddx r(x,y)\right)^2 dx,\quad
\text{and let}
\\ J_i:= &\frac{\li}{12}(\frac{n-1}{n})^2+\frac{n-1}{6n^2}\frac{\li^2}{\li+\ri}+\frac{1}{12n^2}
\frac{\li^3+3\li(R_{a_{i,p}}-R_{b_{i,p}})^2} {(\li+\ri)^2}.
\end{split}
\end{equation}
Then, via Maple, $\frac{n}{4}I_i=J_i$. Inserting this into
\eqnref{eqndouble2} and using \propref{proptau}, we see that
$\ta{\ga^{DA,n}}=\sum_{e_{i} \in \ee{\ga}} J_i$. This yields the
theorem.
\end{proof}
In \secref{section general DA}, we will give a far-reaching
generalization of \thmref{thmdouble}.
\begin{corollary}\label{cordoubleN=2}
Let $\ga$ be a graph. Then,
\begin{equation*}
\begin{split}
\ta{\ga^{DA}} &=\frac{\tg}{4} + \frac{\ell(\ga)}{48} + \frac{1}{24}\sum_{e_i \in \, \ee{\ga}}\frac{\li^2}{\li+\ri}.
\\ \ta{\ga^{DA,3}} &=\frac{\tg}{9} +
\frac{\ell(\ga)}{27} + \frac{1}{27}\sum_{e_i \in \,
\ee{\ga}}\frac{\li^2}{\li+\ri}.
\end{split}
\end{equation*}
\end{corollary}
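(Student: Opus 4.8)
The plan is to derive both formulas as direct specializations of \thmref{thmdouble}, which already gives the general identity
\[
\ta{\ga^{DA,n}}=\frac{\tg}{n^2} + \frac{\ell(\ga)}{12}\Big(\frac{n-1}{n}\Big)^2 + \frac{n-1}{6n^2}\sum_{e_i \in \, \ee{\ga}}\frac{\li^2}{\li+\ri}
\]
for every integer $n \geq 2$. Since $\ga^{DA}$ is by definition $\ga^{DA,2}$, the first formula is obtained by setting $n=2$: then $\frac{1}{n^2}=\frac14$, $\frac{1}{12}\big(\frac{n-1}{n}\big)^2 = \frac{1}{12}\cdot\frac14 = \frac{1}{48}$, and $\frac{n-1}{6n^2} = \frac{1}{24}$, which gives exactly the stated expression for $\ta{\ga^{DA}}$.

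For the second formula I would set $n=3$ in the same identity: now $\frac{1}{n^2}=\frac19$, $\frac{1}{12}\big(\frac{n-1}{n}\big)^2 = \frac{1}{12}\cdot\frac49 = \frac{1}{27}$, and $\frac{n-1}{6n^2} = \frac{2}{54} = \frac{1}{27}$, yielding the claimed expression for $\ta{\ga^{DA,3}}$. No further input is needed beyond arithmetic simplification of these three coefficients.

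There is essentially no obstacle here: the corollary is purely a matter of substituting $n=2$ and $n=3$ into \thmref{thmdouble} and reducing the resulting rational coefficients. The only thing worth double-checking is the coefficient $\frac{n-1}{6n^2}$ of the sum $\sum_{e_i}\frac{\li^2}{\li+\ri}$, since it is the one place where a small slip could occur; but one verifies immediately that it equals $\frac{1}{24}$ for $n=2$ and $\frac{1}{27}$ for $n=3$, matching the statement.
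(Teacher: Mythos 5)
Your proposal is correct and coincides with the paper's own proof, which likewise obtains both identities by setting $n=2$ and $n=3$ in \thmref{thmdouble}; the arithmetic simplifications of the three coefficients are all verified accurately.
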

\begin{proof}
Setting $n=2$ and $n=3$ in \thmref{thmdouble} gives the equalities.
\end{proof}
\begin{figure}
\centering
\includegraphics[scale=0.4]{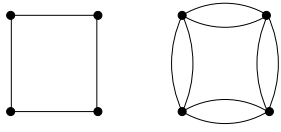} \caption{$\ga \rightarrow
\ga^{DA}$, doubling the edges.} \label{fig double2}
\end{figure}
\begin{corollary}\label{cordoublebanana}
Let $\ga$ be a banana graph with $n\geq 1$ edges that have equal
length. Then,
$$\ta{\ga}=\frac{\ell(\ga)}{4n^2} + \frac{\ell(\ga)}{12}\big(\frac{n-1}{n}\big)^2=\frac{\ell(\ga)}{12}\frac{n^2-2n+4}{n^2}
\geq \frac{\elg}{16}.$$
\end{corollary}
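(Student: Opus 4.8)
The plan is to realize the $n$-banana graph as an iterated-edge graph of a line segment and then invoke \thmref{thmdouble}. Let $\beta$ be the line segment (the $1$-banana graph) of total length $\ell(\ga)$; replacing its single edge by $n$ equal edges of length $\ell(\ga)/n$ reproduces $\ga$, so $\ga = \beta^{DA,n}$ in the notation of \secref{section DA}. For $n=1$ there is nothing to do: $\ga=\beta$ is a tree, so $\ta{\ga}=\ell(\ga)/4$ by \corref{lemtauformula2}, which matches the claimed formula at $n=1$ and clearly satisfies $\ta{\ga}\geq \ell(\ga)/16$. So from now on I assume $n\geq 2$.

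Now I feed the data of $\beta$ into \thmref{thmdouble}. Since $\beta$ is a tree, \corref{lemtauformula2} gives $\ta{\beta}=\ell(\ga)/4$. The only edge $e_1$ of $\beta$ is a bridge, so $R_1=\infty$, and under the limiting convention already in force in \propref{proptau} and \thmref{thmdouble} the summand $\tfrac{L_1^2}{L_1+R_1}$ — hence the whole sum $\sum_{e_i\in\ee{\beta}}\tfrac{\li^2}{\li+\ri}$ — equals $0$. Substituting $\ta{\beta}=\ell(\ga)/4$, $\ell(\beta)=\ell(\ga)$, and this vanishing sum into the formula of \thmref{thmdouble} yields
\[
\ta{\ga} \;=\; \ta{\beta^{DA,n}} \;=\; \frac{\ell(\ga)}{4n^2} + \frac{\ell(\ga)}{12}\Big(\frac{n-1}{n}\Big)^2,
\]
which is the first displayed equality. (If one prefers to avoid the infinite-resistance term altogether, the same formula comes straight out of \propref{proptau}: fix one of the two vertices $p$; then for each of the $n$ edges we have $\ri=\tfrac{\ell(\ga)/n}{\,n-1\,}$ by \lemref{lemparallel1} and $(R_{a_{i},p}-R_{b_{i},p})^2=\ri^2$ since $p$ is an endpoint, and the $n$ identical summands collapse.)

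What remains is elementary algebra. Putting the two terms over the common denominator $12n^2$ gives $\ta{\ga}=\ell(\ga)\big(3+(n-1)^2\big)/(12n^2)=\ell(\ga)\,(n^2-2n+4)/(12n^2)$, which is the middle equality. For the inequality, $\tfrac{n^2-2n+4}{12n^2}\geq\tfrac{1}{16}$ clears to $4n^2-32n+64\geq 0$, i.e.\ $(n-4)^2\geq 0$, which always holds, with equality precisely at $n=4$; hence $\ta{\ga}\geq \ell(\ga)/16$. There is no serious obstacle in this argument; the only point deserving a moment's care is the interpretation of the third summand of \thmref{thmdouble} when $R_1$ is infinite, and the parenthetical direct computation via \propref{proptau} removes even that.
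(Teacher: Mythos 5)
Your argument is correct and is essentially the paper's own proof: you realize the $n$-banana graph as $\beta^{DA,n}$ for a line segment $\beta$, apply \thmref{thmdouble} with $\ta{\beta}=\ell(\beta)/4$ and $R_1(\beta)=\infty$ killing the third term, and finish the inequality by elementary algebra (equality at $n=4$). Your separate treatment of $n=1$ and the parenthetical direct check via \propref{proptau} are minor refinements of the same route, not a different approach.
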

\begin{proof}
 Let $\beta$ be a line segment of length $\ell(\ga)$. Since $R_1(\beta)=\infty$,
$\ta{\beta^{DA,n}}=\frac{\ta{\beta}}{n^2} +
\frac{\ell(\beta)}{12}\big(\frac{n-1}{n}\big)^2+0$ by
\thmref{thmdouble}. On the other hand, we have $\beta^{DA,n}=\ga$,
$\ell(\beta)=\ell(\ga)$, and $\ta{\beta}=\frac{\ell(\beta)}{4}$
since $\beta$ is a tree. This gives the equalities we want to show, and the inequality follows by Calculus.
\end{proof}
By dividing each edge $e_i \in \ee{\ga}$ into $m$ equal subsegments and
considering the end points of the subsegments as new vertices, we obtain
a new graph which we denote by $\ga^m$. Note that $\ga$ and $\ga^m$
have the same topology, and $\ell(\ga)=\ell(\ga^m)$, but
$\#(\ee{\ga^m})=m \cdot \#(\ee{\ga}) =m \cdot e$ and
$\#(\vv{\ga^m})=\#(\vv{\ga})+(m-1) \cdot \#(\ee{\ga})=v+(m-1)
\cdot e$. Figure \ref{fig mdivision2} shows an example when $\ga$ is
a line segment with end points $p$ and $q$, and $m=3$.
\begin{figure}
\centering
\includegraphics[scale=0.5]{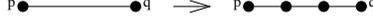} \caption{Division into $m=3$
equal parts.} \label{fig mdivision2}
\end{figure}
%

Suppose an edge $e_k \in \ee{\ga^m}$ has end points $p_k$ and $q_k$ that are in $\vv{\ga^m}$.
To avoid any potential misinterpretation, we will denote the length of $e_k$ by $L_k(\ga^m)$. Likewise, the
resistance between $p_k$ and $q_k$ in $\ga^m-e_k$ will be denoted by $R_k(\ga^m)$.
\begin{lemma}\label{lemdivision1}
Let $\ga$ be a graph, and $\ga^m$ be as defined. Then the following identities
hold:
\begin{equation*}
\begin{split}
& (i) \qquad  \sum_{e_k \in
\ee{\ga^m}}\frac{L_k(\ga^m)^2}{L_k(\ga^m)+R_k(\ga^m)}
=\frac{1}{m}\sum_{e_i \in \ee{\ga}}\frac{\li^2}{\li+\ri}.
\\ & (ii) \qquad  \sum_{e_k \in \ee{\ga^m}}\frac{L_k(\ga^m)^3}{(L_k(\ga^m)+R_k(\ga^m))^2}
=\frac{1}{m^2}\sum_{e_i \in \ee{\ga}}\frac{\li^3}{(\li+\ri)^2}.
\\ & (iii) \qquad  \sum_{e_k \in \ee{\ga^m}}\frac{L_k(\ga^m)R_k(\ga^m)}{L_k(\ga^m)+R_k(\ga^m)}
=\frac{m-1}{m}\ell(\ga)+\frac{1}{m}\sum_{e_i \in
\ee{\ga}}\frac{\li\ri}{\li+\ri}.
\end{split}
\end{equation*}
\end{lemma}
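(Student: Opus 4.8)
The plan is to analyze the effect of the subdivision operation $\ga \mapsto \ga^m$ edge by edge, using the fact that $\ga^m$ has the same topology as $\ga$ and the key observation that subdividing an edge does not alter effective resistances between the original vertices. The crucial structural fact is this: fix an edge $e_i \in \ee{\ga}$ with endpoints $\pp, \qq$, length $\li$, and let $\ri = R_i(\ga)$ be the resistance between $\pp$ and $\qq$ in $\ga - e_i$. When we subdivide $e_i$ into $m$ equal subsegments $e_{i,1}, \dots, e_{i,m}$, each of length $\frac{\li}{m}$, and pick one such subsegment $e_k = e_{i,j}$, then $\ga^m - e_k$ consists of the $m-1$ remaining subsegments of $e_i$ (total length $\frac{(m-1)\li}{m}$, arranged as two series paths of lengths $\frac{(j-1)\li}{m}$ and $\frac{(m-j)\li}{m}$ hanging off $\pp$ and $\qq$ respectively) together with all of $\ga - e_i$. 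By series reduction the two stubs are irrelevant to the resistance between the endpoints of $e_k$ only insofar as they are dangling, but actually the endpoints of $e_k$ are interior subdivision points, so $\ga^m - e_k$ still connects them through the path $\pp \leftrightarrow (\ga-e_i) \leftrightarrow \qq$ extended by the two stubs. Hence $R_k(\ga^m) = \frac{(j-1)\li}{m} + \frac{(m-j)\li}{m} + \ri = \frac{(m-1)\li}{m} + \ri$, which is \emph{independent of $j$}.

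With $L_k(\ga^m) = \frac{\li}{m}$ and $R_k(\ga^m) = \frac{(m-1)\li}{m} + \ri$ established for all $m$ subsegments of $e_i$, each of the three sums reduces to a clean computation. For (i): $L_k + R_k = \li + \ri$, so $\frac{L_k^2}{L_k + R_k} = \frac{\li^2/m^2}{\li+\ri}$, and summing over the $m$ subsegments of $e_i$ gives $\frac{1}{m}\cdot\frac{\li^2}{\li+\ri}$; summing over all $e_i$ yields the claim. For (ii): similarly $\frac{L_k^3}{(L_k+R_k)^2} = \frac{\li^3/m^3}{(\li+\ri)^2}$, and the $m$ subsegments contribute $\frac{1}{m^2}\cdot\frac{\li^3}{(\li+\ri)^2}$. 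For (iii): $\frac{L_k R_k}{L_k+R_k} = \frac{(\li/m)\big((m-1)\li/m + \ri\big)}{\li+\ri} = \frac{(m-1)\li^2/m^2 + \li\ri/m}{\li+\ri}$; summing over the $m$ subsegments of $e_i$ gives $\frac{(m-1)\li^2/m + \li\ri}{\li+\ri} = \frac{(m-1)\li}{m}\cdot\frac{\li}{\li+\ri} + \frac{\li\ri}{\li+\ri}$. To finish (iii) one then sums over all $e_i$ and uses $\sum_{e_i}\frac{\li}{\li+\ri}=g$ from \eqnref{eqn genus}... wait, that gives $\frac{(m-1)}{m}g$, not $\frac{(m-1)}{m}\ell(\ga)$. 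Let me reconsider: actually $\frac{(m-1)\li}{m}\cdot\frac{\li}{\li+\ri}$ is not $\frac{(m-1)\li}{m}$ unless $\ri = 0$; so the correct grouping must instead write $\frac{(m-1)\li^2/m + \li\ri}{\li+\ri} = \frac{(m-1)}{m}\li - \frac{(m-1)}{m}\cdot\frac{\li\ri}{\li+\ri} + \frac{\li\ri}{\li+\ri} = \frac{(m-1)}{m}\li + \frac{1}{m}\cdot\frac{\li\ri}{\li+\ri}$, using $\li - \frac{\li\ri}{\li+\ri} = \frac{\li^2}{\li+\ri}$ — hmm, that is not quite it either; one has $\frac{(m-1)\li^2/m+\li\ri}{\li+\ri}$, and writing $\li\ri = \li(\li+\ri) - \li^2$ gives $\frac{(m-1)\li^2/m + \li(\li+\ri) - \li^2}{\li+\ri} = \li + \frac{(m-1)\li^2/m - \li^2}{\li+\ri} = \li - \frac{\li^2/m}{\li+\ri}$. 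That is wrong sign again. The cleanest route: $\frac{L_kR_k}{L_k+R_k} = L_k - \frac{L_k^2}{L_k+R_k}$, so $\sum_{k}\frac{L_kR_k}{L_k+R_k} = \ell(\ga^m) - \sum_k \frac{L_k^2}{L_k+R_k} = \ell(\ga) - \frac{1}{m}\sum_{e_i}\frac{\li^2}{\li+\ri}$ by part (i); then writing $\ell(\ga) = \sum_{e_i}\li$ and $\li - \frac{1}{m}\cdot\frac{\li^2}{\li+\ri} = \frac{m-1}{m}\li + \frac{1}{m}\big(\li - \frac{\li^2}{\li+\ri}\big) = \frac{m-1}{m}\li + \frac{1}{m}\cdot\frac{\li\ri}{\li+\ri}$ gives exactly the stated identity (iii).

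I would therefore organize the proof as: first establish the resistance formula $R_k(\ga^m) = \frac{(m-1)\li}{m} + \ri$ for a subsegment $e_k$ of $e_i$ via series reduction and the principle that resistance between $\pp,\qq$ is unchanged under subdivision of other edges; then derive (i) and (ii) by the direct per-edge summation; then derive (iii) from (i) via the algebraic identity $\frac{L_kR_k}{L_k+R_k} = L_k - \frac{L_k^2}{L_k+R_k}$ together with $\ell(\ga^m)=\ell(\ga)$. The main obstacle is purely the first step — correctly identifying $R_k(\ga^m)$, in particular seeing that the two ``dangling stubs'' of the subdivided edge contribute additively by series reduction and that the result is independent of which subsegment $j$ one picks — after which everything is bookkeeping. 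A minor subtlety worth a sentence: if $\ga - e_i$ is disconnected (so $\ri = \infty$), then $R_k(\ga^m) = \infty$ as well, the corresponding summands in (i)–(iii) are interpreted as limits ($\frac{\li^2}{\li+\ri}\to 0$ in (i), $\frac{\li^3}{(\li+\ri)^2}\to 0$ in (ii), $\frac{\li\ri}{\li+\ri}\to\li$ in (iii)), and the identities persist; this matches the limiting conventions already in force in \propref{proptau} and \thmref{thmdouble}.
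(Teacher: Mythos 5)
Your proposal is correct and follows essentially the same route as the paper: the whole lemma rests on the observation that a subsegment $e_k$ of $e_i$ has $L_k(\ga^m)=\frac{\li}{m}$ and $R_k(\ga^m)=\frac{m-1}{m}\li+\ri$ (so $L_k+R_k=\li+\ri$), after which (i)--(iii) are bookkeeping, exactly as in the paper's proof. The mid-proof detours in your part (iii) are harmless --- the identity $\frac{L_kR_k}{L_k+R_k}=L_k-\frac{L_k^2}{L_k+R_k}$ combined with $\ell(\ga^m)=\ell(\ga)$ and part (i) that you settle on is a valid (and clean) way to finish, and your remark on the $\ri=\infty$ convention is consistent with the paper's usage.
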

\begin{proof}
Proof of part $(i)$:
Note that subdivision of an edge in $\ee{\ga}$ results in $m$ edges in $\ee{\ga^m}$. If $e_k \in \ee{\ga^m}$ is one of the
edges corresponding to an edge $e_i \in \ee{\ga}$, then we have
$L_k(\ga^m)=\frac{\li}{m}$ and
$R_k(\ga^m)=\frac{m-1}{m}\li+\ri$.
Therefore, $L_k(\ga^m)+R_k(\ga^m)=\li+\ri$ giving
\begin{equation*}
\sum_{e_k \in
\ee{\ga^m}}\frac{L_k(\ga^m)^2}{L_k(\ga^m)+R_k(\ga^m)}
=\sum_{j=1}^m \Big(\frac{1}{m^2}\sum_{e_i
\in \ee{\ga}}\frac{\li^2}{\li+\ri} \Big)
=\frac{1}{m}\sum_{e_i \in \ee{\ga}}\frac{\li^2}{\li+\ri}.
\end{equation*}
The proofs of parts $(ii)$ and $(iii)$ follow by similar
calculations.
\end{proof}
\begin{theorem}\label{thmdoubledivision}
Let $\ga$ be a graph, and let $\ga^m$ be as above. Then,
$$\ta{(\ga^m)^{DA,n}}=\frac{\tg}{n^2} +
\frac{\ell(\ga)}{12}\big(\frac{n-1}{n}\big)^2 +
\frac{n-1}{6mn^2}\sum_{e_i \in \, \ee{\ga}}\frac{\li^2}{\li+\ri}.$$
\end{theorem}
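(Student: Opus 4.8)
The plan is to combine the two edge-refinement operations already analyzed: the subdivision operation $\ga \mapsto \ga^m$ (studied via \lemref{lemdivision1}) and the multiple-edge operation $\beta \mapsto \beta^{DA,n}$ (studied in \thmref{thmdouble}). Concretely, $(\ga^m)^{DA,n}$ is obtained by first subdividing each edge of $\ga$ into $m$ equal pieces and then replacing each of the resulting $me$ edges by $n$ parallel copies. So the first step is to apply \thmref{thmdouble} directly to the graph $\ga^m$ in place of $\ga$, which gives
\[
\ta{(\ga^m)^{DA,n}}=\frac{\ta{\ga^m}}{n^2} + \frac{\ell(\ga^m)}{12}\Big(\frac{n-1}{n}\Big)^2 + \frac{n-1}{6n^2}\sum_{e_k \in \, \ee{\ga^m}}\frac{L_k(\ga^m)^2}{L_k(\ga^m)+R_k(\ga^m)}.
\]

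The next step is to rewrite each of the three terms in terms of data on $\ga$ itself. Since $\ga$ and $\ga^m$ have the same topology and total length, $\ell(\ga^m)=\ell(\ga)$, so the middle term is already in final form. For the last term, I would invoke \lemref{lemdivision1}(i), which says exactly that $\sum_{e_k \in \ee{\ga^m}} L_k(\ga^m)^2/(L_k(\ga^m)+R_k(\ga^m)) = \frac{1}{m}\sum_{e_i \in \ee{\ga}} \li^2/(\li+\ri)$; substituting this turns the last term into $\frac{n-1}{6mn^2}\sum_{e_i\in\ee{\ga}} \li^2/(\li+\ri)$, which matches the claimed formula. The only remaining point is the first term: I need $\ta{\ga^m}=\tg$, i.e. that pure subdivision (inserting valence-2 vertices) does not change the tau constant. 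This is precisely the valence property of $\tg$ recorded in \remref{remvalence} (equivalently, it follows from \propref{proptau} together with \lemref{lemdivision1}(ii) and \eqnref{eqn genus}): subdividing edges only adds vertices of valence $2$, and $\tg$ depends only on the topology and edge-length distribution of $\ga$. Hence $\ta{\ga^m}/n^2 = \tg/n^2$, and assembling the three terms yields the stated identity.

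I do not expect any serious obstacle here; the theorem is essentially a formal composition of two results already established in the excerpt, and the bookkeeping is light once one observes that $(\ga^m)^{DA,n}$ literally equals $(\ga^m)$ fed into the $\mathrm{DA},n$ construction. The one subtlety worth stating carefully is the invariance $\ta{\ga^m}=\tg$: one must be sure the new vertices introduced by subdivision all have valence $2$ (which they do, being interior points of former edges), so that \remref{remvalence} applies verbatim. If $\ga$ has self-loops one should note, as in \lemref{lemdivision1}, that subdividing a self-loop still produces $m$ honest edges and the same resistance relations hold, so the argument is unaffected. A degenerate-$\ri$ remark analogous to the ones in \propref{proptau} and \thmref{thmdouble} could be appended if desired, but since $\ga^m - e_k$ is connected whenever $\ga - e_i$ is (and the summands in \lemref{lemdivision1}(i) are the ``safe'' ones involving $\li^2/(\li+\ri)$, whose limit as $\ri\to\infty$ is $0$... actually $\li$), the formula extends to that case by the same limiting convention.
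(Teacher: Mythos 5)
Your proof is correct and is essentially the paper's own argument: apply \thmref{thmdouble} to $\ga^m$, then use $\ell(\ga^m)=\ell(\ga)$, $\ta{\ga^m}=\tg$ (the valence property of the tau constant), and part (i) of \lemref{lemdivision1} to rewrite the last sum. (Only the final parenthetical aside contains a slip — $\frac{\li^2}{\li+\ri}\to 0$, not $\li$, as $\ri\to\infty$ — but that remark plays no role in the argument.)
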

\begin{proof}
Applying \thmref{thmdouble} to $\ga^m$ gives
\begin{equation}\label{eqndivision2}
\begin{split}
\ta{(\ga^m)^{DA,n}} =\frac{\ta{\ga^m}}{n^2} +
\frac{\ell(\ga^m)}{12}\big(\frac{n-1}{n}\big)^2
+\frac{n-1}{6n^2}\sum_{e_k \in \,
\ee{\ga^m}}\frac{L_k(\ga^m)^2}{L_k(\ga^m)+R_k(\ga^m)}.
\end{split}
\end{equation}
Since $\ell(\ga^m)=\ell(\ga)$ and $\ta{\ga^m}=\tg$,
the result follows from part $(i)$ of \lemref{lemdivision1}.
\end{proof}
\begin{example}
Let $\ga$ be the circle graph with one vertex, and let $\ga^m$ be as above (see also \figref{fig curvewdm2}). Since $\tg =\frac{\elg}{12}$ and
$\sum_{e_i \in \ee{\ga}}\frac{\li^2}{\li+\ri}=\elg$, we have $\ta{(\ga^m)^{DA,n}}=\big(\frac{(n-1)^2+1}{12 n^2} +\frac{n-1}{6 m n^2}\big)\elg$ by using \thmref{thmdoubledivision}. In particular, we have $\ta{(\ga^m)^{DA}}=\frac{1}{24}\elg +\frac{1}{24 m}\elg$.
\end{example}
\begin{figure}
\centering
\includegraphics[scale=0.35]{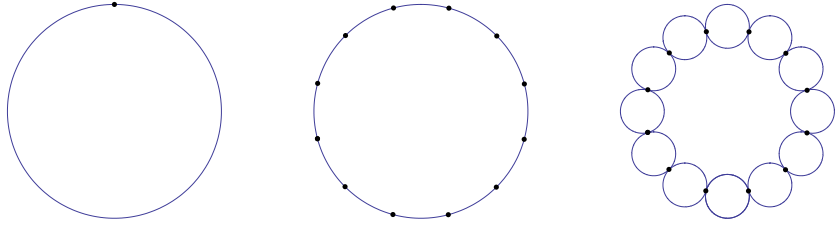} \caption{Circle graph, circle with multi vertices and the corresponding double graph.} \label{fig curvewdm2}
\end{figure}
\begin{lemma}\label{lemdivisione}
Let $\ga$ be a graph. The following identities hold:
\begin{equation*}
\begin{split}
& (i) \qquad  \ri(\ga^{DA,n})=\frac{1}{n}\frac{\li \ri}{(n\li+(n-1)\ri)}.
\\ & (ii) \qquad \sum_{e_i \in \,
\ee{\ga^{DA,n}}}\frac{\li(\ga^{DA,n})^2}{\li(\ga^{DA,n})+\ri(\ga^{DA,n})}
 = \frac{n-1}{n}\ell(\ga)+\frac{1}{n}\sum_{e_i \in \,
\ee{\ga}}\frac{\li^2}{\li+\ri}.
\end{split}
\end{equation*}
\end{lemma}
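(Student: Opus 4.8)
The plan is to compute the relevant resistances in $\ga^{DA,n}$ directly by circuit reduction, and then assemble the sum in part $(ii)$ from part $(i)$ together with already-established identities. For part $(i)$, fix an edge $e_i \in \ee{\ga}$ with endpoints $\pp,\qq$, and consider one of its $n$ replacement edges, say $e_{i,1}$, of length $\frac{\li}{n}$. To compute the resistance $\ri(\ga^{DA,n})$ between the endpoints of $e_{i,1}$ in $\ga^{DA,n}-e_{i,1}$, I would note that this graph consists of (a) the remaining $n-1$ parallel copies $e_{i,2},\dots,e_{i,n}$ each of length $\frac{\li}{n}$ joining $\pp$ and $\qq$, together with (b) a subcircuit between $\pp$ and $\qq$ that is exactly $\ga^{DA,n}$ with all $n$ copies of $e_i$ removed. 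By \corref{corparallel2} (or rather the construction in its proof), the latter subcircuit has the same resistance between $\pp$ and $\qq$ as $\ga-e_i$ after dividing all edge lengths by $n^2$, namely $\frac{\ri}{n^2}$. The first piece, by parallel reduction (\lemref{lemparallel1} in spirit), has resistance $\frac{1}{n-1}\cdot\frac{\li}{n} = \frac{\li}{n(n-1)}$. These two are in parallel, so
\begin{equation*}
\ri(\ga^{DA,n}) = \left(\frac{n(n-1)}{\li} + \frac{n^2}{\ri}\right)^{-1} = \frac{\li\ri}{n(n-1)\ri + n^2\li} = \frac{1}{n}\cdot\frac{\li\ri}{n\li + (n-1)\ri},
\end{equation*}
which is part $(i)$. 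One must separately check the degenerate cases where $\ga-e_i$ is disconnected ($\ri=\infty$), where the formula should be read as $\frac{\li}{n(n-1)}$, matching the parallel contribution alone.

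For part $(ii)$, I would first compute the individual summand. Writing $\li(\ga^{DA,n}) = \frac{\li}{n}$ and using part $(i)$,
\begin{equation*}
\li(\ga^{DA,n}) + \ri(\ga^{DA,n}) = \frac{\li}{n} + \frac{1}{n}\cdot\frac{\li\ri}{n\li+(n-1)\ri} = \frac{\li}{n}\cdot\frac{n\li + (n-1)\ri + \ri}{n\li+(n-1)\ri} = \frac{\li}{n}\cdot\frac{n(\li+\ri)}{n\li+(n-1)\ri},
\end{equation*}
so that
\begin{equation*}
\frac{\li(\ga^{DA,n})^2}{\li(\ga^{DA,n})+\ri(\ga^{DA,n})} = \frac{\li^2/n^2}{\frac{\li}{n}\cdot\frac{n(\li+\ri)}{n\li+(n-1)\ri}} = \frac{\li}{n^2}\cdot\frac{n\li+(n-1)\ri}{\li+\ri} = \frac{1}{n^2}\left(n\li - \frac{\ri\li}{\li+\ri}\right)\cdot\frac{1}{1}\cdot\ \ ;
\end{equation*}
more cleanly, $\frac{n\li+(n-1)\ri}{\li+\ri} = n - \frac{\ri}{\li+\ri}$, giving summand $\frac{1}{n^2}\bigl(n\li - \frac{\li\ri}{\li+\ri}\bigr) = \frac{\li}{n} - \frac{1}{n^2}\cdot\frac{\li\ri}{\li+\ri}$. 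Each edge $e_i$ of $\ga$ contributes $n$ such replacement edges to $\ga^{DA,n}$, all giving the same value, so
\begin{equation*}
\sum_{e_i \in \ee{\ga^{DA,n}}}\frac{\li(\ga^{DA,n})^2}{\li(\ga^{DA,n})+\ri(\ga^{DA,n})} = \sum_{e_i \in \ee{\ga}} n\left(\frac{\li}{n} - \frac{1}{n^2}\cdot\frac{\li\ri}{\li+\ri}\right) = \ell(\ga) - \frac{1}{n}\sum_{e_i\in\ee{\ga}}\frac{\li\ri}{\li+\ri}.
\end{equation*}
Finally, rewriting $\frac{\li\ri}{\li+\ri} = \li - \frac{\li^2}{\li+\ri}$ turns this into $\ell(\ga) - \frac{1}{n}\ell(\ga) + \frac{1}{n}\sum_{e_i\in\ee{\ga}}\frac{\li^2}{\li+\ri} = \frac{n-1}{n}\ell(\ga) + \frac{1}{n}\sum_{e_i\in\ee{\ga}}\frac{\li^2}{\li+\ri}$, which is part $(ii)$.

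The main obstacle, and the only genuinely delicate point, is justifying the resistance computation in part $(i)$ — specifically the claim that deleting all $n$ copies of $e_i$ from $\ga^{DA,n}$ yields a subcircuit with $\pp$–$\qq$ resistance exactly $\frac{\ri}{n^2}$, and that this subcircuit sits in parallel with the remaining $n-1$ banana edges. This requires observing that $\ga^{DA,n}-\{e_{i,1},\dots,e_{i,n}\}$ is obtained from $\ga - e_i$ by the same "replace each edge by $n$ parallels of length $\frac{1}{n}$" operation, whose endpoint-resistance scaling by $\frac{1}{n^2}$ is exactly the content of the proof of \corref{corparallel2} via \remref{remresproportional}; and that these two subnetworks share only the nodes $\pp$ and $\qq$, so the series/parallel rules apply. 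Once part $(i)$ is in hand, part $(ii)$ is a routine algebraic manipulation using \eqnref{eqn genus}-style rewriting of $\frac{\li\ri}{\li+\ri}$, with the degenerate (disconnected) edges handled by the convention already fixed in the text. I would also remark that part $(ii)$ can alternatively be deduced by combining \lemref{lemdivision1}$(iii)$ with the DA-construction, as a consistency check.
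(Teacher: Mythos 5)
Your proof is correct and follows essentially the same route as the paper: part $(i)$ is the parallel combination of the remaining $(n-1)$-banana (resistance $\frac{\li}{n(n-1)}$) with the DA-transformed $\ga-e_i$ (resistance $\frac{\ri}{n^2}$ via \corref{corparallel2}), which is exactly the quantity $\frac{d(a+b)}{d+a+b}$ the paper takes from the proof of \thmref{thmdouble}, and part $(ii)$ is the same substitution of $(i)$ followed by routine algebra. No gaps.
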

\begin{proof}The proof of $(i)$:
By the proof of \thmref{thmdouble} with its notation $a$, $b$, $d$,
\begin{equation*}
\begin{split}
\ri(\ga^{DA,n})=\frac{d(a+b)}{d+a+b}= \frac{\frac{\li}{n(n-1)}
\frac{\ri}{n^2}}{\frac{\li}{n(n-1)}+\frac{\ri}{n^2}}
=\frac{1}{n}\frac{\li \ri}{(n\li+(n-1)\ri)}.
\end{split}
\end{equation*}
The proof of $(ii)$: By using part $(i)$,
\begin{equation*}
\begin{split}
\sum_{e_i \in \,
\ee{\ga^{DA,n}}}\frac{\li(\ga^{DA,n})^2}{\li(\ga^{DA,n})+\ri(\ga^{DA,n})}
=n \sum_{e_i \in \,
\ee{\ga}}\frac{(\frac{\li}{n})^2}{\frac{\li}{n}+\frac{1}{n}\frac{\li\ri}{n\li+(n-1)\ri}}.
\end{split}
\end{equation*}
Then the result follows.
%
%
\end{proof}
\begin{theorem}\label{thmdoubleimp}
Let $\ga$ be a graph with $\ell(\ga)=1$. Suppose $\ta{\ga^{DA,n}}
\geq \frac{1}{108}\big(\frac{3n-2}{n}\big)^2$. Then $\tg \geq
\frac{1}{108}$.
\end{theorem}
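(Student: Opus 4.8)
The plan is to play the exact formula for $\ta{\ga^{DA,n}}$ from \thmref{thmdouble} against a second, quadratic lower bound for $\tg$, the two estimates being calibrated so that their worst case is exactly $\tfrac{1}{108}$. Write $S := \sum_{e_i \in \ee{\ga}}\frac{\li^2}{\li+\ri}$ (with the usual convention that a bridge contributes $0$). Since $\ell(\ga)=1$, \thmref{thmdouble} says
\[
\tg \;=\; n^2\,\ta{\ga^{DA,n}} \;-\; \frac{(n-1)^2}{12} \;-\; \frac{n-1}{6}\,S ,
\]
so the hypothesis $\ta{\ga^{DA,n}} \geq \frac{1}{108}\big(\frac{3n-2}{n}\big)^2$ gives
\[
\tg \;\geq\; \frac{(3n-2)^2}{108} - \frac{(n-1)^2}{12} - \frac{n-1}{6}\,S \;=:\; f(S),
\]
a linear function of $S$ which, since $n\geq 2$, is strictly decreasing.

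For the complementary bound I would drop the nonnegative terms $3\li(R_{a_{i},p}-R_{b_{i},p})^2$ in \propref{proptau} to get $\tg \geq \frac{1}{12}\sum_{e_i\in\ee{\ga}} \frac{\li^3}{(\li+\ri)^2}$, and then apply \eqnref{eqnthmcauchy1} (Cauchy--Schwarz, legitimate here because $\sum_i\li=\ell(\ga)=1$) to obtain
\[
\tg \;\geq\; \frac{1}{12}\Big(\sum_{e_i\in\ee{\ga}} \frac{\li^2}{\li+\ri}\Big)^2 \;=\; \frac{S^2}{12} \;=:\; g(S),
\]
which is nondecreasing for $S\geq 0$. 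Bridges need no separate treatment: they only enlarge the quantity in \propref{proptau} and the sum $\sum_i\li$, and contribute $0$ to $S$.

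Finally, a one-line computation shows $f$ and $g$ meet at $S=\tfrac13$: the equation $f(S)=g(S)$ rearranges to $(S+(n-1))^2 = \tfrac{(3n-2)^2}{9}$, whose relevant root is $S=\tfrac13$, at which $f(\tfrac13)=g(\tfrac13)=\tfrac{1}{108}$. Since the actual value of $S$ satisfies both $\tg\geq f(S)$ and $\tg\geq g(S)$, the proof closes with a dichotomy: if $S\geq\tfrac13$ then $\tg\geq g(S)\geq g(\tfrac13)=\tfrac{1}{108}$ because $g$ is nondecreasing, while if $S\leq\tfrac13$ then $\tg\geq f(S)\geq f(\tfrac13)=\tfrac{1}{108}$ because $f$ is decreasing. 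Either way $\tg\geq\tfrac{1}{108}$.

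The load-bearing observation — and the step needing the most care — is that \thmref{thmdouble} by itself bounds $\tg$ below by a quantity that \emph{decreases} in $S$, so pairing it with the crude bound $S\leq 1$ yields nothing useful; one must add the quadratic estimate $\tg\geq S^2/12$, and the precise factor $\frac{3n-2}{n}$ in the hypothesis is exactly what makes the decreasing and increasing bounds cross at height $\frac{1}{108}$.
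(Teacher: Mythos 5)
Your proposal is correct and follows essentially the same route as the paper: both convert the hypothesis via \thmref{thmdouble} into the decreasing linear bound in $S=\sum_{e_i\in\ee{\ga}}\frac{\li^2}{\li+\ri}$, pair it with the quadratic bound $\tg\geq \frac{S^2}{12}$ coming from \propref{proptau} and the Cauchy--Schwarz inequality (\ref{eqnthmcauchy1}) as in the proof of \thmref{thmeqlength}, and note that the line and parabola meet at $\big(\tfrac13,\tfrac{1}{108}\big)$. Your explicit monotonicity dichotomy merely spells out the final geometric step that the paper leaves implicit.
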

\begin{proof}
By \thmref{thmdouble},
$\tg=n^2\ta{\ga^{DA,n}}-\frac{(n-1)^2}{12}-\frac{n-1}{6}\sum_{e_i \in \,
\ee{\ga}}\frac{\li^2}{\li+\ri}.$ On the other hand, by
the proof of \thmref{thmeqlength}
 $\tg \geq  \frac{1}{12}\Big(\sum_{e_i \in \,
\ee{\ga}}\frac{\li^2}{\li+\ri}\Big)^2.$ Let $x=\sum_{e_i \in \,
\ee{\ga}}\frac{\li^2}{\li+\ri}$ and $y=\tg$; then we have
\begin{equation}\label{eqndoubleimp}
y \geq \frac{(3n-2)^2}{108}-\frac{(n-1)^2}{12}-\frac{n-1}{6}x
\quad \text{and } y \geq \frac{x^2}{12}.
\end{equation}
The line and the parabola, obtained by considering inequalities in
\eqnref{eqndoubleimp} as equalities, in $xy-$plane intersect at
$x=\frac{1}{3}$ and $y=\frac{1}{108}$, since $n \geq 1$. Hence,
(\ref{eqndoubleimp}) implies the result.
\end{proof}
\begin{corollary}\label{cordoubleimp1}
Let $\ga$ be a graph with $\ell(\ga)=1$. If $\ta{\ga^{DA}} \geq
\frac{1}{27}$ or $\ta{\ga^{DA,3}} \geq \frac{49}{972}$,
then $\tg \geq \frac{1}{108}$.
\end{corollary}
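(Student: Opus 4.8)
The plan is to obtain Corollary~\ref{cordoubleimp1} as the special cases $n=2$ and $n=3$ of \thmref{thmdoubleimp}. First I would evaluate the threshold $\frac{1}{108}\big(\frac{3n-2}{n}\big)^2$ that appears in the hypothesis of \thmref{thmdoubleimp}. For $n=2$ one gets $\frac{1}{108}\cdot\big(\frac{4}{2}\big)^2=\frac{4}{108}=\frac{1}{27}$, and for $n=3$ one gets $\frac{1}{108}\cdot\big(\frac{7}{3}\big)^2=\frac{49}{972}$. Since $\ga^{DA}=\ga^{DA,2}$ by definition, the hypothesis $\ta{\ga^{DA}}\geq\frac{1}{27}$ is precisely the hypothesis of \thmref{thmdoubleimp} with $n=2$, so that theorem gives $\tg\geq\frac{1}{108}$; likewise $\ta{\ga^{DA,3}}\geq\frac{49}{972}$ is the hypothesis of \thmref{thmdoubleimp} with $n=3$, which again yields $\tg\geq\frac{1}{108}$.

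There is essentially no obstacle here: the entire content of the corollary is carried by \thmref{thmdoubleimp}, and all that remains is to confirm the two numerical values of the threshold. If one prefers a self-contained argument instead of invoking \thmref{thmdoubleimp}, one can reprise its proof verbatim, replacing the appeal to \thmref{thmdouble} by the $n=2$ (resp.\ $n=3$) identity of \corref{cordoubleN=2}, retaining the bound $\tg\geq\frac{1}{12}\big(\sum_{e_i\in\ee{\ga}}\frac{\li^2}{\li+\ri}\big)^2$ coming from the proof of \thmref{thmeqlength}, and then intersecting the resulting line and parabola in the $xy$-plane (with intersection point $x=\tfrac13$, $y=\tfrac{1}{108}$) to read off $\tg\geq\frac{1}{108}$.
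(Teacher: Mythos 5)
Your proposal is correct and matches the paper's proof, which likewise obtains the corollary by specializing \thmref{thmdoubleimp} to $n=2$ and $n=3$; the numerical checks $\frac{1}{108}(\frac{4}{2})^2=\frac{1}{27}$ and $\frac{1}{108}(\frac{7}{3})^2=\frac{49}{972}$ are exactly what is needed.
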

\begin{proof}
The result follows from
\thmref{thmdoubleimp}.
\end{proof}
%
%
In section \secref{section general DA}, we will give far-reaching generalizations of
\thmref{thmdouble} and \thmref{thmdoubledivision}.

\section{The tau constant and graph immersions}\label{section general DA}
In this section, we will define another graph operation which will be
a generalization of the process of obtaining  $\ga^{DA,n}$ from a graph $\ga$ as
presented in \secref{section DA}. Let $r(x,y)$ and $r^n(x,y)$ be the
resistance functions on $\ga$ and $\ga^{DA,n}$, respectively. First
we reinterpret the way we constructed $\ga^{DA,n}$ in order to
clarify how to generalize it.

Given a graph $\ga$ and a $n$-banana graph $\beta_n$ (the graph with
$n$ parallel edges of equal length between vertices $p$ and $q$) we
replaced each edge of $\ga$ by $\beta_{n,i}$, a copy of $\beta_n$
scaled so that each edge had length $n \li$. Then, we divided each
edge length by $n^2$ to have $\ell(\ga^{DA,n})=\ell(\ga)$. In this
operation the following features were important in enabling us to
compute $\ta{\ga^{DA,n}}$ in terms of $\tg$:
\begin{itemize}
\item We started with a graph $\ga$ and a graph $\beta_n$ with distinguished points $p$ and $q$.
\item We replaced each edge $e_i$ of $\ga$ by $\beta_{n,i}$, a copy of
$\beta_n$, scaled so that $r_{\beta_{n,i}}(p,q)=\li$.

\item After all the edge replacements were done we obtained a graph which had total
length $n^2\ell(\ga)$. We divided each edge length of this graph by
$n^2$ to obtain $\ga^{DA,n}$, so that $\ell(\ga^{DA,n})=\ell(\ga)$.
\item We kept the vertex set of $\ga$ in the vertex set of $\ga^{DA,n}$,
 $\vv{\ga}=\vv{\ga^{DA,n}}$ and for any $p$, $q$ in $\vv{\ga}$, we had
$r^n(p,q)=\frac{r(p,q)}{n^2}.$
\end{itemize}
\vskip 0.1 in Now consider the following more general setup.

Let $\ga$ and $\beta$ be two given graphs with
$\ell(\ga)=\ell(\beta)=1$. Let $p$ and $q$ be any two distinct
points in $\beta$. For every edge $e_i \in \ee{\ga}$, if $e_i$ has
length $\li$, let $\beta_i$ be the graph obtained from $\beta$ by
multiplying each edge length in $\beta$ by
$\frac{\li}{r_{\beta}(p,q)}$ where $r_{\beta}(x,y)$ is the
resistance function in $\beta$. Then
$\ell(\beta_i)=\frac{\li}{r_{\beta}(p,q)}$,  and if
$r_{\beta_i}(x,y)$ is the resistance function in $\beta_i$, then
$r_{\beta_i}(p,q)=\li$. For each edge $e_i \in \ee{\ga}$, if $e_i$
has end points $\pp$ and $\qq$, we replace $e_i$ by $\beta_i$,
identify $\pp$ with $p$ and $\qq$ with $q$. (The choice of the
labeling of the end points of $e_i$ does not change the
$\tau$-constant of the graph obtained, as the computations below
will show clearly. However, we will assume that a labeling of the end
points is given, so that the graph obtained at the end of edge
replacements will be uniquely determined.) This gives a new graph
which we will denote $\ga \star \beta_{p,q}$, and call ``the full
immersion of $\beta$ into $\ga$ with respect to $p$ and $q$'' (see
Figure \ref{fig double3}). Note that
\begin{equation}\label{eqnmag0}
\ell(\ga \star \beta_{p,q})=\sum_{e_i \in
\ee{\ga}}\ell(\beta_i)=\sum_{e_i \in
\ee{\ga}}\frac{\li}{r_{\beta}(p,q)}=\frac{\ell(\ga)}{r_{\beta}(p,q)}=\frac{1}{r_{\beta}(p,q)}.
\end{equation}
Having constructed $\ga \star \beta_{p,q}$, we divide each edge
length by $\ell(\ga \star \beta_{p,q})$, obtaining the normalized
graph $(\ga \star \beta_{p,q})^N$, with  $\ell((\ga \star
\beta_{p,q})^N)=1=\ell(\ga).$

Our goal in this section is to compute $\ta{(\ga \star \beta_{p,q})^N}$. We begin with some preliminary computations
which will also be useful in later sections.
\begin{notation}
Define $A_{p,q,\ga}:=\int_{\ga}\jj{x}{p}{q}(\ddx \jj{p}{x}{q})^2dx .$
\end{notation}
Note that $A_{p,q,\ga} \geq 0$ for any p, q $\in \ga$.
The importance of $A_{p,q,\ga}$ will be clear when we examine
its relation to $\tg$ in later sections.
In some sense it is ``the'' basic hard-to-evaluate graph integral,
and many other integrals can be evaluated in terms of it.
\begin{remark}[Scaling Property for $A_{p,q,\ga}$]\label{rem scaling}
Let $\ga$ be a graph and let $\beta$ be a graph obtained by
multiplying length of each edge in $\ee{\ga}$ by a constant $c$.
Then $\ell(\beta)=c\ell(\ga)$, $\vv{\beta}=\vv{\ga}$, $j_{x}^{\beta}(p,q) =c \jj{x}{p}{q}$, and
$A_{p,q,\beta}=c^2A_{p,q,\ga}$ for any $p$ and $q$ in $\vv{\ga}$.
\end{remark}
\begin{remark}{\label{remjpq}}
For any p, q and x $\in \ga$, $\frac{d}{dx}\jj{p}{x}{q} =
-\frac{d}{dx}\jj{q}{x}{p}$, since $r(p,q)=
\jj{p}{x}{q}+\jj{q}{x}{p}$.
\end{remark}
\begin{theorem}\label{thmremain}
For any p, q $\in \ga$, the following quantities are all equal to each other:
\begin{align*}
 & (i) \, \, A_{p,q,\ga}  \quad
 & (ii) \, \, \frac{1}{2}\int_{\ga}\jj{x}{p}{q}\DD{x}(\jj{p}{x}{q}\jj{q}{x}{p}) \quad \quad \quad
\\ & (iii) \, \, \frac{1}{2}\int_{\ga}\jj{p}{x}{q}\jj{q}{x}{p}\DD{x}\jj{x}{p}{q}
\quad & (iv) \, \, -\int_{\ga}\jj{p}{x}{q}\ddx\jj{p}{x}{q}\ddx\jj{x}{p}{q}dx \quad
\\ & (v) \, \, \int_{\ga}\jj{q}{x}{p}\ddx\jj{p}{x}{q}\ddx\jj{x}{p}{q}dx
\quad &
(vi) \, \, -\frac{r(p,q)^2}{2} + \int_{\ga}r(p,x)(\ddx \jj{p}{x}{q})^2dx
\end{align*}
\end{theorem}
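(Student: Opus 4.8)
The plan is to prove that all six quantities equal $(i) = A_{p,q,\ga}$, via a handful of short integration-by-parts computations. The tools are: the self-adjointness of $\DD{x}$ and Green's identity (\propref{prop Greens identity}); the product rule $\DD{x}(fg) = g\DD{x}f + f\DD{x}g - 2f'(x)g'(x)dx$ (\thmref{thmdelta}); the identity $\DD{x}\jj{p}{x}{q} = \dd{q}(x) - \dd{p}(x)$ (\propref{prop cordjpq}); and the circuit relations $r(p,x) = \jj{p}{x}{q} + \jj{x}{p}{q}$ and $r(p,q) = \jj{q}{x}{p} + \jj{p}{x}{q}$ from \eqnref{eqn1.1}, the second of which is constant in $x$ and hence also gives $\ddx\jj{q}{x}{p} = -\ddx\jj{p}{x}{q}$ (\remref{remjpq}). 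I will also use that $\jj{a}{b}{c} = 0$ whenever $a \in \{b,c\}$, and that squares and products of $\Zh(\ga)$-functions again lie in $\Zh(\ga)$, so that Green's identity applies to them.

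First I would handle $(vi) = (i)$ and $(iv) = (v) = (i)$. For $(vi)$: substituting $r(p,x) = \jj{p}{x}{q} + \jj{x}{p}{q}$ splits the integral as $\int_{\ga}\jj{p}{x}{q}(\ddx\jj{p}{x}{q})^2dx + \int_{\ga}\jj{x}{p}{q}(\ddx\jj{p}{x}{q})^2dx = \tfrac12 r(p,q)^2 + A_{p,q,\ga}$, by \corref{corjrpq} and the definition of $A_{p,q,\ga}$, and the $-\tfrac12 r(p,q)^2$ cancels. For $(iv)$: writing $\jj{p}{x}{q}\ddx\jj{p}{x}{q} = \tfrac12\ddx(\jj{p}{x}{q}^2)$ and applying Green's identity turns $(iv)$ into $-\tfrac12\int_{\ga}\jj{x}{p}{q}\DD{x}(\jj{p}{x}{q}^2)$; by the product rule and \propref{prop cordjpq}, $\DD{x}(\jj{p}{x}{q}^2) = 2\jj{p}{x}{q}(\dd{q}(x) - \dd{p}(x)) - 2(\ddx\jj{p}{x}{q})^2dx$, and integrating against $\jj{x}{p}{q}$ kills the Dirac terms (since $\jj{q}{p}{q} = \jj{p}{p}{q} = 0$), leaving $(iv) = A_{p,q,\ga}$. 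Then $(v) - (iv) = \int_{\ga}(\jj{q}{x}{p} + \jj{p}{x}{q})\,\ddx\jj{p}{x}{q}\,\ddx\jj{x}{p}{q}\,dx = r(p,q)\int_{\ga}\ddx\jj{p}{x}{q}\,\ddx\jj{x}{p}{q}\,dx = 0$ by \lemref{lemorthogonality}.

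Finally, $(ii) = (iii)$ is immediate from the self-adjointness of $\DD{x}$ applied to $f = \jj{x}{p}{q}$ and $g = \jj{p}{x}{q}\jj{q}{x}{p}$; and $(ii) = (i)$ follows from the product rule applied to $\jj{p}{x}{q}\jj{q}{x}{p}$: using \propref{prop cordjpq} for both factors and \remref{remjpq} to turn the cross term $-2\ddx\jj{p}{x}{q}\,\ddx\jj{q}{x}{p}dx$ into $2(\ddx\jj{p}{x}{q})^2dx$, one gets $\DD{x}(\jj{p}{x}{q}\jj{q}{x}{p}) = (\jj{q}{x}{p} - \jj{p}{x}{q})(\dd{q}(x) - \dd{p}(x)) + 2(\ddx\jj{p}{x}{q})^2dx$, and integrating against $\jj{x}{p}{q}$ again annihilates the Dirac part and leaves $2A_{p,q,\ga}$. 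Chaining all of this gives $(i) = (ii) = (iii) = (iv) = (v) = (vi)$. I do not anticipate a genuine obstacle: every computation is one or two lines. The only things needing care are checking that each Dirac-mass contribution drops out — which it does because $\jj{x}{p}{q}$ vanishes at $x = p$ while its partner factor vanishes at the other evaluation point — and spotting the two ``collapsing'' manipulations: splitting $r(p,x)$ in $(vi)$, and subtracting $(iv)$ from $(v)$ so that the constant $r(p,q) = \jj{q}{x}{p} + \jj{p}{x}{q}$ factors out and \lemref{lemorthogonality} can be invoked.
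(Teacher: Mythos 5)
Your proposal is correct and uses essentially the same approach as the paper: a chain of pairwise equalities driven by the product rule for $\DD{x}$ (\thmref{thmdelta}), self-adjointness and Green's identity (\propref{prop Greens identity}), the identity $\DD{x}\jj{p}{x}{q}=\dd{q}(x)-\dd{p}(x)$ (\propref{prop cordjpq}), \lemref{lemorthogonality}, and the splitting $r(p,x)=\jj{p}{x}{q}+\jj{x}{p}{q}$ for $(vi)$. The only difference is cosmetic: you link $(iv)$ directly to $(i)$ by writing $\jj{p}{x}{q}\ddx\jj{p}{x}{q}=\tfrac{1}{2}\ddx\big(\jj{p}{x}{q}^2\big)$ and computing $\DD{x}\big(\jj{p}{x}{q}^2\big)$, whereas the paper reaches $(iv)$ from $(iii)$ via Green's identity and the substitution $\jj{q}{x}{p}=r(p,q)-\jj{p}{x}{q}$ together with \lemref{lemorthogonality} --- an equivalent one-line computation.
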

\begin{proof}
(i) and (ii) are equal:
\begin{equation*}
\begin{split}
\int_{\ga} \jj{x}{p}{q} & \DD{x}(\jj{p}{x}{q}\jj{q}{x}{p}) =
\int_{\ga}\jj{x}{p}{q} \Big( \jj{q}{x}{p}\DD{x}\jj{p}{x}{q}+\jj{p}{x}{q}\DD{x}\jj{q}{x}{p}
\\ & \quad-2\ddx\jj{p}{x}{q}\ddx\jj{q}{x}{p}dx \Big), \quad \text{by \thmref{thmdelta}};
\\ &=\int_{\ga}\jj{x}{p}{q}\jj{q}{x}{p}(\dd{q}(x)-\dd{p}(x)) + \int_{\ga}\jj{x}{p}{q}\jj{p}{x}{q}(\dd{p}(x)-\dd{q}(x))
\\ & \quad -2\int_{\ga}\jj{x}{p}{q}\ddx\jj{p}{x}{q}\ddx\jj{q}{x}{p}dx, \quad \text{by \propref{prop cordjpq}};
\\ &=\jj{q}{p}{q}\jj{q}{q}{p}-\jj{p}{p}{q}\jj{q}{p}{p}+\jj{p}{p}{q}\jj{p}{p}{q}-\jj{q}{p}{q}\jj{p}{q}{q}
\\ & \quad +2\int_{\ga}\jj{x}{p}{q}(\ddx\jj{p}{x}{q})^2dx, \quad \text{by \remref{remjpq}};
\\ &=2\int_{\ga}\jj{x}{p}{q}(\ddx\jj{p}{x}{q})^2dx, \quad \text{since $\jj{q}{p}{q}=0=\jj{p}{p}{q}$};
\\ &=2A_{p,q,\ga}.
\end{split}
\end{equation*}
(ii) and (iii) are equal:
This follows from the self-adjointness of $\DD{x}$, see \propref{prop Greens identity}.
\\(iii) and (iv) are equal:
\begin{equation*}
\begin{split}
\frac{1}{2}&\int_{\ga}\jj{p}{x}{q}\jj{q}{x}{p}\DD{x}\jj{x}{p}{q} =
\frac{1}{2}\int_{\ga}\ddx\big[\jj{p}{x}{q}\jj{q}{x}{p}\big]\ddx\jj{x}{p}{q}dx
\\ & = \frac{1}{2}\int_{\ga}\ddx\jj{x}{p}{q}\big[\jj{q}{x}{p}\ddx\jj{p}{x}{q}-\jj{p}{x}{q}\ddx\jj{p}{x}{q}\big]dx,
\quad \text{by \remref{remjpq}};
\\ & = \frac{1}{2}\int_{\ga}\ddx\jj{x}{p}{q}\ddx\jj{p}{x}{q}\big[r(p,q)-2\jj{p}{x}{q}\big]dx
\\ & = \frac{r(p,q)}{2}\int_{\ga}\ddx\jj{x}{p}{q}\ddx\jj{p}{x}{q}dx-\int_{\ga}\jj{p}{x}{q}\ddx\jj{x}{p}{q}\ddx\jj{p}{x}{q}dx
\\ & = -\int_{\ga}\jj{p}{x}{q}\ddx\jj{p}{x}{q}\ddx\jj{x}{p}{q}dx, \quad \text{by \lemref{lemorthogonality}}.
\end{split}
\end{equation*}
(iv) and (v) are equal:
\begin{equation*}
\begin{split}
-\int_{\ga}&\jj{p}{x}{q}\ddx\jj{p}{x}{q}\ddx\jj{x}{p}{q}dx =
-\int_{\ga}\big[r(p,q)-\jj{q}{x}{p}\big]\ddx\jj{p}{x}{q}\ddx\jj{x}{p}{q}dx
\\ & = -r(p,q) \cdot 0 + \int_{\ga}\jj{q}{x}{p}\ddx\jj{p}{x}{q}\ddx\jj{x}{p}{q}dx, \quad \text{by \lemref{lemorthogonality}}.
\end{split}
\end{equation*}
(i) and (vi) are equal: By  \eqnref{eqn1.1},
\begin{equation*}
\begin{split}
A_{p,q,\ga} = \int_{\ga}\jj{x}{p}{q} (\ddx\jj{p}{x}{q})^2dx
= \int_{\ga}(r(p,x)-\jj{p}{x}{q}) (\ddx\jj{p}{x}{q})^2dx.
\end{split}
\end{equation*}
Hence the result follows from \corref{corjrpq}.
\end{proof}
\begin{example}\label{exdiamond}
Let $\ga$ be the graph, which
we will call the ``diamond graph", shown in
Figure \ref{fig Adiamond}. Assume the edges $\{e_1, \, \dots, e_5\}$ and the vertices $\{a, \, b, \, p, \, q\}$ are
labeled as shown. Let each edge length be $L$.
By the symmetry of the graph, edges $e_1$, $e_2$, $e_3$ and $e_4$
make the same contribution to $A_{p,q,\ga}$.
After circuit reductions and computations in Maple, we obtain that $j_{p}(x,q)$ is
constant on $e_5$, where $j_{x}(y,z)$ is the voltage function in
$\ga$. (Alternatively, $j_{p}(a,q)=j_{p}(b,q)$ by the symmetry again, so $j_{p}(x,q)$ must be constant on $e_5$.)
 Therefore, $$A_{p,q,\ga}=\int_{\ga}\jj{x}{p}{q}(\ddx
\jj{p}{x}{q})^2dx=4\int_{e_1}\jj{x}{p}{q}(\ddx \jj{p}{x}{q})^2dx .$$
Using circuit reductions and computations in Maple, one finds
$\ddx \jj{p}{x}{q}=\frac{1}{2}$ and
$\jj{x}{p}{q}=\frac{x(4L-3x)}{8L}$. Evaluating the integral gives
$A_{p,q,\ga}=\frac{L^2}{8}$.

\begin{figure}
\centering
\includegraphics[scale=0.7]{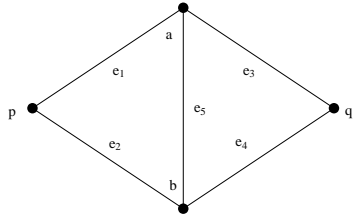} \caption{Diamond graph.} \label{fig Adiamond}
\end{figure}
\end{example}
\begin{proposition}\label{propAtree}
Let $\ga$ be a tree. Then, for any points $p$ and $q$ in
$\ga$, $A_{p,q,\ga}=0$.
\end{proposition}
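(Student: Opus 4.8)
The plan is to prove the stronger, pointwise statement that on a tree the integrand defining $A_{p,q,\ga}$ vanishes identically. Recall $A_{p,q,\ga}=\int_{\ga}\jj{x}{p}{q}\,(\ddx \jj{p}{x}{q})^2dx$, so it suffices to show that $\jj{x}{p}{q}\,(\ddx \jj{p}{x}{q})^2\equiv 0$ whenever $\ga$ is a tree. Since neither the integral nor the voltage function depends on the choice of vertex set, I may assume $p,q\in\vv{\ga}$. First I would record the elementary consequence of \eqnref{eqn1.1} that the three voltages among $x,p,q$ can be solved for in terms of resistances; in particular
\[
\jj{p}{x}{q}=\tfrac12\big(r(p,x)+r(p,q)-r(q,x)\big),\qquad \jj{x}{p}{q}=\tfrac12\big(r(p,x)+r(q,x)-r(p,q)\big).
\]

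Let $P$ be the unique path in $\ga$ from $p$ to $q$. The tree $\ga$ is the union of $P$ with finitely many subtrees $T_1,\dots,T_k$, where $T_j$ meets $P$ in exactly one vertex $z_j$. I would first treat a point $x$ in a branch $T_j$: since $r$ is a tree metric, $r(p,x)=r(p,z_j)+r(z_j,x)$ and $r(q,x)=r(q,z_j)+r(z_j,x)$, so $r(p,x)-r(q,x)=r(p,z_j)-r(q,z_j)$ is independent of $x$ within $T_j$. By the first displayed formula, $\jj{p}{x}{q}$ is therefore constant on $T_j$, so $\ddx \jj{p}{x}{q}=0$ there, and the integrand vanishes on every branch.

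Next I would treat a point $x\in P$. Then $r(p,x)+r(x,q)=r(p,q)$, i.e. $r(q,x)=r(p,q)-r(p,x)$, and substituting into the second displayed formula gives $\jj{x}{p}{q}=\tfrac12\big(r(p,x)+r(p,q)-r(p,x)-r(p,q)\big)=0$, so the integrand also vanishes along $P$. Combining the two cases, the integrand is identically zero on $\ga=P\cup T_1\cup\cdots\cup T_k$, hence $A_{p,q,\ga}=0$.

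There is essentially no analytic obstacle here: the argument reduces to the linear relations \eqnref{eqn1.1} together with additivity of distances along paths in a tree, so the only thing to be careful about is the bookkeeping of the path/branch decomposition (and the harmless reduction to $p,q\in\vv{\ga}$). As an alternative route one could invoke part $(vi)$ of \thmref{thmremain}: on a tree $\ddx \jj{p}{x}{q}$ is supported on $P$, where in the arc-length parametrization of $P$ it equals $1$ while $r(p,x)$ equals the arc-length coordinate $t$, so $\int_{\ga}r(p,x)(\ddx \jj{p}{x}{q})^2dx=\int_0^{r(p,q)}t\,dt=\tfrac12 r(p,q)^2$, which cancels the term $-\tfrac12 r(p,q)^2$ and again yields $A_{p,q,\ga}=0$.
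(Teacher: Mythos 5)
Your proof is correct and follows essentially the same route as the paper: the paper also shows the integrand $\jj{x}{p}{q}(\ddx\jj{p}{x}{q})^2$ vanishes pointwise, arguing edge by edge that $\ddx\jj{p}{x}{q}=0$ off the $p$--$q$ path and $\jj{x}{p}{q}=0$ on it. Your derivation of these two facts from \eqnref{eqn1.1} and tree-metric additivity just makes explicit what the paper leaves to circuit theory.
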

\begin{proof}
Let $j_{x}(y,z)$ be the voltage function in $\ga$. Let $e_i
\in \ee{\ga}$. If $e_i$ is not between $p$ and $q$, then $\ddx j_{p}(x,q)=0$ for all $x \in e_i$.
If $e_i$ is between $p$ and $q$, then $j_{x}(p,q)=0$ for all $x \in e_i$.
Therefore, $j_{x}(p,q)(\ddx j_{p}(x,q))^2=0$ for every $x \in \ga$. This gives, by
definition, $A_{p,q,\ga}=0.$
\end{proof}
The following proposition is similar to the additive property of $\tau$.
\begin{proposition}[Additive Property for
$A_{p,q,\ga}$]\label{propAadditive} Let $\ga$, $\ga_1$ and $\ga_2$
be graphs such that $\ga=\ga_1 \cup \ga_2$ and $\ga_1 \cap
\ga_2=\{y\}$ for some $y \in \ga$. For any $p \in \ga_1$ and $q \in
\ga_2$,
$$A_{p,q,\ga}=A_{p,y,\ga_1}+A_{y,q,\ga_2}.$$
\end{proposition}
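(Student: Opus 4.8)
The plan is to decompose the defining integral $A_{p,q,\ga}=\int_{\ga}\jj{x}{p}{q}(\ddx\jj{p}{x}{q})^2\,dx$ over the two pieces $\ga_1$ and $\ga_2$, and to identify each piece with the corresponding quantity on the subgraph. The key structural input is the behavior of the voltage function under the wedge decomposition $\ga=\ga_1\cup\ga_2$ with $\ga_1\cap\ga_2=\{y\}$: for $p\in\ga_1$ and $q\in\ga_2$, any current path from $p$ to $q$ must pass through the cut vertex $y$, so circuit theory gives $\jj{x}{p}{q}=\jj{x}{p}{y}$ for $x\in\ga_1$ and $\jj{x}{p}{q}=\jj{x}{y}{q}$ for $x\in\ga_2$, while the function $j_p(x,q)$ restricted to $\ga_1$ agrees with $j_p(x,y)$ computed in $\ga_1$ (similarly on $\ga_2$, where it equals $j_y(x,q)$). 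I would state these identities at the outset, citing the same circuit-theoretic reasoning used for the additive property of $\tg$ (and \cite[Theorem 9]{BF}).

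First I would split $A_{p,q,\ga}=\int_{\ga_1}+\int_{\ga_2}$. On $\ga_1$, for $x\in\ga_1$ we have $\jj{x}{p}{q}=\jj{x}{p}{y}$, and $\ddx\jj{p}{x}{q}=\ddx\jj{p}{x}{y}$ (derivative of the $\ga$-voltage function restricted to $\ga_1$, which coincides with the $\ga_1$-voltage function, since no current flows past $y$ into $\ga_2$). Hence
\begin{equation*}
\int_{\ga_1}\jj{x}{p}{q}(\ddx\jj{p}{x}{q})^2\,dx=\int_{\ga_1}j^{\ga_1}_x(p,y)\,(\ddx j^{\ga_1}_p(x,y))^2\,dx=A_{p,y,\ga_1},
\end{equation*}
where the superscript denotes the voltage function on the indicated subgraph. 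Symmetrically, on $\ga_2$ we get $\int_{\ga_2}=A_{y,q,\ga_2}$. Adding the two contributions yields $A_{p,q,\ga}=A_{p,y,\ga_1}+A_{y,q,\ga_2}$.

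The main obstacle is to justify carefully that the $\ga$-voltage functions restrict correctly to the subgraphs, i.e. that $j^{\ga}_x(p,q)\big|_{\ga_1}$ and its derivative coincide with the intrinsic $\ga_1$-voltage data. This is where the hypothesis that the intersection is a single point $y$ is essential: it forces the unit current entering at $q$ and exiting at $p$ to be constant (equal to the full unit) across the cut at $y$, so the potential and current distribution on $\ga_1$ is exactly the one produced by a unit current between $p$ and $y$ within $\ga_1$ alone, and on $\ga_2$ exactly the one for unit current between $y$ and $q$ within $\ga_2$. Once this reduction is in place the computation is immediate; I would also remark that the non-negativity noted after the definition of $A_{p,q,\ga}$ makes both summands individually non-negative, consistent with the formula. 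An alternative route, should one prefer to avoid the restriction argument, is to use characterization $(ii)$ or $(iii)$ of \thmref{thmremain} together with the additivity of the Laplacian-pairing integrals under the wedge decomposition, but the direct splitting above is the cleanest.
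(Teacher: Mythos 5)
Your proposal is correct and takes essentially the same route as the paper: split the integral over $\ga_1$ and $\ga_2$ and use circuit theory at the cut vertex $y$ to identify the restricted voltage data with the intrinsic voltage functions of the subgraphs (the paper does this via an explicit circuit reduction in which the extra resistance $s$ is independent of $x$). The only nuance is that on $\ga_2$ one has $\jj{p}{x}{q}=j^{\ga_2}_y(x,q)$ only up to an additive constant (namely $\jj{p}{y}{q}$), which is harmless since only $\ddx \jj{p}{x}{q}$ enters the integrand.
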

\begin{proof}
Let $j_x(p,q)$, $j_{x}^1(p,q)$ and $j_{x}^2(p,q)$ be the voltage
functions in $\ga$, $\ga_1$ and $\ga_2$ respectively.
For any $x \in \ga_1$, after circuit reduction, we obtain the first
graph in Figure \ref{fig AadditiveN}. Note that $s$ is independent of $x$, so $\ddx (s)=0.$ Also,
$j_x(p,q)=j_{x}^1(p,y)$.

Similarly, after circuit reduction, for any $x \in \ga_2$ we obtain
the second graph in Figure \ref{fig AadditiveN}. Note that $S$ is independent of $x$, so
$\ddx S=0.$ Also, $j_x(p,q)=j_{x}^2(p,y)$.
\begin{figure}
\centering
\includegraphics[scale=0.8]{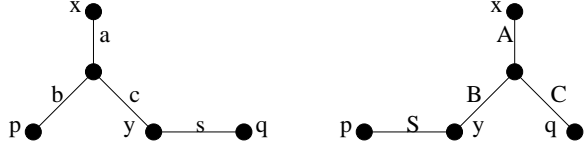} \caption{Circuit reductions for $\ga=\ga_1 \cup \ga_2$.} \label{fig AadditiveN}
\end{figure}
%
Thus
\begin{equation*}
\begin{split}
A_{p,q,\ga} &=\int_{\ga}\jj{x}{p}{q}(\ddx \jj{p}{x}{q})^2dx
\\ &=\int_{\ga_1}j_{x}^1(p,q)(\ddx j_{p}^1(x,q))^2dx+\int_{\ga_2}j_{x}^2(p,q)(\ddx j^2_{p}(x,q))^2dx.
\end{split}
\end{equation*}
Then the result follows from the definitions of $A_{p,y,\ga_1}$ and $A_{y,q,\ga_2}$.
\end{proof}
The following theorem gives value of $\ta{(\ga \star
\beta_{p,q})^N}$ in terms of $\tg$, $\ta{\beta}$, $r_{\beta}(p,q)$
and two other constants related to $\ga$ and $\beta$.
\begin{theorem}\label{thmmagnificent}
Let $\ga$ and $\beta$ be two graphs with $\ell(\ga)=\ell(\beta)=1$,
and let $p$ and $q$ be two distinct points in $\vv{\beta}$. Let
$r_{\beta}(x,y)$ be the resistance function on $\beta$. Then,
\[
\ta{(\ga \star
\beta_{p,q})^N}=\ta{\beta}-\frac{r_{\beta}(p,q)}{4}+r_{\beta}(p,q)\tg+
\frac{A_{p,q,\beta}}{r_{\beta}(p,q)}\sum_{e_i \in
\ee{\ga}}\frac{\li^2}{\li+\ri}.
\]
\end{theorem}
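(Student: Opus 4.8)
The plan is to reduce everything to the edge-by-edge structure of $\ga \star \beta_{p,q}$, exactly as in the proof of \thmref{thmdouble}. Fix a vertex $p_0 \in \vv{\ga}$ (which survives as a vertex of $\ga \star \beta_{p,q}$); by \lemref{lemtauformula} applied to the normalized graph, $\ta{(\ga \star \beta_{p,q})^N}$ is $\frac14\int (\frac{\partial}{\partial x}r^N(x,p_0))^2\,dx$, and since each copy $\beta_i$ is glued to the rest of the graph only at its two distinguished points $\pp$ and $\qq$, the integral splits as a sum over $e_i \in \ee{\ga}$ of the contribution of $\beta_i$. The point is that, on $\beta_i$, the resistance $r^N(x,p_0)$ to the fixed external point $p_0$ is governed by a circuit reduction of the complement: the rest of the graph presents a $Y$-shaped network attached to the two endpoints of $\beta_i$, with resistances $R_{a_i,p_0}, R_{b_i,p_0}, R_{c_i,p_0}$ (in the notation of \propref{proptau}), exactly as in \figref{fig edgedelete5}, except that the edge $e_i$ is replaced by the whole graph $\beta_i$.

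First I would do the scaling bookkeeping. The graph $\beta_i$ is $\beta$ scaled by $c_i := \li/r_\beta(p,q)$, so by \remref{rem tau scale-idependence} and \remref{rem scaling} we have $\ta{\beta_i}=c_i\ta{\beta}$, $r_{\beta_i}(p,q)=c_i r_\beta(p,q)=\li$, and $A_{p,q,\beta_i}=c_i^2 A_{p,q,\beta}$. Then $\ell(\ga\star\beta_{p,q})=1/r_\beta(p,q)$ by \eqnref{eqnmag0}, and passing to the normalization $(\ga\star\beta_{p,q})^N$ multiplies all lengths — hence $\tau$ — by $r_\beta(p,q)$, again by scale-independence. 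So it suffices to compute $\ta{\ga\star\beta_{p,q}}$ and then multiply by $r_\beta(p,q)$ at the very end.

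The heart of the argument is the single-block computation: for a graph $\beta_i$ with distinguished points $\pp,\qq$ embedded in a larger circuit that reduces to a $Y$ with arms $R_{a_i,p_0}$ (at $\pp$), $R_{b_i,p_0}$ (at $\qq$), $R_{c_i,p_0}$ (to $p_0$), I want to evaluate $\frac14\int_{\beta_i}(\frac{\partial}{\partial x}r(x,p_0))^2\,dx$. For $x\in\beta_i$ the resistance is $r(x,p_0)=j^{\beta_i}_x(\pp,\qq)+$ (a reduction of the two $Y$-arms $R_{a_i,p_0},R_{b_i,p_0}$ in parallel through the block) $+R_{c_i,p_0}$; more precisely, using the voltage-function decomposition as in \eqnref{eqn1.1} together with a parallel reduction, $r(x,p_0)=\frac{(j^{\beta_i}_\pp(x,\qq)+R_{a_i,p_0})(j^{\beta_i}_\qq(x,\pp)+R_{b_i,p_0})}{\li+\ri}+j^{\beta_i}_x(\pp,\qq)+R_{c_i,p_0}$ — wait, one must be careful: the external arms see the block $\beta_i$ only through its port resistances, so the correct statement is $r(x,p_0)=j^{\beta_i}_x(\pp,\qq)+\big[\text{parallel combination through }\beta_i\text{ of the path via }\pp\text{ and the path via }\qq\big]+R_{c_i,p_0}$. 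Expanding $\big(\frac{\partial}{\partial x}r(x,p_0)\big)^2$ and integrating over $\beta_i$, the cross terms are handled by \lemref{lemorthogonality} and \thmref{thmremain}: the pure term $\int_{\beta_i}(\frac{\partial}{\partial x}j^{\beta_i}_x(\pp,\qq))^2\,dx$ contributes (via \thmref{thmbasic}) a $\tau$-type quantity, the term involving $(\frac{\partial}{\partial x}j^{\beta_i}_\pp(x,\qq))^2$ weighted against $j^{\beta_i}_x(\pp,\qq)$ is exactly $A_{p,q,\beta_i}$, and the remaining terms integrate to elementary expressions in $\li,\ri,R_{a_i,p_0},R_{b_i,p_0}$ via \corref{corjrpq} and \thmref{thmjpq2njpq}.

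Summing over $e_i$, the $\tau$-contributions assemble (using \propref{proptau} for $\ga$ and the additive/scaling properties) into $\ta{\beta}$ together with $r_\beta(p,q)\,\tg$ and the correction $-r_\beta(p,q)/4$ coming from the discrepancy between a single edge $e_i$ and the block $\beta_i$ replacing it (this is the same ``$-\frac14$'' phenomenon as in \corref{lemtauformula2}), while the $A$-terms assemble into $\frac{A_{p,q,\beta}}{r_\beta(p,q)}\sum_{e_i}\frac{\li^2}{\li+\ri}$ after restoring the scaling factors $c_i^2=\li^2/r_\beta(p,q)^2$ and one factor of $\li+\ri$ from the circuit-reduction denominator. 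Finally I multiply through by $r_\beta(p,q)$ to pass from $\ga\star\beta_{p,q}$ to its normalization. The main obstacle I anticipate is getting the single-block integral right — correctly identifying which combination of voltage functions on $\beta_i$ appears in $r(x,p_0)$ and then disentangling the square of its derivative into the orthogonal pieces so that \lemref{lemorthogonality} and \thmref{thmremain} apply cleanly; once that identity is in hand, the summation and the scaling bookkeeping are routine. It is likely cleanest to first treat the case where $\ga-e_i$ is connected and then note that the disconnected case ($\ri=\infty$) follows by the usual limiting argument, exactly as in \propref{proptau}.
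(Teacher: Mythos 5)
Your proposal is correct and follows essentially the same route as the paper's proof: fix a vertex of $\ga$, apply \lemref{lemtauformula} and split the integral over the blocks $\beta_i$, reduce $\ga-e_i$ to the $Y$-network of \propref{proptau}, write $r(x,y)$ by parallel reduction (your first displayed formula for $r(x,p_0)$ is exactly the one used in the paper, so the subsequent hedging is unnecessary), expand the square and evaluate the terms via \corref{corjrpq}, \lemref{lemorthogonality}, \thmref{thmremain} and \thmref{thmbasic}, then sum using \propref{proptau} and finish with the scaling and normalization bookkeeping. The only cosmetic inaccuracy is attributing the $-\tfrac14$ term to \corref{lemtauformula2} rather than to $\int_{\beta_i}(\tfrac{d}{dx}\jxpq)^2dx=4\ta{\beta_i}-\li$ summed over edges, which does not affect the argument.
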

\begin{proof}
We will first compute $\ta{\ga \star \beta_{p,q}}$. Let $y$ be a
fixed point {\em in the vertex set $\vv{\ga}$} and let $r(x,y)$ be the
resistance function in $\ga \star \beta_{p,q}$. Then, by
\corref{lemtauformula},
\begin{equation}\label{eqnmag1}
\begin{split}
\ta{\ga \star \beta_{p,q}}&=\frac{1}{4}\int_{\ga \star
\beta_{p,q}}\big(\ddx r(x,y)\big)^2dx =\frac{1}{4}\sum_{e_i \in
\ee{\ga}} \int_{\beta_i}\big(\ddx r(x,y)\big)^2dx .
\end{split}
\end{equation}
\begin{figure}
\centering
\includegraphics[scale=0.6]{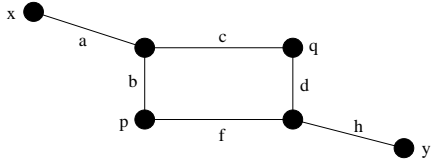} \caption{Circuit reduction
for $\ga \star \beta_{p,q}$ with reference to $p$, $q$, $y$, and
$x$.} \label{fig gendoublea}
\end{figure}
Consider a point $x \in \beta_i$.
By carrying out circuit reductions
in $\beta_i$ and in $\ga-e_i$, we obtain a network with equivalent
resistance between the points $x$, $p$, $q$, $y$ as shown in Figure
\ref{fig gendoublea}. Note that in this new circuit, the existence of
the part with edges $d$, $f$ and $h$ depends on the fact that $y$, being a point in
$\vv{\ga}$, belongs to $\ga-e_i$. It is possible that $y=p$ or
$y=q$, in which cases some of the edge lengths in $\{d, \, f, \, h \}$ are $0$.
It is also possible that $\ga-e_i$ is disconnected, in which case $d$ or $f$ will be $\infty$.
Let $j^{\beta_i}_x(y,z)$ be the voltage
function in $\beta_i$ and $R_{a_i,y}$, $R_{b_i,y}$, $R_{c_i,y}$ be
the voltages in $\ga-e_i$, using the same notation as in
\propref{proptau}. Then the resistances in Figure \ref{fig
gendoublea} are as follows: $a=\jxpq$, $b=\jpxq$, $c=\jqxp$,
$f=R_{a_i,y}$, $d=R_{b_i,y}$, $h=R_{c_i,y}$.
Note that the values in the figure are results of our conditions on
$\beta_i$ and the replacements that are made. Note also that
$b+c=r_{\beta_i}(p,q)=\li$ and $f+d=\ri$, so as $x$ varies along an
edge of $\beta_i$, we have $\ddx b=-\ddx c$.
Since $r_{\beta_j}(p,q)=L_j$ for each $e_j \in \ee{\ga-e_j}$,
$\ga \star \beta_{p,q}$ can be transformed to the circuit in \ref{fig gendoublea}.

By applying parallel reduction,
\begin{equation*}
\begin{split}
r(x,y)=a+\frac{(b+f)(c+d)}{b+c+d+f}+h
=\jxpq+\frac{(\jpxq+R_{a_i,y})(\jqxp+R_{b_i,y})}{\li+\ri}+R_{c_i,p}
.
\end{split}
\end{equation*}
Therefore,
\begin{equation*}
\begin{split}
\ddx r(x,y) = \ddx \jxpq + \frac{\jqxp+R_{b_i,y}}{\li+\ri} \ddx\jpxq
+ \frac{\jpxq+R_{a_i,y}}{\li+\ri}\ddx \jqxp.
\end{split}
\end{equation*}
Since $ \, \ddx \jqxp =-\ddx \jpxq$ and $\jpxq + \jqxp = \li$,
\begin{equation*}
\begin{split}
\ddx r(x,y)& = \ddx \jxpq +
\frac{\li-2\jpxq+R_{b_i,y}-R_{a_i,y}}{\li+\ri}\ddx \jpxq.
\end{split}
\end{equation*}
Thus,
\begin{equation}\label{eqnmag01}
\begin{split}
&\int_{\beta_i}\big(\ddx r(x,y)\big)^2dx  = \int_{\beta_i}\big(\ddx
\jxpq\big)^2dx
+ \Big[\frac{\li+R_{b_i,y}-R_{a_i,y}}{\li+\ri} \Big]^2
\int_{\beta_i}\big(\ddx \jpxq\big)^2dx
\\ & \qquad +\frac{4}{(\li+\ri)^2}\int_{\beta_i}\big[\jpxq \ddx \jpxq\big]^2dx
\\ & \qquad
+2\Big[\frac{\li+R_{b_i,y}-R_{a_i,y}}{\li+\ri} \Big] \int_{\beta_i}
\ddx \jxpq \ddx \jpxq dx
\\ & \qquad -\frac{4}{\li+\ri}\int_{\beta_i} \jpxq \ddx \jpxq \ddx \jxpq dx
\\ & \qquad
-4\frac{\li+R_{b_i,y}-R_{a_i,y}}{(\li+\ri)^2} \int_{\beta_i}\jpxq
\big[\ddx \jpxq \big]^2dx .
\end{split}
\end{equation}
On the other hand, we have
\begin{equation}\label{eqnmag02}
\begin{split}
& \text{By \corref{corjrpq}, } \quad \int_{\beta_i}\big(\ddx
\jpxq\big)^2dx=r_{\beta_i}(p,q).
\\ & \text{By \corref{corjrpq},   }\quad
\int_{\beta_i}\big[\jpxq \ddx \jpxq\big]^2dx
=\frac{1}{3}(r_{\beta_i}(p,q))^3.
\\ & \text{By \lemref{lemorthogonality},   }\quad
\int_{\beta_i} \ddx \jxpq \ddx \jpxq dx=0.
\\ & \text{By \thmref{thmremain},   }\quad \int_{\beta_i} \jpxq \ddx \jxpq \ddx \jpxq
dx=-A_{p,q,\beta_i}.
\\ & \text{By \corref{corjrpq},   } \quad \int_{\beta_i}\jpxq \big[\ddx \jpxq \big]^2dx
=\frac{1}{2}(r_{\beta_i}(p,q))^2.
\end{split}
\end{equation}
Substituting the results in \eqnref{eqnmag02} into \eqnref{eqnmag01},
and recalling $r_{\beta_i}(p,q)=\li$, gives
\begin{equation}\label{eqnmag20}
\begin{split}
\int_{\beta_i}\big(\ddx r(x,y)\big)^2dx &=\int_{\beta_i}\big(\ddx
\jxpq\big)^2dx + \Big[\frac{\li+R_{b_i,y}-R_{a_i,y}}{\li+\ri}
\Big]^2 \li
\\ &\qquad +\frac{4 \li^3}{3(\li+\ri)^2} +\frac{4 A_{p,q,\beta_i}}{\li+\ri}
-4\frac{\li+R_{b_i,y}-R_{a_i,y}}{(\li+\ri)^2} \frac{\li^2}{2}
\\ &= \int_{\beta_i}\big(\ddx \jxpq\big)^2dx + \frac{\li^3+3 \li(R_{b_i,y}-R_{a_i,y})^2}{3(\li+\ri)^2}
+\frac{4 A_{p,q,\beta_i}}{\li+\ri}.
\end{split}
\end{equation}
By applying \thmref{thmbasic} to $\beta_i$, we obtain
$$\int_{\beta_i}\big(\ddx \jxpq
\big)^2dx=4\ta{\beta_i}-r_{\beta_i}(p,q)=4\ta{\beta_i}-\li.$$
Substituting this into \eqnref{eqnmag20} and summing up over all
edges in $\ee{\ga}$ gives
\begin{equation}\label{eqnmag2}
\begin{split}
\sum_{e_i \in \ee{\ga}} \int_{\beta_i}\big(\ddx r(x,y)\big)^2dx &=
4\sum_{e_i \in \ee{\ga}}\ta{\beta_i}-\sum_{e_i \in \ee{\ga}}\li +
4\sum_{e_i \in \ee{\ga}}\frac{A_{p,q,\beta_i}}{\li+\ri}
\\ &\qquad +\frac{1}{3}\sum_{e_i \in \ee{\ga}}\frac{\li^3+3\li(R_{b_i,y}-R_{a_i,y})^2}{(\li+\ri)^2}
\\ &=4\sum_{e_i \in \ee{\ga}}\ta{\beta_i} \, -1+ 4\sum_{e_i \in \ee{\ga}}\frac{A_{p,q,\beta_i}}{\li+\ri}+ 4\tg.
\end{split}
\end{equation}
The second equality in \eqnref{eqnmag2} follows from \propref{proptau}. By using Remarks (\ref{rem tau scale-idependence}) and (\ref{rem scaling}) and the fact that
$\ell(\beta_i)=\frac{\li}{r_{\beta}(p,q)}$, we obtain
\begin{equation}\label{eqnmag3}
\begin{split}
&\ta{\beta_i}=\frac{\li}{r_{\beta}(p,q)}\ta{\beta}, \quad \text{and
} A_{p,q,\beta_i}=
\Big[\frac{\li}{r_{\beta}(p,q)}\Big]^2A_{p,q,\beta}.
\end{split}
\end{equation}
Substituting the results in \eqnref{eqnmag3} into \eqnref{eqnmag2} gives
\begin{equation}\label{eqnmag4}
\begin{split}
\sum_{e_i \in \ee{\ga}} \int_{\beta_i}\big(\ddx r(x,y)\big)^2dx =
4\frac{\ta{\beta}}{r_{\beta}(p,q)}\sum_{e_i \in \ee{\ga}}\li +
4\tg -1
+ \frac{4A_{p,q,\beta}}{(r_{\beta}(p,q))^2}\sum_{e_i \in
\ee{\ga}}\frac{\li^2}{\li+\ri}.
\end{split}
\end{equation}
Substituting \eqnref{eqnmag4} into \eqnref{eqnmag1} gives
\begin{equation}\label{eqnmag5}
\begin{split}
\ta{\ga \star \beta_{p,q}}& =
\frac{\ta{\beta}}{r_{\beta}(p,q)} + \tg -\frac{1}{4}+
\frac{A_{p,q,\beta}}{(r_{\beta}(p,q))^2}\sum_{e_i \in
\ee{\ga}}\frac{\li^2}{\li+\ri}.
\end{split}
\end{equation}
Since $\ell(\ga \star \beta_{p,q}) = \frac{1}{r_{\beta}(p,q)}$ by
\eqnref{eqnmag0}, for the normalized graph $(\ga \star \beta_{p,q})^N$ we have
$$\ta{(\ga \star \beta_{p,q})^N} = \ta{\beta} + r_{\beta}(p,q) \tg -
\frac{r_{\beta}(p,q)}{4} +
\frac{A_{p,q,\beta}}{r_{\beta}(p,q)}\sum_{e_i \in
\ee{\ga}}\frac{\li^2}{\li+\ri}.$$ This is what we want to show.
\end{proof}
\begin{theorem}\label{thm smaller tau}
Let $\ga$ be a normalized graph. Let $r(x,y)$ be the resistance
function on $\ga$, and let $p$ and $q$ be any two points in $\ga$.
Then for any $\varepsilon >0$, there exists a normalized graph
$\ga'$ such that
$$\ta{\ga'} \leq \tg - r(p,q) (\frac{1}{4}-\tg) + \varepsilon .$$
In particular, if Conjecture ~\ref{TauBound} holds with a constant
$C$, then there is no normalized graph $\beta$ with $\ta{\beta}=C$.
\end{theorem}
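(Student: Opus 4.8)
The plan is to realize $\ga'$ as a normalized full immersion of $\ga$ into a finely subdivided copy of $\ga$ itself, and then to read off $\ta{\ga'}$ from \thmref{thmmagnificent}: with the gadget taken equal to $\ga$, that theorem's output has exactly the shape $\tg-r(p,q)(\frac{1}{4}-\tg)$ plus a correction term, and subdividing the base drives the correction to $0$. First I would dispose of the trivial case $p=q$, where $r(p,q)=0$ and the inequality reads $\ta{\ga'}\le\tg+\varepsilon$, satisfied by $\ga'=\ga$. Otherwise, enlarging $\vv{\ga}$ so that $p,q\in\vv{\ga}$ does not change $\ga$ as a metrized graph and hence leaves $\tg$, $r(p,q)$ and $A_{p,q,\ga}$ untouched (cf.\ \remref{remvalence}); so I may assume $p\neq q$ in $\vv{\ga}$, whence $r(p,q)>0$ since $\ga$ is connected with positive edge lengths.

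Now fix a positive integer $m$ and let $\ga^m$ be the $m$-fold subdivision of $\ga$ from \secref{section DA}, which is again normalized with $\ta{\ga^m}=\tg$ (\remref{remvalence}). Put $\ga':=(\ga^m\star\ga_{p,q})^N$, the normalization of the full immersion of the gadget $\ga$ (marked at the distinct vertices $p,q$) into the base $\ga^m$; this is legitimate because $\ell(\ga^m)=\ell(\ga)=1$. Applying \thmref{thmmagnificent} with base $\ga^m$ and gadget $\ga$, then substituting $\ta{\ga^m}=\tg$ and part $(i)$ of \lemref{lemdivision1} --- which gives $\sum_{e_k\in\ee{\ga^m}}\frac{L_k(\ga^m)^2}{L_k(\ga^m)+R_k(\ga^m)}=\frac{1}{m}\sum_{e_i\in\ee{\ga}}\frac{\li^2}{\li+\ri}$ --- yields
\[
\ta{\ga'}=\tg-r(p,q)\big(\tfrac{1}{4}-\tg\big)+\frac{A_{p,q,\ga}}{m\,r(p,q)}\sum_{e_i\in\ee{\ga}}\frac{\li^2}{\li+\ri}.
\]
Since $A_{p,q,\ga}\ge 0$ and the sum is a fixed finite number, I would finish the first part by choosing $m$ large enough that the last term is $\le\varepsilon$. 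I do not anticipate a real obstacle: the only delicate point is bookkeeping --- ensuring the two graphs fed to \thmref{thmmagnificent} have total length exactly $1$ and distinct marked vertices, which is why the reductions and the normalizations are needed --- while the one substantive observation is that choosing the gadget to be $\ga$ itself is precisely what makes the output match the target.

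For the concluding statement, suppose \conjref{TauBound} holds with a constant $C$. Applying it to a circle graph of total length $1$, whose tau constant is $\frac{1}{12}$ by \corref{cor tau formula for circle}, forces $C\le\frac{1}{12}<\frac{1}{4}$. If some normalized graph $\beta$ satisfied $\ta{\beta}=C$, I would pick distinct points $p,q\in\beta$, so $r_{\beta}(p,q)>0$, and apply the first part with $\varepsilon=\frac{1}{2}r_{\beta}(p,q)\big(\tfrac{1}{4}-C\big)>0$, producing a normalized graph $\ga'$ with
\[
\ta{\ga'}\le C-r_{\beta}(p,q)\big(\tfrac{1}{4}-C\big)+\tfrac{1}{2}r_{\beta}(p,q)\big(\tfrac{1}{4}-C\big)=C-\tfrac{1}{2}r_{\beta}(p,q)\big(\tfrac{1}{4}-C\big)<C,
\]
contradicting $\ta{\ga'}\ge C\cdot\ell(\ga')=C$. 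Hence the value $C$ is not attained by any normalized graph.
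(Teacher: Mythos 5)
Your proposal is correct and follows essentially the same route as the paper: form $\ga':=(\ga^m\star\ga_{p,q})^N$, apply \thmref{thmmagnificent} together with \lemref{lemdivision1}(i) and $\ta{\ga^m}=\tg$ to get the error term $\frac{A_{p,q,\ga}}{m\,r(p,q)}\sum_{e_i\in\ee{\ga}}\frac{\li^2}{\li+\ri}$, let $m\to\infty$, and derive the non-attainment statement by comparing $C$ with the circle's value $\frac{1}{12}$ and choosing $\varepsilon$ small. Your explicit treatment of the case $p=q$ and of putting $p,q$ into the vertex set are minor tidy-ups the paper handles implicitly by fixing distinct points.
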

\begin{proof}
Let $\ga^m$ be the graph defined in \secref{section DA}. Then by
\lemref{lemdivision1},
\begin{equation}\label{eqnmag5mm}
\begin{split}
\sum_{e_k \in \ee{\ga^m}}\frac{L_k(\ga^m)^2}{L_k(\ga^m)+R_k(\ga^m)}
=\frac{1}{m}\sum_{e_i \in \ee{\ga}}\frac{\li^2}{\li+\ri}.
\end{split}
\end{equation}
Fix distinct points $p$, $q$ in $\ga$. \eqnref{eqnmag5mm} and \thmref{thmmagnificent} applied to
$\ga^m$ and $\ga$ give
\begin{equation}\label{eqnmag5mmm}
\begin{split}
\ta{(\ga^m \star \ga_{p,q})^N}=\tg-r(p,q)(\frac{1}{4}-\tg)+
\frac{A_{p,q,\ga}}{m \cdot r(p,q)}\sum_{e_i \in
\ee{\ga}}\frac{\li^2}{\li+\ri}.
\end{split}
\end{equation}
Since $\displaystyle \frac{A_{p,q,\ga}}{r(p,q)}\sum_{e_i \in
\ee{\ga}}\frac{\li^2}{\li+\ri}$ is independent of $m$, we can choose
$m$ large enough to make $\displaystyle \frac{A_{p,q,\ga}}{m \cdot
r(p,q)}\sum_{e_i \in \ee{\ga}}\frac{\li^2}{\li+\ri} \leq \varepsilon
$ for any given $\varepsilon >0$. Then taking $\ga':=(\ga^m \star \ga_{p,q})^N$ gives the inequality we wanted to show.

Suppose Conjecture ~\ref{TauBound} holds with a constant
$C$ and that $\beta$ is a normalized graph with $\ta{\beta}=C$. Then we have $\ta{\beta} \leq \frac{1}{12}$
since $\tg=\frac{1}{12}$ for the normalized circle graph $\ga$ by \corref{cor tau formula for circle}. 
Thus, $\frac{1}{4}-\ta{\beta} > 0$. Let $p$ and $q$ be distinct points in $\beta$, and let $\beta':=(\beta^m \star \beta_{p,q})^N$. For sufficiently large $m$, we have $\ta{\beta'} < \ta{\beta}$ by the inequality we proved.
This contradicts with the assumption made for $\beta$. This completes the proof of the theorem.
\end{proof}

The proof of \thmref{thmmagnificent} suggests a further
generalization of \thmref{thmmagnificent}, as follows:

Let $\ga$ be a graph with $\ell(\ga)=1$ and $e$ edges. For
each $i=1, 2, \dots, e$, suppose $\beta^i$ is a graph with
$\ell(\beta^i)=1$. Let $\pp$ and $\qq$ be any two distinct points in
$\vv{\beta^i}$, and let $r_{\beta^i}(x,y)$ be the resistance
function in $\beta^i$. By multiplying each edge length of $\beta^i$
by $\frac{\li}{r_{\beta^i}(\pp,\qq)}$ we obtain a graph which will
be denoted by $\beta_i$. Note that
$\ell(\beta_i)=\frac{\li}{r_{\beta^i}(\pp,\qq)}$ and
$r_{\beta_i}(\pp,\qq)=\li$, where $r_{\beta_i}(x,y)$ is the
resistance function in $\beta_i$. We replace each edge of $\ga$ with
$\beta_i$ and identify the end points of $e_i \in \ee{\ga}$ with the
points $\pp$ and $\qq$ in $\beta_i$ so that the resistances between
points in $\vv{\ga}$ do not change after the replacement. When edge
replacements are complete, we obtain a graph which we will denote by
$\ga \star (\beta^{1}_{p_1,q_1}\times \beta^{2}_{p_2,q_2}\times
\dots \times\beta^{e}_{p_e,q_e})$ or by $\ga \star
\prod_{i=1}^{e}\beta^{i}_{\pp,\qq}$ for short (see Figure \ref{fig replace3}).
Clearly,
\begin{equation}\label{eqnmaggen1}
\ell(\ga \star \prod_{i=1}^{e}\beta^{i}_{\pp,\qq})=\sum_{e_i \in
\ee{\ga}}\ell(\beta_i)=\sum_{e_i \in
\ee{\ga}}\frac{\li}{r_{\beta^i}(\pp,\qq)}.
\end{equation}
\begin{equation}\label{eqnmaggen2}
\begin{split}
\ta{\beta_i}=\frac{\li}{r_{\beta^i}(\pp,\qq)}\ta{\beta^i} \quad
\text{and }
A_{\pp,\qq,\beta_i}=\Big(\frac{\li}{r_{\beta^i}(\pp,\qq)}\Big)^2A_{\pp,\qq,\beta^i}.
\end{split}
\end{equation}
Let $r(x,y)$ be the resistance function in $\ga$. For any fixed $y
\in \vv{\ga}$, we can employ the same arguments as in the proof of
\thmref{thmmagnificent}. Therefore, \eqnref{eqnmag2} gives
\begin{equation}\label{eqnmaggen3}
\begin{split}
\sum_{e_i \in \ee{\ga}} \int_{\beta_i}\big(\ddx r(x,y)\big)^2dx =
-1+4\sum_{e_i \in \ee{\ga}}\ta{\beta_i}+4\sum_{e_i \in
\ee{\ga}}\frac{A_{\pp,\qq,\beta_i}}{\li+\ri} + 4\tg.
\end{split}
\end{equation}
Substituting Equations (\ref{eqnmaggen2}) into
\eqnref{eqnmaggen3} gives
\begin{equation}\label{eqnmaggen4}
\begin{split}
\sum_{e_i \in \ee{\ga}} \int_{\beta_i}\big(\ddx r(x,y)\big)^2dx &=
4\tg -1
+ \sum_{e_i \in
\ee{\ga}}\Big(\frac{4\li \ta{\beta^i}}{r_{\beta^i}(\pp,\qq)}+\frac{4 \li^2A_{\pp,\qq,\beta^i}}{(\li+\ri)(r_{\beta^i}(\pp,\qq))^2}\Big).
\end{split}
\end{equation}
Using \eqnref{eqnmag1} and \eqnref{eqnmaggen4} gives
\begin{equation}\label{eqnmaggen5}
\begin{split}
\ta{\ga \star \prod_{i=1}^{e}\beta^{i}_{\pp,\qq}} =\tg -\frac{1}{4}
+ \sum_{e_i \in
\ee{\ga}}\frac{\li}{r_{\beta^i}(\pp,\qq)}\ta{\beta^i}
+ \sum_{e_i \in
\ee{\ga}}\frac{\li^2A_{\pp,\qq,\beta^i}}{(\li+\ri)(r_{\beta^i}(\pp,\qq))^2}.
\end{split}
\end{equation}
By using \eqnref{eqnmaggen1}, we can normalize $\displaystyle \ga \star
\prod_{i=1}^{e}\beta^{i}_{\pp,\qq}$. In this way, we obtain the following theorem.
\begin{theorem}\label{thmmaggen}
Suppose $\ga$ is a normalized metrized graph with $\#(\ee{\ga})=e$. Let $\beta^i$
be a normalized metrized graph, and let $\pp$ and $\qq$ be any two points in
$\ee{\beta^i}$ for each $i=1, 2, \dots, e$. Then
\begin{equation*}
\begin{split}
\ta{\big(\ga \star & \prod_{i=1}^{e}\beta^{i}_{\pp,\qq}\big)^N} \sum_{e_i \in
\ee{\ga}}\frac{\li}{r_{\beta^i}(\pp,\qq)} =
\tg -\frac{1}{4} + \sum_{e_i \in
\ee{\ga}}\Big[\frac{\li \ta{\beta^i}}{r_{\beta^i}(\pp,\qq)}+
\frac{\li^2A_{\pp,\qq,\beta^i}}{(\li+\ri)(r_{\beta^i}(\pp,\qq))^2} \Big].
\end{split}
\end{equation*}
\end{theorem}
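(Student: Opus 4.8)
The plan is to follow essentially the same argument as in the proof of \thmref{thmmagnificent}, but now keeping track of the fact that each edge $e_i$ of $\ga$ is replaced by a different graph $\beta_i$ (a scaled copy of $\beta^i$) rather than by a common $\beta$. First I would fix a point $y \in \vv{\ga}$ and, exactly as before, write $\ta{\ga \star \prod_{i=1}^e \beta^i_{\pp,\qq}} = \frac{1}{4}\sum_{e_i \in \ee{\ga}}\int_{\beta_i}(\ddx r(x,y))^2 dx$ via \corref{lemtauformula}. The key local computation is the circuit reduction shown in \figref{fig gendoublea}: for $x \in \beta_i$, the graph reduces to a network with resistances $a = \jxpq$, $b = \jpxq$, $c = \jqxp$, $f = R_{a_i,y}$, $d = R_{b_i,y}$, $h = R_{c_i,y}$, where now $j^{\beta_i}$ denotes the voltage function in $\beta_i$, with $b+c = r_{\beta_i}(\pp,\qq) = \li$. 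This reduction is valid because, by construction, $r_{\beta_j}(\pp,\qq) = L_j$ for every $e_j$, so resistances between vertices of $\ga$ are unchanged. Parallel reduction then gives $r(x,y)$, and differentiating yields the same expression for $\ddx r(x,y)$ as in \eqnref{eqnmag01}, with $\beta_i$ in place of $\beta$ throughout.

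Next I would integrate over $\beta_i$ and use the identities from \eqnref{eqnmag02}, namely the three consequences of \corref{corjrpq}, the orthogonality from \lemref{lemorthogonality}, and the identity $\int_{\beta_i}\jpxq\,\ddx\jxpq\,\ddx\jpxq\,dx = -A_{\pp,\qq,\beta_i}$ from \thmref{thmremain}. Substituting and simplifying exactly as in the passage from \eqnref{eqnmag01} to \eqnref{eqnmag20} gives
\begin{equation*}
\int_{\beta_i}(\ddx r(x,y))^2 dx = \int_{\beta_i}(\ddx \jxpq)^2 dx + \frac{\li^3 + 3\li(R_{b_i,y}-R_{a_i,y})^2}{3(\li+\ri)^2} + \frac{4A_{\pp,\qq,\beta_i}}{\li+\ri}.
\end{equation*}
Then \thmref{thmbasic} applied to $\beta_i$ gives $\int_{\beta_i}(\ddx \jxpq)^2 dx = 4\ta{\beta_i} - \li$. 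Summing over all edges of $\ga$ and invoking \propref{proptau} to recognize $\frac{1}{3}\sum \frac{\li^3 + 3\li(R_{b_i,y}-R_{a_i,y})^2}{(\li+\ri)^2} = 4\tg$ produces \eqnref{eqnmaggen3}.

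Finally, I would substitute the scaling relations \eqnref{eqnmaggen2}, which follow from the scale-independence of $\tg$ (\remref{rem tau scale-idependence}) and the scaling property of $A_{\pp,\qq,\ga}$ (\remref{rem scaling}) together with $\ell(\beta_i) = \li/r_{\beta^i}(\pp,\qq)$, to obtain \eqnref{eqnmaggen4} and hence \eqnref{eqnmaggen5} for $\ta{\ga \star \prod \beta^i_{\pp,\qq}}$. Dividing all edge lengths by $\ell(\ga \star \prod \beta^i_{\pp,\qq}) = \sum_{e_i} \li/r_{\beta^i}(\pp,\qq)$ from \eqnref{eqnmaggen1} and using scale-independence once more normalizes the graph and yields the stated formula. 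The main point requiring care — though it is conceptually the same obstacle handled in \thmref{thmmagnificent} — is justifying the circuit reduction of \figref{fig gendoublea} uniformly across all $i$: one must check that the part of the network involving $d$, $f$, $h$ genuinely depends only on $\ga - e_i$ (so it is independent of $x \in \beta_i$), including the degenerate cases $y = \pp$, $y = \qq$, or $\ga - e_i$ disconnected, where some of $d$, $f$, $h$ vanish or become infinite; the computation then goes through with the usual limiting conventions for $\ri$.
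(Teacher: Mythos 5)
Your proposal is correct and follows essentially the same route as the paper, which itself derives Theorem~\ref{thmmaggen} by repeating the argument of \thmref{thmmagnificent} edge by edge with the scaled graphs $\beta_i$, obtaining the analogue of \eqnref{eqnmag2}, substituting the scaling relations \eqnref{eqnmaggen2}, and then normalizing via \eqnref{eqnmaggen1}. Your attention to the degenerate cases ($y=\pp$, $y=\qq$, or $\ga-e_i$ disconnected) matches the caveats already noted in the proof of \thmref{thmmagnificent}, so no new issues arise.
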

\begin{corollary}\label{cormaggen1}
Let $\ga$ and $\beta^1, \dots, \beta^e$ be as before. For each $\,
i\in\{1,2,\dots,e\}$, if there exist points $\pp$ and $\qq$ in
$\beta^i$ such that $r_{\beta_i}(\pp,\qq)=r$, where
$r_{\beta_i}(x,y)$ is the resistance function in $\beta^i$, then
\[
\ta{\big(\ga \star \prod_{i=1}^{e}\beta^{i}_{\pp,\qq}\big)^N}= r
\cdot \tg-\frac{r}{4}+\sum_{e_i \in
\ee{\ga}}\li\ta{\beta^i}+\frac{1}{r}\sum_{e_i \in
\ee{\ga}}\frac{\li^2}{\li+\ri}A_{\pp,\qq,\beta^i}.
\]
\end{corollary}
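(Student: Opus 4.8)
The plan is to obtain Corollary~\ref{cormaggen1} as an immediate specialization of Theorem~\ref{thmmaggen}. The hypothesis provides, for each $i\in\{1,2,\dots,e\}$, points $\pp$ and $\qq$ in $\beta^i$ with $r_{\beta^i}(\pp,\qq)=r$, the same value $r$ for every $i$. So the only real content is to feed this common value, together with the normalization $\ell(\ga)=1$, into the identity supplied by Theorem~\ref{thmmaggen}, and then clear the factor on the left.

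First I would evaluate the coefficient of $\ta{\big(\ga \star \prod_{i=1}^{e}\beta^{i}_{\pp,\qq}\big)^N}$ on the left-hand side of Theorem~\ref{thmmaggen}. Since $r_{\beta^i}(\pp,\qq)=r$ for all $i$ and $\ga$ is normalized, $\sum_{e_i \in \ee{\ga}}\frac{\li}{r_{\beta^i}(\pp,\qq)}=\frac{1}{r}\sum_{e_i\in\ee{\ga}}\li=\frac{\ell(\ga)}{r}=\frac{1}{r}$. Next I would substitute $r_{\beta^i}(\pp,\qq)=r$ into the right-hand side, so that $\frac{\li\ta{\beta^i}}{r_{\beta^i}(\pp,\qq)}$ becomes $\frac{\li\ta{\beta^i}}{r}$ and $\frac{\li^2 A_{\pp,\qq,\beta^i}}{(\li+\ri)(r_{\beta^i}(\pp,\qq))^2}$ becomes $\frac{\li^2 A_{\pp,\qq,\beta^i}}{r^2(\li+\ri)}$. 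Theorem~\ref{thmmaggen} then reads
\[
\frac{1}{r}\,\ta{\big(\ga \star \prod_{i=1}^{e}\beta^{i}_{\pp,\qq}\big)^N}=\tg-\frac{1}{4}+\frac{1}{r}\sum_{e_i\in\ee{\ga}}\li\,\ta{\beta^i}+\frac{1}{r^2}\sum_{e_i\in\ee{\ga}}\frac{\li^2 A_{\pp,\qq,\beta^i}}{\li+\ri},
\]
and multiplying through by $r$ yields precisely the claimed formula.

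There is no genuine obstacle: every step is a substitution or an elementary algebraic manipulation, and it relies only on the already-established Theorem~\ref{thmmaggen}. The one point that requires care is the bookkeeping of the normalization hypothesis $\ell(\ga)=1$, which is exactly what collapses the weighted sum on the left of Theorem~\ref{thmmaggen} to the single constant $1/r$; and one should keep in mind that the hypothesis $r_{\beta^i}(\pp,\qq)=r$ concerns the resistance in the original normalized graph $\beta^i$, not in the rescaled copy $\beta_i$ (in which the resistance between $\pp$ and $\qq$ equals $\li$).
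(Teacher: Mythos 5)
Your proposal is correct and follows exactly the paper's route: the paper also proves the corollary by setting $r_{\beta^i}(\pp,\qq)=r$ in \thmref{thmmaggen}, using $\ell(\ga)=1$ so that the left-hand coefficient collapses to $\frac{1}{r}$, and then multiplying through by $r$. Your closing remark correctly resolves the notational slip in the statement (the hypothesis is about the resistance in the normalized graph $\beta^i$, since in the rescaled copy $\beta_i$ that resistance equals $\li$).
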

\begin{proof}
Setting $r_{\beta_i}(\pp,\qq)=r$ in \thmref{thmmaggen} gives the
result.
\end{proof}
\begin{corollary}\label{cormaggen2}
Let $\ga$ and $\beta$ be two normalized graphs and let
$\#(\ee{\ga})=e$. Let $r_{\beta}(x,y)$ be the resistance function
in $\beta$. For any pairs of points $\{p_1,q_1\}, \{p_2,q_2\},
\dots, \{p_e,q_e\}$ in $\beta$,
\begin{equation*}
\begin{split}
\ta{\big(\ga \star \prod_{i=1}^{e}\beta_{\pp,\qq}\big)^N}=
\ta{\beta} + \frac{1}{\sum_{e_i \in
\ee{\ga}}\frac{\li}{r_{\beta}(\pp,\qq)}} \Bigg[\tg -\frac{1}{4}
+\sum_{e_i \in
\ee{\ga}}\frac{\li^2A_{\pp,\qq,\beta}}{(\li+\ri)(r_{\beta}(\pp,\qq))^2}\Bigg].
\end{split}
\end{equation*}
\end{corollary}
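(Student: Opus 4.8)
The plan is to obtain \corref{cormaggen2} by specializing \thmref{thmmaggen} to the case in which all of the graphs $\beta^1,\dots,\beta^e$ coincide with a single normalized graph $\beta$, while still allowing the reference pairs $\{p_i,q_i\}$ to vary from edge to edge. Setting $\beta^i=\beta$ for every $i$, we have $r_{\beta^i}(\pp,\qq)=r_\beta(\pp,\qq)$, $\ta{\beta^i}=\ta{\beta}$, and $A_{\pp,\qq,\beta^i}=A_{\pp,\qq,\beta}$, so the identity furnished by \thmref{thmmaggen} becomes
\[
\ta{\big(\ga \star \prod_{i=1}^{e}\beta_{\pp,\qq}\big)^N}\sum_{e_i \in \ee{\ga}}\frac{\li}{r_{\beta}(\pp,\qq)} = \tg-\frac{1}{4}+\sum_{e_i \in \ee{\ga}}\frac{\li\,\ta{\beta}}{r_\beta(\pp,\qq)}+\sum_{e_i\in\ee{\ga}}\frac{\li^2 A_{\pp,\qq,\beta}}{(\li+\ri)(r_\beta(\pp,\qq))^2}.
\]

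Next I would factor the constant $\ta{\beta}$ out of the first sum on the right, writing $\sum_{e_i\in\ee{\ga}}\frac{\li\,\ta{\beta}}{r_\beta(\pp,\qq)}=\ta{\beta}\sum_{e_i\in\ee{\ga}}\frac{\li}{r_\beta(\pp,\qq)}$, and then divide the entire identity by $\sum_{e_i\in\ee{\ga}}\frac{\li}{r_\beta(\pp,\qq)}$, which is strictly positive because each $\li>0$ and each $r_\beta(\pp,\qq)>0$. The $\ta{\beta}$ term then separates off exactly, while the quantities $\tg-\frac14$ and $\sum_{e_i\in\ee{\ga}}\frac{\li^2 A_{\pp,\qq,\beta}}{(\li+\ri)(r_\beta(\pp,\qq))^2}$ each acquire the prefactor $\big(\sum_{e_i\in\ee{\ga}}\frac{\li}{r_\beta(\pp,\qq)}\big)^{-1}$, which is precisely the formula asserted in the statement.

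There is essentially no obstacle in this argument: \corref{cormaggen2} is a direct algebraic rearrangement of \thmref{thmmaggen}, and no new circuit-theoretic input is needed since \thmref{thmmaggen} already applies to arbitrary normalized $\beta^i$. The only bookkeeping point worth verifying is the normalization: by \eqnref{eqnmaggen1} one has $\ell\big(\ga\star\prod_{i=1}^{e}\beta_{\pp,\qq}\big)=\sum_{e_i\in\ee{\ga}}\li/r_\beta(\pp,\qq)$, so dividing by this sum is exactly what converts $\ta{\ga\star\prod_{i=1}^{e}\beta_{\pp,\qq}}$ into $\ta{(\ga\star\prod_{i=1}^{e}\beta_{\pp,\qq})^N}$ via the scale-independence of the tau constant (\remref{rem tau scale-idependence}).
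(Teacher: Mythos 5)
Your proposal is correct and is essentially the paper's own proof: the paper also obtains \corref{cormaggen2} simply by setting $\beta^i=\beta$ in \thmref{thmmaggen}, with the factoring of $\ta{\beta}$ and division by $\sum_{e_i\in\ee{\ga}}\li/r_{\beta}(\pp,\qq)$ being the same routine rearrangement you carry out explicitly.
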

\begin{proof}
Setting $\beta^i=\beta$ in \thmref{thmmaggen} gives the result.
\end{proof}
\begin{figure}
\centering
\includegraphics[scale=0.65]{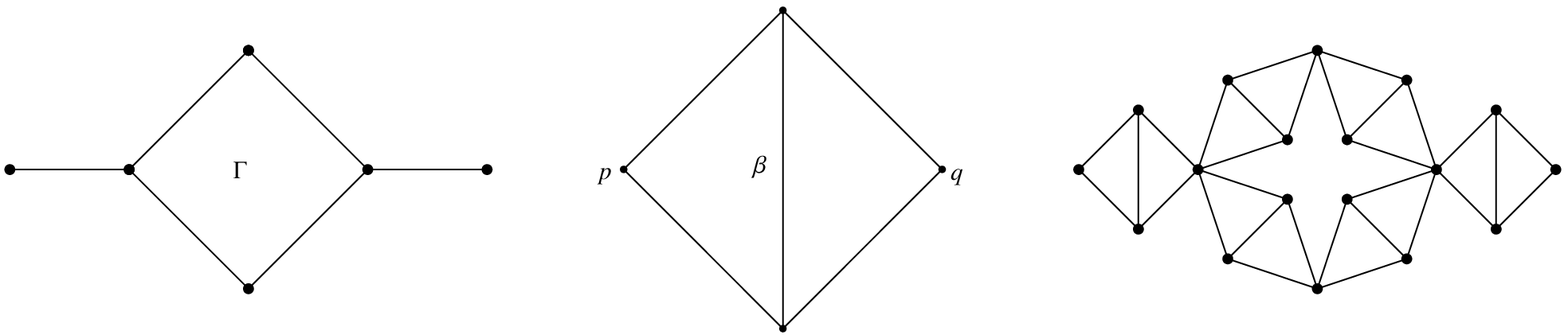} \caption{$\ga$, $\beta$ and $\ga
\star \prod_{i=1}^{e}\beta_{\pp,\qq}$.} \label{fig double3}
\end{figure}
\begin{figure}
\centering
\includegraphics[scale=0.7]{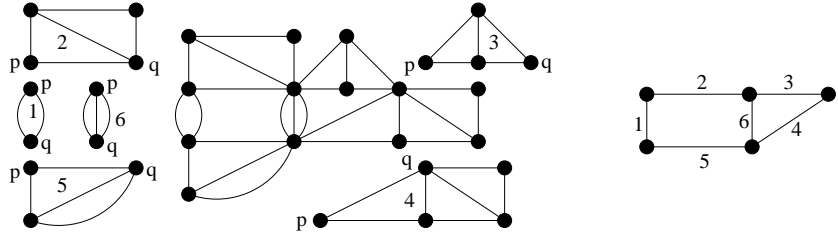} \caption{$\ga$ (edges are numbered), $\beta^i$ ($i=1, \cdots, 6$) with corresponding $p$ and $q$, and
$\ga \star \prod_{i=1}^{e}\beta^i_{\pp,\qq}$.}\label{fig replace3}
\end{figure}
%

\section{The tau constant of the union of two graphs along two
points}\label{section union along pq}
Let $\gam{1} \cup \gam{2}$ denote the union, along two points p and
q, of two connected graphs $\gam{1}$ and $\gam{2}$, so that $\gam{1}
\cap \gam{2} = \{ p,q \}$. Let $r(x,y)$, $r_{1}(x,y)$ and
$r_{2}(x,y)$ denote the resistance functions on $\gam{1} \cup
\gam{2}$, $\gam{1}$ and $\gam{2}$, respectively. Note that $\ell(\gam{1} \cup
\gam{2})=\ell(\gam{1})+\ell(\gam{2})$.
\begin{theorem}\label{thmtwopunion}
Let p, q, $r_{1}(p,q)$, $r_{2}(p,q)$, $\gam1$, $\gam2$ and
$\ta{\gam1 \cup \gam2}$ be as above. Then,
\begin{equation*}
\begin{split}
\ta{\gam1 \cup \gam2} = \ta{\gam1} + \ta{\gam2}
-\frac{r_{1}(p,q)+r_{2}(p,q)}{6} +
\frac{A_{p,q,\gam1}+A_{p,q,\gam2}}{r_{1}(p,q)+r_{2}(p,q)} .
\end{split}
\end{equation*}
\end{theorem}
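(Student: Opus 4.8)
The plan is to apply \thmref{thmbasic} to $\ga:=\gam1\cup\gam2$ with the two gluing points $p$ and $q$: this turns the problem into computing $\int_{\ga}\big(\ddx\jj{x}{p}{q}\big)^2dx$ together with $r(p,q)$. Since $\gam1$ and $\gam2$ meet only at $p$ and $q$, as an electrical network $\ga$ is $\gam1$ with one extra edge of resistance $r_2(p,q)$ joining $p$ and $q$ (and symmetrically, $\gam2$ with an extra edge of resistance $r_1(p,q)$); hence parallel reduction gives $r(p,q)=\dfrac{r_1(p,q)r_2(p,q)}{r_1(p,q)+r_2(p,q)}$.

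The key step, and the main obstacle, is to express $\jj{x}{p}{q}$, for $x\in\gam1$, in terms of the voltage function $j^1$ of $\gam1$. I would first apply the $Y$-reduction of $\gam1$ relative to $x$, $p$, $q$ (as in \figref{fig xpq1new2}), whose three legs have lengths $j^1_x(p,q)$, $j^1_p(x,q)$, $j^1_q(x,p)$, then attach the resistor $r_2(p,q)$ across $p$ and $q$, and read off $r(p,x)$, $r(q,x)$, $r(p,q)$ in $\ga$ by series and parallel reductions. Feeding these into the identity $\jj{x}{p}{q}=\tfrac12\big(r(p,x)+r(q,x)-r(p,q)\big)$ (which follows from \eqnref{eqn1.1}), the cross terms cancel and one obtains, for $x\in\gam1$,
\[
\jj{x}{p}{q}=j^1_x(p,q)+\frac{j^1_p(x,q)\,j^1_q(x,p)}{r_1(p,q)+r_2(p,q)},
\]
and the symmetric identity for $x\in\gam2$.

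Next, set $s:=r_1(p,q)+r_2(p,q)$ and $b:=j^1_p(x,q)$. Differentiating the displayed identity and using \remref{remjpq} in $\gam1$ (so $\ddx j^1_q(x,p)=-\ddx b$, since $j^1_p(x,q)+j^1_q(x,p)=r_1(p,q)$) gives $\ddx\jj{x}{p}{q}=\ddx j^1_x(p,q)+\tfrac1s\big(r_1(p,q)-2b\big)\ddx b$. I would square this and integrate over $\gam1$ term by term: the pure term $\int_{\gam1}\big(\ddx j^1_x(p,q)\big)^2dx=4\ta{\gam1}-r_1(p,q)$ by \thmref{thmbasic} applied to $\gam1$; the mixed terms are handled by \lemref{lemorthogonality} (which kills $\int_{\gam1}\ddx j^1_x(p,q)\,\ddx b\,dx$) and \thmref{thmremain}(iv) (which gives $\int_{\gam1}b\,\ddx j^1_x(p,q)\,\ddx b\,dx=-A_{p,q,\gam1}$); and the remaining term expands into $\int_{\gam1}(\ddx b)^2\,b^k\,dx$ for $k=0,1,2$, each evaluated by \corref{corjrpq}. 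Collecting the pieces yields
\[
\int_{\gam1}\big(\ddx\jj{x}{p}{q}\big)^2dx=4\ta{\gam1}-r_1(p,q)+\frac{4A_{p,q,\gam1}}{s}+\frac{r_1(p,q)^3}{3s^2},
\]
and the same computation on $\gam2$ gives the analogous formula with the subscript $1$ replaced by $2$.

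Finally I would add the two contributions and the term $r(p,q)=\dfrac{r_1(p,q)r_2(p,q)}{s}$ prescribed by \thmref{thmbasic}, so that $4\tg$ equals $4\ta{\gam1}+4\ta{\gam2}+\tfrac4s\big(A_{p,q,\gam1}+A_{p,q,\gam2}\big)$ plus the resistance-theoretic remainder $-s+\dfrac{r_1(p,q)^3+r_2(p,q)^3}{3s^2}+\dfrac{r_1(p,q)r_2(p,q)}{s}$. Using $r_1(p,q)^3+r_2(p,q)^3=s^3-3r_1(p,q)r_2(p,q)s$, this remainder collapses to $-\tfrac{2s}{3}$, and dividing through by $4$ produces exactly the asserted identity.
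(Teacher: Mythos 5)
Your proof is correct, but it takes a genuinely different route from the paper's. The paper obtains \thmref{thmtwopunion} as an immediate corollary of the immersion machinery of \secref{section general DA}: it takes $\ga$ to be the circle graph with vertex set $\{p,q\}$ and edge lengths proportional to $r_1(p,q)$ and $r_2(p,q)$, sets $\beta^1=\ga_1^N$, $\beta^2=\ga_2^N$, and reads the result off from \thmref{thmmaggen} applied to $\big(\ga \star \prod_{i=1}^{2}\beta^{i}_{p,q}\big)^N$. You instead give a self-contained computation starting from \thmref{thmbasic} applied to $\gam1\cup\gam2$ at the two gluing points: your key identity $\jj{x}{p}{q}=j^1_x(p,q)+\frac{j^1_p(x,q)\,j^1_q(x,p)}{r_1(p,q)+r_2(p,q)}$ for $x\in\gam1$ is correct (it is exactly what the $Y$-reduction of $\gam1$ plus the resistor $r_2(p,q)$ across $p,q$ gives, combined with $\jj{x}{p}{q}=\tfrac12(r(p,x)+r(q,x)-r(p,q))$ from \eqnref{eqn1.1}), and your term-by-term evaluation — \thmref{thmbasic} on $\gam1$ for the pure term, \lemref{lemorthogonality} and \thmref{thmremain}(iv) for the cross terms, \corref{corjrpq} for the powers of $j^1_p(x,q)$, and the identity $r_1^3+r_2^3=s^3-3r_1r_2s$ at the end — checks out and yields the stated formula. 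In effect you re-run, in this special case, the expansion that underlies \thmref{thmmagnificent} (compare \eqnref{eqnmag01}--\eqnref{eqnmag02}), so your argument is more elementary and independent of the immersion theorems, at the cost of redoing that computation; the paper's proof is a one-liner given \thmref{thmmaggen}, and a direct proof in your spirit is essentially what it attributes to \cite[page 96]{C1}.
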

\begin{proof}
Let $\ga$ be the circle graph with vertex set $\{p, q \}$, and with edge lengths $L_1=\frac{r_{1}(p,q)}{r_{1}(p,q)+r_{2}(p,q)}$
and $L_2=\frac{r_{2}(p,q)}{r_{1}(p,q)+r_{2}(p,q)}$. Let $\beta^1=\ga_1^N$ and $\beta^2=\ga_2^N$. Then the result follows by computing $\ta{ \big(\ga \star \prod_{i=1}^{2}\beta^{i}_{p,q}\big)^N}$, applying \thmref{thmmaggen}.
\end{proof}
A different proof of \thmref{thmtwopunion} can be found in \cite[page 96]{C1}.
\begin{corollary}\label{cor1twopunion}
Suppose $\ga:=\gam1=\gam2$ in \thmref{thmtwopunion}. Then, $r_{1}(x,y)=r_{2}(x,y)$ and
\begin{equation*}
\begin{split}
\ta{\ga \cup \ga} = 2\ta{\ga} -\frac{r_{1}(p,q)}{3} +
\frac{A_{p,q,\ga}}{r_{1}(p,q)}.
\end{split}
\end{equation*}
\end{corollary}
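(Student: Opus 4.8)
The plan is simply to specialize Theorem~\ref{thmtwopunion} to the case $\gam1 = \gam2 = \ga$. First I would note that $\ga \cup \ga$ is to be understood as the union along $\{p,q\}$ of two isometric copies of $\ga$, so that Theorem~\ref{thmtwopunion} applies with both pieces equal to $\ga$. Since the resistance function on a metrized graph is determined by the graph together with its edge-length metric, the two resistance functions coincide, i.e.\ $r_1(x,y) = r_2(x,y) = r(x,y)$ on $\ga$; in particular $r_1(p,q) = r_2(p,q)$. This is the first assertion of the corollary.

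Next I would carry out the substitution $\ta{\gam1} = \ta{\gam2} = \ta{\ga}$, $A_{p,q,\gam1} = A_{p,q,\gam2} = A_{p,q,\ga}$ and $r_1(p,q) = r_2(p,q)$ into the formula
\[
\ta{\gam1 \cup \gam2} = \ta{\gam1} + \ta{\gam2} - \frac{r_{1}(p,q)+r_{2}(p,q)}{6} + \frac{A_{p,q,\gam1}+A_{p,q,\gam2}}{r_{1}(p,q)+r_{2}(p,q)}
\]
of Theorem~\ref{thmtwopunion}. The first two terms collapse to $2\ta{\ga}$, the third term becomes $\frac{2 r_1(p,q)}{6} = \frac{r_1(p,q)}{3}$, and the last term becomes $\frac{2 A_{p,q,\ga}}{2 r_1(p,q)} = \frac{A_{p,q,\ga}}{r_1(p,q)}$, which is exactly the claimed identity.

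There is essentially no obstacle: the statement is a direct specialization, and the only point worth a sentence of care is making sure the hypotheses of Theorem~\ref{thmtwopunion} are met by the two glued copies of $\ga$ (connectedness of each piece and $\gam1 \cap \gam2 = \{p,q\}$), after which only the elementary arithmetic simplification above remains.
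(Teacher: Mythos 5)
Your proposal is correct and matches the paper's own proof: both simply observe that $r_1=r_2$, $\ta{\gam1}=\ta{\gam2}=\ta{\ga}$, and $A_{p,q,\gam1}=A_{p,q,\gam2}=A_{p,q,\ga}$, and then substitute into Theorem~\ref{thmtwopunion} and simplify. No gaps; the extra remark about checking the hypotheses of Theorem~\ref{thmtwopunion} is fine but not needed beyond what the paper states.
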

\begin{proof}
Since $\ga:=\gam1=\gam2$, clearly, we have $r_{1}(x,y)=r_{2}(x,y)$,
$\ta{\ga}= \ta{\gam1}=\ta{\gam2}$
and $A_{p,q,\ga}=A_{p,q,\gam1}=A_{p,q,\gam2}$.
\end{proof}
The following corollary of \thmref{thmtwopunion} shows how the tau constant changes by deletion of an edge when the remaining graph is connected.
\begin{corollary}\label{cor2twopunion}
Suppose that $\ga$ is a graph such that $\ga-e_{i}$ is connected, where
$e_{i} \in \ee{\ga}$ is an edge with length $\li$ and end points $p_{i}$ and
$q_{i}$. Then,
\begin{equation*}
\begin{split}
\ta{\ga} = \ta{\ga-e_{i}} +\frac{\li}{12}-\frac{\ri}{6}+
\frac{A_{p_{i},\qq,\ga-e_{i}}}{\li+\ri}.
\end{split}
\end{equation*}
\end{corollary}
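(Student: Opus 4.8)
The plan is to realize $\ga$ as the union of two graphs along the two points $p_i$ and $q_i$ and then invoke \thmref{thmtwopunion}. Concretely, I would set $\gam{1} := \ga - e_i$ and let $\gam{2}$ be the graph consisting of the single edge $e_i$, regarded as a line segment with endpoints $p_i$ and $q_i$. Since $\ga - e_i$ is assumed connected, both $\gam{1}$ and $\gam{2}$ are connected, and clearly $\ga = \gam{1} \cup \gam{2}$ with $\gam{1} \cap \gam{2} = \{p_i, q_i\}$, so the hypotheses of \thmref{thmtwopunion} are satisfied (when $p_i \ne q_i$; the degenerate case in which $e_i$ is a self-loop, where $\ri = 0$ and $\ga$ is $\ga - e_i$ with a circle of length $\li$ attached at a point, follows instead from the additive property of $\tg$ together with \corref{cor tau formula for circle}).

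Next I would record the four quantities attached to $\gam{2}$ that enter the formula of \thmref{thmtwopunion}. The resistance between $p_i$ and $q_i$ in $\gam{1} = \ga - e_i$ is $\ri$ by the definition of $\ri$, while the resistance between the endpoints of a line segment of length $\li$ is $\li$; thus $r_{1}(p_i,q_i) = \ri$ and $r_{2}(p_i,q_i) = \li$. Since $\gam{2}$ is a tree, \corref{lemtauformula2} gives $\ta{\gam{2}} = \frac{\li}{4}$, and \propref{propAtree} gives $A_{p_i,q_i,\gam{2}} = 0$.

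Substituting these values into \thmref{thmtwopunion} yields
\[
\ta{\ga} = \ta{\ga-e_i} + \frac{\li}{4} - \frac{\ri+\li}{6} + \frac{A_{p_i,q_i,\ga-e_i}}{\ri+\li},
\]
and the claimed identity then follows from the elementary simplification $\frac{\li}{4} - \frac{\ri+\li}{6} = \frac{\li}{12} - \frac{\ri}{6}$. I expect no real obstacle here: the mathematical content is entirely packaged in \thmref{thmtwopunion}, and the only steps needing a word of care are verifying that the decomposition $\ga = (\ga - e_i) \cup e_i$ meets along exactly the two points $\{p_i, q_i\}$ and, if one wishes to be thorough, disposing of the self-loop case as indicated above.
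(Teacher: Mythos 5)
Your proposal is correct and follows essentially the same route as the paper: decompose $\ga$ as the union of $\ga-e_i$ and the line segment $e_i$ along $\{p_i,q_i\}$, apply \thmref{thmtwopunion} together with \corref{lemtauformula2} and \propref{propAtree}, and simplify. The only difference is the (harmless) swap of the labels $\gam{1}$, $\gam{2}$ and your extra remark on the self-loop case, which the paper leaves implicit.
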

\begin{proof}
Let $\gam1:=e_{i}$ and $\gam2:=\ga-e_{i}$. Therefore,
$\ta{\gam1}=\frac{\li}{4}$ by \corref{lemtauformula2}, $r_{1}(\pp,\qq)=\li$,
$r_{2}(\pp,\qq)=\ri$, and $A_{p_{i},\qq,\gam1}=0$ by \propref{propAtree}. Then by
\thmref{thmtwopunion}, we have
$\ta{\ga}=
\ta{\ga-e_{i}}+\frac{\li}{4}-\frac{1}{6}(\li+\ri)+\frac{A_{p_{i},\qq,\ga-e_{i}}}{\li+\ri}$.
This gives the result.
\end{proof}
\begin{corollary}\label{cor2twopunion2}
Suppose that $\ga$ is a graph such that $\ga-e_{i}$, for some edge
$e_{i} \in \ee{\ga}$ with length $\li$ and end points $p_{i}$ and
$q_{i}$, is connected. For the voltage function $j^{i}_{x}(y,z)$ in
$\ga-e_i$,
\begin{equation*}
\begin{split}
\ta{\ga} = \frac{1}{4}\int_{\ga-e_i}(\ddx j^{i}_{x}(\pp,\qq))^2dx
+\frac{\li+\ri}{12}+ \frac{A_{p_{i},\qq,\ga-e_{i}}}{\li+\ri} .
\end{split}
\end{equation*}
\end{corollary}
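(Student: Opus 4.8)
The plan is to obtain this identity as an immediate consequence of \corref{cor2twopunion}, which was just proved, together with \thmref{thmbasic} applied to the subgraph $\ga - e_i$. Since $\ga - e_i$ is connected by hypothesis, it is itself a metrized graph, and $\pp$, $\qq$ are two points of it; moreover, by the very definition of $\ri$, the effective resistance between $\pp$ and $\qq$ in $\ga - e_i$ equals $\ri$.

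First I would apply \thmref{thmbasic} to the graph $\ga - e_i$ with the two points $\pp$, $\qq$, using the notation $j^{i}_{x}(y,z)$ for the voltage function on $\ga - e_i$. This yields
\[
\ta{\ga-e_i} = \frac{1}{4}\int_{\ga-e_i}\big(\ddx j^{i}_{x}(\pp,\qq)\big)^2\,dx + \frac{1}{4}\,r_{\ga-e_i}(\pp,\qq) = \frac{1}{4}\int_{\ga-e_i}\big(\ddx j^{i}_{x}(\pp,\qq)\big)^2\,dx + \frac{\ri}{4}.
\]
Next I would substitute this expression for $\ta{\ga-e_i}$ into the formula of \corref{cor2twopunion}, namely $\ta{\ga} = \ta{\ga-e_i} + \tfrac{\li}{12} - \tfrac{\ri}{6} + \tfrac{A_{\pp,\qq,\ga-e_i}}{\li+\ri}$. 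Collecting the two terms proportional to $\ri$ gives $\tfrac{\ri}{4} - \tfrac{\ri}{6} = \tfrac{\ri}{12}$, which combines with $\tfrac{\li}{12}$ to produce $\tfrac{\li+\ri}{12}$, and this is precisely the claimed identity.

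I do not anticipate any genuine obstacle: the statement is a one-line algebraic rearrangement of two results already available in the excerpt. The only point requiring (minimal) care is to recognize that $r_{\ga-e_i}(\pp,\qq) = \ri$, so that \thmref{thmbasic} contributes exactly the term $\tfrac{\ri}{4}$, and then to check that $\tfrac{\ri}{4} - \tfrac{\ri}{6} = \tfrac{\ri}{12}$ when merging it with the $\tfrac{\li}{12}$ coming from \corref{cor2twopunion}.
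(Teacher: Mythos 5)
Your proposal is correct and is essentially identical to the paper's own proof: both apply \thmref{thmbasic} to $\ga-e_i$ with the points $\pp$, $\qq$ to get $\ta{\ga-e_i}=\frac{1}{4}\int_{\ga-e_i}(\ddx j^{i}_{x}(\pp,\qq))^2dx+\frac{\ri}{4}$ and substitute this into \corref{cor2twopunion}. The arithmetic $\frac{\ri}{4}-\frac{\ri}{6}=\frac{\ri}{12}$ is exactly as required, so no further comment is needed.
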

\begin{proof}
By \thmref{thmbasic},
$\ta{\ga-e_i}=\frac{1}{4}\int_{\ga-e_i}(\ddx j^{i}_{x}(\pp,\qq))^2dx +\frac{\ri}{4}$.
Substituting this into the formula of \corref{cor2twopunion}, one
obtains the result.
\end{proof}
Note that \corref{cor2twopunion2} shows that the tau constant $\tg$ approaches $\frac{\elg}{12}$ (the tau constant of a circle graph) as we increase one of the edge lengths and fix the other edge lengths.

One wonders how $\tg$ changes if one changes the length of an edge in the
graph $\ga$. \lemref{lemedgeext} below sheds some light on the answer:
\begin{lemma}\label{lemedgeext}
Let $\ga$ and $\ga'$ be two graphs such that  $\ga-e_{i}$
$\ga'-e_{i}'$ are connected, where $e_{i} \in \ee{\ga}$ is of length
$\li$ and has end points $p_{i}$, $q_{i}$ and $e_{i}' \in \ee{\ga'}$
is of length $\li+x_{i}$ and has end points $p_{i}$, $q_{i}$. Here,
$x_{i} \in \RR$ is such that $L_{i}+x_{i} \geq 0$. Suppose that
$\ga-e_{i}$ and $\ga'-e_{i}'$ are copies of each other. Then,
\begin{equation*}
\begin{split}
\ta{\ga'}=\ta{\ga}+\frac{x_{i}}{12}-\frac{x_{i}A_{\pp,\qq,\ga-e_{i}}}{(\li+\ri)(\li+\ri+x_{i})} .
\end{split}
\end{equation*}
\end{lemma}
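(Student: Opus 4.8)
The plan is to derive this by applying \corref{cor2twopunion} to both $\ga$ and $\ga'$ and subtracting. Since $\ga-e_i$ is connected by hypothesis, \corref{cor2twopunion} gives
\[
\ta{\ga} = \ta{\ga-e_{i}} +\frac{\li}{12}-\frac{\ri}{6}+ \frac{A_{\pp,\qq,\ga-e_{i}}}{\li+\ri}.
\]
Similarly, since $\ga'-e_i'$ is connected and $e_i'$ is an edge of length $\li+x_i$ with end points $\pp$, $\qq$, the same corollary applied to $\ga'$ yields
\[
\ta{\ga'} = \ta{\ga'-e_{i}'} +\frac{\li+x_i}{12}-\frac{R_i'}{6}+ \frac{A_{\pp,\qq,\ga'-e_{i}'}}{\li+x_i+R_i'},
\]
where $R_i'$ denotes the resistance between $\pp$ and $\qq$ in $\ga'-e_i'$.

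Next I would use the hypothesis that $\ga-e_i$ and $\ga'-e_i'$ are copies of each other. As isomorphic metrized graphs in which $\pp$ and $\qq$ correspond under the isomorphism, they have the same tau constant, the same effective resistance between $\pp$ and $\qq$, and the same value of the graph integral $A_{\pp,\qq,\,\cdot}$ (each of these being determined by the isomorphism class of the metrized graph together with the two marked points). Hence $\ta{\ga'-e_i'}=\ta{\ga-e_i}$, $R_i'=\ri$, and $A_{\pp,\qq,\ga'-e_i'}=A_{\pp,\qq,\ga-e_i}$.

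Finally, subtracting the two displayed identities and simplifying with
\[
\frac{1}{\li+x_i+\ri}-\frac{1}{\li+\ri}=\frac{-x_i}{(\li+\ri)(\li+\ri+x_i)}
\]
gives
\[
\ta{\ga'}-\ta{\ga}=\frac{x_i}{12}+A_{\pp,\qq,\ga-e_i}\Big(\frac{1}{\li+x_i+\ri}-\frac{1}{\li+\ri}\Big)=\frac{x_{i}}{12}-\frac{x_{i}A_{\pp,\qq,\ga-e_{i}}}{(\li+\ri)(\li+\ri+x_{i})},
\]
which is the desired formula. There is no substantive obstacle here; the only points requiring care are checking that the connectedness hypothesis of \corref{cor2twopunion} holds for both graphs (immediate from the assumptions) and recording precisely which invariants of $\ga-e_i$ are left unchanged upon passing to $\ga'-e_i'$.
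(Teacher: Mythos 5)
Your proposal is correct and follows the same route as the paper: apply \corref{cor2twopunion} to both $\ga$ and $\ga'$, use the fact that $\ga-e_i$ and $\ga'-e_i'$ are copies (so $\ta{\ga'-e_i'}=\ta{\ga-e_i}$, $R_i'=\ri$, and $A_{\pp,\qq,\ga'-e_i'}=A_{\pp,\qq,\ga-e_i}$), and subtract the two identities. The algebraic simplification you record explicitly is exactly the "combining" step the paper leaves implicit.
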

\begin{proof}By \corref{cor2twopunion},
$\ta{\ga} = \ta{\ga-e_{i}} +\frac{\li}{12}-\frac{\ri}{6}+
\frac{A_{p_{i},\qq,\ga-e_{i}}}{\li+\ri}.$
Again, by \corref{cor2twopunion} and the fact that
$\ga-e_{i}=\ga'-e_{i}'$,
$\ta{\ga'} = \ta{\ga-e_{i}} +\frac{\li+x_{i}}{12}-\frac{\ri}{6}+
\frac{A_{p_{i},\qq,\ga-e_{i}}}{\li+x_{i}+\ri}$.
The result follows by combining these two equations.
\end{proof}
One may also want to know what happens to $\tg$ if the edge lengths are
changed successively.

Let $\ga$ be a bridgeless graph. Suppose that
$\{e_{1}, e_{2}, \dots, e_{e} \}$ is the set of edges of $\ga$ in an
arbitrarily chosen order. Recall that $e$ is the number of edges in
$\ga$. Also, $\li$ is the length of the edge $e_{i}$ with end points
$\pp$, $\qq$, for $i=1,2,\dots, e$. We define a sequence of graphs
as follows:

$\ga_{0}:=\ga$, $\ga_{1}$ is obtained from $\ga_{0}$ by changing
$L_{1}$ to $L_{1}+x_{1}$. Similarly, $\ga_{k}$ is obtained from
$\ga_{k-1}$ by changing $L_{k}$ to $L_{k}+x_{k}$ at $k-$th
step. Here, $x_{k} \in \RR$ is such that $L_{k}+x_{k} \geq 0$ for any
k. We have $\ell(\ga_{k})=\ell(\ga)+\sum_{j=1}^{k}x_{j}$. With this
change, the edge $e_{k} \in \ga_{k-1}$ becomes the edge $e_{k}' \in
\ga_{k}$, so $\ga_{k-1}-e_{k}=\ga_{k}-e_{k}'$ and
$A_{p_k,q_k,\ga_{k-1}-e_{k}}=A_{p_k,q_k,\ga_{k}-e_{k}'}$. We also
let $R'_{k}$ $(R_k)$ denote the resistance, in $\ga_{k}-e'_{k}$ (in
$\ga-e_k$), between end points of $e'_{k}$ ($e_{k}$, respectively).
Here, $k\in \{1,2,\dots,e\}$. Therefore, at the last step we obtain
$\ga_{e}$ and $\ell(\ga_{e})=\ell(\ga)+\sum_{j=1}^{e}x_{j}$.

With these notation, we have the following lemma:
\begin{lemma}\label{lemsuccessedgeext}
With the notation above,
\begin{equation*}
\ta{\ga_{e}}=\ta{\ga}+\frac{1}{12}\sum_{i=1}^{e}x_{i}
-\sum_{i=1}^{e}\frac{x_{i}A_{\pp,\qq,\ga_{i}-e_{i}'}}{(\li+\ri')(\li+\ri'+x_{i})} .
\end{equation*}
\end{lemma}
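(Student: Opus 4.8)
The plan is to prove this by induction on the number of edges whose lengths have been modified, using \lemref{lemedgeext} as the one-step engine. The base case, after zero modifications, is trivial since $\ga_0 = \ga$. For the inductive step, suppose the formula holds for $\ga_{k-1}$, i.e.
\begin{equation*}
\ta{\ga_{k-1}}=\ta{\ga}+\frac{1}{12}\sum_{i=1}^{k-1}x_{i}
-\sum_{i=1}^{k-1}\frac{x_{i}A_{\pp,\qq,\ga_{i}-e_{i}'}}{(\li+\ri')(\li+\ri'+x_{i})} .
\end{equation*}
Now pass from $\ga_{k-1}$ to $\ga_k$ by changing $L_k$ to $L_k + x_k$. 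The graphs $\ga_{k-1}$ and $\ga_k$ differ exactly in the length of a single edge ($e_k$ versus $e_k'$) with common endpoints $p_k$, $q_k$, and $\ga_{k-1}-e_k = \ga_k - e_k'$ by construction, so \lemref{lemedgeext} applies directly with $\ga_{k-1}$ playing the role of ``$\ga$'', $\ga_k$ playing the role of ``$\ga'$'', and the edge length increment $x_k$. This gives
\begin{equation*}
\ta{\ga_k}=\ta{\ga_{k-1}}+\frac{x_k}{12}-\frac{x_k A_{\pp,\qq,\ga_{k-1}-e_k}}{(L_k+R'')(L_k+R''+x_k)},
\end{equation*}
where $R''$ is the resistance between $p_k$ and $q_k$ in $\ga_{k-1}-e_k$.

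The one genuinely delicate point — and the step I expect to be the main obstacle — is reconciling the resistance appearing in the one-step application of \lemref{lemedgeext} with the quantity $\ri'$ in the statement. In \lemref{lemedgeext}, the relevant resistance is computed in the graph $\ga_{k-1}-e_k$; but in the statement of \lemref{lemsuccessedgeext}, $\ri'$ denotes the resistance between the endpoints of $e_k'$ in $\ga_k - e_k'$. Since $\ga_{k-1}-e_k$ and $\ga_k - e_k'$ are literally the same graph (deleting $e_k$ removes the only edge that was changed at step $k$), these two resistances coincide: $R'' = \ri'$. Similarly $A_{\pp,\qq,\ga_{k-1}-e_k} = A_{\pp,\qq,\ga_k - e_k'}$, which is precisely the quantity in the $i=k$ summand. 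One must also check that all the hypotheses of \lemref{lemedgeext} are met at each step — in particular that $\ga_{k-1}-e_k$ and $\ga_k-e_k'$ are connected; this follows from the standing assumption that $\ga$ is bridgeless together with the observation that deleting an edge and only changing lengths of other edges preserves connectivity of the complement (bridgelessness of $\ga$ guarantees $\ga - e_k$ is connected, and $\ga_{k-1}-e_k$ is $\ga-e_k$ with some edge lengths altered, hence still connected).

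Substituting the inductive hypothesis into the one-step formula and using $R'' = \ri'$ gives
\begin{equation*}
\ta{\ga_k}=\ta{\ga}+\frac{1}{12}\sum_{i=1}^{k}x_{i}
-\sum_{i=1}^{k}\frac{x_{i}A_{\pp,\qq,\ga_{i}-e_{i}'}}{(\li+\ri')(\li+\ri'+x_{i})} ,
\end{equation*}
which completes the induction. Taking $k = e$ yields the claimed identity for $\ta{\ga_e}$. The whole argument is essentially bookkeeping once the resistance-identification issue above is handled carefully; no new computation beyond \lemref{lemedgeext} is required.
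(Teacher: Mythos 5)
Your proof is correct and follows essentially the same route as the paper: the paper likewise applies \lemref{lemedgeext} once for each edge (with $\ga_{k-1}$, $\ga_k$ in the roles of $\ga$, $\ga'$) and sums the resulting telescoping identities, the resistance and $A_{p_k,q_k}$ identifications you worry about being built into the paper's definitions of $\ri'$ and $\ga_{k}-e_k'=\ga_{k-1}-e_k$. Your induction phrasing and the explicit connectivity check are just a more careful write-up of the same argument.
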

\begin{proof}
By using \lemref{lemedgeext} at each step, we obtain:
\begin{equation*}
\begin{split}
\ta{\ga_{1}}
&=\ta{\ga}+\frac{x_{1}}{12}-\frac{x_{1}A_{p_{1},q_{1},\ga_{1}-e_{1}'}}{(L_{1}+R_{1}')(L_{1}+R_{1}'+x_{1})}
\\ \ta{\ga_{2}} &=\ta{\ga_{1}}+\frac{x_{2}}{12}-\frac{x_{2}A_{p_{2},q_{2},\ga_{2}-e_{2}'}}{(L_{2}+R_{2}')(L_{2}+R_{2}'+x_{2})}
\\ &\vdots
\\ \ta{\ga_{e}} &=\ta{\ga_{e-1}}+\frac{x_{e}}{12}-\frac{x_{e}A_{p_{e},q_{e},\ga_{e}-e_{e}'}}{(L_{e}+R_{e}')(L_{e}+R_{e}'+x_{e})}
\\ \text{Then, by adding}& \text{  all of these},
\\ \ta{\ga_{e}} &=\ta{\ga}+\frac{1}{12}\sum_{i=1}^{e}x_{i}
-\sum_{i=1}^{e}\frac{x_{i}A_{\pp,\qq,\ga_{i}-e_{i}'}}{(\li+\ri')(\li+\ri'+x_{i})} .
\end{split}
\end{equation*}
\end{proof}
\begin{theorem}\label{thmbasic2}
Let $\ga$ be a bridgeless graph. Suppose that $\pp$, $\qq$ are the
end points of the edge $e_{i}$, for each $i=1,2,\dots, e$. Then,
\begin{equation*}
\tg=\frac{\ell(\ga)}{12}-\sum_{i=1}^{e}\frac{\li
A_{\pp,\qq,\ga-e_{i}}}{(\li+\ri)^2} .
\end{equation*}
\end{theorem}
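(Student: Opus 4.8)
The plan is to observe that, for a bridgeless graph, $\tg$ is a smooth function of the edge-length vector $(\li)_{i=1}^{e}$ which is homogeneous of degree $1$, to compute its partial derivatives via \lemref{lemedgeext}, and then to apply Euler's identity for homogeneous functions.

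First I would note that by \propref{proptau}, and the fact that each $\ri$ is a rational function of the edge lengths whose relevant denominator does not vanish when $\ga$ is bridgeless (so that $\li+\ri>0$ for every $i$), the tau constant $\tg$ depends rationally — in particular differentiably — on $(\li)_{i=1}^{e}$. Next, fix $i$ and hold all other edge lengths fixed; since $\ga$ is bridgeless, $\ga-e_i$ is connected, so \lemref{lemedgeext} applies to the graph $\ga'$ obtained from $\ga$ by replacing $\li$ with $\li+x_i$, giving
\[
\ta{\ga'}-\tg=\frac{x_i}{12}-\frac{x_i\,A_{\pp,\qq,\ga-e_i}}{(\li+\ri)(\li+\ri+x_i)}.
\]
Dividing by $x_i$ and letting $x_i\to 0$ (note $\ga-e_i$ and $\ri$ do not depend on $x_i$) yields
\[
\frac{\partial\tg}{\partial\li}=\frac{1}{12}-\frac{A_{\pp,\qq,\ga-e_i}}{(\li+\ri)^2}.
\]

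Then I would invoke the scale-independence of the tau constant (\remref{rem tau scale-idependence}): $\ta{t\ga}=t\,\tg$ for all $t>0$, i.e.\ $\tg$ is homogeneous of degree $1$ in $(\li)_{i=1}^{e}$. Differentiating this relation in $t$ at $t=1$ — equivalently, applying Euler's identity — gives $\tg=\sum_{i=1}^{e}\li\,\frac{\partial\tg}{\partial\li}$, and substituting the partial derivatives computed above produces
\[
\tg=\sum_{i=1}^{e}\li\Big(\frac{1}{12}-\frac{A_{\pp,\qq,\ga-e_i}}{(\li+\ri)^2}\Big)=\frac{\ell(\ga)}{12}-\sum_{i=1}^{e}\frac{\li\,A_{\pp,\qq,\ga-e_i}}{(\li+\ri)^2},
\]
as claimed. (Alternatively, one can bypass Euler's theorem by letting all the increments $x_1,\dots,x_e$ in \lemref{lemsuccessedgeext} be infinitesimal, where $\ga_i-e_i'\to\ga-e_i$ and $\ri'\to\ri$, read off the first-order term, and combine it with $\ta{t\ga}=t\tg$.)

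The only delicate point — which is really a matter of care rather than a genuine obstacle — is legitimizing the differentiation: one must check that $\tg$ is actually differentiable as a function of the edge lengths (secured by the rationality remark, valid precisely because bridgelessness keeps all denominators $\li+\ri$ positive) so that the chain rule / Euler identity may be applied, and one must be sure that the graph $\ga-e_i$ surviving the limit $x_i\to 0$ is the original $\ga-e_i$, which it is because \lemref{lemedgeext}'s data $\ga-e_i$, $\ri$, $A_{\pp,\qq,\ga-e_i}$ are independent of $x_i$.
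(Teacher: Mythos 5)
Your proposal is correct and coincides with the paper's own second proof of this theorem: the partial derivative $\frac{\partial \tg}{\partial \li}=\frac{1}{12}-\frac{A_{\pp,\qq,\ga-e_i}}{(\li+\ri)^2}$ is exactly \lemref{lem diff} (which the paper deduces from \corref{cor2twopunion}, just as your difference quotient via \lemref{lemedgeext} does), and the conclusion is then Euler's formula \eqnref{eqn Euler} applied to the degree-one homogeneous function $T(L_1,\dots,L_e)=\tg$. The paper's first proof is slightly different in presentation --- it takes $x_i=M\li$ in \lemref{lemsuccessedgeext}, uses $\ta{\ga_e}=(1+M)\tg$, and lets $M\to 0$ with Rayleigh's principle controlling $\ri'$ --- but this is the same idea in limit form, so no further comparison is needed.
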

\begin{proof}
Let M be a positive real number. By choosing $x_{i}=M \cdot L_{i}$
for all $i=1,2, \dots, e$ in \lemref{lemsuccessedgeext}, we obtain
$\ga_{e}$ with
$\ell(\ga_{e})=\ell(\ga)+M\sum_{j=1}^{e}L_{j}=(M+1)\ell(\ga)$. We
can also obtain $\ga_{e}$ by multiplying the length of each edge
in $\ga$ by $M+1$. Therefore, $\ta{\ga_{e}}=(1+M)\ta{\ga}$.
Then, by using \lemref{lemsuccessedgeext},
\begin{equation*}
(1+M)\tg=\tg+\frac{1}{12}M\ell(\ga)-\sum_{i=1}^{e}\frac{M \cdot L_{i}
A_{\pp,\qq,\ga_{i}-e_{i}'}}{(\li+\ri')(\li+M.\li+\ri')} .
\end{equation*}
Then,
\begin{equation*}
\tg=\frac{\ell(\ga)}{12}-\sum_{i=1}^{e}\frac{L_{i}
A_{\pp,\qq,\ga_{i}-e_{i}'}}{(\li+\ri')(\li+M.\li+\ri')} .
\end{equation*}
On the other hand, by Rayleigh's Principle (which states that if the resistances of a circuit are increased then the effective resistance between any two points can only increase, see \cite{DS} for more information), we see that $\ri \leq
\ri' \leq (1+M)\ri$.

As $M\longrightarrow 0$, we have
$\ga_{k}-e_{k}'\longrightarrow \ga-e_{k}$, $A_{\pp,\qq,\ga_{i}-e_{i}'} \longrightarrow A_{\pp,\qq,\ga-e_{i}}$,
and $\ri' \longrightarrow \ri$.
%
Hence, the result follows.
\end{proof}
\begin{corollary}\label{corbasic2}
Let $\ga$ be a bridgeless graph with total length $1$. Then, $\tg
\leq \frac{1}{12}$ .
\end{corollary}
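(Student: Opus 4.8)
The plan is to derive the bound directly from \thmref{thmbasic2}. Since $\ga$ is bridgeless, for every edge $e_i \in \ee{\ga}$ the graph $\ga - e_i$ is connected, so $\ri$ is finite and the quantity $A_{p_i,q_i,\ga-e_i}$ is well-defined; hence the hypothesis of \thmref{thmbasic2} is satisfied and it gives
\[
\tg = \frac{\ell(\ga)}{12} - \sum_{i=1}^{e}\frac{\li\, A_{p_i,q_i,\ga-e_i}}{(\li+\ri)^2}.
\]
Substituting $\ell(\ga)=1$ reduces the claim to showing that the displayed sum is nonnegative.

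Next I would invoke the positivity of $A$: as recorded just after the \emph{Notation} introducing $A_{p,q,\ga}$, we have $A_{p,q,\ga'} \geq 0$ for any metrized graph $\ga'$ and any two points $p,q$ in it, simply because the integrand $\jj{x}{p}{q}(\ddx\jj{p}{x}{q})^2$ is the product of a nonnegative voltage value and a square. Applying this with $\ga' = \ga-e_i$, $p=\pp$, $q=\qq$, and combining with $\li>0$ and $(\li+\ri)^2>0$ (including the degenerate self-loop case, where $\ri=0$ and the summand is still $A_{p_i,q_i,\ga-e_i}/\li\ge 0$), shows that every summand is $\geq 0$, so the whole sum is $\geq 0$.

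Therefore $\tg = \frac{1}{12} - (\text{nonnegative quantity}) \leq \frac{1}{12}$, which is what we want. There is essentially no obstacle here — all of the substantive content lives in \thmref{thmbasic2} and in the elementary nonnegativity of $A_{p,q,\ga}$. The only point worth a sentence of care is that \emph{bridgeless} is precisely the hypothesis ensuring $\ga-e_i$ is connected for every $i$, so that \thmref{thmbasic2} applies; without it the formula must be interpreted with the limiting convention for $\ri = \infty$, and the clean argument above would need adjustment.
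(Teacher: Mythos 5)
Your proof is correct and is essentially the paper's own argument: the paper also deduces the bound immediately from \thmref{thmbasic2} together with the nonnegativity $A_{\pp,\qq,\ga-e_i}\geq 0$, which holds since the integrand defining $A_{p,q,\ga}$ is a nonnegative voltage value times a square. Your extra remarks on why bridgelessness makes \thmref{thmbasic2} applicable are accurate but are simply the hypothesis of that theorem made explicit.
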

\begin{proof}
Since $A_{\pp,\qq,\ga-e_{i}} \geq 0$ for any $i=1,2, \dots, e$,
\thmref{thmbasic2} gives the result.
\end{proof}
\begin{remark}\label{rembasic2}
The upper bound given in \corref{corbasic2} is sharp. When $\ga$ is
the circle of length $1$, $\tg=\frac{1}{12}$. For a bridgeless $\ga$,  \corref{corbasic2} improves the upper bound given in
\eqnref{FMM1}.
\end{remark}
We will give a second proof of \thmref{thmbasic2} by using
Euler's formula for homogeneous functions. A function $f: \RR^n
\rightarrow \RR$ is called homogeneous of degree $k$ if $f(\lambda
x_1,\lambda x_2, \cdots, \lambda x_n)=\lambda^k f(x_1,x_2, \cdots,
x_n)$ for $\lambda > 0$. A continuously differentiable function $f: \RR^n \rightarrow
\RR$ which is homogeneous of degree $k$ has the following property:
\begin{equation}\label{eqn Euler}
k \cdot f=\sum_{i=1}^n x_i \frac{\partial f}{\partial x_i} .
\end{equation}
\eqnref{eqn Euler} is called Euler's formula.

For a graph $\ga$ with $\# (\ee{\ga})=e$, let $\{L_1, L_2, \cdots, L_e \}$ be the edge lengths,
and let $r(x,y)$ be the resistance function on $\ga$.
For any two vertices $p$ and $q$ in $\vv{\ga}$, we have a function $R_{p,q}:\RR^e_{>0} \rightarrow
\RR$ given by $R_{pq}(L_1, L_2, \cdots, L_e)=r(p,q)$. By using circuit reductions, we can reduce
$\ga$ to a line segment with end points $p$ and $q$, and with length $r(p,q)$.
It can be seen from the edge length transformations used for circuit reductions (see \secref{section two})
that $R_{pq}(L_1, L_2, \cdots, L_e)$ is a continuously differentiable homogeneous function of degree $1$, when we consider all possible length distributions without changing the topology of the graph $\ga$.

Similarly, we have the function $T:\RR^e_{>0} \rightarrow \RR$ given by $T(L_1, L_2, \cdots, L_e)=\tg$. \propref{proptau} and the facts given in the previous paragraph imply that
$T(L_1, L_2, \cdots, L_e)$ is a continuously
differentiable homogeneous function of degree $1$, when we consider
all possible length distributions without changing the topology of
the graph $\ga$.
\begin{lemma}\label{lem diff}
Let $\ga$ be a bridgeless graph. Let $\pp$ and $\qq$ be end points
of the edge $e_i \in \ee{\ga}$, and let $L_i$ be its length for
$i=1,2, \cdots, e$. Then
$$\frac{\partial T}{\partial L_i} =\frac{1}{12}-\frac{A_{\pp,\qq,\ga-e_i}}{(\li+\ri)^2}.$$
\end{lemma}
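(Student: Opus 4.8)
The plan is to deduce \lemref{lem diff} as a one-step consequence of \lemref{lemedgeext}, exploiting that the partial derivative $\partial T/\partial L_i$ is by definition the limit of a difference quotient in the single variable $L_i$, and that \lemref{lemedgeext} computes exactly such a difference quotient. First I would fix $i\in\{1,\dots,e\}$; since $\ga$ is bridgeless, $\ga-e_i$ is connected, so \lemref{lemedgeext} is available. For a real number $x_i>-\li$, let $\ga'$ be the graph obtained from $\ga$ by changing the length of $e_i$ from $\li$ to $\li+x_i$ while leaving every other edge length fixed; then $\ga'-e_i'$ is a copy of $\ga-e_i$, so the resistance between $\pp$ and $\qq$ in $\ga'-e_i'$ equals $\ri$ and $A_{\pp,\qq,\ga'-e_i'}=A_{\pp,\qq,\ga-e_i}$. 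The key observation is that none of $\li$, $\ri$, or $A_{\pp,\qq,\ga-e_i}$ depends on the perturbation variable $x_i$, since the latter two are defined purely in terms of the graph $\ga-e_i$, which does not contain $e_i$.

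Then \lemref{lemedgeext} gives
\begin{equation*}
T(L_1,\dots,\li+x_i,\dots,L_e)-T(L_1,\dots,\li,\dots,L_e)=\frac{x_i}{12}-\frac{x_i\,A_{\pp,\qq,\ga-e_i}}{(\li+\ri)(\li+\ri+x_i)} .
\end{equation*}
Dividing by $x_i\neq 0$ and letting $x_i\to 0$ --- which is legitimate because $T$ was recalled above to be continuously differentiable in the edge lengths, so the two-sided limit exists and equals the partial derivative --- yields
\begin{equation*}
\frac{\partial T}{\partial L_i}=\lim_{x_i\to 0}\left(\frac{1}{12}-\frac{A_{\pp,\qq,\ga-e_i}}{(\li+\ri)(\li+\ri+x_i)}\right)=\frac{1}{12}-\frac{A_{\pp,\qq,\ga-e_i}}{(\li+\ri)^2} ,
\end{equation*}
which is precisely the asserted formula.

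I do not expect a genuine obstacle here: all the analytic content is already packaged in \lemref{lemedgeext}, and the only thing that needs a word of justification is the elementary fact that perturbing $\li$ alone leaves the graph $\ga-e_i$, and hence $\ri$ and $A_{\pp,\qq,\ga-e_i}$, untouched. The one place a careless argument could stumble is the temptation to prove the lemma directly by differentiating the closed-form expression of \thmref{thmbasic2}: for $j\ne i$, both $R_j$ and $A_{p_j,q_j,\ga-e_j}$ genuinely vary with $\li$, so term-by-term differentiation there is delicate, whereas the route through \lemref{lemedgeext} sidesteps this completely. Finally, I would note that \lemref{lem diff} immediately yields the promised second proof of \thmref{thmbasic2}: since $T$ is homogeneous of degree $1$, Euler's formula \eqnref{eqn Euler} gives $\tg=\sum_{i=1}^{e}\li\,\frac{\partial T}{\partial L_i}$, and substituting the formula just established recovers $\tg=\frac{\elg}{12}-\sum_{i=1}^{e}\frac{\li A_{\pp,\qq,\ga-e_i}}{(\li+\ri)^2}$.
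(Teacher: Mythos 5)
Your proof is correct and is essentially the paper's argument: the paper plugs in \corref{cor2twopunion} to write $T$ explicitly as a function of $\li$ (with $\ta{\ga-e_i}$, $\ri$, $A_{\pp,\qq,\ga-e_i}$ independent of $\li$) and differentiates, while you reach the same conclusion by passing through \lemref{lemedgeext} --- itself just two applications of \corref{cor2twopunion} --- and taking the limit of the resulting difference quotient. The two routes are equivalent, and your observation that $\ri$ and $A_{\pp,\qq,\ga-e_i}$ are untouched by perturbing $\li$ is exactly the point the paper's proof rests on.
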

\begin{proof}
By \corref{cor2twopunion}, $T(L_1, L_2, \cdots, L_e) =
\ta{\ga-e_{i}} +\frac{\li}{12}-\frac{\ri}{6}+
\frac{A_{p_{i},\qq,\ga-e_{i}}}{\li+\ri}, \quad$ for each $i=1,2,
\cdots, e \,$. Since $\ta{\ga-e_{i}}$, $\ri$ and
$A_{p_{i},\qq,\ga-e_{i}}$ are independent of $\li$, the result
follows.
\end{proof}
It follows from \eqnref{eqn Euler} and \lemref{lem diff} that
\thmref{thmbasic2} is nothing but Euler's formula applied to the tau
constant.

\section{How the tau constant changes by contracting edges}\label{section contraction}

For any given $\ga$, we want to understand how $\tg$ changes under
various graph operations. In the previous section, we have seen
the effects of both edge deletion on $\ga$ and changing edge lengths of $\ga$. In this section, we will
consider another operation done by contracting the lengths of
edges until their lengths become zero. First, we introduce
some notation.

Let $\oga_i$ be the graph obtained by contracting the i-th edge
$e_{i}$, $i \in \{1,2, \dots e\}$, of a given graph $\ga$ to its end
points. If $e_{i} \in \ga$ has end points $\pp$ and $\qq$, then in
$\oga_i$, these points become identical, i.e., $\pp=\qq$. Also, let
$\tga_i$ be the graph obtained from $\ga$ by identifying $\pp$ and
$\qq$, the end points of $e_{i}$. Then the edge $e_i$ of $\ga$ becomes
a self loop, which will still denoted by $e_i$, in $\tga_i$. Thus,
$\ell(\tga_i)=\ell(\oga_i)+\li = \ell(\ga)$ and $\tga_i-e_i =
\oga_i$.
\begin{lemma}\label{lemcontract1}
Let $e_i$, $\pp$, $\qq$, $\li$ and $\ri$ be as defined previously for
$\ga$. If $\ga-e_i$ is connected, then
$$
\ta{\oga_i} =\ta{\ga-e_i} - \frac{\ri}{6} +
\frac{A_{\pp,\qq,\ga-e_i}}{\ri},
\quad
\ta{\tga_i} =\ta{\ga-e_i} + \frac{\li}{12}-\frac{\ri}{6} + \frac{A_{\pp,\qq,\ga-e_i}}{\ri}.
$$
\end{lemma}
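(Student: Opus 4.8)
The plan is to obtain both identities by specializing the general two-point union formula \thmref{thmtwopunion}, exactly as in the proof of \corref{cor2twopunion}. First I would prove the formula for $\ta{\oga_i}$. Observe that $\oga_i$ is obtained from $\ga-e_i$ by identifying its two marked points $\pp$ and $\qq$; equivalently, $\oga_i = (\ga-e_i) \cup \beta$ where $\beta$ is a ``graph'' consisting of a single point obtained by gluing $\pp$ to $\qq$. A cleaner way to say this: view $\oga_i$ as the union along the two points $\pp,\qq$ of $\ga-e_i$ with a second copy in which $\pp$ and $\qq$ are connected by an edge of length $0$. Taking the limit $\li \to 0$ in \thmref{thmtwopunion} with $\gam1 := \ga-e_i$, $\gam2 :=$ (an edge of length $\li$ between $\pp$ and $\qq$), and using $\ta{\gam2} = \li/4 \to 0$, $r_2(\pp,\qq) = \li \to 0$, $A_{\pp,\qq,\gam2} = 0$ by \propref{propAtree}, gives
\[
\ta{\oga_i} = \ta{\ga-e_i} - \frac{\ri}{6} + \frac{A_{\pp,\qq,\ga-e_i}}{\ri},
\]
since $r_1(\pp,\qq) = \ri$ and $A_{\pp,\qq,\gam1} = A_{\pp,\qq,\ga-e_i}$. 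Alternatively, and more rigorously, one avoids the limiting argument entirely: $\oga_i$ is literally the identification of the two marked points of $\ga - e_i$, so applying \corref{cor2twopunion} to a graph whose edge $e_i$ has been shrunk, or simply re-running the proof of \corref{cor2twopunion2} with $\li = 0$, yields the same thing. The point worth checking is that the derivation in \corref{cor2twopunion} never actually used $\li > 0$ in an essential way beyond the bookkeeping of the added edge.

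Second, the identity for $\ta{\tga_i}$ is immediate from the first one together with the relations $\tga_i - e_i = \oga_i$ and the fact that in $\tga_i$ the edge $e_i$ is a self-loop attached at the single vertex $\pp = \qq$. A self-loop of length $\li$ attached at one point is a circle glued to $\oga_i$ at that point, so by the additive property of the tau constant and \corref{cor tau formula for circle},
\[
\ta{\tga_i} = \ta{\oga_i} + \frac{\li}{12} = \ta{\ga-e_i} + \frac{\li}{12} - \frac{\ri}{6} + \frac{A_{\pp,\qq,\ga-e_i}}{\ri}.
\]
Here I use that $\ell(\tga_i) = \ell(\oga_i) + \li$ and that the tau constant of a circle of length $\li$ is $\li/12$. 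One should note that $\ri$ here is still the resistance between $\pp$ and $\qq$ in $\ga - e_i$, which is the same as in $\oga_i$ with $e_i$ removed, since $\tga_i - e_i = \oga_i = (\ga-e_i)$ with $\pp,\qq$ identified — wait, more carefully, $\ri$ is defined as the resistance between the endpoints of $e_i$ in $\ga - e_i$, and this is exactly the quantity appearing, so no reinterpretation is needed.

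The main obstacle, such as it is, is purely a matter of justifying the degenerate application of \thmref{thmtwopunion} when one of the pieces has a marked-point resistance tending to $0$ (for $\oga_i$) — one must confirm that $A_{p,q,\gam2}$ and $\ta{\gam2}$ vanish in the limit at the right rate so that no indeterminate $0/0$ arises; since $A_{p,q,\gam2} = 0$ identically for the line-segment $\gam2$ and $r_2(p,q)+r_1(p,q) = \li + \ri \to \ri > 0$ (as $\ga - e_i$ is connected, $\ri$ is finite and positive when $e_i$ is not a bridge), the denominator stays bounded away from zero and the limit is clean. The self-loop step for $\tga_i$ has no real content beyond invoking additivity and \corref{cor tau formula for circle}. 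I would present the argument in roughly this order: state that $\oga_i$ arises from $\ga - e_i$ by identification, invoke \thmref{thmtwopunion} (or directly the proof of \corref{cor2twopunion}) in the degenerate case to get the first formula, then add the self-loop contribution $\li/12$ to get the second.
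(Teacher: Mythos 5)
Your proposal is correct and follows essentially the same route as the paper: the paper also obtains the first identity by letting $\li \to 0$ in the edge-deletion formula of \corref{cor2twopunion} (which is exactly your specialization of \thmref{thmtwopunion} with $\gam2$ a segment of length $\li$), and the second by the additive property applied to the self-loop, contributing $\frac{\li}{12}$.
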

\begin{proof}
By \corref{cor2twopunion},
$\ta{\ga} = \ta{\ga-e_{i}} +\frac{\li}{12}-\frac{\ri}{6}+
\frac{A_{p_{i},\qq,\ga-e_{i}}}{\li+\ri}.$
As $\li \longrightarrow 0$, we have $\ga \longrightarrow \oga_i$, so
$\tg \longrightarrow \ta{\oga_i}$. Since $\ta{\ga-e_{i}}$, $\ri$,
$A_{p_{i},\qq,\ga-e_{i}}$ are independent of $\li$, in the limit
we obtain the following:
\begin{equation*}
\begin{split}
\ta{\oga_i} &=\ta{\ga-e_i} - \frac{\ri}{6} +
\frac{A_{\pp,\qq,\ga-e_i}}{\ri} .
\end{split}
\end{equation*}
This yields the first formula.
On the other hand, since $\tga_i-e_i$ and the self-loop $e_i$
intersect at one point, $\pp=\qq$, we can apply the additive property of
the tau constant. That is, $\ta{\tga_i}=\ta{\tga_i-e_i}+\ta{e_i}=
\ta{\oga_i}+\frac{\li}{12}$. Using this with the first formula gives
the second formula.
\end{proof}
\begin{lemma}\label{lemcontract2}
Let $e_i$, $\pp$, $\qq$, $\li$ and $\ri$ be as defined previously for
$\ga$. If $\ga-e_i$ is connected, then
$$
\ta{\ga} =\ta{\oga_i} + \frac{\li}{12} -\frac{\li
A_{\pp,\qq,\ga-e_i}}{\ri(\li+\ri)},
\qquad
\ta{\ga} =\ta{\tga_i} -\frac{\li A_{\pp,\qq,\ga-e_i}}{\ri(\li+\ri)}.
$$
\end{lemma}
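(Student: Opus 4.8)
The plan is to obtain both identities by directly combining \corref{cor2twopunion} with the two formulas just proved in \lemref{lemcontract1}; no new estimates or limiting arguments are needed, since the required ingredients have all been established. Recall that \corref{cor2twopunion} gives
\[
\ta{\ga} = \ta{\ga-e_{i}} +\frac{\li}{12}-\frac{\ri}{6}+
\frac{A_{\pp,\qq,\ga-e_{i}}}{\li+\ri},
\]
while \lemref{lemcontract1} gives
\[
\ta{\oga_i} =\ta{\ga-e_i} - \frac{\ri}{6} +
\frac{A_{\pp,\qq,\ga-e_i}}{\ri},
\qquad
\ta{\tga_i} =\ta{\ga-e_i} + \frac{\li}{12}-\frac{\ri}{6} + \frac{A_{\pp,\qq,\ga-e_i}}{\ri}.
\]

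For the first formula, I would subtract the expression for $\ta{\oga_i}$ from the expression for $\ta{\ga}$. The terms $\ta{\ga-e_i}$ and $-\tfrac{\ri}{6}$ cancel, leaving
\[
\ta{\ga}-\ta{\oga_i}=\frac{\li}{12}+A_{\pp,\qq,\ga-e_i}\Big(\frac{1}{\li+\ri}-\frac{1}{\ri}\Big)
=\frac{\li}{12}-\frac{\li\,A_{\pp,\qq,\ga-e_i}}{\ri(\li+\ri)},
\]
where the last step just puts the two fractions over the common denominator $\ri(\li+\ri)$ and uses $\ri-(\li+\ri)=-\li$. This is exactly the first claimed identity.

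For the second formula, I would instead subtract the expression for $\ta{\tga_i}$ from the expression for $\ta{\ga}$; now $\ta{\ga-e_i}$, $\tfrac{\li}{12}$, and $-\tfrac{\ri}{6}$ all cancel, and the same fraction manipulation as above yields
\[
\ta{\ga}-\ta{\tga_i}=A_{\pp,\qq,\ga-e_i}\Big(\frac{1}{\li+\ri}-\frac{1}{\ri}\Big)=-\frac{\li\,A_{\pp,\qq,\ga-e_i}}{\ri(\li+\ri)},
\]
which is the second claimed identity. (Alternatively, the second identity follows from the first together with the relation $\ta{\tga_i}=\ta{\oga_i}+\tfrac{\li}{12}$ noted in the proof of \lemref{lemcontract1}.) There is essentially no obstacle here: the only point requiring a moment's care is the bookkeeping of which terms cancel, and the elementary identity $\tfrac{1}{\li+\ri}-\tfrac{1}{\ri}=-\tfrac{\li}{\ri(\li+\ri)}$.
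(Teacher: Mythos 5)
Your proposal is correct and follows exactly the paper's argument, which likewise obtains both formulas by combining \corref{cor2twopunion} with \lemref{lemcontract1}; you have simply written out the cancellation and the identity $\tfrac{1}{\li+\ri}-\tfrac{1}{\ri}=-\tfrac{\li}{\ri(\li+\ri)}$ explicitly.
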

\begin{proof}
By combining \corref{cor2twopunion} and \lemref{lemcontract1}, one
obtains the formulas.
\end{proof}

\section{How the tau constant changes by adding edges or identifying
points}\label{section adding edges}

Let $p$, $q$  be any two points of a graph $\ga$ and let $e^{new}$
be an edge of length $L^{new}$. By identifying end points of the
edge $e^{new}$ with p and q of $\ga$ we obtain a new graph which we
denote by $\ga_{(p,q)}$. Then,
$\ell(\ga_{(p,q)})=\ell(\ga)+L^{new}$. Also, by identifying p and q
with each other in $\ga$ we obtain a graph which we denote by
$\ga_{pq}$. Then, $\ell(\ga_{pq})=\ell(\ga)$. Note that if $p$ and $q$ are end
points of an edge $e_i \in \ga$, then $\ga_{pq}=\tga_i$, where
$\tga_i$ is as defined in \secref{section contraction}.
\begin{corollary}\label{coradding1}
Let $\ga$ be a metrized graph with resistance function
$r(x,y)$. For $p$, $q$ and $\ga_{(p,q)}$ as given above,
$$\ta{\ga_{(p,q)}}=\tg+\frac{L^{new}}{12}-\frac{r(p,q)}{6}+\frac{A_{p,q,\ga}}{L^{new}+r(p,q)}.$$
\end{corollary}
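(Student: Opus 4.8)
The plan is to realize $\ga_{(p,q)}$ as the union of two graphs along the two points $p$ and $q$, and then apply \thmref{thmtwopunion}. First I would arrange that $p$ and $q$ are vertices: if either is not already in $\vv{\ga}$, enlarge the vertex set to include it. By the valence property (\remref{remvalence}) this changes neither $\tg$ nor the resistance function, and it allows us to regard the new edge $e^{new}$ as an edge joining two vertices. Now set $\gam1:=\ga$ and let $\gam2:=e^{new}$ be the line segment of length $L^{new}$ with end points $p$ and $q$. Then $\ga_{(p,q)}=\gam1\cup\gam2$ with $\gam1\cap\gam2=\{p,q\}$, and both pieces are connected, so \thmref{thmtwopunion} is applicable and gives
\[
\ta{\ga_{(p,q)}}=\ta{\gam1}+\ta{\gam2}-\frac{r_1(p,q)+r_2(p,q)}{6}+\frac{A_{p,q,\gam1}+A_{p,q,\gam2}}{r_1(p,q)+r_2(p,q)}.
\]

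The next step is to substitute the values of the ingredients. Since $\gam2$ is a single segment, hence a tree, \corref{lemtauformula2} gives $\ta{\gam2}=\frac{L^{new}}{4}$, and \propref{propAtree} gives $A_{p,q,\gam2}=0$. Moreover $r_2(p,q)=L^{new}$, $r_1(p,q)=r(p,q)$, $\ta{\gam1}=\tg$, and $A_{p,q,\gam1}=A_{p,q,\ga}$. Plugging these in yields
\[
\ta{\ga_{(p,q)}}=\tg+\frac{L^{new}}{4}-\frac{r(p,q)+L^{new}}{6}+\frac{A_{p,q,\ga}}{r(p,q)+L^{new}}.
\]
Finally, collecting the two contributions from $L^{new}$ outside the last fraction, $\frac{L^{new}}{4}-\frac{L^{new}}{6}=\frac{L^{new}}{12}$, produces exactly the asserted identity.

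This is essentially a direct specialization of \thmref{thmtwopunion}, so there is no serious obstacle; the only points needing care are the reduction to the case where $p$ and $q$ are vertices (handled by the valence property) and the observation that, because $\gam2$ is a tree, both its cyclic $\tau$-contribution collapses to $\frac{L^{new}}{4}$ and its $A$-term vanishes. (One may also note that even the degenerate case $p=q$, where $e^{new}$ becomes a self-loop, is consistent with the formula since then $r(p,q)=0$ and $A_{p,q,\ga}=0$, recovering $\ta{\ga_{(p,q)}}=\tg+\frac{L^{new}}{12}$ in agreement with the additive property.)
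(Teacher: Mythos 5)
Your proof is correct and is essentially the paper's argument: the paper simply observes $\ga_{(p,q)}-e^{new}=\ga$ and cites \corref{cor2twopunion}, which is itself obtained from \thmref{thmtwopunion} by exactly the specialization you carry out by hand ($\gam2=e^{new}$ a segment, $\ta{\gam2}=\frac{L^{new}}{4}$, $A_{p,q,\gam2}=0$, $r_2(p,q)=L^{new}$). So you have merely unfolded the intermediate corollary; the substance and the bookkeeping are the same.
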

\begin{proof}
We have $\ga_{(p,q)}-e^{new}=\ga$, so the result follows from
\corref{cor2twopunion}.
\end{proof}
\begin{corollary}\label{coradding2}
Let $\ga$ be a metrized graph with resistance function
$r(x,y)$. For two distinct points $p$, $q$ and $\ga_{pq}$, we have
$$\ta{\ga_{pq}}=\tg-\frac{r(p,q)}{6}+\frac{A_{p,q,\ga}}{r(p,q)}.$$
\end{corollary}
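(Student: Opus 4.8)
The plan is to obtain \corref{coradding2} as the degeneration $L^{new}\to 0$ of \corref{coradding1}. The key observation is that the graph $\ga_{(p,q)}$ --- formed from $\ga$ by attaching a new edge $e^{new}$ of length $L^{new}$ joining $p$ and $q$ --- collapses to $\ga_{pq}$ as $L^{new}\to 0$, since contracting $e^{new}$ to a point identifies its two endpoints $p$ and $q$. Because the tau constant varies continuously with the edge-length distribution (for instance by the explicit formula of \propref{proptau}, the same principle underlying the limiting arguments in \lemref{lemcontract1}), it follows that $\ta{\ga_{(p,q)}}\longrightarrow\ta{\ga_{pq}}$.

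Then I would simply invoke \corref{coradding1}, which asserts
\[
\ta{\ga_{(p,q)}}=\tg+\frac{L^{new}}{12}-\frac{r(p,q)}{6}+\frac{A_{p,q,\ga}}{L^{new}+r(p,q)},
\]
and let $L^{new}\to 0$. The quantities $\tg$, $r(p,q)$ and $A_{p,q,\ga}$ are all intrinsic to $\ga$, hence independent of $L^{new}$, and $r(p,q)>0$ since $p\neq q$; so $\frac{L^{new}}{12}\to 0$ while $\frac{A_{p,q,\ga}}{L^{new}+r(p,q)}\to\frac{A_{p,q,\ga}}{r(p,q)}$, which yields the claimed identity. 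Alternatively, \corref{coradding1} itself, and hence this corollary, can be read off directly from \thmref{thmtwopunion} with $\gam{1}=\ga$ and $\gam{2}=e^{new}$, using $\ta{e^{new}}=\frac{L^{new}}{4}$ from \corref{lemtauformula2} and $A_{p,q,e^{new}}=0$ from \propref{propAtree}.

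There is essentially no obstacle here: the only point requiring care is the continuity of $\ta{\cdot}$ under the degeneration, but this is exactly the kind of edge-length-to-zero limit already carried out in the proof of \lemref{lemcontract1}. As a consistency check, when $p$ and $q$ are the endpoints of an edge $e_i$ of $\ga$ one has $\ga_{pq}=\tga_i$, and the formula here agrees with \lemref{lemcontract1} once the latter is re-expressed relative to $\ga$ rather than $\ga-e_i$.
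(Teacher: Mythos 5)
Your proposal is correct and is essentially the paper's own argument: the paper also obtains this corollary by letting $L^{new}\to 0$ in \corref{coradding1}, using $\ga_{(p,q)}\to\ga_{pq}$. Your added remarks on continuity of $\tau$ and the alternative derivation via \thmref{thmtwopunion} only make explicit what the paper leaves implicit.
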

\begin{proof}
Note that $\ga_{(p,q)} \longrightarrow \ga_{pq}$ as $L^{new}\longrightarrow 0$. Thus, we obtain what we want by using \corref{coradding1}.
\end{proof}

\section{Further properties of $A_{p,q,\ga}$}\label{section on Apq}
In this section, we establish additional properties of  $A_{p,q,\ga}$. The formulas given in this section along with the ones given previously can be used to calculate the tau constants for several
classes of metrized graphs, including graphs with vertex connectivity one or two. For metrized graphs with vertex connectivity one, we have Additivity properties for both $\tg$ and $A_{p,q,\ga}$ (see \secref{section two} and \propref{propAadditive}). For metrized graphs with vertex connectivity two, we can use the techniques developed in \secref{section general DA} and \thmref{thmtwopunion}.

First, we derive a formula for  $A_{p,q,\ga}$ for a metrized graph with vertex connectivity two.
\begin{theorem}\label{thm twopunion for Apq}
Let $\gam{1} \cup \gam{2}$ denote the union, along two points p and
q, of two connected graphs $\gam{1}$ and $\gam{2}$, so that $\gam{1}
\cap \gam{2} = \{ p,q \}$. Let $r_{1}(x,y)$ and
$r_{2}(x,y)$ denote the resistance functions on $\gam{1}$ and $\gam{2}$, respectively. Then,
\begin{equation*}
\begin{split}
A_{p,q,\gam1 \cup \gam2} = \frac{r_{2}(p,q)^2 A_{p,q,\gam1}+r_{1}(p,q)^2 A_{p,q,\gam2}}{\big(r_{1}(p,q)+r_{2}(p,q)\big)^2}
+\frac{1}{6} \Big(\frac{r_{1}(p,q)  r_{2}(p,q)}{r_{1}(p,q)+r_{2}(p,q)} \Big)^2.
\end{split}
\end{equation*}
\end{theorem}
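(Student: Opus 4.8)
The plan is to work directly from the definition, unlike \thmref{thmtwopunion} which was deduced from \thmref{thmmaggen} (that machinery tracks only $\tau$, not $A_{p,q,\ga}$). Split the defining integral
\[
A_{p,q,\gam1\cup\gam2}=\int_{\gam1\cup\gam2}\jj{x}{p}{q}(\ddx\jj{p}{x}{q})^2dx=\int_{\gam1}+\int_{\gam2},
\]
and evaluate each piece in terms of data on the two subgraphs. Write $j^1_z(x,y)$, $j^2_z(x,y)$ for the voltage functions of $\gam1$, $\gam2$, and abbreviate $r_1=r_1(p,q)$, $r_2=r_2(p,q)$. The first tool is the elementary identity $\jj{z}{x}{y}=\tfrac12\big(r(x,z)+r(y,z)-r(x,y)\big)$, valid in any metrized graph, which follows immediately by combining the three equations of \eqnref{eqn1.1}.

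Fixing $x\in\gam1$, a circuit reduction of $\gam2$ with reference to $p$ and $q$ replaces $\gam1\cup\gam2$ by $\gam1$ together with a single edge of length $r_2$ joining $p$ and $q$, with all resistances among $x,p,q$ unchanged. Reducing $\gam1$ further to its $Y$-graph with reference to $x,p,q$ — whose legs have lengths $j^1_p(x,q)$ at $p$, $j^1_q(x,p)$ at $q$, and $j^1_x(p,q)$ at $x$ by \eqnref{eqn1.1} — and then using only series and parallel reductions, one computes
\[
r(x,p)=j^1_x(p,q)+\frac{j^1_p(x,q)\big(j^1_q(x,p)+r_2\big)}{r_1+r_2},\qquad r(p,q)=\frac{r_1r_2}{r_1+r_2},
\]
together with the expression for $r(x,q)$ obtained by swapping $p$ and $q$. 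Substituting these into the identity above, and using $j^1_p(x,q)+j^1_q(x,p)=r_1$, yields the two key relations valid for $x\in\gam1$:
\[
\jj{x}{p}{q}=j^1_x(p,q)+\frac{j^1_p(x,q)\,j^1_q(x,p)}{r_1+r_2},\qquad \jj{p}{x}{q}=\frac{r_2}{r_1+r_2}\,j^1_p(x,q).
\]

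With these in hand I would expand $\int_{\gam1}\jj{x}{p}{q}(\ddx\jj{p}{x}{q})^2dx$, pulling the constant $\big(r_2/(r_1+r_2)\big)^2$ out of the squared derivative. One resulting term is $A_{p,q,\gam1}$ by definition; for the cross term substitute $j^1_q(x,p)=r_1-j^1_p(x,q)$, reducing it to $r_1\int_{\gam1}j^1_p(x,q)(\ddx j^1_p(x,q))^2dx-\int_{\gam1}j^1_p(x,q)^2(\ddx j^1_p(x,q))^2dx=\tfrac12 r_1^3-\tfrac13 r_1^3=\tfrac16 r_1^3$ by the $n=1$ and $n=2$ cases of \corref{corjrpq}. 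This gives
\[
\int_{\gam1}\jj{x}{p}{q}(\ddx\jj{p}{x}{q})^2dx=\frac{r_2^2}{(r_1+r_2)^2}\Big(A_{p,q,\gam1}+\frac{r_1^3}{6(r_1+r_2)}\Big),
\]
and interchanging the roles of $\gam1,\gam2$ (hence of $r_1,r_2$) gives the analogous value of $\int_{\gam2}$. Adding the two, the error terms combine as $\frac{r_1^3r_2^2+r_1^2r_2^3}{6(r_1+r_2)^3}=\frac16\big(\frac{r_1r_2}{r_1+r_2}\big)^2$, which is exactly the claimed formula.

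The step that needs the most care is the circuit reduction: one must justify that for $x\in\gam1$ the voltage functions of $\gam1\cup\gam2$ are governed by the reduced two-terminal network — this uses $\gam1\cap\gam2=\{p,q\}$, so current leaving $x$ can reach $\gam2$ only through $p$ and $q$ — and then carry out the short resistor computation producing the two displayed relations for $\jj{x}{p}{q}$ and $\jj{p}{x}{q}$. Everything after that is routine algebra together with \corref{corjrpq}.
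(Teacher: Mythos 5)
Your proof is correct, and it takes a genuinely different route from the paper. The paper never touches the defining integral: it applies \corref{coradding2} to the graph $(\gam1 \cup \gam2)_{pq}$ obtained by identifying $p$ with $q$, observes that this identified graph is the one-point union of $(\gam1)_{pq}$ and $(\gam2)_{pq}$ so the additive property of $\tau$ applies, applies \corref{coradding2} again to each piece, and then solves the resulting equation for $A_{p,q,\gam1\cup\gam2}$ using the already-proved formula for $\ta{\gam1\cup\gam2}$ from \thmref{thmtwopunion}. You instead compute $A_{p,q,\gam1\cup\gam2}$ directly from its definition, splitting the integral over $\gam1$ and $\gam2$ (legitimate since the overlap $\{p,q\}$ has measure zero) and using the two-terminal circuit reduction of the complementary piece; I checked your key local relations, namely $\jj{p}{x}{q}=\tfrac{r_2}{r_1+r_2}\,j^1_p(x,q)$ and $\jj{x}{p}{q}=j^1_x(p,q)+\tfrac{j^1_p(x,q)\,j^1_q(x,p)}{r_1+r_2}$ for $x\in\gam1$, which do follow from \eqnref{eqn1.1} and the averaging identity $\jj{z}{x}{y}=\tfrac12\big(r(x,z)+r(y,z)-r(x,y)\big)$, and the evaluation of the cross term as $\tfrac16 r_1^3$ via the $n=1,2$ cases of \corref{corjrpq}; the error terms indeed sum to $\tfrac16\big(\tfrac{r_1r_2}{r_1+r_2}\big)^2$. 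What each approach buys: the paper's argument is very short but heavily leveraged — it depends on \thmref{thmtwopunion} (hence on the immersion machinery of \thmref{thmmaggen}) and on the identification corollaries, so it is an indirect back-solve; your argument is self-contained, uses only \eqnref{eqn1.1}, \corref{corjrpq} and standard series/parallel reductions, and as a by-product the displayed relations between the voltage functions of $\gam1\cup\gam2$ and those of $\gam1$, $\gam2$ are of independent use (one could re-derive \thmref{thmtwopunion} itself from them in the same direct manner).
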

\begin{proof}
Let $r(x,y)$ be the resistance function on $\gam{1} \cup
\gam{2}$. We have $r(p,q)=\frac{r_{1}(p,q) r_{2}(p,q)}{r_{1}(p,q)+r_{2}(p,q)}$
by parallel circuit reduction. For a metrized graph $\ga$, let $\ga_{pq}$ be the metrized graph obtained by identifying $p$ and $q$
as in \secref{section adding edges}.
By applying \corref{coradding2} to $(\gam{1} \cup \gam{2})_{pq}$,
$$\ta{(\gam{1} \cup
\gam{2})_{pq}}=\ta{\gam{1} \cup
\gam{2}}-\frac{r(p,q)}{6}+\frac{A_{p,q,\gam{1} \cup
\gam{2}}}{r(p,q)}.$$
On the other hand, $(\gam{1} \cup
\gam{2})_{pq}$ is the one point union of $(\ga_1)_{pq}$ and $(\ga_2)_{pq}$, so by the additive property of the tau constant, $\ta{(\gam{1} \cup
\gam{2})_{pq}}=\ta{(\ga_1)_{pq}}+\ta{(\ga_2)_{pq}}$. Thus by applying \corref{coradding2} to both $(\ga_1)_{pq}$ and $(\ga_2)_{pq}$,
$$\ta{(\gam{1} \cup
\gam{2})_{pq}}=\ta{\gam{1}}+\ta{\gam{2}}-\frac{r_{1}(p,q)+r_{2}(p,q)}{6} + \frac{A_{p,q,\gam{1}}}{r_{1}(p,q)}+
\frac{A_{p,q,\gam{2}}}{r_{2}(p,q)}.$$
Hence, the result follows if we compute $\ta{\gam{1} \cup
\gam{2}}$ by applying \thmref{thmtwopunion}.
\end{proof}
\begin{corollary}\label{corlem twopunion for Apq}
Let $\ga \cup \ga$ be the union of two copies of $\ga$ along any
$p$, $q$ in $\ga$. For the resistance function $r(x,y)$ in $\ga$, we
have
$$2 A_{p,q,\ga \cup \ga}= \frac{r(p,q)^2}{12}+A_{p,q,\ga}.$$
\end{corollary}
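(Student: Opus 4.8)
The plan is to obtain this identity as an immediate specialization of \thmref{thm twopunion for Apq}. I would take $\gam1$ and $\gam2$ to both be the graph $\ga$, so that $\gam1 \cup \gam2 = \ga \cup \ga$ is the union of two copies of $\ga$ along the points $p$ and $q$. Since the two pieces are literally copies of the same graph, their resistance functions agree, $r_1(x,y) = r_2(x,y) = r(x,y)$, and in particular $r_1(p,q) = r_2(p,q) = r(p,q)$; likewise the integral invariant is the same for both pieces, $A_{p,q,\gam1} = A_{p,q,\gam2} = A_{p,q,\ga}$.

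Next I would substitute these equalities into the formula of \thmref{thm twopunion for Apq}. The first term becomes
\[
\frac{r(p,q)^2 A_{p,q,\ga} + r(p,q)^2 A_{p,q,\ga}}{\big(2r(p,q)\big)^2} = \frac{2 r(p,q)^2 A_{p,q,\ga}}{4 r(p,q)^2} = \frac{A_{p,q,\ga}}{2},
\]
and the second term becomes
\[
\frac{1}{6}\Big(\frac{r(p,q)^2}{2 r(p,q)}\Big)^2 = \frac{1}{6}\cdot\frac{r(p,q)^2}{4} = \frac{r(p,q)^2}{24}.
\]
Adding these gives $A_{p,q,\ga \cup \ga} = \tfrac{1}{2}A_{p,q,\ga} + \tfrac{1}{24}r(p,q)^2$, and multiplying through by $2$ yields the claimed identity $2A_{p,q,\ga \cup \ga} = \tfrac{1}{12}r(p,q)^2 + A_{p,q,\ga}$.

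There is essentially no obstacle here: the statement is a pure specialization, and the only thing to be careful about is the bookkeeping of which quantities coincide when the two glued pieces are identical (namely both resistance functions and both $A$-invariants), together with the elementary algebraic simplification of the two summands. No new analytic input beyond \thmref{thm twopunion for Apq} is needed.
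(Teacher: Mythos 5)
Your proposal is correct and follows exactly the paper's route: the paper also deduces this corollary by specializing \thmref{thm twopunion for Apq} to $\gam1=\gam2=\ga$, with the same identifications $r_1=r_2=r$ and $A_{p,q,\gam1}=A_{p,q,\gam2}=A_{p,q,\ga}$. Your algebraic simplification of the two terms is accurate, so nothing further is needed.
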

\begin{proof}
The result follows from \thmref{thm twopunion for Apq}.
\end{proof}
A different proof of \corref{corlem twopunion for Apq} can be found in \cite[page 96]{C1}.

Let $p$, $q$ be in $\ga$. Let $\cC\ga_{n}(p,q)$ be the union of n
copies of $\ga$ along $p$, $q$ in $\ga$. Note that
$\cC\ga_{2}(p,q)=\ga \cup \ga$.
\begin{theorem}\label{thm twopunion2}
Let $p$, $q$ be in $\ga$, and let $r(x,y)$ be the resistance
function in $\ga$. Let $\ga$ be a normalized graph, and let
$(\cC\ga_{2^{n}}(p,q))^N$ be the normalization of
$\cC\ga_{2^{n}}(p,q)$. Then
$$
\ta{(\cC\ga_{2^{n}}(p,q))^N}=\tg+\frac{a_{n}}{2^{n}}
\frac{A_{p,q,\ga}}{r(p,q)}+\frac{b_{n}}{2^{n}}r(p,q).
$$
where $n\geq 2$ and we have
$a_{n}=2a_{n-1} +1$,  $a_1=1$,
$b_{n}=2b_{n-1} -\frac{1}{2^{n}}+\frac{1}{6}$, and $b_1=-\frac{1}{3}$.
%
Equivalently,
$$
\ta{(\cC\ga_{2^{n}}(p,q))^N}=\tg+ \big(1-\frac{1}{2^{n}} \big)
\frac{A_{p,q,\ga}}{r(p,q)}+ \big(-\frac{1}{6}-\frac{1}{6 \cdot
2^{n}}+\frac{1}{3 \cdot 4^{n}} \big)r(p,q).
$$
\end{theorem}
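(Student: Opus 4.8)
The key structural observation is that the construction is self-similar: gluing two copies of $\cC\ga_{2^{n-1}}(p,q)$ along the points $p$ and $q$ produces exactly $\cC\ga_{2^{n}}(p,q)$. This lets me run a single induction on $n$ using the two-point-union results of the previous section. Write $\Gamma^{(n)}$ for the \emph{un-normalized} graph $\cC\ga_{2^{n}}(p,q)$ built out of the normalized graph $\ga=\Gamma^{(0)}$, and set $r_n:=r_{\Gamma^{(n)}}(p,q)$, $\tau_n:=\ta{\Gamma^{(n)}}$, $A_n:=A_{p,q,\Gamma^{(n)}}$; abbreviate $r:=r(p,q)=r_0$ and $A:=A_{p,q,\ga}=A_0$. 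Since $\Gamma^{(n)}=\Gamma^{(n-1)}\cup\Gamma^{(n-1)}$ glued along $\{p,q\}$, parallel reduction gives $r_n=r_{n-1}/2$, hence $r_n=r/2^{n}$; \corref{cor1twopunion} gives
\begin{equation*}
\tau_n=2\tau_{n-1}-\frac{r_{n-1}}{3}+\frac{A_{n-1}}{r_{n-1}} ;
\end{equation*}
and \corref{corlem twopunion for Apq} gives $2A_n=\frac{r_{n-1}^2}{12}+A_{n-1}$.

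First I would solve the $A$-recursion in closed form, since $A_{n-1}$ must be known explicitly to evaluate the $\tau$-recursion. Substituting $r_{n-1}=r/2^{n-1}$ into $A_n=\frac{A_{n-1}}{2}+\frac{r^2}{24\cdot 4^{n-1}}$ and solving (e.g. with the ansatz $A_n=2^{-n}A+c_n r^2$, $c_0=0$) yields $A_n=\frac{A}{2^{n}}+\frac16\big(\frac{1}{2^{n}}-\frac{1}{4^{n}}\big)r^2$. Plugging $A_{n-1}$ and $r_{n-1}$ into the $\tau$-recursion and simplifying gives
\begin{equation*}
\tau_n=2\tau_{n-1}+\frac{A}{r}+\frac16\Big(1-\frac{1}{2^{n-1}}\Big)r-\frac{r}{3\cdot 2^{n-1}} .
\end{equation*}
Now I would induct on $n$ to prove $\tau_n=2^{n}\tg+a_n\frac{A}{r}+b_n r$: the base case $n=1$ is precisely \corref{cor1twopunion} for $\ga\cup\ga$, giving $a_1=1$ and $b_1=-\tfrac13$, and the inductive step forces $a_n=2a_{n-1}+1$ and $b_n=2b_{n-1}+\frac16-\frac{1}{2^{n}}$, which are the stated recursions. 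Dividing by $\ell(\Gamma^{(n)})=2^{n}$ and invoking the scale-independence of $\tg$ (\remref{rem tau scale-idependence}) — the scaling property of $A$ (\remref{rem scaling}) being what makes the behavior of $A$ and $r$ under normalization transparent — converts this into $\ta{(\cC\ga_{2^{n}}(p,q))^N}=\tg+\frac{a_n}{2^{n}}\frac{A}{r}+\frac{b_n}{2^{n}}r$, the first displayed identity.

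For the ``equivalently'' form I would solve the two linear recursions explicitly: $a_n=2a_{n-1}+1$ with $a_1=1$ gives $a_n=2^{n}-1$, so $a_n/2^{n}=1-2^{-n}$; and $b_n=2b_{n-1}+\frac16-\frac{1}{2^{n}}$ with $b_1=-\frac13$ gives, by an easy induction, $b_n=-\frac{2^{n}}{6}-\frac16+\frac{1}{3\cdot 2^{n}}$, so $b_n/2^{n}=-\frac16-\frac{1}{6\cdot 2^{n}}+\frac{1}{3\cdot 4^{n}}$. Substituting these closed forms into the first identity yields the second. The main obstacle here is organizational rather than conceptual: one must resolve the nested recursions in the correct order (the $A$-recursion must be closed before the $\tau$-recursion can be evaluated) and then carry out the elementary but error-prone algebra of solving for $a_n$, $b_n$ and of normalizing; no new ideas beyond \corref{cor1twopunion} and \corref{corlem twopunion for Apq} are needed, provided one is careful to apply them — and the self-similarity $\Gamma^{(n)}=\Gamma^{(n-1)}\cup\Gamma^{(n-1)}$ — to the non-normalized graphs, deferring normalization to the very last step.
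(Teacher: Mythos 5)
Your proposal is correct and follows essentially the same route as the paper: both exploit the self-similarity $\cC\ga_{2^{n}}(p,q)=\cC\ga_{2^{n-1}}(p,q)\cup\cC\ga_{2^{n-1}}(p,q)$, apply \corref{corlem twopunion for Apq} successively to obtain the closed form for $A_{p,q,\cC\ga_{2^{n}}(p,q)}/r_{2^{n}}(p,q)$ (the paper's \eqnref{eqn twopuniona}), then feed this into the $\tau$-recursion from \corref{cor1twopunion} and normalize by $\ell(\cC\ga_{2^{n}}(p,q))=2^{n}$. Your write-up merely makes explicit the inductions and the resolution of the recursions for $a_n$ and $b_n$ that the paper compresses into ``using calculus,'' and your computations check out.
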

\begin{proof}
Let $r_{2^k}(x,y)$ be the resistance function in
$\cC\ga_{2^{k}}(p,q)$ for $k \geq 1$ and $r_{2^0}(x,y)=r(x,y)$. Note
that $r_{2^k}(p,q)=\frac{r_{2^{k-1}}(p,q)}{2}$ for any $k\geq 1$.
Thus, applying \corref{corlem twopunion for Apq} successively gives
\begin{equation}\label{eqn twopuniona}
\frac{A_{p,q,\cC\ga_{2^{n}}(p,q)}}{r_{2^n}(p,q)}=\frac{A_{p,q,\ga}}{r(p,q)}+\frac{1}{6
} (1-\frac{1}{2^n})r(p,q).
\end{equation}
Then the result follows from \eqnref{eqn twopuniona},
\corref{cor1twopunion}, the fact that
$\ell(\cC\ga_{2^{n}}(p,q))=2^{n} \ell(\ga) = 2^{n}$, and using calculus.
\end{proof}
\begin{corollary}\label{cor twopunion}
Let $\ga$ be a normalized graph, and let $p$, $q$ be in $\ga$. Then
$$\ta{(\cC\ga_{4}(p,q))^N}=\tg +\frac{3}{4} \frac{A_{p,q,\ga}}{r(p,q)}-\frac{3}{16} r(p,q).
$$
\end{corollary}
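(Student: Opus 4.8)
The plan is to obtain this as the special case $n=2$ of \thmref{thm twopunion2}, since $\cC\ga_{4}(p,q)=\cC\ga_{2^{2}}(p,q)$ and the hypothesis $n\geq 2$ there is met (with equality). First I would invoke the closed-form expression established in that theorem, namely
$$\ta{(\cC\ga_{2^{n}}(p,q))^N}=\tg+ \big(1-\tfrac{1}{2^{n}} \big)\frac{A_{p,q,\ga}}{r(p,q)}+ \big(-\tfrac{1}{6}-\tfrac{1}{6 \cdot 2^{n}}+\tfrac{1}{3 \cdot 4^{n}} \big)r(p,q),$$
and then simply substitute $n=2$ and simplify the two coefficients.

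Carrying out the substitution, the coefficient of $\dfrac{A_{p,q,\ga}}{r(p,q)}$ becomes $1-\tfrac14=\tfrac34$, and the coefficient of $r(p,q)$ becomes $-\tfrac16-\tfrac1{24}+\tfrac1{48}$; putting these over the common denominator $48$ gives $\tfrac{-8-2+1}{48}=-\tfrac{3}{16}$. Collecting the two terms yields exactly the claimed identity $\ta{(\cC\ga_{4}(p,q))^N}=\tg +\tfrac{3}{4}\,\dfrac{A_{p,q,\ga}}{r(p,q)}-\tfrac{3}{16}\, r(p,q)$, completing the proof.

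Since the whole argument is a direct specialization of a previously proved formula, there is no genuine obstacle here; the only point requiring care is the elementary arithmetic with the fractions. As a cross-check one may instead use the recursive description in \thmref{thm twopunion2}: from $a_1=1$ one gets $a_2=2a_1+1=3$, and from $b_1=-\tfrac13$ one gets $b_2=2b_1-\tfrac14+\tfrac16=-\tfrac23-\tfrac14+\tfrac16=-\tfrac34$; dividing by $2^{2}=4$ then recovers the coefficients $\tfrac{a_2}{4}=\tfrac34$ and $\tfrac{b_2}{4}=-\tfrac{3}{16}$, in agreement with the computation above.
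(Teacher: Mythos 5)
Your proposal is correct and matches the paper's proof, which likewise obtains the corollary by setting $n=2$ in \thmref{thm twopunion2}; your arithmetic for the coefficients ($\tfrac34$ and $-\tfrac{3}{16}$) and the recursive cross-check are both accurate.
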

\begin{proof}
Applying \thmref{thm twopunion2} with $n=2$ gives the result.
\end{proof}
\begin{corollary}\label{corpropAcircle}
Let $\ga$ be a circle graph. Fix $p$ and $q$ in $\ga$. Let the
edges connecting $p$ and $q$ have lengths $a$ and $b$, so
$\ell(\ga)=a+b$. Then
$A_{p,q,\ga}=\frac{a^2b^2}{6(a+b)^2}$.
\end{corollary}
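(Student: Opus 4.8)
The plan is to compute $A_{p,q,\ga}$ for a circle graph directly, either from the definition or by applying one of the structural results already available. The cleanest route is to use \corref{corpropAcircle}'s analogue for the union of two segments: a circle with marked points $p$ and $q$ is exactly the union $\gam1 \cup \gam2$ along $\{p,q\}$, where $\gam1$ is a segment of length $a$ and $\gam2$ is a segment of length $b$. Then $r_1(p,q)=a$, $r_2(p,q)=b$, and by \propref{propAtree} (a segment is a tree) we have $A_{p,q,\gam1}=A_{p,q,\gam2}=0$. Substituting into \thmref{thm twopunion for Apq} gives
\[
A_{p,q,\ga} = \frac{b^2 \cdot 0 + a^2 \cdot 0}{(a+b)^2} + \frac{1}{6}\Big(\frac{ab}{a+b}\Big)^2 = \frac{a^2b^2}{6(a+b)^2},
\]
which is exactly the claimed formula. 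This is the approach I would take, since it reduces everything to results proved earlier in the paper.

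First I would set up the decomposition: observe that the circle graph $\ga$ of total length $a+b$ with two distinguished points $p,q$ dividing it into arcs of lengths $a$ and $b$ satisfies $\ga = \gam1 \cup \gam2$ with $\gam1 \cap \gam2 = \{p,q\}$, where $\gam1$ and $\gam2$ are line segments (hence trees) of lengths $a$ and $b$ respectively. Next I would record the two inputs to \thmref{thm twopunion for Apq}: the effective resistances $r_1(p,q) = a$ and $r_2(p,q) = b$ (the resistance across a segment equals its length, as noted in the proof of \corref{lemtauformula2}), and the vanishing $A_{p,q,\gam1} = A_{p,q,\gam2} = 0$ by \propref{propAtree}. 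Then I would simply invoke \thmref{thm twopunion for Apq} and simplify, as displayed above.

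There is essentially no obstacle here — the only thing to be slightly careful about is making sure the hypotheses of \thmref{thm twopunion for Apq} are literally met, namely that $\gam1$ and $\gam2$ are connected graphs meeting in exactly the two points $p$ and $q$, which holds by construction. An alternative, equally short proof would compute $A_{p,q,\ga}$ directly from the definition $A_{p,q,\ga} = \int_\ga \jj{x}{p}{q}(\ddx \jj{p}{x}{q})^2\,dx$: by parallel reduction $r(p,q) = \frac{ab}{a+b}$, and on the arc of length $a$ one has $\ddx \jj{p}{x}{q}$ equal to a constant determined by the current split between the two arcs, while $\jj{x}{p}{q}$ is a quadratic in $x$ vanishing at $p$ and $q$; integrating over both arcs and adding gives the same answer. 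I would present the first proof as the main argument since it is a one-line consequence of \thmref{thm twopunion for Apq}, and perhaps remark that the direct computation is also available.
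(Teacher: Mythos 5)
Your proof is correct and is essentially identical to the paper's own argument: decompose the circle as the union of two line segments of lengths $a$ and $b$ along $\{p,q\}$, use \propref{propAtree} to kill both $A$-terms, and apply \thmref{thm twopunion for Apq} with $r_1(p,q)=a$, $r_2(p,q)=b$. Nothing further is needed.
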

\begin{proof}
Let $\ga_1$ and $\ga_2$ be two line segments of lengths $a$ and $b$.
For end points $p$ and $q$ both in $\ga_1$ and $\ga_2$,
$A_{p,q,\ga_1}=A_{p,q,\ga_2}=0$ by \propref{propAtree}. Since the circle graph $\ga$
is obtained by identifying end points of $\ga_1$ and $\ga_2$, the result follows from
\thmref{thm twopunion for Apq}.
\end{proof}
As the following lemma shows, whenever the vertices $p$ and $q$ are connected by an edge $e_{i}$ of
$\ga$, we can determine the value of $A_{p,q,\ga}$ in terms of
$A_{p,q,\ga-e_i}$ and resistance, in $\ga$, between $p$ and $q$.
\begin{lemma}\label{lemApq}
Let $e_{i} \in \ee{\ga}$ be an edge such that $\ga-e_{i}$ is
connected, where $\li$ is its length, $\ri$ is the resistance
between $p$ and $q$ in $\ga-e_i$ and $p$ and $q$ are its end points. For
the resistance function $r(x,y)$ of $\ga$,
\begin{equation*}
\begin{split}
A_{p,q,\ga}=\frac{\li^2
A_{p,q,\ga-e_{i}}}{(\li+\ri)^2}+\frac{r(p,q)^2}{6}.
\end{split}
\end{equation*}
\end{lemma}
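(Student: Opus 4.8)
The plan is to recognize the graph $\ga$ as the union, along the two points $p$ and $q$, of two connected graphs, and then to quote \thmref{thm twopunion for Apq} directly. Concretely, I would set $\gam1 := e_i$, regarded as a line segment of length $\li$ with end points $p$ and $q$, and $\gam2 := \ga-e_i$. Since $\ga-e_i$ is connected by hypothesis, both $\gam1$ and $\gam2$ are connected graphs, $\gam1 \cap \gam2 = \{p,q\}$, and $\gam1 \cup \gam2 = \ga$, so the hypotheses of \thmref{thm twopunion for Apq} are met. Write $r_1(x,y)$ and $r_2(x,y)$ for the resistance functions on $\gam1$ and $\gam2$.

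The only preliminary facts I would record are the three bookkeeping items that feed into the theorem: (i) $r_1(p,q) = \li$, since $\gam1$ is a line segment of length $\li$ (equivalently a single resistor of resistance $\li$); (ii) $r_2(p,q) = \ri$, which is the definition of $\ri$; and (iii) $A_{p,q,\gam1} = 0$ by \propref{propAtree}, since a line segment is a tree. I would also note that in $\ga$ the points $p$ and $q$ are joined by the two parallel connections $e_i$ and $\ga-e_i$, so parallel circuit reduction gives $r(p,q) = \frac{\li\ri}{\li+\ri}$.

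Substituting $r_1(p,q)=\li$, $r_2(p,q)=\ri$, $A_{p,q,\gam1}=0$, and $A_{p,q,\gam2}=A_{p,q,\ga-e_i}$ into the formula of \thmref{thm twopunion for Apq} then yields
$$A_{p,q,\ga} \;=\; \frac{\ri^2\cdot 0 + \li^2\, A_{p,q,\ga-e_i}}{(\li+\ri)^2} \;+\; \frac{1}{6}\Big(\frac{\li\ri}{\li+\ri}\Big)^2 \;=\; \frac{\li^2\, A_{p,q,\ga-e_i}}{(\li+\ri)^2} + \frac{r(p,q)^2}{6},$$
where the last equality uses $r(p,q)=\frac{\li\ri}{\li+\ri}$. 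This is exactly the asserted identity, so the proof is complete.

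I do not expect a genuine obstacle here: all of the substance is already contained in \thmref{thm twopunion for Apq}, and the only things to check are the elementary items (i)--(iii) and the parallel reduction, each of which is immediate. If one preferred a self-contained argument, one could instead mimic the proof of \thmref{thmmagnificent}: carry out the circuit reduction of $\ga$ with reference to $p$, $q$ and a variable point $x$, split $A_{p,q,\ga}=\int_{e_i}\jj{x}{p}{q}(\ddx\jj{p}{x}{q})^2dx+\int_{\ga-e_i}\jj{x}{p}{q}(\ddx\jj{p}{x}{q})^2dx$, and evaluate the two pieces using \corref{corjrpq} and \thmref{thmremain}; but this only re-derives the special case of \thmref{thm twopunion for Apq} that we already have, so it is unnecessary.
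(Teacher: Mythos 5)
Your proposal is correct and is essentially identical to the paper's own proof: the paper also decomposes $\ga$ as the union along $p,q$ of the line segment $e_i$ and $\ga-e_i$, notes $A_{p,q,e_i}=0$ via \propref{propAtree} and $r(p,q)=\frac{\li\ri}{\li+\ri}$ by parallel reduction, and then applies \thmref{thm twopunion for Apq}. No differences worth noting.
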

\begin{proof}
Let $\ga_1$ be the line segment of length $\li$, and let $\ga_2$ be the graph $\ga-e_i$.
We have $A_{p,q,\ga_1}=0$ by \propref{propAtree}. Note that
$r(p,q)=\frac{\li \ri}{\li +\ri}$ by parallel circuit reduction. Inserting these values, the result follows
from \thmref{thm twopunion for Apq}.
\end{proof}
A different proof of \lemref{lemApq} can be found in \cite[Lemma 3.32]{C1}.

In the rest of this section, we will give some examples showing how the formulas we have obtained for $A_{p,q,\ga}$ and $\tg$ can be used to compute the tau constant of some graphs explicitly.
\begin{example}\label{exdiamond2}
Let $\ga$ be the Diamond graph with equal edge lengths $L$(see
\exref{exdiamond}). Let $e_5$ be the inner edge as labeled in \figref{fig Adiamond}, with end points $a$
and $b$. Then $\ga-e_5$ is a circle graph and $\ell(\ga-e_5)=4L$,
so that $\ta{\ga-e_5}=\frac{L}{3}$. Also,
$A_{a,b,\ga-e_5}=\frac{(2L)^2(2L)^2}{6(2L+2L)^2}=\frac{L^2}{6}$ by
\corref{corpropAcircle}. By parallel reduction $R_{e_5}=L$. Thus applying
\corref{cor2twopunion} to $\ga$ with edge $e_5$ gives
$ \tg = \ta{\ga-e_{5}} +\frac{L_{e_5}}{12}-\frac{R_{e_5}}{6}+
\frac{A_{p,q,\ga-e_{5}}}{L_{e_5}+R_{e_5}}
=\frac{L}{3}+\frac{L}{12}-\frac{L}{6}+
\frac{1}{L+L}\frac{L^2}{6}
=\frac{L}{3},$ i.e., $\tg=\frac{\ell(\ga)}{15}$.
\end{example}
Let $\ga$ be circle graph with $t$ vertices and $t$ edges of
length $a$. If we disconnect each vertex and
reconnect via adding a rhombus with its
short diagonal whose length is equal to
side lengths, $b$, we obtain a graph which
will be denoted by $\ga(a,b,t)$. We will call
it the ``Diamond Necklace graph'' of type
$(a,b,t)$. Figure \ref{fig diamondnecklace} gives
an example with $t=4$.
The graph $\ga(a,b,t)$ is
a cubic graph with $v=4t$ vertices and $e=6t$ edges.
\begin{figure}
\centering
\includegraphics[scale=0.7]{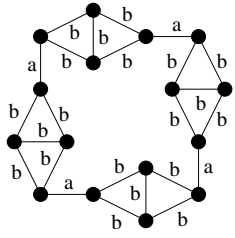} \caption{A Diamond Necklace graph, $\ga(a,b,4)$.} \label{fig diamondnecklace}
\end{figure}
\begin{example}\label{exDiamondnecklace}
Let $\ga(a,b,t)$ be a normalized Diamond Necklace graph. Let $e_a \in \ee{\ga(a,b,t)}$ be an edge of length
$a$ with end points $p$ and $q$. Note that $R_{e_a}=(t-1)a+tb$. By applying the additive property for
$A_{p,q,\ga(a,b,t)-{e_a}}$, i.e., \propref{propAadditive},  and
using \propref{propAtree}, we obtain $A_{p,q,\ga(a,b,t)-{e_a}}=t
A_{p,q,\gamma}$, where $\gamma$ is a Diamond graph with edge lengths
$b$ and $p$, $q$ as in \exref{exdiamond2}. By \exref{exdiamond},
$A_{p,q,\gamma}=\frac{b^2}{8}$. Also,
$\ta{\ga(a,b,t)-e_a}=\frac{(t-1)a}{4}+t \ta{\gamma}=\frac{(t-1)a}{4}
+t\frac{b}{3}$ by using the additive property and
\exref{exdiamond2}. Thus applying \corref{cor2twopunion} to
$\ga(a,b,t)$ with edge $e_a$ gives
\begin{equation*}
\begin{split}
\ta{\ga(a,b,t)}& = \ta{\ga(a,b,t)-e_{a}}
+\frac{L_{e_a}}{12}-\frac{R_{e_a}}{6}+
\frac{A_{p,q,\ga(a,b,t)-e_{a}}}{L_{e_a}+R_{e_a}}
\\ &=\frac{(t-1)a}{4}
+t\frac{b}{3}+\frac{a}{12}-\frac{(t-1)a+tb}{6}+
\frac{1}{a+(t-1)a+tb}t\frac{b^2}{8}
\\ &=\frac{t(a+2b)}{12}+\frac{b^2}{8(a+b)}.
\end{split}
\end{equation*}
In particular, if $\ga(a,b,t)$ is normalized, then
$1=\ell(\ga(a,b,t))=ta+5tb$ gives
$$\ta{\ga(a,b,t)}= \frac{24t^3a^2+22t^2a+4t+3-6ta+3t^2a^2}{120t(4ta+1)}.$$
\end{example}
When $\ga(a,b,t)$ is normalized, we have $b=\frac{1-a t}{5t}$ and one can show that the equality
$\frac{1}{12}\sum_{\substack{e_i \in \ga(a,b,t)}}\frac{\li^3}{(\li+\ri)^2}=\frac{4-12(a-1) t + (12a^2+24a+13)t^2 +
 a(1996 a^2-84a+91)t^3 + 8a^2 (6a+13) t^4-208a^3 t^5}{960 t^2 (4 a t+1)^2}$
holds.

In particular, when $a=\frac{1}{101}$, $b=\frac{1}{50500}$ and $t=100$ we have $\ta{\ga(a,b,t)}>\frac{1}{12.1}$ and $\displaystyle \frac{1}{12}\sum_{e_i \in \\ \ga(a,b,t)}\frac{\li^3}{(\li+\ri)^2}<\frac{1}{5000}$. Moreover, for any given $\varepsilon>0$ there are normalized diamond graphs $\ga(a,b,t)$ such that $\ta{\ga(a,b,t)}$ is close to $\frac{1}{12}$ and that $\displaystyle \frac{1}{12}\sum_{e_i \in \ga(a,b,t)}\frac{\li^3}{(\li+\ri)^2} \leq \varepsilon$. This example shows us that the method applied in the proof of \thmref{thmeqlength} can not be used to prove \conjref{TauBound} for all graphs.
\begin{proposition}\label{propAbanana}
Let $\ga$ be an m-banana graph with vertex set $\{p, q\}$ and $m$
edges. Let $r(x,y)$ be the resistance function on it. Then
$A_{p,q,\ga}=(m-1) \cdot \frac{r(p,q)^2}{6}.$
\end{proposition}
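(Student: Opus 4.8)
The plan is to prove the identity by induction on the number of edges $m$, using \lemref{lemApq} for the inductive step. I would take $m=1$ as the base case: then $\ga$ is a single line segment, hence a tree, so $A_{p,q,\ga}=0$ by \propref{propAtree}, which matches $(1-1)\cdot\frac{r(p,q)^2}{6}=0$.

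For the inductive step I would assume the formula for all $(m-1)$-banana graphs and let $\ga$ be an $m$-banana graph with $m\ge 2$, whose edges $e_1,\dots,e_m$ all join $p$ and $q$, with $\li=\ell(e_i)$. Fixing one edge $e_i$, the key observation is that $\ga-e_i$ is still connected (since $m\ge 2$) and is again a banana graph, now with $m-1$ edges between $p$ and $q$, so \lemref{lemApq} applies to $\ga$ and $e_i$. Writing $\ri$ for the resistance between $p$ and $q$ in $\ga-e_i$, the induction hypothesis gives $A_{p,q,\ga-e_i}=(m-2)\cdot\frac{\ri^2}{6}$, and parallel circuit reduction gives $r(p,q)=\frac{\li\ri}{\li+\ri}$. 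Substituting both into \lemref{lemApq} yields
\begin{align*}
A_{p,q,\ga}&=\frac{\li^2A_{p,q,\ga-e_i}}{(\li+\ri)^2}+\frac{r(p,q)^2}{6}=(m-2)\cdot\frac{1}{6}\Big(\frac{\li\ri}{\li+\ri}\Big)^2+\frac{r(p,q)^2}{6}\\ &=(m-1)\cdot\frac{r(p,q)^2}{6},
\end{align*}
which closes the induction.

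Since every step here invokes a result already established, I do not expect a genuine obstacle; the only point needing a moment's care is that \lemref{lemApq} requires $\ga-e_i$ to be connected, which is exactly why the recursion is anchored at the trivial case $m=1$ and runs for $m\ge 2$. If one prefers to avoid induction, the same computation can be packaged as a single application of \thmref{thm twopunion for Apq} to the decomposition of $\ga$ along $\{p,q\}$ into one edge $e_i$ (which contributes $A_{p,q,e_i}=0$, by \propref{propAtree}) and the complementary $(m-1)$-banana graph $\ga-e_i$; this gives the same recurrence, and together with the base case it delivers the stated closed form.
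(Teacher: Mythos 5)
Your proof is correct and follows essentially the same route as the paper: the paper also anchors the induction at the trivial cases via \propref{propAtree} (and \corref{corpropAcircle} for $m=2$, which your argument instead recovers from the $m=1$ base) and then inducts on $m$ using \lemref{lemApq}, exactly as you do. The algebra in your inductive step checks out, so nothing further is needed.
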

\begin{proof}
When $m=1$, $\ga$ is a line segment. In particular, it is a tree. Then the result in this case follows from
\propref{propAtree}.
When $m=2$, $\ga$ is a circle, so the result in this case follows from \corref{corpropAcircle}.
Then the general case follows by induction on $m$, if we use \lemref{lemApq}.
\end{proof}
The lower bound to the tau constant of a banana graph was studied in \cite{REU}. For a banana graph $\ga$, a \cite{REU} participant, Crystal Gordon, found by applying Lagrange multipliers that the smallest value of $\tg$ is achieved when the edge lengths are
equal to each other and the number of edges is equal to $4$ as in the following proposition. We will provide a different, shorter proof.
\begin{proposition}\label{proplembanana}
Let $\ga$ be an m-banana graph with vertex set $\{p,q\}$ and resistance function $r(x,y)$, where $m \geq 1$. Then
$\tg=\frac{\ell(\ga)}{12}-\frac{(m-2)}{6}r(p,q)$.

In particular, $\tg \geq \elg \big(\frac{1}{12}-\frac{m-2}{6 m^2}\big) \geq \frac{\elg}{16}$, 
where the first inequality holds if and only if the edge lengths of $\ga$ are all equal to each other,
and the second holds if and only if $m=4$.
\end{proposition}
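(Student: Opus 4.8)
The plan is to recognize the $m$-banana graph $\ga$ inside constructions already treated in the paper and to extract $\tg$ from \corref{coradding2}. First I would look at the graph $\ga_{pq}$ obtained by identifying the two vertices $p$ and $q$ of $\ga$. Since every edge of $\ga$ joins $p$ to $q$, this identification turns each of the $m$ edges into a self loop, so $\ga_{pq}$ is a one point union of $m$ circles, the $i$-th having length $\li$. By the additive property of the tau constant together with \corref{cor tau formula for circle}, this gives at once $\ta{\ga_{pq}} = \sum_{i=1}^{m}\li/12 = \ell(\ga)/12$.

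Next I would apply \corref{coradding2} to $\ga$ with the distinct points $p,q$, namely $\ta{\ga_{pq}} = \tg - r(p,q)/6 + A_{p,q,\ga}/r(p,q)$, and substitute $A_{p,q,\ga} = (m-1)r(p,q)^2/6$ from \propref{propAbanana}. The identity $\ell(\ga)/12 = \tg - r(p,q)/6 + (m-1)r(p,q)/6 = \tg + \tfrac{m-2}{6}r(p,q)$ then rearranges to the claimed formula $\tg = \ell(\ga)/12 - \tfrac{m-2}{6}r(p,q)$. I would double-check the degenerate case $m=1$, where $\ga$ is a single segment, $\ga_{pq}$ a circle, $A_{p,q,\ga}=0$, and $r(p,q)=\ell(\ga)$, so that the formula correctly reproduces $\tg=\ell(\ga)/4$ as in \corref{lemtauformula2}. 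A slightly longer alternative, kept in reserve, is a direct computation from \propref{proptau}: fixing the vertex $p$, each edge is incident to $p$, so $(R_{a_i,p}-R_{b_i,p})^2 = \ri^2$; using $r(p,q)=\li\ri/(\li+\ri)$ one checks that the summand $\tfrac{\li^3 + 3\li\ri^2}{(\li+\ri)^2}$ equals $\li - 2r(p,q) + 4r(p,q)^2/\li$, and summing over $i$ with $\sum_i 1/\li = 1/r(p,q)$ recovers the same answer.

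For the inequalities, I would use the parallel-reduction identity $1/r(p,q) = \sum_{i=1}^{m} 1/\li$ and the AM--HM inequality $\sum_i 1/\li \ge m^2/\sum_i \li = m^2/\ell(\ga)$, which gives $r(p,q) \le \ell(\ga)/m^2$ with equality exactly when all edge lengths coincide. Because $m-2\ge 0$ for $m\ge 2$, plugging this bound into the formula yields $\tg \ge \elg\big(\tfrac1{12}-\tfrac{m-2}{6m^2}\big)$, with equality for $m\ge 3$ precisely when the $\li$ are all equal; the cases $m=1$ and $m=2$ are checked directly (for $m=2$ the coefficient $m-2$ vanishes, so equality holds regardless of the lengths). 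Finally, the elementary identity $\big(\tfrac1{12}-\tfrac{m-2}{6m^2}\big)-\tfrac1{16} = \tfrac{m^2-8(m-2)}{48m^2} = \tfrac{(m-4)^2}{48m^2}\ge 0$ delivers $\tg \ge \elg/16$ with equality if and only if $m=4$.

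I do not expect a real obstacle here: every ingredient (\corref{coradding2}, \propref{propAbanana}, \corref{cor tau formula for circle}, the additive property, parallel reduction, AM--HM) is already available, and the work is essentially bookkeeping. The only points needing care are the sign of $m-2$ when turning the resistance bound into an inequality for $\tg$, noting that the ``iff equal edge lengths'' equality clause is genuinely restrictive only when $m\ge 3$, and treating the degenerate $m=1$ (tree) and $m=2$ (circle) cases where the banana-graph formulas collapse.
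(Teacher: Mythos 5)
Your proposal is correct and follows essentially the same route as the paper: compute $\ta{\ga_{pq}}=\frac{\ell(\ga)}{12}$ via the additive property and \corref{cor tau formula for circle}, invert \corref{coradding2} using $A_{p,q,\ga}=(m-1)\frac{r(p,q)^2}{6}$ from \propref{propAbanana}, and then get the inequalities from $r(p,q)=\big(\sum_i 1/\li\big)^{-1}$ together with the arithmetic--harmonic mean inequality and elementary algebra. Your explicit handling of the $m=1$ and $m=2$ edge cases and of the sign of $m-2$ is a small but welcome refinement of the paper's ``elementary algebra'' step.
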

\begin{proof}
By \corref{coradding2}, we have
$\ta{\ga_{pq}}=\tg-\frac{r(p,q)}{6}+\frac{A_{p,q,\ga}}{r(p,q)}$.
On the other hand, $\ta{\ga_{pq}}$ becomes one pointed union of m circles, and so by applying
additive property of the tau constant and \corref{cor tau formula for circle} we obtain $\ta{\ga_{pq}}=\frac{\ell(\ga)}{12}$.
Therefore, the equality follows from \propref{propAbanana}.

Note that the inequality was proved in \corref{cordoublebanana} when the edge lengths are equal. Let edge lengths of $\ga$
be given by $\{L_1,L_2, \cdots, L_m \}$. Then
by elementary circuit theory $r(p,q)=\frac{1}{\sum_{i=1}^m \frac{1}{L_i}}$. On the other hand, by applying the Arithmetic-Harmonic
Mean inequality we obtain $\frac{\elg}{m^2} \geq \frac{1}{\sum_{i=1}^m \frac{1}{L_i}}$, with equality if and only if the edge lengths are equal. Hence, the result follows by using the first part of the proposition and by elementary algebra.
\end{proof}


\end{document}